\documentclass[10pt, letterpaper]{amsart}
\usepackage[margin=1in]{geometry}
\usepackage{preamble_article_R_T_orthogonal}

\title{An R=T theorem for certain orthogonal Shimura varieties}
\author{Hao Peng and Dmitri Whitmore}

\begin{document}

\begin{abstract}
We prove an almost minimal $\msf R=\bb T$ theorem for self-dual Galois representations with coefficients in a finite field satisfying a property called rigid. We also prove the rigidity property for a large family of residual Galois representations attached to regular algebraic self-dual representations. Our theorem is based on a Taylor--Wiles patching argument for $G$-valued Galois representation, where $G$ equals $\GO_{2m}$ or $\GSp_{2m}$.
\end{abstract}

\maketitle

\tableofcontents

\section{Introduction}

Let $F$ be a totally real number field. We prove an $\msf R=\bb T$ theorem (see Theorem~\ref{peeoieowivmincs}) for self-dual Galois representations $\ovl\rho$ of $\Gal_F$ with coefficients in a finite field satisfying certain \emph{rigidity} property (see Definition~\ref{isiinefies}). We then show that for any elliptic curve $A$ over $F$, the residual Galois representations attached to symmetric powers of $A$ is rigid when the residual characteristic is large. More generally, we show that if $\Pi$ is a regular algebraic symplectically self-dual cuspidal representation of general linear groups over $F$ with a supercuspidal component, then the residual residual Galois representations attached to $\Pi$ is rigid when the residual characteristic is large.

The $\msf R=\bb T$ theorem for $\GL_2$ are crucial to Wiles and Taylor--Wiles' work on the Fermat's Last Theorem; cf.\cites{Wil95, T-W95}. For the original Taylor--Wiles method to work when considering a representation
\begin{equation*}
\rho: \Gal(\ovl F/F)\to G(\ovl{\bb Q_\ell}),
\end{equation*}
valued in a (not necessarily connected) reductive group $G$, one needs
\begin{equation*}
[F: \bb Q](\dim G-\dim B)=\sum_{v|\infty}\bx H^0(\Gal(\ovl F_v/F_v), \ad^0(\ovl\rho)),
\end{equation*}
where $B$ denotes a Borel subgroup of $G$, and $\ad^0(\ovl\rho)$ denotes the kernel from $\ad(\ovl\rho)$ to its $G$-coinvariants. As $F$ is totally real, this numerical coincidence holds for the disconnected group $\mrs G_n$ (with identity component $\GL_n\times\GL_1$) defined in \cite{CHT08}. This leads to $\msf R=\bb T$ theorems for unitary Shimura sets, which are essential in the proofs of potential modularity theorem for conjugate self-dual Galois representations and of the Sato--Tate conjecture (cf. for example, \cite{Tay08} and \cite{HSBT}). The case of unitary Shimura varieties is treated in~\cite{LTXZZa}.

This numerical coincidence also holds for $G=\GSp(2m)$ or $G=\GO(2m)$. The corresponding $\msf R=\bb T$ theorems for definite classical groups are treated in the work of Xiaoyu Zhang \cite{Zha25}. In this work, we consider orthogonal Shimura varieties (including orthogonal Shimura sets) associated to special orthogonal groups that are not necessarily unramified at every finite place, and with arbitrary neat level structures; this flexibility is crucial for our applications. For example, the main result will be used in the Ph.D. thesis of the first author \cite{Pen26}, which proves the Beilinson--Bloch--Kato conjecture for a large family of regular algebraic self-dual cuspidal representations of general linear groups over $F$; see \cite[\S1.2]{Pen25c}. 

We state one of the results that is the least technical to formulate. Let $\Pi$ be a $0$-REASDC (that is, regular essentially algebraic self-dual cuspidal with trivial central character) representation of $\GL_{2m}(\Ade_F)$ for some positive integer $m$ (see Definition~\ref{psoisieneifemiws}). Let $E\subset\bb C$ be a number field \sut there exists a family of Galois representations
\begin{equation*}
\{\rho_{\Pi, \lbd}: \Gal(\ovl F/F)\to \GL_{2m}(E_\lbd)\}_\lbd
\end{equation*}
indexed by finite places of $E$ \sut $\rho_{\Pi, \lbd}$ is conjugate to $\rho_{\Pi, \lbd}^\vee(1-2m)$ (see Proposition~\ref{associateGlaosireifnies}, Definition~\ref{strongienfeihenss}, and Remark~\ref{oeietureps}). Let $\Pla$ be a finite set of finite places of $F$ \sut for every finite place $v$ of $F$, $\Pi_v$ is unramified and the underlying rational prime of $v$ is unramified in $F$.

Consider a pair $(\mbf V, \mdc K)$ in which
\begin{itemize}
\item
$\mbf V$ is a nondegenerate quadratic space over $F$ of dimension $2m+1$ that is of signature $(N, 0)$ or $(N-2, 2)$ at every infinite place, 
and
\item
$\mdc K=\prod_v\mdc K_v$ is a neat open compact subgroup of $\SO(\mbf V)(\Ade_F^\infty)$, \sut $\mdc K_v$ is the stabilizer of a self-dual lattice for every $v$ not in $\Pla$.
\end{itemize}
We assume that there exists a cuspidal automorphic representation $\pi$ of $\SO(\mbf V)(\Ade_F)$ with nonzero $\mdc K$-invariants and with automorphic functorial lift $\Pi$ (see Definition~\ref{funcroliaureofnils}). Let $\bb T^{\mfk m, \Pla\cup\Pla(\ell_\lbd)}$ denote the Hecke algebra of $\SO(\mbf V)$ away from $\Pla$ and places lying above $\ell$ (see \textup{Definition~\ref{slabossleiisTakeows}}). Every such pair $(\mbf V, \mdc K)$ defines a complex Shimura variety $\bSh(\mbf V,\mdc K)$ of dimension $d(\mbf V)$. The Archimedean weights $\xi$ of $\Pi$ (see Definition~\ref{psoisieneifemiws}) defines an $\mcl O_\lbd$-linear local system $\mrs L_{\xi, \lbd}$ on $\bSh(\mbf V, \mdc K)$. Let $\bb T_\lbd$ be the image of $\bb T^{\mfk m,\Pla\cup\Pla(\ell_\lbd)}$ in $\End_{\mcl O_\lbd}\paren{\etH^{d(\mbf V)}(\bSh(\mbf V, \mdc K), \mrs L_{\xi, \lbd})}$.

For every finite place $\lbd$ of $E$, we write $\ell_\lbd$ its underlying rational prime and $\mcl O_\lbd$ the ring of integers of $E_\lbd$.

\begin{mainthm}\label{oeiiunughufmeis}
Suppose that 
\begin{enumerate}
\item
there exists a finite place $v$ of $F$ at which $\Pi_v$ is supercuspidal, and
\item
either the local system $\mrs L_{\xi, \lbd}$ is constant or $d(\mbf V)=0$.
\end{enumerate}
Then for all but finitely many finite places $\lbd$ of $E$, we can attach to $\Pi$ a maximal ideal $\mfk m_\lbd$ of $\bb T^{\mfk m, \Pla\cup\Pla(\ell_\lbd)}$ with residue field $\mcl O_\lbd/\lbd$, \sut 
\begin{enumerate}
\item
the localization $\bb T_{\lbd,\mfk m_\lbd}$ is canonically isomorphic to a commutative $\mcl O_\lbd$-algebra $\msf R_{\mrs S_\lbd}$ that classifies self-dual deformations of the residual representation of $\rho_{\Pi, \lbd}$ that are crystalline with regular Fontaine--Laffaille weights at places above $\ell_\lbd$ and unramified outside $\Pla$ and places above $\ell_\lbd$; and
\item
the localization $\etH^{d(\mbf V)}(\bSh(\mbf V, \mdc K), \mrs L_{\xi, \lbd})_{\mfk m_\lbd}$ is a finite free module over $\bb T_{\lbd, \mfk m_\lbd}$.
\end{enumerate}
\end{mainthm}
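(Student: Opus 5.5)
This theorem will be deduced from the general $\msf R=\bb T$ theorem (Theorem~\ref{peeoieowivmincs}), which requires the residual representation to be rigid in the sense of Definition~\ref{isiinefies}. We also need the standard Taylor--Wiles hypotheses at $\ell_\lbd$ for all but finitely many $\lbd$. The plan has three stages.

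\textbf{Residual representation and maximal ideal.} Set $\ovl\rho_\lbd:=\ovl\rho_{\Pi,\lbd}$, valued in $\GO_{2m}$ or $\GSp_{2m}$ according to the sign of the self-duality. The $\mdc K$-invariants of $\pi$ give a Hecke eigensystem in $\etH^{d(\mbf V)}(\bSh(\mbf V,\mdc K),\mrs L_{\xi,\lbd})\otimes\ovl E_\lbd$, and hence a maximal ideal $\mfk m_\lbd$. For this we use either Matsushima/Zucker (the local system is constant, so the cohomology is middle degree) or the trivial case $d(\mbf V)=0$. The Satake parameters of $\pi_v$ match those of $\Pi_v$ by the definition of functorial lift (Definition~\ref{funcroliaureofnils}). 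Hence $\mfk m_\lbd$ is the ideal attached to $\ovl\rho_\lbd$, and its residue field is $\mcl O_\lbd/\lbd$ because the eigenvalues lie in $E$.

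\textbf{Verifying the hypotheses of Theorem~\ref{peeoieowivmincs} for almost all $\lbd$.} We exclude the finitely many $\lbd$ for which any of the following fails:
\begin{enumerate}
\item $\ell_\lbd$ is ramified in $F$, or lies below a place in $\Pla$;
\item $\ell_\lbd$ is small relative to the Hodge--Tate weights $\xi$, so that the Fontaine--Laffaille range is regular and $\ell_\lbd>2m+1$.
\end{enumerate}
Compatibility of the family (Proposition~\ref{associateGlaosireifnies}) then gives that $\rho_{\Pi,\lbd}$ is crystalline at $\ell_\lbd$ with the right weights. The remaining hypothesis is rigidity of $\ovl\rho_\lbd$, together with adequacy/big-image conditions for the Taylor--Wiles primes. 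We would get these from the rigidity result for representations with a supercuspidal component proved later in the paper. The rough argument has three steps:
\begin{enumerate}
\item The supercuspidal component at $v$ forces $\rho_{\Pi,\lbd}|_{\Gal_{F_v}}$ to be irreducible (local Langlands and local–global compatibility).
\item The inertial type is independent of $\lbd$, so for large $\ell_\lbd$ the reduction of the type stays irreducible; this forces $\ovl\rho_\lbd$ to be absolutely irreducible with controlled local deformation rings at $v$ and minimal ramification at $\Pla$.
\item Combining this with Larsen-type large-image results for compatible systems, the image of $\ovl\rho_\lbd$ contains the derived group for a density-one set of $\lbd$, and in fact for all large $\lbd$ after using the regular Hodge--Tate weights. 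This yields adequacy/enormity and the vanishing needed to show the local deformation problems at $\Pla$ are minimal and formally smooth.
\end{enumerate}

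\textbf{Conclusion and main obstacle.} Theorem~\ref{peeoieowivmincs} then gives the isomorphism $\msf R_{\mrs S_\lbd}\simeq\bb T_{\lbd,\mfk m_\lbd}$ and the freeness of the localized cohomology. Freeness follows from the patched module being maximal Cohen--Macaulay over a regular patched ring, i.e.\ the Diamond freeness criterion. This requires the cohomology localized at $\mfk m_\lbd$ to be concentrated in degree $d(\mbf V)$ and torsion free. When the local system is constant or $d=0$, this follows from the vanishing results for non-Eisenstein maximal ideals used in the paper, and is where hypothesis~(2) enters. We expect the main obstacle to be the uniform-in-$\lbd$ rigidity: the local conditions at ramified places in $\Pla$ must be rigid for every $\lbd$ outside a finite set, not merely a density-one set. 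We would first try to control this via the supercuspidal place, using irreducibility of the local residual representation to pin down the global image, and then get the local statements at $\Pla$ from the independence of $\lbd$ of Weil–Deligne types.
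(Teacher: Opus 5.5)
Your top-level reduction matches the paper. Theorem~A is Theorem~\ref{peeoieowivmincs}, with every place of $\Pla$ a minimal place and no level-raising places, combined with Theorem~\ref{ososinbuufemws}, which supplies (D2) and (D3) for all but finitely many $\lbd$; the splitting condition on $v_0$ is vacuous here since $\mfk d=0$. Hypothesis~(2) is used only for (D4): for constant coefficients it comes from the torsion-vanishing results of Peng and Yang--Zhu, and it is trivial when $d(\mbf V)=0$. However, your sketch of the rigidity input, which you rightly call the crux, has a genuine gap.

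\textbf{Where the sketch fails.} Condition~(1) of Definition~\ref{isiinefies} says that every lift of $\ovl\rho_v$ at $v\in\Pla$ is minimally ramified. It cannot be obtained from the $\lbd$-independence of $\WD(\rho_{\Pi,\lbd}|_{\Gal_{F_v}})$, and the supercuspidal place $v_0$ says nothing about the other places of $\Pla$.
\begin{itemize}
\item The characteristic-zero type is the same for every $\lbd$. But $\ovl\rho_v$ is the reduction on the lattice forced by the global representation, and it can lose monodromy.
\item If $N$ drops rank modulo $\lbd$, then pairs of Frobenius eigenvalues of $\ovl\rho_v$ still differ by a factor $\norml{v}$, but no residual monodromy links them. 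In that case $\rho_{\Pi,\lbd}|_{\Gal_{F_v}}$ is itself a non-minimally ramified lift of $\ovl\rho_v$.
\item For a Tate curve this happens exactly when $\ell$ divides the valuation of the Tate period. That is why the elliptic-curve case in the paper needs $\ell\nmid\ord_v(j(A))$.
\item For general $\Pi$ there is no such formula. Excluding this for all but finitely many $\lbd$ is a level-lowering type statement, hence global, and no local argument reaches it.
\end{itemize}

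\textbf{The missing idea: the paper's global argument.}
\begin{enumerate}
\item Fix any choice $\mrs D^\square_\Pla$ of irreducible components of the local lifting rings at $v\in\Pla$. The ring $\msf R_{\mrs S(\mrs D^\square_\Pla)}$ is finite over $\mcl O$ by Theorem~\ref{osiseimfiehgiemifes}, proved by passing to $\mrs G_{2m}$ and using unitary finiteness results.
\item It has Krull dimension at least $1$ by the Galois-cohomological lower bound. Here the local terms vanish away from $\ell$, and adequacy kills $\bx H^0(\Gal_F,\ad^0(\ovl\rho)(1))$.
\item So it has a $\ovl{\bb Q_\ell}$-point. By unitary modularity lifting this point comes from some $\Pi(\mrs D^\square_\Pla)$ of weight $\xi$, with level bounded independently of $\lbd$ by Corollary~\ref{losoieiefniewswivmie} (this uses $\ell>\norml{v}^{2m}$).
\item Only finitely many such representations exist. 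Strong multiplicity one therefore forces $\Pi(\mrs D^\square_\Pla)\cong\Pi$ once $\ell$ is large.
\item Temperedness makes $\rho_{\Pi,\iota_\ell}|_{\Gal_{F_v}}$ a smooth point of the generic fibre of the local lifting ring, so it lies on a single component.
\item Hence each local lifting ring has only one component, which must be the minimally ramified one by Proposition~\ref{psisiidimfimies}(3).
\end{enumerate}

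\textbf{A secondary problem: the big-image step.} Larsen-type results give only a density-one set of $\lbd$, not all but finitely many. What is actually needed for (D2) is irreducibility over $F(\mu_\ell)$. The paper obtains it as follows:
\begin{itemize}
\item The supercuspidal place $v_0$ shows that the summands over $F(\mu_\ell)$ are pairwise non-isomorphic.
\item Hence any reducibility would make $\ovl\rho$ induced from a subfield of $F(\mu_\ell)$.
\item A Fontaine--Laffaille argument at $\ell$ rules out such an induction.
\end{itemize}
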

\begin{rem*}
The finite set of finite places of $E$ that are excluded in Theorem~\ref{oeiiunughufmeis} can be effectively bounded.
\end{rem*}

Theorem~\ref{oeiiunughufmeis} extends previous results in the conjugate self-dual setting \cites{CHT08, LTXZZa} to the self-dual setting. 
It is a consequence of the two main results of this article, namely, Theorem~\ref{peeoieowivmincs} and Theorem~\ref{ososinbuufemws}. In this article, a more general result is proved for orthogonal Shimura varieties of type $\bx B$ or $\bx D^{\bb R}$, under an extra assumption on certain vanishing of localized cohomologies of $\bSh(\mbf V, \mdc K)$ off middle degree (see~Theorem~\ref{peeoieowivmincs}(D4)). Note that, when $\mrs L_{\xi, \lbd}$ is a constant local system, this assumption is proved by the author \cite{Pen25a} when $\bSh(\mbf V, \mdc K)$ is proper, and later by Yang--Zhu \cite{Y-Z25} in general.

The proof of Theorem~\ref{oeiiunughufmeis} uses the Taylor--Wiles method following \cite{Whi23a}, which provided a framework for handling Galois representations valued in arbitrary connected reductive groups under a mild condition on the residual image.
Let $\ovl\rho$ be the residual representation of $\rho_{\Pi, \lbd}$, considered as a representation valued in $G(\mcl O_\lbd/\lbd)$, where $G=\GSp(2m)$ or $\GO(2m)$. In the Taylor--Wiles method, one considers liftings of $\ovl\rho$ for which ramification is allowed at certain auxiliary sets of places. In our setting, a Taylor--Wiles place $v$ of $F$ satisfies
\begin{itemize}
\item
$\ovl\rho$ is unramified at $v$;
\item
the residue field $\kappa(v)$ satisfies $\kappa(v)\equiv 1\modu \ell$; and
\item
the image of the Frobenius element $\ovl g=\ovl\rho(\phi_v)$ is semisimple and contained in the identity component of $G$.
\end{itemize}
We then consider Taylor--Wiles deformation problems as defined in \cite{Whi23a}, which consists of liftings of $\ovl\rho$ for which the inertia group $I_v$ is valued in certain torus. Since our deformation problem is in general not equal to the unrestricted deformation problem, some kind of local-global compatibility is required to show that Galois representations (attached to those automorphic representations arising in the Taylor--Wiles method) satisfy our Taylor--Wiles deformation problems.
The main input here (aside from local-global compatibility up to semisimplification) is the local Langlands correspondence for principal series representations constructed in \cite{Sol25}, whose construction is explicit enough for us to establish the necessary vanishing of the monodromy operator in the associated Weil--Deligne representation whilst also coinciding with Arthur's construction of local Langlands for classical groups.

In order to control the arising universal deformation rings, one requires the existence sets of Taylor--Wiles places for which a certain Galois cohomology group (the dual Selmer group) is forced to vanish. We impose a `big image' condition on $\ovl\rho$ using the notion of \emph{$G$-adequate} subgroups from \cite{Whi23a}, which can be verified to hold whenever $\ell$ (the rational prime underlying $\lbd$) is large 
and $\ovl\rho$ is absolutely irreducible. Moreover, in \S5, we verify that $\ovl\rho$ is absolutely irreducible for all but finitely many $\lbd$.

Finally, in order to deal with finite places of $F$ away from $\ell$ where $\ovl\rho$ is ramified, we use the notion of \emph{minimally ramified} representations from \cite{Boo19a}. We introduce a notion of \emph{rigid} Galois representation in \S4.1, so that when $\ovl\rho$ is rigid, any lifting of $\ovl\rho$ is minimally ramified at finite places of $F$ away from $\ell$. Moreover, in \S5, we verify that that $\ovl\rho$ is rigid for all but finitely many $\lbd$. We emphasize that the the proof of rigidity ultimately relies on classical modularity lifting theorems, in particular, the classical $\msf R=\bb T$ theorems in the unitary setting \cites{CHT08, Tho12}.

\subsection{Notation and conventions}

\begin{note}\enskip
\begin{itemize}
\item
Let $\bb N =\{0, 1, 2, 3, . . . \}$ be the monoid of nonnegative integers and set $\bb Z_+=\bb N\setm\{0\}$. We write $\bb Z$, $\bb Q$, $\bb R$, and $\bb C$ for the integers, rational numbers, real numbers, and complex numbers, respectively.
\item
All rings are commutative and unital, and ring homomorphisms preserve units.
\item
We take square roots only of positive real numbers and always choose the positive root.
\item
Suppose $X$ is a set.
\begin{itemize}
\item
Let $\uno\in X$ denote the distinguished trivial element (this notation is only used when the notion of triviality is clear from context).
\item
Let $\#X$ be the cardinality of $X$.
\item
For $a, b\in X$, we define the Kronecker symbol
\begin{equation*}
\delta_{a, b}\defining\begin{cases} 1 &\If a=b\\ 0 &\If a\ne b\end{cases}.
\end{equation*}
\end{itemize}
\item
For two topological (resp. algebraic) groups $H$ and $G$, we write $H\le G$ if $H$ is a closed subgroup of $G$.
\item
For each algebraic group $G$ over a field $K$, we write $G^\circ$ for the neutral component of $G$.
\item
For each square matrix $M$ over a ring, we write $M^\top$ for its transpose.
\item 
For each positive integer $n$, we denote by $\Sym_n$ the $n$-th symmetric group acting on $\{1, \ldots, n\}$.
\item
We use standard notation from category theory. The category of sets is denoted by $\Set$. The category of schemes is denoted by $\Sch$.
\item
For each rational prime $p$, we fix an algebraic closure $\ovl{\bb Q_p}$ of $\bb Q_p$ with residue field $\ovl{\bb F_p}$. For every integer $r\in\bb Z_+$, we denote by $\bb Q_{p^r}$ the unique unramified extension of $\bb Q_p$ of degree $r$ inside $\ovl{\bb Q_p}$, and by $\bb F_{p^r}$ its residue field.
\item
We fix a totally real number field $F$ and an algebraic closure $\ovl F$ of $F$. Set $\Gal_F\defining\Gal(\ovl F/F)$.
\item
We fix an integer $m\ge 1$ and an element $\mfk d$ in $F$ \sut $(-1)^m\mfk d$ is totally nonnegative (that is, $(-1)^m\tau(\mfk d)\ge 0$ for every embedding $\tau: F\to \bb R$). 
Denote by $\chi_{\mfk d}: F^\times\bsh \Ade_F^\times\to \bb C^\times$ the character corresponding to the Galois extension $F(\sqrt{\mfk d})/F$ via global class field theory (which is either trivial or quadratic).
\item
We denote by $\Mat_{2m}$ (resp. $\GL_{2m}$) the scheme over $\bb Z$ of $2m\times 2m$ square matrices (resp. invertible $2m\times 2m$ square matrices).
\item
We define $N=2m+\delta_{\mfk d, 0}$, and assume $N\ge 3$.
\item
For each positive integer $k$, we denote by $I_k$ the unit element of $\GL_N(\bb Z)$. Let $J_k=\sum_{i=1}^ke_{1, k+1-i}$, which is an anti-diagonal $k\times k$ matrix in $\Mat_k(\bb Z)$, let $J'_k=\sum_{i=1}^k(-1)^{i-1}e_{i, k+1-i}$, which is an anti-diagonal $k\times k$ matrix in $\Mat_k(\bb Z)$, and let $N_k=\sum_{i=1}^{k-1}e_{i, i+1}$, which is a nilpotent matrix.
\item
Denote by $\Pla^\infty$ the set of real embeddings of $F$, and by $\Pla^\fin$ the set of finite places of $F$.
\item
For any set $S$ of rational primes, denote by $\Pla(S)$ the set of all finite places $v$ of $F$ with $\chr\kappa_v\in S$. If $S=\{p\}$ is a singleton, we write simply $\Pla(p)\defining \Pla(\{p\})=\{v\in\fPla_F: v|p\}$.
\item
Let $\Pla^\bad$ denote the (finite) set of finite places of $F$ whose underlying rational prime is ramified in $F(\sqrt{\mfk d})$.
\item
For each finite set $\Pla$ of places of $F$, we write
\begin{equation*}
\Ade_{F, \Pla}\defining\prod\nolimits_{v\in\Pla}F_v, \quad \Ade_F^\Pla\defining\prod_{v\in \Pla_F\setm\Pla} \nolimits' F_v, \quad \Ade_F\defining \Ade_F^\infty\times(F\otimes_{\bb Q}\bb R)\end{equation*}
\item
For each place $v$ of $F$, we 
\begin{itemize}
\item
write $\mcl O_v$ and $F_v$ for the completion of $\mcl O_F$ (resp. $F$) at $v$;
\item
let $\kappa_v$ denote the residue field, $\norml{v}\defining\#\kappa_v$, and write $\chr \kappa_v$ for the residue characteristic.
\item
we fix an algebraic closure $\ovl{F_v}$ of $F_v$ with residue field $\ovl{\kappa_v}$ and an embedding $\iota_v: \ovl F\inj \ovl{F_v}$ extending $F\inj F_v$; via $\iota_v$ we regard $\Gal_{F_v}\defining \Gal(\ovl{F_v}/F_v)$ as a decomposition subgroup of $\Gal_F$;
\item
for any map $r: \Gal_F\to X$, we write $r_v\defining r|_{\Gal_{F_v}}$;
\item
let $\bx I_{F_v}\subset \Gal_{F_v}$ be the inertia subgroup;
\item
let $W_{F_v}\subset \Gal_F$ be the Weil group.
\item
fix an \emph{arithmetic} Frobenius element $\phi_v\in\Gal_{F_v}$.
\end{itemize}
\item
For each finite set $\Pla$ of finite places of $F$, we denote by $\Gal_{F, \Pla}$ the Galois group of the maximal subextension of $\ovl F/F$ that is unramified outside $\Pla\cup\Pla^\infty$.
\item
For each rational prime $\ell$, we denote by $\ve_\ell: \Gal_F\to \bb Z_\ell^\times$ the $\ell$-adic cyclotomic character.
\item
Suppose $\Gamma, G$ are groups and $L$ is a ring.
\begin{itemize}
\item
We denote by $\Gamma^\ab$ the maximal abelian quotient of $\Gamma$;
\item
For a homomorphism $\rho: \Gamma\to \GL_r(L)$ for some $r\in\bb Z_+$, we denote by $\rho^\vee: \Gamma\to \GL_r(L)$ the contragredient homomorphism, which is defined by the formula $\rho^\vee(x)=(\rho(x)^\top)^{-1}$.
\item
We say that two homomorphisms $\rho_1, \rho_2: \Gamma\to G$ are conjugate if there exists an element $g\in G$ \sut $\rho_1=g\circ\rho_2\circ g^{-1}$.
\item
Suppose $\Gamma$ is a subgroup of another group $\tld\Gamma$, and an element $\gamma\in\tld\Gamma$ normalizes $\tld\Gamma$. For any $\rho: \Gamma\to G$ for some $r\in\bb Z_+$, we let $\rho^\gamma: \Gamma\to G$ be the homomorphism defined by $\rho^\gamma(x)=\rho(\gamma x\gamma^{-1})$ for every $x\in\Gamma$.
\end{itemize}
\item
For each positive integer $n$, let $\mu_n$ be the group scheme over $\Spec\bb Z$ representing the group of $n$-th roots of unity.
\end{itemize}
\end{note}

\subsection{Acknowledgments}

We thank Anne-Marie Aubert, Yifeng Liu, Zeyu Wang, Yichao Tian, Liang Xiao, and Jialiang Zou for helpful conversations.

\section{Preliminaries}

\subsection{Orthogonal Satake parameters and orthogonal Hecke algebras}

In this subsection, we introduce quadratic spaces, their associated special orthogonal groups and orthogonal Hecke algebras.

\begin{defi}
Let $R$ be a $\mcl O_F$-algebra. 
A quadratic space over $R$ of dimension $N$ is a projective $R$-module $V$ of rank $N$ together with a symmetric perfect pairing
\begin{equation*}
V\times V\to R
\end{equation*}
that is $R$-linear in both variables.

For any quadratic space $V$ of dimension $N$ over $R$, we denote by $V_\sharp$ the quadratic space $V\oplus Re$ where $\norml{e}=1$, which is a quadratic space of dimension $N+1$ over $R$. For any $R$-linear isometry $f: V\to V'$ of quadratic spaces over $R$, we denote by $f_\sharp: V_\sharp\to V_\sharp'$ the induced isometry.
\end{defi}

We introduce abstract orthogonal Satake parameters.

\begin{defi}\label{slabossleiisTakeows}
Let $L$ be a ring. For any multiset $\bm\alpha=\{\alpha_1, \ldots, \alpha_{2m}\}\subset L$, we put
\begin{equation*}
P_{\bm\alpha}(T)=\prod_{i\in[2m]_+}(T-\alpha_i)\in L[T].
\end{equation*}

For each finite place $v$ of $F$ not in $\Pla^\bad$, we define an \tbf{abstract orthogonal Satake parameter} in $L$ at $v$ to be a multiset $\bm\alpha$ consisting of $2m$ elements in $L$ \sut $P_{\bm\alpha}(T)=\chi_{\mfk d}(\phi_v)T^{2m}P_{\bm\alpha}(T^{-1})$.
\end{defi}

For each finite place $v$ of $F$ not in $\Pla^\bad$, let $\Lbd^{m, \mfk d}_v$ be the unique up to isomorphism quadratic space over $\mcl O_v$ of rank $N$ with discriminant $\mfk d$ (resp. discriminant $1$) when $N$ is even (resp. odd), 
and $\SO^{m, \mfk d}_v$ the associated special orthogonal group over $\mcl O_v$, considered as a subgroup of $\GL(\Lbd^{m, \mfk d})$. In particular, we are considering odd special orthogonal groups when $\mfk d=0$ (thus $N$ is odd), and considering even special orthogonal groups when $\mfk d\ne 0$ (thus $N$ is even). Under suitable basis, the associated quadratic form of $\Lbd^{m, \mfk d}_v$ is given by the matrix
\begin{equation*}
\begin{cases}
J_{2m+1} &\If \mfk d=0,\\
J_{2m}&\If \mfk d\in F_v^2\setm\{0\},\\
\begin{bmatrix}
&     &  & J_{m-1}\\
& 1 &            0& \\
& 0& -\mfk d &  \\
J_{m-1}&  &             &  \\
\end{bmatrix} &\If \mfk d\notin F_v^2.\\
\end{cases}
\end{equation*}

Consider the integral local spherical Hecke algebra
\begin{equation*}
\bb T^{m, \mfk d}_v\defining \bb Z[\SO^{m, \mfk d}_v(\mcl O_{F_v})\bsh \SO^{m, \mfk d}_v(F_v)/\SO^{m, \mfk d}_v(\mcl O_{F_v})]
\end{equation*}
with unit element $\uno_{\SO^{m, \mfk d}_v(\mcl O_{F_v})}$. We write $A^{m, \mfk d}_v$ for a maximal split diagonal torus of $\SO^{m, \mfk d}_v$ and $X_\bullet(A^{m, \mfk d}_v)$ its cocharacter group, then we have the Satake transform
\begin{equation}\label{Sianineifiems}
\CT^\cl: \bb Z\Bkt{\sqrt{\norml{v}}}\otimes\bb T^{m, \mfk d}_v\to \bb Z\Bkt{\sqrt{\norml{v}}}[X_\bullet(A^{m, \mfk d}_v)].
\end{equation}
It is a homomorphism of rings. 

\begin{defi}\label{aosisieeinifmews}
For a finite set $\Pla$ of finite places of $F$ containing $\Pla^\bad$, we define the \emph{abstract orthogonal Hecke algebra} away from $\Pla$ to be the restricted tensor product
\begin{equation*}
\bb T^{m, \mfk d, \Pla}\defining\btimes_{v\in \fPla_F\setm\Pla}\nolimits'\bb T^{m, \mfk d}_v
\end{equation*}
\wrt unit elements. It is a ring.
\end{defi}

\begin{defi}\label{ieifiemifesqpo}
Suppose $v$ is a finite place of $F$ not in $\Pla^\bad$, and $\bm\alpha$ is an abstract orthogonal Satake parameter in $L$ at $v$. We construct a Hecke character of $\bb T^{m, \mfk d}_v$. There are two cases:
\begin{itemize}
\item
If $\mfk d$ is a square in $F_v$, then a set of representatives of $X_\bullet(A^{m, \mfk d}_v)$ can be taken as
\begin{equation*}
\{(t_1, \ldots, t_N)|t_1, \ldots, t_N\in\bb Z \text{ satisfying }t_i+t_{N+1-i}=0\text{ for all }i\in[N]_+\}.
\end{equation*}
Because $\Pi_v$ has trivial central character, we can choose an ordering of $\bm\alpha$ as $(\alpha_1, \ldots, \alpha_{2m})$ satisfying $\alpha_{N+1-i}=\alpha_i^{-1}$ for each $i\in[N]_+$. Then we have a unique ring homomorphism
\begin{equation*}
\bb Z\Bkt{\sqrt{\norml{v}}}[X_\bullet(A^{m, \mfk d}_v)]\to \bb C
\end{equation*}
sending $(t_1, \ldots, t_N)$ to $\prod_{i\in[m]_+}\alpha_i^{t_i}$. Composing with the Satake transform \eqref{Sianineifiems}, we obtain a ring homomorphism
\begin{equation*}
\phi_{\Pi_v}: \bb T^{m, \mfk d}_v\to \bb C.
\end{equation*}
It is independent of the ordering of $\bm\alpha$. 
\item
If $\mfk d$ is not a square in $F_v$, then $N=2m$, and a set of representatives of $X_\bullet(A^{m, \mfk d}_v)$ can be taken as
\begin{equation*}
\{(t_1, \ldots, t_{2m})|t_1, \ldots, t_N\in\bb Z \text{ satisfying }t_i+t_{2m+1-i}=0\text{ for all }i\in[2m]_+\text{ And }t_m=t_{m+1}=0\}. 
\end{equation*}
Because $\Pi_v$ has central character $\chi_{\mfk d, v}$, we can choose an ordering of $\bm\alpha$ as $(\alpha_1, \ldots, \alpha_{2m})$ satisfying $\alpha_m=1, \alpha_{m+1}=-1$,
and $\alpha_{2m+1-i}=\alpha_i^{-1}$ for each $i\in[m-1]_+$. Then we have a unique ring homomorphism
\begin{equation*}
\bb Z\Bkt{\sqrt{\norml{v}}}[X_\bullet(A^{m, \mfk d}_v)]\to \bb C.
\end{equation*}
sending $(t_1, \ldots, t_{2m})$ to $\prod_{i\in[m-1]_+}\alpha_i^{t_i}$. Composing with the Satake transform \eqref{Sianineifiems}, we obtain a ring homomorphism
\begin{equation*}
\phi_{\Pi_v}: \bb T^{m, \mfk d}_v\to \bb C.
\end{equation*}
It is independent of the ordering of $\bm\alpha$. 
\end{itemize}
\end{defi}

\subsection{Automorphic representations}

In this subsection, we recall some facts concerning automorphic representations.

\begin{defi}
We denote by $\bb Z_\le^{2m}$ the subset of $\bb Z^{2m}$ consisting of nondecreasing sequences. For a finite set $T$ and an element $\xi=(\xi_\tau)_{\tau\in T}\in(\bb Z_\le^{2m})^T$, set
\begin{equation*}
a_\xi\defining \min_{\tau\in T}\{\xi_{\tau, 1}\}, \quad b_\xi\defining\max_{\tau\in T}\{\xi_{\tau, 2m}\}+N-2
\end{equation*}
\end{defi}

\begin{defi}\label{psoisieneifemiws}
Suppose $\Pi$ is an automorphic representation  of $\GL_{2m}(\Ade_F)$. We say $\Pi$ is $\mfk d$-\tbf{REASDC} (that is, regular essentially algebraic self-dual cuspidal) 
if
\begin{enumerate}
\item
$\Pi$ is a cuspidal automorphic representation;
\item
$\Pi\cong \Pi^\vee$;
\item
the central character of $\Pi$ is equal to $\chi_{\mfk d}$; and
\item
for every Archimedean place $\tau$ of $F$, $\Pi_\tau$ has infinitesimal weight $\lbd_\tau=(\lbd_{\tau, 1}, \ldots, \lbd_{\tau, 2m})\in \bb Z^{2m}+\paren{\paren{\frac{N-1}{2}}^{(2m)}}$,\footnote{This notation means an $2m$-tuple with all entries being $\frac{N-1}{2}$.} satisfying that $\lbd_{\tau, 1}<\lbd_{\tau, 2}<\ldots< \lbd_{\tau, m}< 0<\lbd_{\tau, m+1}<\lbd_{\tau, m+2}<\ldots<\lbd_{\tau, 2m}$.
\end{enumerate}

If $\Pi$ is $\mfk d$-REASDC, then there is a unique element $\xi_\Pi=(\xi_{\tau, 1}, \ldots, \xi_{\tau, 2m})_\tau\in (\bb Z_\le^{2m})^{\Pla^\infty}$, which we call the \emph{Archimedean weights} of $\Pi$, satisfying $\xi_{\tau, i}=-\xi_{\tau, 2m+1-i}$ for every $\tau$ and $i$, \sut
\begin{equation*}
\lbd_\tau=\xi_\tau+\paren{1-\frac{N}{2}, 2-\frac{N}{2}, \ldots, -\delta_{\mfk d, 0}-1, -\delta_{\mfk d, 0}, \delta_{\mfk d, 0}, \delta_{\mfk d, 0}+1, \ldots \frac{N}{2}-1}.
\end{equation*}
\end{defi}

Let $v$ be a finite place of $F$. For every irreducible admissible representation $\Pi$ of $\GL_{2m}(F_v)$, every rational prime $\ell$, and every isomorphism $\iota_\ell: \bb C\xr\sim\ovl{\bb Q_\ell}$, we denote by $\rec_{2m}(\iota_\ell\Pi)$ the (Frobenius-semisimple) Weil--Deligne representation associated with $\iota_\ell\Pi$ via the local Langlands correspondence; see~\cites{Hen00, H-T01}. For each representation $\rho$ of $\Gal_{F_v}$ with coefficients in $\ovl{\bb Q_\ell}$ that is continuous (resp. de Rham) if $v\in\Pla(\ell)$ (resp. $v\notin\Pla(\ell)$), let $\WD(\rho)$ be the corresponding Weil--Deligne representation of $W_{F_v}$ (see, for example, \cite[\S1]{T-Y07}).

\begin{prop}\label{associateGlaosireifnies}
Let $\Pi$ be $\mfk d$-REASDC representation of $\GL_{2m}(\Ade_F)$ with Archimedean weights $\xi_\Pi$. Let $G$ denote the split smooth group scheme $\GSp_{2m}$ (resp. $\GO_{2m}$) over $\bb Q$ if $\mfk d=0$ (resp. $\mfk d\ne 0$). We denote by $\Std$ the standard representation of $G$ over $\mcl O$ of dimension $2m$, and by $\nu: G\to \GL_1$ the similitude character.
\begin{enumerate}
\item
$\Pi_v$ is tempered for each finite place of $F$.
\item
For any rational prime $\ell$ and any isomorphism $\iota_\ell: \bb C\xr\sim\ovl{\bb Q_\ell}$, there is a continuous homomorphism
\begin{equation*}
\rho_{\Pi, \iota_\ell}: \Gal_F\to G(\ovl{\bb Q_\ell})
\end{equation*}
unique up to conjugation, \sut
\begin{equation}\label{iseihienfies}
\WD\paren{\Std\circ\rho_{\Pi, \iota_\ell}|_{\Gal_{F_v}}}^{\bx F\dash\sems}=\iota_\ell\rec_{2m}\paren{\Pi_v\otimes\largel{\det}^{\frac{2-N}{2}}}
\end{equation}
for every finite place $v$ of $F$. Moreover, we have
\begin{equation*}
\nu\circ\rho_{\Pi, \iota_\ell}=\ve_\ell^{2-N}, \quad \det(\Std\circ\rho_{\Pi, \iota_\ell})=\chi_{\mfk d}\cdot\ve_\ell^{m(2-N)}.
\end{equation*}
\item
For every finite place $v$ of $F$ lying above $\ell$, $\Std\circ\rho_{\Pi, \iota_\ell}|_{\Gal_{F_v}}$ is de Rham (and furthermore crystalline if $\Pi_v$ is unramified) with regular Hodge--Tate weights contained in the range $[a_\xi, b_\xi]$.
\end{enumerate}
\end{prop}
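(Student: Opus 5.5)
This proposition assembles known results, so the plan is to reduce each part to the literature on regular algebraic essentially self-dual cuspidal representations and then refine the $\GL_{2m}$-valued Galois representation to one valued in $G$.

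\emph{Reduction to known results.} Since $\Pi\cong\Pi^\vee$ with central character $\chi_{\mfk d}$ and regular integral infinitesimal weight, $\Pi\otimes\largel{\det}^{\frac{2-N}{2}}$ is regular algebraic and essentially self-dual. For $N$ odd, i.e.\ $\mfk d=0$, the shift $\frac{2-N}{2}$ is a half-integer, and the weights in Definition~\ref{psoisieneifemiws} are arranged so that the twist is algebraic.

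\emph{Part (1) and the existence in part (2).} Temperedness at every finite place (1) is Caraiani's theorem (Ramanujan for regular algebraic essentially (conjugate) self-dual cuspidal representations), applied after base change to a CM quadratic extension $F'/F$. The base change stays cuspidal because $\Pi$ is regular, hence not induced from $F'$ when $F'$ is chosen suitably. The existence of $\rho:\Gal_F\to\GL_{2m}(\ovl{\bb Q_\ell})$ satisfying \eqref{iseihienfies} up to Frobenius semisimplification at all $v$ is due to Harris--Taylor, Shin, Chenevier--Harris and Caraiani. It is deduced from the CM case by patching over varying quadratic CM extensions and using Chebotarev. Full local-global compatibility including the monodromy operator at $v\nmid\ell$ uses Caraiani's results together with Taylor--Yoshida purity. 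Uniqueness up to $\GL_{2m}$-conjugacy follows from Chebotarev and Brauer--Nesbitt once $\rho$ is known to be semisimple; in fact $\rho$ is irreducible for regular $\Pi$ by Barnet-Lamb--Gee--Geraghty--Taylor. Part (3), de Rham/crystalline with Hodge--Tate weights read from $\xi$, is likewise part of the package (Barnet-Lamb--Gee--Geraghty--Taylor, Caraiani). The bounds $[a_\xi,b_\xi]$ follow from the normalization in Definition~\ref{psoisieneifemiws}.

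\emph{Refining to $G$-valued.} Since $\rho\cong\rho^\vee\otimes\ve_\ell^{2-N}$ and $\rho$ is irreducible (absolutely irreducible is enough), Schur's lemma gives a nondegenerate pairing on $\Std$, unique up to scalar and either symmetric or alternating, preserved up to the multiplier $\ve_\ell^{2-N}$. Bellaïche--Chenevier's sign theorem says the sign is $+1$ (orthogonal) exactly when the multiplier weight is even, and otherwise symplectic. Here $N=2m+1$ gives multiplier $\ve_\ell^{1-2m}$ and a symplectic form, while $N=2m$ gives an even weight and an orthogonal form. This matches $G=\GSp_{2m}$ for $\mfk d=0$ and $G=\GO_{2m}$ for $\mfk d\ne 0$, and yields $\nu\circ\rho=\ve_\ell^{2-N}$. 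The determinant formula follows by comparing central characters via local-global compatibility at unramified places and Chebotarev: the central character of $\Pi$ is $\chi_{\mfk d}$, and the twist contributes $\ve_\ell^{m(2-N)}$.

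\emph{Uniqueness and the main obstacle.} Uniqueness up to $G$-conjugacy holds because two such forms on an irreducible representation differ by a scalar, and $\GL_{2m}$-conjugacy between $G$-valued representations preserving forms can be adjusted into $G$. In the $\GO$ case this means noting that $G$-conjugacy and $\GL$-conjugacy agree for $\GO$ up to the similitude, and absorbing the scalar. The main obstacle is this sign determination when $\rho$ might be reducible. Irreducibility for regular $\Pi$ is only known in general for $\ell$ outside a density-zero set, or by potential automorphy arguments. I would first try to handle it by decomposing $\rho$ into irreducibles. Each irreducible constituent is then self-dual with a sign still forced by Bellaïche--Chenevier, since the Hodge--Tate weights are regular and pairwise distinct. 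Combining the forms on the constituents gives a form of the correct type.
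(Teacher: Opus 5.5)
Your argument is correct in substance, but it reaches the $G$-structure by a different route from the paper.

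\textbf{The paper's route.} The paper never builds the pairing by hand. It uses Arthur's multiplicity formula \cite{Art13} to descend $\Pi$ to an automorphic representation $\pi$ of the quasi-split group $\SO_N^{\mfk d}$. It then takes the $\GSp_{2m}$- or $\GO_{2m}$-valued representation attached to $\pi$ by \cite[Theorem~1.2.2]{Shi24}, which already carries the similitude and determinant identities. Chebotarev identifies $\Std\circ\rho_{\Pi,\iota_\ell}$ with the $\GL_{2m}$-valued representation of \cite{LGGT}. Compatibility at every finite place then comes from \cite{Car12} and \cite{Car14}, and part (3) from \cite[Theorem~4.2]{C-H13}.

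\textbf{Your route and the trade-off.} You stay entirely on the $\GL_{2m}$ side and supply the form via Schur's lemma and the Bella\"{\i}che--Chenevier sign theorem. In the paper, the orthogonal/symplectic type is encoded in which classical group $\Pi$ descends to, and Shin's theorem outputs a $G$-valued representation directly. The cost is reliance on Arthur's classification and Shin's transfer. Your route avoids the classical group altogether, but imports the sign theorem, which is itself a deep input.

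\textbf{Corrections.} None of these breaks the argument.
\begin{enumerate}
\item Drop the claim that $\rho$ is irreducible for regular $\Pi$ by \cite{LGGT}. Irreducibility is only known for a density-one or positive-density set of primes, and the statement is for every $\ell$, so your reducible-case argument is genuinely needed.
\item In the reducible case, it is false that every irreducible constituent is self-dual. The involution $\sigma\mapsto\sigma^\vee\otimes\ve_\ell^{2-N}$ on the constituents (pairwise non-isomorphic, by regularity of the Hodge--Tate weights) may swap two of them. The fix is standard. Bella\"{\i}che--Chenevier is stated for all polarized irreducible constituents, so it fixes the type on the self-dual ones. Each swapped pair $\sigma\oplus\sigma^\vee\ve_\ell^{2-N}$ carries a hyperbolic form that can be chosen symmetric or alternating at will. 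The orthogonal sum then has the required type.
\item The same block decomposition gives uniqueness up to $G$-conjugacy when $\rho$ is reducible. Each block of the form is unique up to a scalar, which can be absorbed by the centralizer of $\rho$. Your Schur argument only covers the irreducible case.
\item Cuspidality of the base change to a CM quadratic extension $F'$ does not follow from regularity, since CM forms are regular and induced. Instead, choose $F'$ so that the quadratic character of $F'/F$ avoids the finitely many $\omega$ with $\Pi\otimes\omega\cong\Pi$.
\end{enumerate}
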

\begin{proof}
We first note that $\Pi\otimes\largel{\det}^{\frac{2-N}{2}}$ is regular algebraic. Then Part (1) follows from \cite{Car12}*{Theorem 1.2}. For part (2): Let $J_{\mfk d}\in\Mat_N$ denote the $N$ by $N$ matrix 
\begin{equation*}
\begin{cases}
J_{2m+1} & \If \mfk d=0,\\
\begin{bmatrix}
&     &  & J_{m-1}\\
& 1 &            0& \\
& 0& -\mfk d &  \\
J_{m-1}&  &             &  \\
\end{bmatrix} &\If \mfk d\ne 0,
\end{cases}
\end{equation*}
and let $\SO_N^{\mfk d}$ be the quasi-split special orthogonal group over $F$ defined by
\begin{equation*}
\SO_N^{\mfk d}=\{g\in \GL_N: g^\top J_{\mfk d}g=J_{\mfk d}\}.
\end{equation*}
Then $\SO_N^{\mfk d}$ is quasi-split at every place of $F$. It follows from Arthur's multiplicity formula \cite{Art13} that there is a automorphic representation $\pi$ of $\SO_N^{\mfk d}(\Ade_F)$ whose functorial lifting to $\GL_{2m}(\Ade_F)$ is $\Pi$ (cf. Definition~\ref{funcroliaureofnils}). By \cite[Theorem~1.2.2]{Shi24}, there is a continuous homomorphism
\begin{equation*}
\rho_{\Pi, \iota_\ell}: \Gal_F\to G(\ovl{\bb Q_\ell})
\end{equation*}
associated to $\pi$ satisfying Equation~\eqref{iseihienfies} at every places of $F$ where $\pi$ is unramified. It follows from the Chebotarev density theorem that $\Std\circ\rho_{\Pi, \iota_\ell}$ is just the Galois representation associated to $\Pi$ in the sense of \cite[Theorem~2.1.1]{LGGT}. 
Thus Equation~\eqref{iseihienfies} holds for each place of $F$, by \cite{Car12}*{Theorem 1.1} and \cite{Car14}*{Theorem 1.1}. The last two properties also follow from \cite{Shi24} and the Chebotarev density theorem. 
Part (3) is proved in \cite[Theorem~4.2]{C-H13}.
\end{proof}

It is expected that these associated Galois representations form a compatible system defined over a number field (as defined in \cite{LGGT}*{\S 5.1}). More explicitly, we define the notion of strong coefficient fields:



\begin{defi}\label{isioswooiemis}
For each finite place $v$ of $F$ not in $\Pla^\Pi$, let $\bm\alpha(\Pi_v)$ denote the Satake parameter of $\Pi_v$, which is an abstract Satake parameter in $\bb C$ of dimension $2m$ (see Definition~\ref{slabossleiisTakeows}), and let $\bb Q(\Pi_v)$ denote the subfield of $\bb C$ generated by the coefficients of the polynomial
\begin{equation*}
\prod_{\alpha\in\bm\alpha(\Pi_v)}\bigg(T-\alpha\sqrt{\norml{w}}^N\bigg)\in\bb C[T].
\end{equation*}

We define the \tbf{coefficient field} (also called the \tbf{Hecke field}) of $\Pi$ to be the compositum of the fields $\bb Q(\Pi_v)$ for all finite places $w$ of $F$ not in $\Pla^\Pi$, denoted by $\bb Q(\Pi)$.
\end{defi}


\begin{defi}\label{strongienfeihenss}
Let $\Pi$ be a $\mfk d$-REASDC representation of $\GL_{2m}(\Ade_F)$ (see Definition~\ref{psoisieneifemiws}) then a number field $E\subset \bb C$ is called a \tbf{strong coefficient field} for $\Pi$ if it contains $\bb Q(\Pi)$ (see Definition~\ref{isioswooiemis}), and for each finite place $\lbd\in\fPla_E$, there exists a continuous homomorphism
\begin{equation*}
\rho_{\Pi, \lbd}: \Gal_F\to \GL_{2m}(E_\lbd),
\end{equation*}
\sut for any isomorphism $\iota_\ell: \bb C\xr\sim\ovl{\bb Q_\ell}$ inducing the place $\lbd$, $\rho_{\Pi, \lbd}\otimes_{E_\lbd, \iota_\ell}\ovl{\bb Q_\ell}$ and $\Std\circ \rho_{\Pi, \iota_\ell}$ are conjugate (see Proposition~\ref{associateGlaosireifnies}).
\end{defi}
\begin{rem}\label{oeietureps}
If $\Pi$ is $\mfk d$-REASDC representation of $\GL_{2m}(\Ade_F)$, then a strong coefficient field of $\Pi$ exists by the argument of \cite[Proposition~3.2.5]{C-H13}.
\end{rem}

\subsection{Functorial liftings}\label{ososiieucume}

We recall we recall some general facts about the local and global functorial liftings for special orthogonal groups following \cite{C-Z21}, \cite{Ish24} and \cite{Pen25a}*{\S2}. For each element $\alpha\in\bb C^\times$, we denote by $\udl\alpha: K^\times\to \bb C^\times$ the unramified character with Satake parameter $\alpha$.

Let $K$ be a finite extension of $\bb Q_p$, and let $V$ be a quadratic space over $K$. For an irreducible admissible representation $\pi$ of $\SO(V)$, we recall the local functorial liftings of $\pi$ defined by endoscopy theory \cites{Art13, C-Z21, Ish24} in certain special cases.

Fix a minimal parabolic subgroup $P_{\min}\le\SO(V)$. By Langlands' classification, there exists a unique parabolic subgroup $P\le G$ containing $P_{\min}$ with Levi quotient $M$, a unique tempered admissible irreducible representation $\tau$ of $M(K)$, and a strictly positive unramified character $\chi$ of $P(K)$ \sut $\pi$ is the unique irreducible quotient $\bx J_P^G(\tau(\chi))$ of the normalized parabolic induction $\bx I_P^G(\tau(\chi))$, called the \tbf{Langlands quotient}. Then we may write
\begin{equation*}
M=G_0\times\GL_{r_1}\times\cdots\times\GL_{r_t},
\end{equation*}
with $G_0$ the special orthogonal factor, under which
\begin{equation*}
\chi=\uno\boxtimes(\udl{\alpha_1}\circ\det\nolimits_{r_1})\boxtimes\cdots\boxtimes(\udl{\alpha_t}\circ\det\nolimits_{r_t})
\end{equation*}
for unique real numbers $1<\alpha_1<\cdots<\alpha_t$, where $\det\nolimits_{r_i}$ denotes the determinant map on $\GL_{r_i}(K)$. Suppose $\tau=\tau_0\boxtimes\tau_1\boxtimes\cdots\boxtimes\tau_t$ under the above decomposition, then $\FL(\pi)$ is isomorphic to the unique irreducible quotient of the normalized parabolic induction 
\begin{equation*}
\bx I_{P_\GL}^{\GL_{2m}}\paren{\tau_t^\vee(\udl{\alpha_t^{-1}}\circ\det\nolimits_{r_t})\boxtimes\cdots\boxtimes\tau_1^\vee(\udl{\alpha_1^{-1}}\circ\det\nolimits_{r_1})\boxtimes\FL(\tau_0)\boxtimes\tau_1(\udl{\alpha_1}\circ\det\nolimits_{r_1})\boxtimes\cdots\boxtimes\tau_t(\udl{\alpha_t}\circ\det\nolimits_{r_t})},
\end{equation*}
where $P_\GL$ is a standard parabolic subgroup of $\GL_{2m}$ whose Levi quotient is $\GL_{r_t}\times\cdots\GL_{r_1}\times\GL_{2m_0}\times \GL_{r_1}\times\cdots\times\GL_{r_t}$.

Now we consider the tempered admissible irreducible representation $\tau_0$. There exists a unique parabolic subgroup $P_0$ of $G_0$ containing $P_\bmin\cap G_0$ with Levi quotient group $M_0$, and a discrete series representation $\mu$ of $M_0(K)$ \sut $\tau_0$ is a direct summand of the normalized parabolic induction $\bx I_{P_0}^{G_0}(\mu)$. In fact, $\bx I_{P_0}^{G_0}(\mu)$ is a direct sum of finitely many tempered representations of multiplicity one. Write $\mu=\mu_0\boxtimes\mu_1\boxtimes\cdots\boxtimes\mu_s$, similar to the previous case, where $\mu_0$ is a discrete series representation of the special orthogonal factor of the Levi quotient of $P_0$. Then under similar notation, $\FL(\tau_0)$ is isomorphic to
\begin{equation*}
\bx I^{\GL_{2m_0}}\paren{\mu_t^\vee\boxtimes\cdots\boxtimes\mu_1^\vee\boxtimes\FL(\mu_0)\boxtimes\mu_1\boxtimes\cdots\boxtimes\mu_t},
\end{equation*}
which is an irreducible admissible representation of $\GL_{2m_0}(K)$. Here and henceforth, we suppress the parabolic subgroup $P_\GL$ of $\GL_{2m_0}$ when it is clear. 

Finally, we recall the notion of automorphic functorial liftings.

\begin{defi}\label{funcroliaureofnils}
Let $\mbf V$ be a quadratic space over $F$ of dimension $N$, and let $\pi$ be a discrete automorphic representation of $\SO(\mbf V)(\Ade_F)$. An \tbf{automorphic functorial lifting} of $\pi$ is an automorphic representation $\FL(\pi)$ of $\GL_{2m}(\Ade_F)$ which is a finite isobaric sum of discrete automorphic representations of smaller general linear groups \sut $\FL(\pi)_v$ is a functorial lifting of $\pi_v$ for all but finitely many unramified $v\in\fPla_F$. 

By \cite[Theorem~2.1]{C-Z24}, \cite[Theorem~3.16]{Ish24}, and the strong multiplicity one property for $\GL_N$ (see~\cite{Sha79}), $\FL(\pi)$ exists uniquely. Moreover, if $\FL(\pi)$ is a finite isobaric sum of cuspidal self-dual representations of smaller general linear groups, then $\FL(\pi)_v$ is a functorial lifting of $\pi_v$ for every place $v$ of $F$, by Arthur's multiplicity formula for generic $A$-packets; see \cite[Theorem~2.6]{C-Z24} and \cite[Theorem~3.17]{Ish24}.
\end{defi}

\subsection{Almost unramified representations}

In this subsection, we study a special kind of representations of the nonsplit $p$-adic odd special orthogonal groups.

Let $r$ be a positive integer. Let $K$ be a finite extension of $\bb Q_p$ with ring of integers $\mcl O_K$ and residue field $\kappa$. Let $V$ be a quadratic space of dimension $2r+1$ over $K$ equipped with a lattice $\Lbd\subset V$ satisfying that $\Lbd\subset\Lbd^\vee$ and $\Lbd^\vee/\Lbd\cong\kappa$. Then $\SO(V)$ is a nonsplit pure inner form of $\SO_{2r+1, K}$. Let $\mdc K$ be the stabilizer of $\Lbd$ in $\SO_{2r+1}'$, which is a special maximal subgroup of $\SO(V)$.

\begin{prop}\label{oieiticimcimdws}
Let $\pi$ be an irreducible admissible representation of $\SO(V)(K)$ with nonzero invariants by $\mdc K$. Then there exists an element $(\alpha_2, \ldots, \alpha_r)\in(\bb C^\times)^{r-1}$ satisfying $1\le\largel{\alpha_2}\le\ldots\le\largel{\alpha_r}$, unique up to permutation, \sut the local functorial lifting $\FL(\pi)$ (see \textup{\S\ref{ososiieucume}}) is isomorphic to the unique irreducible quotient of the normalized parabolic induction
\begin{equation*}
\bx I_{P_\GL}^{\GL_{2r}}\paren{\udl{\alpha_r^{-1}}\boxtimes\cdots\boxtimes\udl{\alpha_2^{-1}}\boxtimes\St_2\boxtimes\udl{\alpha_2}\boxtimes\cdots\boxtimes\udl{\alpha_r}}.
\end{equation*}
Here $P_\GL$ is a standard parabolic subgroup of $\GL_{2r}$ whose Levi quotient is $\GL_1^{r-1}\times\GL_2\times \GL_1^{r-1}$, and $\St_2$ is the Steinberg representation of $\GL_2(K)$.
\end{prop}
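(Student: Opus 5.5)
We would prove this by first classifying the irreducible representations of $\SO(V)(K)$ with nonzero $\mdc K$-invariants, and then computing their functorial lifts via the recipe recalled in \S\ref{ososiieucume}.

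\emph{Step 1: structure of $\SO(V)$.} Since $\SO(V)$ is the nonsplit pure inner form of $\SO_{2r+1}$, it has $K$-rank $r-1$. Decompose $V=H^{r-1}\oplus V_0$, where $H$ is a hyperbolic plane and $V_0$ is anisotropic of dimension $3$. The minimal Levi is then $M_{\min}=\GL_1^{r-1}\times\SO(V_0)$, and $\SO(V_0)\cong \bx{PD}^\times$ for the quaternion division algebra $D$ over $K$. We choose the lattice compatibly, $\Lbd=\mcl O_K^{2(r-1)}\oplus\Lbd_0$, with $\Lbd_0\subset V_0$ the unique maximal lattice with $\Lbd_0^\vee/\Lbd_0\cong\kappa$. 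Then $\mdc K$ is a special maximal compact subgroup containing the maximal compact of $M_{\min}(K)$. In particular $\mdc K\cap\SO(V_0)=\SO(V_0)(K)$ is the whole compact group $\bx{PD}^\times$.

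\emph{Step 2: classification via Bernstein/Borel theory.} Because $\mdc K$ is special, the Iwasawa decomposition $\SO(V)(K)=P_{\min}(K)\mdc K$ holds, and the Hecke algebra $\bb C[\mdc K\bsh\SO(V)(K)/\mdc K]$ is commutative via the Satake isomorphism for the relative root datum. The standard argument (Casselman/Borel for special maximal compacts) then gives the following: an irreducible $\pi$ with $\pi^{\mdc K}\ne0$ is a subquotient of $\bx I_{P_{\min}}^{G}(\udl{\alpha_2}\boxtimes\cdots\boxtimes\udl{\alpha_r}\boxtimes\uno_{\SO(V_0)})$ for some $(\alpha_i)$, unique up to the relative Weyl group $(\bb Z/2)^{r-1}\rtimes\Sym_{r-1}$. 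Moreover $\pi$ is the unique $\mdc K$-spherical constituent. Using the Weyl group action we normalize so that $1\le\largel{\alpha_2}\le\cdots\le\largel{\alpha_r}$; this gives uniqueness up to permutation.

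\emph{Step 3: computing $\FL(\pi)$.} The tempered piece on the anisotropic factor is the trivial representation of $\SO(V_0)\cong\bx{PD}^\times$. Its local Jacquet--Langlands (equivalently Arthur/Ishimoto) lift to $\GL_2$ is $\St_2$; this holds because the pure inner form $\SO(V_0)$ corresponds to the nonsplit member of the $\Sp_2$-Vogan packet of $\St_2$.

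We would then apply the Langlands classification together with the compatibility of $\FL$ with parabolic induction from \S\ref{ososiieucume}, splitting the $\alpha_i$ by absolute value into those equal to $1$ (tempered part) and those with $\largel{\alpha_i}>1$ (Langlands data). This shows that $\FL(\pi)$ is the unique irreducible quotient of the displayed induction. One point needs care: we must check that the $\mdc K$-spherical constituent matches the Langlands quotient on the $\GL$ side. For this we use that, among the constituents of the induced representation, the $\mdc K$-spherical one is the Langlands quotient when the $\alpha_i$ are ordered as above. We would prove this via the Gindikin--Karpelevich/Casselman computation of the $\mdc K$-fixed vector's image under the long intertwining operator, which is nonvanishing in the positive chamber.

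\emph{Main obstacle.} We expect the hardest step to be this last identification: showing the spherical constituent is exactly the Langlands quotient (including the tempered, reducible cases where $\largel{\alpha_i}=1$), and that the lift of the trivial representation of the anisotropic $\SO_3$ is $\St_2$ in the normalization of \cite{Ish24}. For the first point we would try Casselman's criterion. For the second we would invoke the explicit description of pure inner forms in \cite{Art13, Ish24}.
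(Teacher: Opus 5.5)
Your proposal follows the paper's proof. In both, $\pi$ is identified as the $\mdc K$-spherical constituent of the unramified principal series induced from $\uno_3\boxtimes\udl{\alpha_2}\boxtimes\cdots\boxtimes\udl{\alpha_r}$, Casselman's formula together with the Gindikin--Karpelevich $c$-function shows that the spherical vector survives in the Langlands quotient, and $\FL(\pi)$ is then read off from $\FL(\uno_{\SO(V_3)})=\St_2$ and compatibility with the Langlands classification.

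The one refinement concerns the step you flagged, the tempered case $\largel{\alpha_i}=1$. The paper takes $\tau_0$ to be the $\mdc K$-spherical summand of the tempered induction on $\SO(V_{2r_0+1})$, so that the spherical vector already lies in $\bx I_P(\tau_0\boxtimes\cdots)$. It then uses Konno's description of the Langlands quotient as the image of the intertwining operator for $P$, not for the full long Weyl element. As a result the $c$-function only involves roots in the unipotent radical of $P$, where the parameter is strictly positive, so nonvanishing is immediate.
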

\begin{proof}
We fix a decomposition
\begin{equation*}
\Lbd=\mcl O_Ke_{-r}\oplus\cdots\oplus\mcl O_Ke_{-2}\oplus\Lbd_3\otimes\mcl O_Ke_2\oplus\cdots\oplus \mcl O_Ke_r,
\end{equation*}
in which $(e_i, e_j)=\delta_{i+j, 0}$ for $2\le\largel{i}, \largel{j}\le r$ and each $e_i$ is orthogonal to $\Lbd_3$. For each $1\le i\le r$, set
\begin{equation*}
V_{2i+1}\defining Ke_{-i}\oplus\cdots\oplus Ke_{-2}\oplus\Lbd_3\otimes_{\mcl O_K}K\otimes Ke_2\oplus\cdots\oplus Ke_i,
\end{equation*}
which is a quadratic subspace of $V$. Let $P_\bmin$ be the minimal parabolic subgroup of $\SO(V)$ defined by the isotropic flag
\begin{equation*}
Ke_{-r}\subset Ke_{-r}\oplus Ke_{-r+1}\subset\cdots\subset Ke_{-r}\oplus\cdots\oplus Ke_{-2},
\end{equation*}
with the Levi quotient
\begin{equation*}
\SO(V_3)\times\GL(Ke_2)\times\cdots\times\GL(Ke_r).
\end{equation*}

We now realize the relative Weyl group $W\cong \{\pm1\}^{r-1}\rtimes\Sym_{r-1}$ 
explicitly as a subgroup of $\mdc K$:
\begin{itemize}
\item
We realize $\Sym_{r-1}$ as the stabilizer of the element $1\in\{1,\ldots, r\}$ in $\Sym_r$;
\item
for each $2\le i\le r$, we let the $(i-1)$-th $``-1''$ in $W$ correspond to the element switching $e_i$ and $e_{-i}$ and fixing other $e_j$ for $j\ne i$; and
\item
we let each $\sigma\in \Sym_{r-1}$ correspond to the element sending $e_{\pm i}$ to $e_{\pm\sigma(i)}$ for every $2\le i\le r$, denoted by $w_\sigma$.
\end{itemize}
For each element $\bm\alpha=(\alpha_2, \ldots, \alpha_r)\in (\bb C^\times)^{r-1}$, let $\phi_{\bm\alpha}$ denote the function in $\bx I_{P_\bmin}^{\SO(\mbf V)}(\uno_3\boxtimes\udl{\alpha_2}\boxtimes\cdots\boxtimes\udl{\alpha_r})$ that takes value $1$ on $\mdc K$, where $\uno_3$ denotes the trivial representation of $\SO(V_3)(K)$.

Because $\pi$ has nonzero invariants by $\mdc K$, it is a constituent of an unramified principal series. In other words, there exists an element $\bm\alpha=(\alpha_2, \ldots,\alpha_r)\in(\bb C^\times)^{r-1}$ satisfying $1\le\largel{\alpha_2}\le\cdots\le\largel{\alpha_r}$, unique up to permutation, \sut $\pi$ is a constituent of the normalized parabolic induction
\begin{equation*}
\bx I_{P_\bmin}^{\SO(V)}(\uno_3\boxtimes\udl{\alpha_2}\boxtimes\cdots\boxtimes\udl{\alpha_r}).
\end{equation*}
Note that there exists a unique nonnegative integer $r_0$ and a unique sequence of positive integers $r_1, \ldots, r_t$ satisfying $r_0+\cdots+r_t=r$, \sut $\largel{\alpha_i}=1$ for $2\le i\le r_0$, 
and
\begin{equation*}
1<\largel{\alpha_{r_0+1}}=\cdots=\largel{\alpha_{r_0+r_1}}<\largel{\alpha_{r_0+r_1+1}}=\cdots<\largel{\alpha_{r_0+\cdots+r_{t-1}+1}}=\cdots=\largel{\alpha_t}.
\end{equation*}
For each $1\le i\le t$, let $\tau_i$ be the irreducible tempered unramified representation of $\GL_{r_i}(K)$ with Satake parameter
\begin{equation*}
\Brace{\frac{\alpha_{r_0+\cdots+r_{i-1}+1}}{\largel{\alpha_{r_0+\cdots+r_i}}}, \ldots, \frac{\alpha_{r_0+\cdots+r_i}}{\largel{\alpha_{r_0+\cdots+r_i}}}}.
\end{equation*}
Let $P_{0,\bmin}\defining \SO(V_{2r_0+1})\cap P_\bmin$ be a minimal parabolic subgroup of $\SO(V_{2r_0+1})$. As $\uno_3\boxtimes\udl{\alpha_2}\boxtimes\cdots\boxtimes\udl{\alpha_{r_0}}$ is a discrete series representation of $P_{0, \bmin}(K)$, the normalized parabolic induction
\begin{equation*}
\bx I_{P_{0, \bmin}}^{\SO(V_{2r_0+1})}(\uno_3\boxtimes\udl{\alpha_2}\boxtimes\cdots\boxtimes\udl{\alpha_{r_0}})
\end{equation*}
is a finite direct sum of irreducible tempered representations of $\SO(V_{2r_0+1})(K)$. Let $\tau_0$ be the unique direct summand with nonzero invariants under $\mdc K\cap\SO(V_{2r_0+1})(K)$. In particular, we obtain a Langlands quotient
\begin{equation*}
\bx J\defining\bx J_P^{\SO(V)}\paren{\tau_0\boxtimes\paren{\boxtimes_{i=1}^t\tau_i\paren{\udl{\largel{\alpha_{r_0+\cdots+r_i}}}\circ\det\nolimits_{r_i}}}},
\end{equation*}
where $P$ is a parabolic subgroup of $\SO(V)$ containing $P_0$ whose Levi quotient is isomorphic to $\SO(V_{2r_0+1})\times \GL_{r_1}\times\cdots\times\GL_{r_t}$.

We claim that $\bx J$ has nonzero invariants under $\mdc K$. Assuming this claim, then $\FL(\pi)$ is isomorphic to the unique irreducible quotient of
\begin{equation*}
\bx I_{P_\GL}^{\GL_{2r}}\paren{\paren{\boxtimes_{i=1}^t\tau_i^\vee\paren{\udl{\largel{\alpha^{-1}_{r_0+\cdots+r_i}}}\circ\det\nolimits_{r_i}}}\boxtimes\FL(\tau_0)\boxtimes\paren{\boxtimes_{i=1}^t\tau_i\paren{\udl{\largel{\alpha_{r_0+\cdots+r_i}}}\circ\det\nolimits_{r_i}}}}.
\end{equation*}
However, $\FL(\tau_0)$ is isomorphic to the normalized induction
\begin{equation*}
\bx I_{P_\GL\cap\GL_{2r_0}}^{\GL_{2r_0}}\paren{\udl{\alpha_{r_0}^{-1}}\boxtimes\cdots\boxtimes\udl{\alpha_{2}^{-1}}\boxtimes\St_2\boxtimes\udl{\alpha_2}\boxtimes\cdots\boxtimes\udl{\alpha_{r_0}}},
\end{equation*}
which is irreducible. So the assertion follows.

Now we prove the claim. Note that there is a canonical inclusion of $G(K)$-representations
\begin{equation*}
\bx I_P^{\SO(V)}\paren{\tau_0\boxtimes\paren{\boxtimes_{i=1}^t\tau_i\paren{\udl{\largel{\alpha_{r_0+\cdots+r_i}}}\circ\det\nolimits_{r_i}}}}\subset \bx I_{P_\bmin}^{\SO(\mbf V)}(\uno_3\boxtimes\udl{\alpha_2}\boxtimes\cdots\boxtimes\udl{\alpha_r}),
\end{equation*}
under which $\phi_{\bm\alpha}$ belongs to the former space by our choice of $\tau_0$. By \cite[Corollary~3.2]{Kon03}, the claim is equivalent to
\begin{equation*}
T_w\phi_{\bm\alpha}\ne 0,
\end{equation*}
where $w\in W$ is the element acting trivially on $V_{2r_0}$ and switching $e_{-(r_0+\cdots+r_{i-1}+j)}$ with $e_{r_0+\cdots+r_i+1-j}$ for every $1\le i\le t$ and every $1\le j\le r_i$; and $T_w$ is the intertwining operator \sut 
\begin{equation*}
T_w\phi_{\bm\alpha}(g)=\int_{N(K)}\phi_{\bm\alpha}(w^{-1}ng)\bx dn, \quad g\in \SO(V)(K).
\end{equation*}
Here $N$ is the nilpotent radical of $P$ and the integral is absolutely convergent by \cite[Proposition~\Rmnum{4}.2.1]{Wal03}.

By \cite[Theorem~3.1]{Cas80}, we have $T_w\phi_{\bm\alpha}=c(\bm\alpha)\phi_{w\bm\alpha}$, where $c(\bm\alpha)$ is the Harish-Chandra $c$-function. Note that the regularity condition in \cite[\S3]{Cas80} can be dropped by continuity of intertwining operators. Under suitable normalization of Haar measure, $c(\bm\alpha)$ is given by the Gindikin--Karpelevich formula:
\begin{equation*}
c(\bm\alpha)=\prod_{i=r_0+1}^r\bigg(\frac{\alpha_i-\#\kappa^{-1}}{\alpha_i-1}\prod_{\substack{1\le j\le i-1\\ \largel{\alpha_j}<\largel{\alpha_i}}}\frac{\alpha_i-\#\kappa^{-1}\alpha_j}{\alpha_i-\alpha_j}\prod_{j=1}^{i-1}\frac{\alpha_i\alpha_j-\#\kappa^{-1}}{\alpha_i\alpha_j-1}\bigg).
\end{equation*}
In particular, $c(\bm\alpha)$ is nonzero, and the claim follows.
\end{proof}
\begin{cor}\label{pssienifienfiwwss}
Suppose $\pi$ is a tempered irreducible admissible representation of $\SO(V)(K)$ 
with nonzero invariants under a special maximal compact open subgroup of $G(K)$, then the monodromy operator of $\WD(\FL(\pi))$ is conjugate to $\begin{bmatrix}1 & 1\\ 0 & 1\end{bmatrix}\oplus \uno_{2m-2}$.
\end{cor}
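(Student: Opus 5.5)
The plan is to reduce the corollary to Proposition~\ref{oieiticimcimdws} and then read off the monodromy from the explicit form of $\FL(\pi)$. Here $V$ is the nonsplit quadratic space of dimension $2r+1$ fixed in this subsection, so $2m=2r$, and the special maximal compact subgroup is the stabilizer $\mdc K$ of $\Lbd$.

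First I would note that, up to conjugation by $\SO(V)(K)$, a special maximal compact open subgroup of the nonsplit form is conjugate to $\mdc K$. The nonsplit odd orthogonal group has relative rank $r-1$, and its building has a unique conjugacy class of special vertices, realised by the lattice $\Lbd$ with $\Lbd^\vee/\Lbd\cong\kappa$. If this uniqueness fails (two special vertex types), the other type should be handled by the same argument with the roles of $\Lbd$ and $\Lbd^\vee$ swapped. Granting this, Proposition~\ref{oieiticimcimdws} applies. So $\FL(\pi)$ is the unique irreducible quotient of
\begin{equation*}
\bx I_{P_\GL}^{\GL_{2r}}\paren{\udl{\alpha_r^{-1}}\boxtimes\cdots\boxtimes\udl{\alpha_2^{-1}}\boxtimes\St_2\boxtimes\udl{\alpha_2}\boxtimes\cdots\boxtimes\udl{\alpha_r}}
\end{equation*}
for some $\alpha_2,\ldots,\alpha_r$ with $1\le\largel{\alpha_2}\le\cdots\le\largel{\alpha_r}$.

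Next I would use temperedness to force all $\largel{\alpha_i}=1$. Since $\pi$ is tempered, its Langlands data has no $\GL$-factors with $\largel{\alpha_i}>1$. In the notation of the proof of Proposition~\ref{oieiticimcimdws}, this means $t=0$ and $r_0=r$, so $\FL(\pi)=\FL(\tau_0)$. That representation is the full induced representation, which the proof shows is irreducible. It is then tempered, and by the compatibility of local Langlands for $\GL_{2r}$ with parabolic induction (Bernstein--Zelevinsky and Zelevinsky classification),
\begin{equation*}
\rec_{2r}(\FL(\pi))\cong \rec_2(\St_2)\oplus\bigoplus_{i=2}^r\paren{\rec_1(\udl{\alpha_i})\oplus\rec_1(\udl{\alpha_i^{-1}})}.
\end{equation*}

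Finally I would compute the monodromy operator summand by summand. The characters $\udl{\alpha_i^{\pm1}}$ correspond to one-dimensional unramified Weil--Deligne representations with $N=0$. The Steinberg $\St_2$ corresponds to the special representation $\mathrm{Sp}(2)$, whose monodromy is a single nilpotent Jordan block of size $2$. Therefore the monodromy of $\WD(\FL(\pi))$ is conjugate to the direct sum of one rank-one nilpotent Jordan block of size $2$ with the zero operator on the remaining $2r-2=2m-2$ dimensions. This is what the displayed matrix in the statement means: a unipotent $2\times2$ block plus the identity, where $N$ is identified with $\exp N$. The main obstacle is the first step, identifying an arbitrary special maximal compact with $\mdc K$ up to conjugacy; the remaining steps are direct consequences of the proposition and standard local Langlands for $\GL_n$.
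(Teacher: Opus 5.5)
Your second and third steps are the paper's proof. The paper also applies Proposition~\ref{oieiticimcimdws}, uses temperedness to make the unramified parameters unitary, so that $\FL(\pi)$ is the irreducible induction containing a single $\St_2$, and reads the monodromy off that factor. Your justification via uniqueness of Langlands data ($t=0$, $r_0=r$) is a clean way to do the middle step.

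The problem is your first step. The claim that the nonsplit group has a unique conjugacy class of special vertices is false. Vertex lattices $\Lambda\subset\Lambda^\vee\subset\varpi^{-1}\Lambda$ in $V$ have $\dim_\kappa\Lambda^\vee/\Lambda\in\{1,3,\ldots,2r-1\}$; the value $2r+1$ does not occur because $V$ is nonsplit. Both extreme types are special:
\begin{itemize}
\item For $\dim\Lambda^\vee/\Lambda=1$ (your $\Lbd$), the reductive quotient of the stabilizer is essentially $\SO^-_{2r}$.
\item For $\dim\Lambda^\vee/\Lambda=2r-1$, e.g.\ $\bigoplus_{i\ge 2}(\mcl O_K\varpi e_i\oplus\mcl O_Ke_{-i})\oplus\Lbd_3$, it is essentially $\SO_{2r-1}\times\SO^-_2$.
\end{itemize}
Both have relative Weyl group of type $B_{r-1}$. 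Since $\dim\Lambda^\vee/\Lambda$ is $\SO(V)(K)$-invariant, the two are not conjugate. Your fallback of swapping $\Lbd$ and $\Lbd^\vee$ produces nothing new, because $\mathrm{Stab}(\Lbd)=\mathrm{Stab}(\Lbd^\vee)$.

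More seriously, for the second class the conclusion is false, so no argument can cover it. Take $r=2$ and $q=\#\kappa$. The building is a tree with valencies $q^2+1$ at the $\mdc K$-vertex and $q+1$ at the other special vertex, so the Iwahori--Hecke algebra has parameter $q^2$ at the first and $q$ at the second. Consider the character sending the generator at the $\mdc K$-vertex to $-1$ and the generator at the other vertex to $q$, extended trivially on $\SO(V_3)(K)$.
\begin{itemize}
\item Its normalized translation eigenvalue is $-q^{-1/2}$, so it is square-integrable.
\item It is the discrete-series subrepresentation of $\eta|\cdot|^{1/2}\rtimes\uno_{\SO(V_3)}$, with $\eta$ the unramified quadratic character, and its $L$-parameter is $S_2\oplus\eta S_2$.
\item It has fixed vectors under the second special maximal compact subgroup, and its monodromy has two Jordan blocks.
\end{itemize}
The $\mdc K$-spherical constituent of that same induced representation is the non-tempered Langlands quotient. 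This is consistent with Proposition~\ref{oieiticimcimdws}.

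So the hypothesis has to be read as $\pi^{\mdc K}\neq 0$ for the specific $\mdc K$ of the proposition. That is what the paper's proof does implicitly, by simply invoking Proposition~\ref{oieiticimcimdws}. Replace your first step with this reading, and your argument becomes the paper's.

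With that reading, temperedness is not actually needed. The $L$-parameter of the Langlands quotient in the proposition is $\bigoplus_i(\udl{\alpha_i}\oplus\udl{\alpha_i^{-1}})\oplus\rec_2(\St_2)$ whether or not the $\alpha_i$ are unitary, so its monodromy is already a single size-$2$ block.
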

\begin{proof}
It follows from Proposition~\ref{oieiticimcimdws} that the cuspidal support of $\pi$ is of the form $((K^\times)^{m-1}\times\SO(V_3)(K), \chi\boxtimes\uno)$, where $V_3$ is the anisotropic quadratic space of rank $3$ over $K$ and $\chi$ is an unramified character of $(K^\times)^{m-1}$. We also know that $\chi$ is unitary because $\pi$ is tempered. In particular, $\FL(\pi)$ is the (irreducible) parabolic induction from the Levi
\begin{equation*}
\GL_1^{m-1}\times\SO(V_3)(K)\times\GL_1^{m-1}
\end{equation*}
with inducing data $\chi\boxtimes\St_2\boxtimes\chi^{-1}$, where $\St_2$ is the Steinberg representation of $\GL_2(K)$. 
The corollary follows immediately.
\end{proof}

\section{Deformation}

In this section, we work in the following setting.

\begin{setup}\label{seutpeoeifmeis}\enskip
\begin{itemize}
\item
Let $\ell$ be an odd rational prime. 
\item
Let $E$ be a subfield of $\ovl{\bb Q_\ell}$ finite over $\bb Q_\ell$. We denote by $\mcl O$ the ring of integers of $E$, by $\lbd$ the maximal ideal, and by $\kappa$ the residue field.
\item
Denote by $\mrs C_{\mcl O}$ the category of Noetherian complete local rings over $\mcl O$ with residue field $\kappa$, with morphisms local homomorphisms inducing the identity on $\kappa$. Denote by $\mrs C_{\mcl O}^f$ the full subcategory of Artinian local rings over $\mcl O$.
\item
For each object $\bx R\in \mrs C_{\mcl O}$, we denote by $\mfk m_{\bx R}$ the maximal ideal of $\bx R$.
\item
Let $G$ be the smooth group scheme $\GSp_{2m}$ (resp. $\GO_{2m}$) over $\mcl O$ if $\mfk d=0$ (resp. $\mfk d\ne 0$). Here $\GSp_{2m}$ is defined by the standard alternating pairing on $\mcl O^{\oplus(2m)}$ given by the matrix $J'_{2m}$,
and $\GO_{2m}$ is defined by the standard symmetric pairing given by the matrix $J_{2m}$.
\item
We denote by $\mfk g$ the Lie algebra of $G$, by $G^\circ$ the identity component of $G$, by $G'$ the derived subgroup of $G^\circ$, by $\mfk g'$ the Lie algebra of $G'$, by $Z_G$ the center of $G$.
\item
We denote by $\Std$ the standard representation of $G$ over $\mcl O$ of dimension $2m$, and by $\nu: G\to \GL_1$ the similitude character.
\item
For any smooth group scheme $H$ over $\mcl O$, we denote by $H^\wedge$ the formal completion of $H$ along the unit section. In particular, for each object $\bx R$ in $\mrs C_{\mcl O}$, we have $H^\wedge(\bx R)=\ker(H(\bx R)\to H(\kappa))$.
\item
Let $\Gamma$ be a profinite group satisfying the following $\ell$-finiteness property: for any open subgroup $\Gamma_0\le \Gamma$, here are only finitely many continuous homomorphisms from $\Gamma_0$ to $\bb Z/\ell\bb Z$. (Note that this is true for the absolute Galois group of a local field and for the Galois group of the maximal extension of a number field unramified outside a finite set of places).
\end{itemize}
\end{setup}

\subsection{Some algebras}

In this subsection, we carry out some algebra computations that will be used in the deformation theory.

\begin{lm}\label{psooituries}
Let $\bx R$ be an object of $\mrs C_{\mcl O}$ and let $\rho: \Gamma\to G(\bx R)$ is a homomorphism. Suppose $\Std\circ\rho\modu{\mfk m_{\bx R}}$ is absolutely irreducible, then the centralizer of $\rho$ in $(G')^\wedge(\bx R)$ is trivial.
\end{lm}
\begin{proof}
The lemma is easily reduced to the case when $\bx R$ is Artinian. We then use induction on the length of $\bx R$. The case of length one is immediate. In general we may choose an ideal $\bx I$ of $\bx R$ \sut $\bx I$ has length one. By the induction hypothesis, the centralizer of $\rho$ in $(G')^\wedge(\bx R)$ lies in $G'(1+\bx I)$.

Then it suffices to show that $\bx H^0(\Gamma, \ad^0(\rho)\otimes_{\bx R}\kappa')=0$ for any field $\kappa'$ containing $\kappa$. But this follows from Schur's lemma and that $\ell$ is odd.
\end{proof}

\begin{lm}[Carayol]\label{ososiieiemifes}
Let $\bx R'\subset \bx R$ be Noetherian complete local rings with maximal ideals $\mfk m_{\bx R'}$ and $\mfk m_{\bx R}$, respectively. Suppose that $\bx R'$ and $\bx R$ has the same residue field, and $\mfk m_{\bx R}\cap\bx R'=\mfk m_{\bx R'}$. Let $\Gamma$ be a profinite group and let $\rho: \Delta\to \GL_{2m}(\bx R)$ is a continuous homomorphism, \sut $\rho\modu{\mfk m_{\bx R}}$ is absolutely irreducible and that $\tr(\rho)(\Gamma)\subset \bx R'$. Then there exists a $\GL_{2m}^\wedge(\bx R)$-conjugate $\rho'$ of $\rho$ valued in $\GL_{2m}(\bx R')$.
\end{lm}
\begin{proof}
This is \cite[Lemma~2.1.10]{CHT08}; cf. \cite[Th\'eor\`eme~2]{Car94}.
\end{proof}

We also need to the following version of Carayol's lemma for $G$-valued Galois representations.

\begin{lm}\label{osoosieiwumfiev}
Let $\bx R'\subset \bx R$ be Noetherian complete local rings with maximal ideals $\mfk m_{\bx R'}$ and $\mfk m_{\bx R}$, respectively. Suppose that $\bx R'$ and $\bx R$ has the same residue field $\kappa$ with characteristic $\ell>2$, and $\mfk m_{\bx R}\cap\bx R'=\mfk m_{\bx R'}$. Let $\Gamma$ be a profinite group and let $\rho: \Gamma\to G(\bx R)$ be a continuous homomorphism, \sut $\Std\circ\rho\modu{\mfk m_{\bx R}}$ is absolutely irreducible and that $\tr(\Std\circ\rho)(\Gamma)\subset\bx R', \nu\circ\rho(\Gamma)\subset\bx R'$. Then $\rho$ is $(G')^\wedge(\bx R)$-conjugate to a homomorphism $\rho': \Gamma\to G(\bx R')$.
\end{lm}
\begin{proof}
It follows from Lemma~\ref{ososiieiemifes} that $\Std\circ\rho$ is $\GL_{2m}^\wedge(\bx R)$-conjugate to a representation $\rho'$ valued in $\GL_{2m}(\bx R')$. Because $\rho'$ and $\rho^{\prime, \top}\otimes(\nu\circ\rho)$ are both valued in $\GL_{2m}(\bx R')$ and they are conjugate in $\GL_{2m}(\bx R)$, they are also conjugate in $\GL_{2m}(\bx R')$ by \cite[Th\'eor\`eme~1]{Car94}. Suppose $B$ is an element of $\GL_{2m}(\bx R')$ \sut $\rho'=B\rho^{\prime, \top}\otimes(\nu\circ\rho)B^{-1}$. Then it follows from Schur's lemma that $B$ is symmetric (resp. skew-symmetric) if $N$ is even (resp. $N$ is odd), and $B$ defines a general orthogonal (resp. general symplectic) group isomorphic to $G$. Thus we see that $\rho$ is $\GL_{2m}(\bx R)$-conjugate to a representation valued in $G(\bx R')$. It is also easy to check that one may choose the conjugating matrix to be in $\GL_{2m}^\wedge(\bx R)$. Finally, it follows from Schur's lemma that the conjugating matrix is also in $G(\bx R)$. After rescaling, we may choose the conjugating matrix to be in $(G')^\wedge(\bx R)$. 
\end{proof}

\subsection{Deformation problems}\label{oeiimvimdi}

In this subsection, we recall the notion of deformation problems. We work in the following setting.

\begin{setup}\label{osisienieeimifes}
Suppose $(\Gamma, \ovl\rho, \chi)$ is a triple in which
\begin{itemize}
\item
$\Gamma$ is our fixed topological group,
\item
$\ovl\rho: \Gamma\to G(\kappa)$ is a homomorphism, and
\item
$\chi: \Gamma\to \mcl O^\times$ is a continuous homomorphism with reduction $\ovl\chi: \Gamma\to \kappa^\times$,
\end{itemize}
\sut $\nu\circ \ovl\rho=\ovl\chi$. Let $\ad(\ovl\rho)$ (respectively, $\ad^0(\ovl\rho)$) denote the representation of $\Gamma$ on $\mfk g_\kappa$ (respectively, $\mfk g'_\kappa$) via the adjoint representation.
\end{setup}

\begin{defi}\label{liftings-anadi-defintiieos}
Let $f: \bx R_1\to \bx R_0$ be a morphism in $\mrs C_{\mcl O}$ and $\rho_0: \Gamma\to G(\bx R_0)$ a continuous homomorphism. A \emph{lifting} of $\rho_0$ to $\bx R_1$ (\wrt $\chi$) is a continuous homomorphism $\rho_1: \Gamma\to G(\bx R_1)$ \sut $f_*(\rho_1)=\rho_0$ and $\nu_{\bx R_1}\circ\rho_1=\chi$. Let
\begin{equation*}
\msf D^\square_{\ovl\rho, \mcl O}: \mrs C_{\mcl O}\to \Set
\end{equation*}
be the functor sending each object $\bx R\in \mrs C_{\mcl O}$ to the set of liftings of $\ovl\rho$ to $\bx R$.

For each $\bx R\in \mrs C_{\mcl O}$, two liftings $\rho, \rho'$ of $\ovl\rho$ are called \emph{equivalent} if they are conjugate by an element of $(G')^\wedge(\bx R)$. A \emph{deformation} of $\ovl\rho$ to $\bx R$ is defined to be an equivalence class of deformations of $\ovl\rho$ to $\bx R$. Let
\begin{equation*}
\msf D_{\ovl\rho, \mcl O}: \mrs C_{\mcl O}\to \Set
\end{equation*}
be the functor sending each object $\bx R\in \mrs C_{\mcl O}$ to the set of deformations of $\ovl\rho$ to $\bx R$.

$\msf D^\square_{\ovl\rho, \mcl O}$ is representable by an element $\msf R_{\ovl\rho}^\square\in \mrs C_{\mcl O}$, called the \emph{universal lifting ring}; see \cite[Theorem~1.2.2]{Bal12}. 
\end{defi}

The tangent space of $\msf D_{\ovl\rho, \mcl O}$ is naturally identified with $\bx H^1(\Gamma, \ad^0(\ovl\rho))$. 

\begin{defi}\label{oeieriwowpq}
A \emph{local lifting condition} of $\ovl\rho$ (\wrt $\chi$) is a closed formal subscheme $\mrs D^\square$ of $\Spf\msf R_{\ovl\rho}^\square$ satisfying that $\mrs D^\square(\bx R)$ is closed under equivalence for every $\bx R\in\mrs C_{\mcl O}$.

For each local lifting condition $\mrs D^\square$, we denote by $\mrs I^\square\subset \msf R_{\ovl\rho}^\square$ the closed ideal defining $\mrs D^\square$. Note that $\mrs D^\square$ naturally induces a \emph{local deformation problem} denoted by $\mrs D$, which is a subfunctor of $\mrs D_{\ovl\rho, \mcl O}$.
\end{defi}

\begin{defi}\label{ososeiifneietkeures}
For each \emph{local lifting condition} $\mrs D^\square$ of $\ovl\rho$ (\wrt $\chi$) with the associated local deformation problem $\mrs D$, there is an isomorphism
\begin{equation*}
\Hom_\kappa\big(\mfk m_{\msf R^\square_{\ovl\rho}}/(\mfk m^2_{\msf R^\square_{\ovl\rho}},\lbd), \kappa\big)\cong \Hom_{\mrs C_{\mcl O}}(\msf R^\square_{\ovl\rho}, \kappa[\ve]/(\ve^2))\cong \bx Z^1(\Gamma, \ad^0(\ovl\rho)),
\end{equation*}
where $\bx Z^1(\Gamma, \ad^0(\ovl\rho))$ denotes the group of $1$-cocycles of $\Gamma$ with values in the adjoint representation $\ad^0(\ovl\rho)$ of $\Gamma$ on $\mfk g'_\kappa$.

We define the \emph{tangent space} of $\mrs D$, denoted by $\bx L(\mrs D)$, to be the image of the subspace
\begin{equation*}
\bx L^1(\mrs D^\square)\defining\Hom_\kappa\big(\mfk m_{\msf R^\square_{\ovl\rho}}/(\mfk m^2_{\msf R^\square_{\ovl\rho}}, \mrs I,\lbd), \kappa\big)\subset\bx Z^1(\Gamma, \ad^0(\ovl\rho))
\end{equation*}
under the natural map $\bx Z^1(\Gamma, \ad^0(\ovl\rho))\to \bx H^1(\Gamma, \ad^0(\ovl\rho))$. Here $\mrs I\subset\msf R^\square_{\ovl\rho}$ is the closed ideal defining $\mrs D^\square$.

Because $\mrs I$ is invariant under conjugation by $(G')^\wedge(\bx R)$, we see that $\bx L^1(\mrs D^\square)$ is the preimage of $\bx L(\mrs D)$ in $\bx Z^1(\Gamma, \ad^0(\ovl\rho))$. In particular,
\begin{equation}\label{psisieubifes}
\dim_\kappa\bx L^1(\mrs D^\square)-\dim_\kappa\bx L(\mrs D)=\dim G'_\kappa-\dim_\kappa\bx H^0(\Gamma, \ad^0(\ovl\rho)).
\end{equation}
\end{defi}

\begin{defi}\label{defomrioainoiereid}
Suppose $\Gamma=\Gal_F$. A \emph{global deformation problem} of $\ovl\rho$ (\wrt  $\chi$) is a tuple $(\ovl\rho, \chi, \Pla, \{\mrs D_v^\square\}_{v\in\Pla})$ in which
\begin{itemize}
\item
$\Pla$ is a finite set of finite places of $F$ containing $\Pla(\ell)$ and all places $v$ \sut $\ovl\rho_v$ is ramified.
\item
$\mrs D_v^\square$ is a local lifting condition of $\ovl\rho$ \wrt $\chi_v$ for each $v\in\Pla$.
\end{itemize}
\end{defi}

For the rest of this subsection, we let $\Gamma=\Gal_F$ and fix a global deformation problem $\mrs S=(\ovl\rho, \chi, \Pla, \{\mrs D_v^\square\}_{v\in\Pla})$. For each subset $\Xi\subset\Pla$, set
\begin{equation*}
\msf R_{\mrs S, \Xi}^\square\defining\hat\btimes_{v\in\Xi}(\msf R_{\ovl\rho_v}^\square/\mrs I^\square_v),
\end{equation*}
where the tensor product is taken over $\mcl O$. We recall from \cite[Definition~2.2.1]{CHT08} the notion of framed liftings and framed deformations.

\begin{defi}
For each subset $\Xi\subset\Pla$, a \emph{$\Xi$-framed lifting} of $\ovl\rho$ to an object $\bx R\in \mrs C_{\mcl O}$ is a tuple $(\rho; \{\beta_v\}_{v\in\Xi})$, in which $\rho$ is a lifting of $\ovl\rho$ to $\bx R$ \wrt $\chi$ (see Definition~\ref{liftings-anadi-defintiieos}), and $\beta_v\in (G')^\wedge(\bx R)$ for every $v\in\Xi$. We say that a $\Xi$-framed lifting $(\rho, \{\beta_v\}_{v\in\Xi})$ is of type $\mrs S$ if $\rho$ is unramified outside $\Pla$ and $\rho_v\in \mrs D_v^\square(\bx R)$ for every $v\in\Pla$.

Two $\Xi$-framed liftings $(\rho; \{\beta_v\}_{v\in\Xi})$ and $(\rho'; \{\beta'_v\}_{v\in\Xi})$ are said to be equivalent if there exists $g\in (G')^\wedge(\bx R)$ \sut $\rho'=g\circ\rho\circ g^{-1}$ and $\beta_v'=g\beta_v$ for every $v\in\Xi$. A \emph{$\Xi$-framed deformation} of $\ovl\rho$ is an equivalence class of $\Xi$-framed liftings of $\ovl\rho$.

If a $\Xi$-framed lifting is of type $\mrs S$, so is any equivalent $\Xi$-framed lifting. We say that a $\Xi$-framed deformation is of type $\mrs S$ if some (or equivalently, every) element is of type $\mrs S$. Let
\begin{equation*}
\Def^{\square_\Xi}_{\mrs S}: \mrs C_{\mcl O}\to \Set
\end{equation*}
be the functor sending each object $\bx R\in \mrs C_{\mcl O}$ to the set of $\Xi$-framed deformations of $\ovl\rho$ to $\bx R$ of type $\mrs S$.
\end{defi}

Recall that $\Gal_{F, \Pla}$ is the Galois group of the maximal subextension of $\ovl F/F$ that is unramified outside $\Pla\cup\Pla^\infty$. Following \cite[p.~21]{CHT08}, for each integer $i\in\bb N$, we denote by $\bx H^i_{\mrs S, \Xi}(\Gal_{F,\Pla}, \ad^0(\ovl\rho))$ the cohomology of the complex
\begin{equation*}
\bx C^i_{\mrs S, \Xi}(\Gal_{F,\Pla}, \ad^0(\ovl\rho))\defining\bx C^i(\Gal_{F,\Pla}, \ad^0(\ovl\rho))\oplus\bplus_{v\in \Pla}\frac{\bx C^{i-1}(F_v, \ad^0(\ovl\rho))}{\bx M_v^{i-1}}, 
\end{equation*}
where $\bx M_v^{i-1}=0$ unless $v\in\Pla\setm\Xi$ and $i=0$ in which case 
\begin{equation*}
\bx M_v^i=
\begin{cases}
\bx C^0(F_v,\ad^0(\ovl\rho)) &\If i=0\\
\bx L^1(\mrs D_v^\square) &\If i=1
\end{cases}.
\end{equation*}
The coboundary map is
\begin{equation*}
\bx C^i_{\mrs S, \Xi}(\Gal_{F, \Pla}, \ad^0(\ovl\rho))\to \bx C^{i+1}_{\mrs S, \Xi}(\Gal_{F, \Pla}, \ad^0(\ovl\rho)): (\phi, (\psi_v)_{v\in\Pla})\mapsto (\partial\phi, (\phi|_{F_v}-\partial\psi_v)_{v\in\Pla}).
\end{equation*}
Then there is an exact sequence
\begin{align*}
0&\to \bx H^0_{\mrs S, \Xi}(\Gal_{F,\Pla}, \ad^0(\ovl\rho))\to \bx H^0(\Gal_{F,\Pla}, \ad^0(\ovl\rho))\to \bplus_{v\in\Xi}\bx H^0(F_v, \ad^0(\ovl\rho))\\
&\to \bx H^1_{\mrs S, \Xi}(\Gal_{F,\Pla}, \ad^0(\ovl\rho))\to\bx H^1(\Gal_{F,\Pla}, \ad^0(\ovl\rho))\to \bigg(\bplus_{v\in\Pla\setm\Xi}\frac{\bx H^1(F_v, \ad^0(\ovl\rho))}{\bx L(\mrs D_v)}\bigg)\oplus\bplus_{v\in\Xi}\bx H^1(F_v, \ad^0(\ovl\rho))\\
&\to \bx H^2_{\mrs S, \Xi}(\Gal_{F,\Pla}, \ad^0(\ovl\rho))\to \bx H^2(\Gal_{F,\Pla}, \ad^0(\ovl\rho))\to \bplus_{v\in\Pla}\bx H^2(F_v, \ad^0(\ovl\rho))\\
&\to \bx H^3_{\mrs S, \Xi}(\Gal_{F,\Pla}, \ad^0(\ovl\rho))\to \bx H^3(\Gal_{F,\Pla}, \ad^0(\ovl\rho))\to\ldots\\
\end{align*}

Let $\bx L(\mrs D_v)^\perp\subset\bx H^1(F_v, \ad^0(\ovl\rho)(1))$ be the annihilator of $\bx L(\mrs D_v)$ under the local Tate duality induced from the Galois invariant perfect pairing
\begin{equation*}
\ad^0(\ovl\rho)\times\ad^0(\ovl\rho)\to \kappa: (x, y)\mapsto \tr(xy).
\end{equation*}
We define
\begin{equation}\label{psosiieuueufemes}
\bx H^1_{\mrs L^\perp, \Xi}(\Gal_{F, \Pla}, \ad^0(\ovl\rho)(1))\defining\ker\bigg(\bx H^1(\Gal_{F, \Pla}, \ad^0(\ovl\rho)(1))\to\prod_{v\in\Pla\setm\Xi}\frac{\bx H^1(F_v, \ad^0(\ovl\rho)(1))}{\bx L(\mrs D_v)^\perp}\bigg).
\end{equation}

\begin{lm}\label{sliimiifhiemiws}
The following statements hold:
\begin{enumerate}
\item
$\bx H^i_{\mrs S, \Xi}(\Gal_{F,\Pla}, \ad^0(\ovl\rho))$ vanishes for $i>3$.
\item
$\bx H^0_{\mrs S, \Xi}(\Gal_{F,\Pla}, \ad^0(\ovl\rho))=\bx H^0(\Gal_{F,\Pla}, \ad^0(\ovl\rho))$ if $\Xi=\vn$ and vanishes if $\Xi\ne \vn$.
\item
$\dim_\kappa\bx H^3_{\mrs S, \Xi}(\Gal_{F,\Pla}, \ad^0(\ovl\rho))=\dim_\kappa\bx H^0(\Gal_{F,\Pla}, \ad^0(\ovl\rho)(1))$.
\item
$\dim_\kappa\bx H^2_{\mrs S, \Xi}(\Gal_{F,\Pla}, \ad^0(\ovl\rho))=\dim_\kappa\bx H^1_{\mrs L^\perp, \Xi}(\Gal_{F,\Pla}, \ad^0(\ovl\rho)(1))$.
\item(Wiles' formula)
\begin{align*}
\dim_\kappa\bx H^1_{\mrs S, \Xi}(\Gal_{F,\Pla}, \ad^0(\ovl\rho))&=\delta_{\Xi, \vn}\cdot\dim_\kappa\bx H^0(\Gal_{F,\Pla}, \ad^0(\ovl\rho))+\dim_\kappa\bx H^1_{\mrs L^\perp, \Xi}(\Gal_{F,\Pla}, \ad^0(\ovl\rho)(1))\\
&-\dim_\kappa\bx H^0(\Gal_{F,\Pla}, \ad^0(\ovl\rho)(1))-[F: \bb Q](\dim G_\kappa-\dim B_\kappa)\\
&+\sum_{v\in\Pla\setm\Xi}(\dim_\kappa\bx L(\mrs D_v)-\dim_v\bx H^0(F_v, \ad^0(\ovl\rho))).
\end{align*}
\end{enumerate}
\end{lm}
\begin{proof}
Using the $\ell$-finiteness property, the proof is the same as that of \cite[Lemma~2.3.4]{CHT08}. Here we use that $\bx H^0(\Gal_{F_v}, \ad^0(\mfk g))=m(m+\delta_{\mfk d, 0}-1)=(\dim G_\kappa- \dim B_\kappa)$ for every real place $v$ of $F$. This formula is a corollary of the following claim: Let $\bra{-, -}$ be the nondegenerate symmetric (resp. alternating) pairing on $\kappa^{\oplus 2m}$ defined by $J_{2m}$ (resp. $J_{2m}'$) when $N=2m$ (resp. $N=2m+1$). (In particular, $G_\kappa$ is the subgroup of elements of $\GL_{2m}$ that fix $\bra{-, -}$ up to a scalar.) Then $\ovl\rho(\cc_v)$ is $G(\kappa)$-conjugate to the matrix
\begin{equation*}
\begin{cases}
\begin{bmatrix}
\uno_m & \\
&-\uno_m
\end{bmatrix} & \mfk d=0,\\
J_{2m}  & \mfk d\ne 0,
\end{cases}
\end{equation*}
for any complex conjugation $\cc_v$ at $v$. 

We now prove this claim: We write $s=\rho_{\Pi, \iota_\ell}(\cc_v)$. We first treat the case when $N=2m+1$. It follows from Proposition~\ref{associateGlaosireifnies}(2) that $\nu(s)=-1$. As $s^2=1$, $s$ has eigenvalues $\pm1$. Let $V^\pm$ be the eigenspace of $\pm1$ of $s$. Then
\begin{equation*}
\bra{v,w}=\bra{sv, sw}=-\bra{v, w}
\end{equation*}
for $v, w\in V^+$. Thus $V^+$ is totally isotropic. The same argument shows that $V^-$ is totally isotropic. Thus we can take a basis $\{x_1, \ldots, x_m\}$ of $V^+$ and a basis $\{y_1, \ldots, y_m\}$ of $V^-$, \sut $\bra{x_i, y_i}=\delta_{i,j}$. As a consequence, in the basis $\{x_1, \ldots, x_m, y_1, \ldots, y_m\}$, $s$ is represented by the matrix
\begin{equation*}
\begin{bmatrix}
\uno_m & \\
&-\uno_m
\end{bmatrix}.
\end{equation*}
The claim follows.

In the setting when $N=2m$, we have $\nu(s)=1$. Let $V^\pm$ be the eigenspace of $\pm1$ of $s$. Then $V^+$ is orthogonal to $V^-$. We also know that $\dim V^+=\dim V^-=m$ by \cite[Theorem~1.1]{C-H16}. The claim then follows easily.
\end{proof}

\begin{prop}\label{peoieutnmeumes}
Assume that $\Std\circ\ovl\rho$ is absolutely irreducible. Then for each subset $\Xi\subset\Pla$, the functor $\Def_{\mrs S}^{\square_\Xi}$ is represented by an object $\msf R_{\mrs S}^{\square_\Xi}\in\mrs C_{\mcl O}$. Set $\msf R_{\mrs S}\defining \msf R_{\mrs S}^{\square_\vn}$, called the \emph{global deformation ring} (\wrt $\mrs S$). Note that the natural transformation
\begin{equation*}
\Def_{\mrs S}^{\square_\Xi}\to \prod_{v\in\Xi}\msf D^\square_{\ovl\rho_v, \mcl O}: (\rho, \{\beta_v\}_{v\in\Xi})\mapsto (\beta_v^{-1}\rho_v\beta_v)_{v\in\Xi}
\end{equation*}
induces a tautological morphism $\msf R^\square_{\mrs S, \Xi}\to \msf R^{\square_\Xi}_{\mrs S}$ in $\mrs C_{\mcl O}$.
\begin{enumerate}
\item
There is a canonical isomorphism
\begin{equation*}
\Hom_\kappa\big(\mfk m_{\msf R_{\mrs S}^{\square_\Xi}}/\big(\mfk m_{\msf R_{\mrs S}^{\square_\Xi}}^2, \lbd, \mfk m_{\msf R_{\mrs S, \Xi}^\square}\big), \kappa\big)\cong \bx H^1_{\mrs S, \Xi}(\Gamma_{F, \Pla}, \ad^0(\ovl\rho)),
\end{equation*}
where we regard $\mfk m_{\msf R^\square_{\mrs S, \Xi}}$ as its image under the tautological homomorphism $\msf R^\square_{\mrs S, \Xi}\to \msf R^{\square_\Xi}_{\mrs S}$.
\item
If $\bx H^2_{\mrs S, \Xi}(\Gamma_{F, \Pla}, \ad^0(\ovl\rho))=0$ and $\mrs D_v$ is formally smooth over $\mcl O$ for each $v\in \Pla\setm\Xi$, then $\msf R^{\square_\Xi}_{\mrs S}$ is a power series ring over $\msf R^\square_{\mrs S, \Xi}$ in $\dim_\kappa\bx H^1_{\mrs S, \Xi}(\Gamma_{F, \Pla}, \ad^0(\ovl\rho))$ variables.
\item
Choosing a universal lifting $\rho_{\mrs S}: \Gamma_F\to G(\msf R_{\mrs S})$ of $\ovl\rho$ to $\msf R_{\mrs S}$ determines an extension of the tautological homomorphism $\msf R_{\mrs S}\to \msf R^{\square_\Xi}_{\mrs S}$ to an isomorphism
\begin{equation*}
\msf R_{\mrs S}\hat\otimes_{\mcl O}\mrs T_\Xi\cong \msf R_{\mrs S}^{\square_\Xi},
\end{equation*}
where $\mrs T_\Xi$ is the coordinate ring of the formal smooth group $\prod_{v\in \Xi}(G')^\wedge$ over $\mcl O$. Note that $\mrs T_\Xi$ is isomorphic to a formal power series algebra over $\mcl O$ in $\#\Xi\cdot\dim G'_\kappa$ variables.
\end{enumerate}
\end{prop}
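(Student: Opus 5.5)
Representability will follow the standard Schlessinger-type argument of \cite[Proposition~2.2.9]{CHT08}, adapted to $G$-valued liftings; after that, parts (1)–(3) are tangent-space and obstruction computations. The order is: first $\msf R^\square_{\mrs S}$ (framed at all of $\Pla$, no equivalence), then $\Xi$-framed deformations as a quotient, then (3), (1), (2).

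\emph{Representability.} The functor of liftings of $\ovl\rho|_{\Gal_{F,\Pla}}$ with $\nu\circ\rho=\chi$ is representable, as in \cite[Theorem~1.2.2]{Bal12}. The $\ell$-finiteness property of $\Gal_{F,\Pla}$ gives Noetherianity. The conditions $\rho_v\in\mrs D_v^\square$ are closed conditions, pulled back along $\msf R^\square_{\ovl\rho_v}\to$ (lifting ring), so type-$\mrs S$ liftings form a closed subfunctor. Adjoining the frames $\beta_v\in(G')^\wedge$ for $v\in\Xi$ multiplies by the formally smooth $\Spf\mrs T_\Xi$.

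\emph{Passing to equivalence classes.} To show the quotient by $(G')^\wedge$-conjugation is representable, I use the key input Lemma~\ref{psooituries}: since $\Std\circ\ovl\rho$ is absolutely irreducible, the centralizer of any lifting in $(G')^\wedge(\bx R)$ is trivial, so conjugation acts freely. Two cases:
\begin{itemize}
\item If $\Xi\neq\vn$, freeness means each framed deformation has a unique representative with $\beta_{v_0}=1$ for a fixed $v_0\in\Xi$. Hence $\Def^{\square_\Xi}_{\mrs S}$ is represented by the closed formal subscheme with $\beta_{v_0}=1$.
\item If $\Xi=\vn$, I verify Schlessinger's criteria directly. (H1)–(H3) follow from the lifting ring. (H4), bijectivity on fibre products, holds because centralizers are trivial, so conjugating elements glue uniquely.
\end{itemize}

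\emph{Part (3).} Given a universal lifting $\rho_{\mrs S}$, define
\begin{equation*}
(\rho_{\mrs S},(\beta_v)_{v\in\Xi})\mapsto\text{the class of }(\rho_{\mrs S},(\beta_v)).
\end{equation*}
Every $\Xi$-framed deformation has a unique representative whose underlying lifting is a pushforward of $\rho_{\mrs S}$: the conjugator is unique by Lemma~\ref{psooituries}. The frames then range freely over $\prod_{v\in\Xi}(G')^\wedge$, giving $\msf R_{\mrs S}\hat\otimes\mrs T_\Xi\cong\msf R^{\square_\Xi}_{\mrs S}$.

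\emph{Part (1).} I compute relative tangent vectors over $\kappa[\ve]$, i.e.\ $\Xi$-framed deformations whose local framed liftings $\beta_v^{-1}\rho_v\beta_v$ are trivial. Write $\rho=(1+\ve\phi)\ovl\rho$ with $\phi\in\bx Z^1(\Gal_{F,\Pla},\ad^0(\ovl\rho))$ and $\beta_v=1+\ve\psi_v$. Triviality of the local framed lifting means $\phi|_{F_v}=\partial\psi_v$ for $v\in\Xi$. For $v\in\Pla\setm\Xi$, the condition $\rho_v\in\mrs D_v^\square$ means $\phi|_{F_v}\in\bx L^1(\mrs D_v^\square)$. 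Equivalence is modification by $(\partial X,(X+\psi_v))$. Matching these with the complex $\bx C^\bullet_{\mrs S,\Xi}$, the data $(\phi,(\psi_v))$ is exactly a $1$-cocycle modulo coboundaries, giving $\bx H^1_{\mrs S,\Xi}$.

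\emph{Part (2).} For obstructions, take a small extension $\bx R\to\bx R/\bx I$ and a framed deformation over $\bx R/\bx I$. Formal smoothness of $\mrs D_v$ for $v\in\Pla\setm\Xi$ lets me lift each $\rho_v$ locally. The discrepancy between a global set-theoretic lift and these local lifts then defines a class in $\bx H^2_{\mrs S,\Xi}\otimes\bx I$, and it must be checked that this class vanishes exactly when a lift of type $\mrs S$ exists. With $\bx H^2_{\mrs S,\Xi}=0$, the map $\msf R^\square_{\mrs S,\Xi}[[x_1,\dots,x_h]]\to\msf R^{\square_\Xi}_{\mrs S}$, with $h=\dim\bx H^1_{\mrs S,\Xi}$, is surjective by part (1) and is then an isomorphism by the standard formal-smoothness argument.

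I expect this cocycle bookkeeping, especially the cone-complex sign conventions in the obstruction class, to be the main obstacle. It is mechanical: it follows \cite[Lemma~2.3.4, Proposition~2.2.9]{CHT08} verbatim, with $\ad^0$ replaced by $\mfk g'$.
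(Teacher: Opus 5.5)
Your proposal is correct and follows essentially the same route as the paper. The paper likewise gets representability from the standard argument using the trivial centralizer of Lemma~\ref{psooituries} (citing \cite[Proposition~9.2(2)]{Pat16}), proves part~(1) by the same explicit $\kappa[\ve]$-computation matched against $\bx C^\bullet_{\mrs S,\Xi}$, obtains part~(2) from the obstruction argument of \cite[Corollary~2.2.13]{CHT08}, and deduces part~(3) from Lemma~\ref{psooituries}. One minor remark: for $\Xi\ne\vn$, the normalization $\beta_{v_0}=1$ already makes the representative unique, so trivial centralizers are not needed in that case.
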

\begin{proof}
Note that $\bx H^0(\Gal_F, \ad^0(\ovl\rho))=0$ since $\Std\circ\ovl\rho$ is absolutely irreducible. Thus the representability of $\msf R_{\mrs S}^{\square_\Xi}$ can be proved the in the same way as the proof of \cite[Proposition~9.2(2)]{Pat16}.

For part~(1), 
recall that 
\begin{equation*}
\Hom_\kappa\big(\mfk m_{\msf R_{\mrs S}^{\square_\Xi}}/\big(\mfk m_{\msf R_{\mrs S}^{\square_\Xi}}^2, \lbd, \mfk m_{\msf R_{\mrs S, \Xi}^\square}\big), \kappa\big)\cong \Hom(\msf R_{\mrs S}^{\square_\Xi}/(\mfk m_{\msf R_{\mrs S, \Xi}^\square}), \kappa[\ve]/(\ve^2))
\end{equation*}
is isomorphic to the subspace of $\Def^{\square_\Xi}_{\mrs S}$ consisting of elements giving trivial liftings of $\ovl\rho_v$ for $v\in\Xi$. Any $\Xi$-framed lifting of $\ovl\rho$ is of the form
\begin{equation*}
((\uno+\phi\ve)\ovl\rho; \{\uno+a_v\ve\}_{v\in\Xi})
\end{equation*}
with $\phi\in \bx Z^1(\Gal_F, \ad^0(\ovl\rho))$. It is of type $\mrs S$ if and only if $\phi|_{\Gal_{F_v}}\in \bx L^1(\mrs D_v^\square)$ for every $v\in\Pla$. It gives rise to a trivial lifting of $\ovl\rho|_{\Gal_{F_v}}$ if and only if
\begin{equation*}
(\uno-a_v\ve)(\uno+\phi|_{\Gal_{F_v}}\ve)\ovl\rho(\uno+a_v\ve)=\ovl\rho_v,
\end{equation*}
or equivalently, $\phi|_{\Gal_{F_v}}=(1-\ad^0(\ovl\rho)|_{\Gal_{F_v}})a_v$. On the other hand, two tuples $(\phi; \{a_v\}_{v\in\Xi})$ and $(\phi'; \{a'_v\}_{v\in\Xi})$ are equivalent if and only if there exists $b\in \mfk g'_\kappa$ \sut
\begin{equation*}
\phi'=\phi+(\uno-\ad^0(\ovl\rho))b
\end{equation*}
and
\begin{equation*}
a_v'=a_v+b
\end{equation*}
for every $v\in\Xi$. The assertion clearly follows.

For part~(2), the proof of \cite[Corollary~2.2.13]{CHT08} goes through; cf.~\cite[Proposition~9.2]{Pat16}.

Part~(3) follows from Lemma~\ref{psooituries}.
\end{proof}

\subsection{Fontaine--Laffaille deformations}

In this subsection, we recall some results of the Fontaine--Laffaille lifting condition defined in \cite{Boo19}. Following \cites{B-K90, CHT08}, we use a covariant version of the Fontaine--Laffaille theory \cite{F-L82}. 

Assume that $\ell$ is unramified in $F$, and let $v$ be a finite place of $F$ lying above $\ell$. We denote by $\Fr\in\Gal(F_v/\bb Q_\ell)$ the absolute Frobenius element, and set $\Pla_v\defining\Hom_{\bb Z_\ell}(\mcl O_v, \ovl{\bb Q_\ell})$

\begin{assumption}\label{OApillsieiiurwvx}
The field $E$ contains the images of all embeddings of $F_v$ into $\ovl{\bb Q_\ell}$.
\end{assumption}

\begin{defi}
Suppose Assumption~\ref{OApillsieiiurwvx} holds. We denote by $\mrsMF_{\mcl O, v}$ the category of $\mcl O_v\otimes_{\bb Z_\ell}\mcl O$-modules $M$ of finite length equipped with
\begin{itemize}
\item
a decreasing filtration $\{\Fil^iM\}_{i\in\bb Z}$ by $\mcl O_v\otimes_{\bb Z}\mcl O$-submodules that are $\mcl O_v$-direct summands, satisfying $\Fil^0M=M$ and $\Fil^{\ell-1}M=0$; and
\item
a Frobenius structure, that is, a $\Fr\otimes1$-linear map $\Phi^i: \Fil^iM\to M$ for every $i\in\bb Z$, satisfying the relations $\Phi^i|_{\Fil^{i+1}M}=\ell\Phi^{i+1}$ and $\sum_{i\in\bb Z}\Phi^i\Fil^iM=M$.
\end{itemize}
\end{defi}

\begin{defi}
Suppose Assumption~\ref{OApillsieiiurwvx} holds. For each object $M$ of $\mrsMF_{\mcl O, v}$, the decomposition
\begin{equation*}
\mcl O_v\otimes_{\bb Z_\ell}\mcl O=\bplus_{\tau\in\Pla_v}\mcl O
\end{equation*}
induces canonical decompositions
\begin{equation*}
M=\bplus_{\tau\in\Pla_v}M_\tau, \quad M_\tau\defining M\otimes_{\mcl O_v\otimes_{\bb Z_\ell}\mcl O, \tau\otimes\uno}\mcl O
\end{equation*}
and
\begin{equation*}
\Fil^iM=\bplus_{\tau\in\Pla_v}\Fil^iM_\tau, \quad \Fil^iM_\tau=M_\tau\cap\Fil^iM.
\end{equation*}
Then $\Phi^i$ induces $\mcl O$-linear maps
\begin{equation*}
\Phi^i_\tau: \Fil^iM_\tau\to M_{\tau\circ\Fr^{-1}}.
\end{equation*}
for every $\tau\in\Pla_v$ and every $i\in\bb Z$. We set
\begin{equation*}
\gr^iM_\tau\defining \Fil^iM_\tau/\Fil^{i+1}M_\tau,
\end{equation*}
\begin{equation*}
\gr^\bullet M_\tau\defining \bplus_{i\in\bb Z}\gr^iM_\tau,
\end{equation*}
\begin{equation*}
\gr^\bullet M\defining\bplus_{\tau\in\Pla_v}\gr^\bullet M_\tau,
\end{equation*}
and define the set of $\tau$-Fontaine--Laffaille weights of $M$ to be
\begin{equation*}
\HT_\tau(M)\defining\{i\in\bb Z|\gr^iM_\tau\ne 0\}.
\end{equation*}
We say that $M$ has regular Fontaine--Laffaille weights if $\gr^iM_\tau$ is generated over $\mcl O$ by at most one element for every $\tau\in\Pla_v$ and every $i\in\bb Z$.
\end{defi}

\begin{exm}
Suppose Assumption~\ref{OApillsieiiurwvx} holds. Each object $\bx R$ of $\mrs C_{\mcl O}^f$ can be regarded as an object $\bx R\{0\}$ of $\mrsMF_{\mcl O, v}$ with
\begin{equation*}
\Fil^i(\bx R\{0\})=
\begin{cases}
\mcl O_v\otimes_{\bb Z_\ell}\bx R & i\le 0\\
0 &i>0
\end{cases}.
\end{equation*}
\end{exm}

We recall the notion of Tate twists on objects in $\mrsMF_{\mcl O, v}$.

\begin{defi}
Suppose Assumption~\ref{OApillsieiiurwvx} holds. For each object $M$ of $\mrsMF_{\mcl O, v}$ and $s\in\bb Z$, we define $M\{s\}$ to be the objects of $\mrsMF_{\mcl O, v}$ with $\Fil^iM\{s\}=\Fil^{i+s}M$ and $\Phi^i_{M\{s\}}=\Phi^{i+s}_M$. 
\end{defi}

For each integer $b$ satisfying $0\le b\le \ell-2$, we let $\mrsMF^{[0, b]}_{\mcl O, v}$ be the full subcategory of $\mrsMF_{\mcl O, v}$ consisting of objects $M$ satisfying $\Fil^{b+1}M=0$. 
Moreover, for each object $\bx R$ of $\mrs C_{\mcl O}^f$, we let $\mrsMF^{[0, b]}_{\mcl O, v}|_{\bx R}$ be the full subcategory of $\mrsMF^{[0, b]}_{\mcl O, v}$ consisting of objects $M$ \sut $M$ is of finite free over $\mcl O_v\otimes_{\bb Z_\ell}\bx R$ and that $\Fil^iM$ is a direct summand of $M$ as a $\mcl O_v\otimes_{\bb Z_\ell}\bx R$-submodule for every $i\in\bb Z$. 

\begin{prop}\label{ososoiinfiieifmis}
Suppose \textup{Assumption~\ref{OApillsieiiurwvx}} holds. Let $\mcl O[\Gal_{F_v}]^\fl$ be the category of $\mcl O$-modules of finite length equipped with a continuous action of $\Gal_{F_v}$. There exists an exact fully faithful covariant $\mcl O$-linear functor
\begin{equation*}
\mbf T^\cris_v: \mrsMF_{\mcl O, v}\to \mcl O[\Gal_{F_v}]^\fl
\end{equation*}
satisfying that
\begin{enumerate}
\item
The essential image of $\mbf T^\cris_v$ is closed under taking subobjects and quotient objects.
\item
For each object $M$ of $\mrsMF_{\mcl O, v}$ and each $s\in\bb Z$, $\mbf T^\cris_v(M\{s\})$ is isomorphic to $\mbf T^\cris_v(M)(-s)$.
\item
For each object $M$ of $\mrsMF_{\mcl O, v}$ of finite length (as $\mcl O$-module), we have
\begin{equation*}
\length_{\mcl O}M=[F_v: \bb Q_\ell]\cdot\length_{\mcl O}\mbf T^\cris_v(M).
\end{equation*}
\item
For each object $\bx R$ of $\mrs C_{\mcl O}^f$ and each object $M$ of $\mrsMF_{\mcl O, v}|_{\bx R}$, $\mbf T^\cris_v(M)$ is finite free over $\bx R$.
\end{enumerate}
\end{prop}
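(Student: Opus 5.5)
This proposition is the standard covariant Fontaine--Laffaille theory with $\mcl O$-coefficients. I would deduce it from the classical contravariant theory of \cite{F-L82} (as reworked in \cite{B-K90} and \cite[\S2.4.1]{CHT08}) rather than reprove anything. Let $\mrsMF_{\bb Z_\ell}$ be the classical category of filtered Dieudonn\'e modules over $\mcl O_v$ with weights in $[0,\ell-2]$. Fontaine--Laffaille give an exact, fully faithful contravariant functor $\mbf U_S$ to finite-length $\bb Z_\ell[\Gal_{F_v}]$-modules whose essential image is closed under subobjects and quotients. The covariant functor will be the composite
\begin{equation*}
\mbf T^\cris_v(M)\defining \mbf U_S(M^\vee)^\vee,
\end{equation*}
or equivalently $\Hom_{\mrsMF}(M,\ldots)$ in the form used in \cite[\S2.4.1]{CHT08}. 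Here $M^\vee$ is the Cartier dual in $\mrsMF$, defined with the appropriate Tate twist so that the weights stay in $[0,\ell-2]$, and the outer $(-)^\vee$ is Pontryagin duality.

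First, because every structure map is $\mcl O$-linear, an $\mcl O$-action on $M$ transports functorially to an $\mcl O$-action commuting with $\Gal_{F_v}$. Since $\mbf U_S$ is additive and fully faithful, this gives an exact, fully faithful, $\mcl O$-linear functor into $\mcl O[\Gal_{F_v}]^\fl$. Property (1) then follows from the corresponding statement in \cite{F-L82}: a $\Gal_{F_v}$-stable sub-$\bb Z_\ell$-module of an $\mcl O$-module in the image is automatically $\mcl O$-stable, because $\mcl O$ acts through $\End_{\Gal}$, which equals $\End_{\mrsMF}$ by full faithfulness. Property (2) comes from comparing with the rank-one objects $\bb Z_\ell\{s\}$, whose images are $\bb Z_\ell(-s)$ in the covariant normalization, together with compatibility of the functor with tensor products when the weights stay small. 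If the weight range would exceed $\ell-2$, I would instead identify the twist directly through the explicit description of $\Fil$ and $\Phi$.

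For (3) I would use d\'evissage. By exactness, both sides are additive in short exact sequences, so it suffices to treat simple objects, or just to compare $\bb Z_\ell$-lengths: the classical theory gives $\length_{\bb Z_\ell}\mbf U_S(M)=\length_{\bb Z_\ell}M/[F_v:\bb Q_\ell]$, since $M$ is an $\mcl O_v$-module. Dividing by the ramification degree of $\mcl O$ gives the stated $\mcl O$-length identity. For (4), let $\bx R\in\mrs C^f_{\mcl O}$ and $M$ finite free over $\mcl O_v\otimes\bx R$ with direct-summand filtration. Then $\mbf T^\cris_v(M)$ is an $\bx R$-module. I would check that it is free by the local criterion $\Tor_1^{\bx R}(\mbf T^\cris_v(M),\kappa)=0$: take a resolution $\bx R^{\oplus r}\to\bx R\to\kappa$, note that $M\otimes_{\bx R}(-)$ preserves exactness on $\mrsMF$ because the graded pieces are free, and apply exactness of $\mbf T^\cris_v$ together with the length count from (3) to get $\length_{\bx R}$-rank equality.

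I expect the main difficulty to be bookkeeping rather than substance: fixing the normalizations of duality and twist so that $\Fil^{\ell-1}=0$ is preserved and (2) holds with the correct sign. Freeness in (4) is the other point that needs care; there I would follow the argument of \cite[Lemma~2.4.2]{CHT08} essentially verbatim.
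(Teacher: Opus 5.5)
Your route differs from the paper's: the paper proves nothing directly and simply cites \cite{Boo19}, namely Fact~4.10 for the functor and (1), Definition~4.9 for (2), and Lemmas~4.14 and~4.15 for (3) and (4). Reducing instead to the classical contravariant theory of \cite{F-L82} is a legitimate alternative, but your construction has a concrete error. The functor $M\mapsto\mbf U_S(M^\vee)^\vee$ is \emph{contravariant}: Cartier duality, $\mbf U_S$ and Pontryagin duality each reverse arrows, so their composite does too. Indeed, by the duality compatibility of Fontaine--Laffaille theory, $\mbf U_S(M^\vee)$ is $\mbf U_S(M)^\vee$ up to a Tate twist, so your functor is just a twist of $\mbf U_S$. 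Your alternative ``$\Hom_{\mrsMF}(M,\ldots)$'' is contravariant in $M$ for the same reason.

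This is not cosmetic, because (4) fails for such a functor. Take $M=\mcl O_v\otimes\bx R$ with the trivial filtration; then $\mbf U_S(M)$ is, up to twist, $\Hom_{\mcl O}(\bx R,E/\mcl O)$. That module is not free unless $\bx R$ is Gorenstein, e.g.\ it fails for $\bx R=\kappa[x,y]/(x,y)^2$. Correspondingly, your argument for (4) breaks: exactness of a contravariant functor identifies $\mbf U_S(M\otimes_{\bx R}\kappa)$ with the $\mfk m_{\bx R}$-torsion of $\mbf U_S(M)$, not with $\mbf U_S(M)\otimes_{\bx R}\kappa$, and the length count then only gives cofreeness. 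You must use exactly one duality, e.g.\ $\mbf T^\cris_v(M)\defining\mbf U_S(M)^\vee$ with Pontryagin duality. Exactness, full faithfulness and closure of the essential image then transfer, since Pontryagin duality is an exact anti-equivalence exchanging subobjects and quotients. With that definition your (2) and (4) sketches go through.

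Second, your justification of the $\mcl O$-linear form of (1) is false as stated. A $\Gal_{F_v}$-stable $\bb Z_\ell$-submodule of an $\mcl O$-module in the image need not be $\mcl O$-stable. For example, take $T=\kappa$ with trivial action and $[\kappa:\bb F_\ell]>1$; then $\bb F_\ell\subset\kappa$ is Galois-stable but not $\mcl O$-stable. The fact that $\mcl O$ acts through $\End_{\Gal_{F_v}}(T)$ does not change this. You do not need that claim anyway, since subobjects in $\mcl O[\Gal_{F_v}]^\fl$ are $\mcl O$-stable by definition.

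What must be shown is the reverse direction. Let $W\subset T=\mbf T^\cris_v(M)$ be an $\mcl O[\Gal_{F_v}]$-submodule, and let $M'\subset M$ be the sub-object over $\bb Z_\ell$ realizing $W$ in the classical theory; you need $M'$ to be $\mcl O$-stable. This follows from exactness and faithfulness: for $a\in\mcl O$, the composite $M'\to M\xrightarrow{a}M\to M/M'$ is sent to $W\to T\xrightarrow{a}T\to T/W$, which is zero, so the composite is zero.

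A minor slip in (3): the factor converting $\bb Z_\ell$-lengths of $\mcl O$-modules into $\mcl O$-lengths is the residue degree $[\kappa:\bb F_\ell]$, not the ramification degree. It cancels, so the conclusion is unaffected. With these repairs your argument amounts to reproving the statements the paper imports from \cite{Boo19}.
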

\begin{proof}
Part~(1) follows from \cite[Fact~4.10]{Boo19}. Part~(2) follows from the definition; see \cite[Definition~4.9]{Boo19}. Part~(3) follows from \cite[Lemma~4.14]{Boo19}. Part~(4) follows from \cite[Lemma~4.15]{Boo19}. 
\end{proof}

\begin{defi}\label{otetotiiemifes}
Let $a\le b$ be integers \sut $b-a\le \ell-2$. For an object $\bx R$ of $\mrs C_{\mcl O}$ and a continuous representation $\rho: \Gal_{F_v}\to G(\bx R)$, we say that $\rho$ is \emph{crystalline with regular Fontaine--Laffaille weights in $[a, b]$} if there exists a finite unramified extension $E_\dagger$ of $E$ that splits the extension $F_v/\bb Q_\ell$, with ring of integers $\mcl O_\dagger$, \sut for every quotient $\bx R'$ of $\bx R$ in $\mrs C^f_{\mcl O_\dagger}$, $((\Std\circ\rho)(a)\otimes_{\mcl O}\mcl O_\dagger)\otimes_{\bx R}\bx R'$ is isomorphic to $\mbf T^\cris_v(M')$ for some object $M'$ of $\mrsMF_{\mcl O_\dagger, v}^{[0, b-a]}|_{\bx R'}$ with regular Fontaine--Laffaille weights.
\end{defi}

\begin{defi}\label{ooidnuefiems}
Suppose $(\Gamma, \ovl\rho, \chi)$ is a triple as in Setup~\ref{osisienieeimifes} in which $\Gamma=\Gal_{F_v}$ and $\chi=\ve_\ell^{2-N}$, \sut $\Std\circ\ovl\rho$ is crystalline with regular Fontaine--Laffaille weights in $[a, b]$ for some integers $a\le b$ satisfying $b-a\le\frac{\ell-2}{2}$. We define $\mrs D^{\FL, \square}$ to be the local lifting condition (\wrt the pair $(\ovl\rho, \chi)$) that classifies regular Fontaine--Laffaille crystallines liftings of $\ovl\rho$ (see Definition~\ref{otetotiiemifes}). It is a local lifting condition by \cite[Corollary~5.6]{Boo19}.
\end{defi}

\begin{prop}\label{olaososiefefjeimss}
Let $(\Gamma, \ovl\rho, \chi)$ be a triple as in \textup{Definition~\ref{ooidnuefiems}}. If $B_\kappa$ is a Borel subgroup of $G_\kappa$, then the local lifting condition $\mrs D^{\FL, \square}$ is formally smooth over $\Spf\mcl O$ of pure relative dimension
\begin{equation*}
\dim G'_\kappa+[F_v: \bb Q_\ell]\cdot(\dim G_\kappa-\dim B_\kappa).
\end{equation*}
In particular, it follows from \textup{Equation~\ref{psisieubifes}} that
\begin{equation*}
\dim_\kappa\bx L(\mrs D^\FL)=\dim_\kappa\bx H^0(F_v, \ad^0(\ovl\rho))+[F_v: \bb Q_\ell](\dim G_\kappa-\dim B_\kappa).
\end{equation*}
\end{prop}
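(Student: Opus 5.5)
We will deduce the statement from Booher's results on Fontaine--Laffaille lifting conditions for $G$-valued representations \cite{Boo19}, after reducing to a computation with Fontaine--Laffaille modules carrying a compatible pairing. Since $\ell$ is unramified in $F$ and $b-a\le\frac{\ell-2}{2}$, the Tate twist $(\Std\circ\rho)(a)$ has weights in $[0,b-a]$. Tensor products of two such modules have weights in $[0,2(b-a)]\subset[0,\ell-2]$, so they stay inside the Fontaine--Laffaille range. By Proposition~\ref{ososoiinfiieifmis}, $\mbf T^\cris_v$ is exact, fully faithful and compatible with twists. Hence the symmetric (resp.\ alternating) similitude pairing $\Std\otimes\Std\to\chi=\ve_\ell^{2-N}$ of a lifting corresponds to a pairing $M\otimes M\to\mcl O\{\text{shift}\}$ of Fontaine--Laffaille modules. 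So a Fontaine--Laffaille lifting of $\ovl\rho$ to $\bx R$ amounts to a free module $M$ over $\mcl O_v\otimes\bx R$ with filtration, Frobenius, and a perfect $\Phi$-compatible pairing of the right parity; the filtration is regular, and for each $\tau$ its pieces are isotropic with respect to the pairing in the dual sense.

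\emph{Step 1: formal smoothness.} We verify the infinitesimal lifting criterion. Given a small surjection $\bx R_1\to\bx R_0$ in $\mrs C^f_{\mcl O}$ and a Fontaine--Laffaille lifting over $\bx R_0$, we first lift the filtration, then the $\Phi^i$, compatibly with the pairing. For each $\tau$, a filtration by direct summands with regular weights that is self-dual for the pairing is a $\bx R$-point of the flag variety $G/B$, which is smooth. Next, the Frobenius maps $\Phi^i$ can be packaged, as in \cite[\S5]{Boo19}, into a single element $\Phi\in G(\bx R)$ for each $\tau$ with respect to adapted bases. Since $G$ is smooth over $\mcl O$, any lift of that element is allowed, subject only to the similitude factor being the prescribed one, and $G'$ is smooth. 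Hence lifts exist and the functor is formally smooth. This is essentially \cite[Theorem~1.1 / Corollary~5.6]{Boo19} in the case of $\GSp$ and $\GO$. The main obstacle is to check that Booher's hypotheses hold here: his framework is stated for connected $G$ (or needs care for $\GO$, whose component group acts), and the fixed similitude character must match. We handle $\GO_{2m}$ by noting that the determinant is locally constant on liftings, so the lifting problem stays inside a single $G^\circ$-coset.

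\emph{Step 2: dimension.} We count via the tangent space, using the mod-$\lbd$ special fibre with $\bx R=\kappa[\ve]$. Fontaine--Laffaille liftings modulo framing are parametrized by deformations of the filtered $\Phi$-module with pairing. For each $\tau\in\Pla_v$, the filtration contributes $\dim(G/B)=\dim G-\dim B$, and this gives $[F_v:\bb Q_\ell](\dim G_\kappa-\dim B_\kappa)$ in total. The Frobenius modulo its automorphisms contributes nothing net: the fibre over a fixed filtration is a torsor under $\prod_\tau(G')^\wedge$ modulo the same group. The framing then contributes $\dim G'_\kappa$, since the relative dimension of the framed problem is the sum of the deformation-problem dimension and $\dim G'_\kappa$, corrected by $\bx H^0$ as in \eqref{psisieubifes}. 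Equivalently, one can cite Booher's formula $\dim\bx L(\mrs D^\FL)=\dim\bx H^0(F_v,\ad^0)+[F_v:\bb Q_\ell]\dim(\mfk g/\mfk b)$ and add $\dim G'_\kappa-\dim\bx H^0$ using \eqref{psisieubifes}. Purity of the relative dimension follows from formal smoothness together with this uniform tangent-space computation. The displayed consequence is then exactly \eqref{psisieubifes} applied to $\bx L^1(\mrs D^{\FL,\square})$, whose dimension is the relative dimension.
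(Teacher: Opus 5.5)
Your proposal takes the paper's route: the paper's proof is a citation of \cite[Theorem~5.2]{Boo19}, and your Steps~1--2 sketch Booher's argument, which is written for symplectic and orthogonal similitude groups, so the connectedness ``obstacle'' you raise is not a real one. The one step you omit is the paper's preliminary reduction: your $\tau$-by-$\tau$ decomposition, like Booher's theorem, needs Assumption~\ref{OApillsieiiurwvx}, so you should first base change along the finite unramified (hence faithfully flat) extension $\mathcal{O}\to\mathcal{O}_\dagger$ from Definition~\ref{otetotiiemifes}, and then descend formal smoothness and the relative dimension back to $\mathcal{O}$, as the paper does via \cite[06CM]{Sta}.
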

\begin{proof}
By descent for formal smoothness (see~\cite[06CM]{Sta}), we may assume that \textup{Assumption~\ref{OApillsieiiurwvx}} holds. Then the assertion follows from \cite[Theorem~5.2]{Boo19}.\end{proof}

\subsection{Tame representations}

In this subsection, we recall some results on minimally ramified deformations of tamely ramified self-dual local Galois representations.

We fix a positive integer $q$ that is coprime to $\ell$, and note that all results in this subsection adapt verbatim to the case where $G$ is replaced by the smooth group scheme $\GO_{2m+1}$ defined by the standard symmetric pairing given by the matrix $J_{2m+1}$, cf.~\cite[\S5.2]{Boo19a}.

\begin{defi}\label{eleifnfmeimiswsv}
The \emph{$q$-tame group}, denoted by $\bx T_q$, is defined to be the semidirect product topological group $t^{\bb Z_\ell}\rtimes\phi_q^{\hat{\bb Z}}$, satisfying $\phi_qt\phi_q^{-1}=t^q$.
\end{defi}

We recall the following result on the universal deformations of the tame group $\bx T_q$ from \cite[Proposition~3.16]{LTXZZa}.

\begin{prop}\label{osoosiiieubfes}
Suppose $(\Gamma, \ovl\rho, \chi)$ is a triple as in \textup{Setup~\ref{osisienieeimifes}} in which $\Gamma=\bx T_q$. Then the universal lifting ring $\msf R^\square_{\ovl\rho}$ (see \textup{Definition~\ref{liftings-anadi-defintiieos}}) is a local complete intersection, flat and of pure relative dimension $\dim G'_\kappa$ over $\mcl O$.
\end{prop}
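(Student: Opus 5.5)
The plan is to make the lifting ring of $\bx T_q$ explicit and then count equations. A continuous lifting $\rho:\bx T_q\to G(\bx R)$ with $\nu\circ\rho=\chi$ is determined by the pair $(\Phi,\Sigma)=(\rho(\phi_q),\rho(t))$. Here $\Sigma$ reduces to $\ovl\rho(t)$, and the pro-$\ell$ image of $t$ makes $\Sigma$ well defined via $\bb Z_\ell$-powers, since $\Sigma\ovl\rho(t)^{-\ell^k}$ is topologically unipotent. The only relations are $\Phi\Sigma\Phi^{-1}=\Sigma^q$ together with the similitude conditions $\nu(\Phi)=\chi(\phi_q)$ and $\nu(\Sigma)=\chi(t)$. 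Hence $\msf R^\square_{\ovl\rho}$ is the completed local ring, at the point $(\ovl\rho(\phi_q),\ovl\rho(t))$, of the closed subscheme
\begin{equation*}
\mathcal M=\{(\Phi,\Sigma)\in G\times G:\ \Phi\Sigma\Phi^{-1}=\Sigma^q,\ \nu(\Phi)=\chi(\phi_q),\ \nu(\Sigma)=\chi(t)\}
\end{equation*}
over $\mcl O$. One subtlety is that $\Sigma$ need not be unipotent in general, since $\ovl\rho(t)$ can have nontrivial $\ell$-power order. Following \cite{LTXZZa} (and \cite{Boo19a}), I would handle this by replacing $G$ with the centralizer $C=Z_G(\ovl\rho(t)^{ss})$, or more precisely with the decomposition attached to the prime-to-$\ell$-free part. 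This reduces to the case where $\Sigma$ is unipotent modulo $\lbd$ inside a possibly disconnected reductive group, with $\Phi$ lying in the normalizer.

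The key steps, in order, are as follows.

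\textbf{(a) Dimension count.} The ambient space $G\times G$ with the two $\nu$-conditions has relative dimension $2\dim G'_\kappa$, up to the central torus. The equation $\Phi\Sigma\Phi^{-1}\Sigma^{-q}=1$ lands in $G'$ once the $\nu$-conditions are imposed, since $\nu(\Sigma^{q-1})$ is automatically consistent with $\chi(t)^{q-1}=1$ when $\chi$ is a Galois character factoring through the tame quotient. This gives $\dim G'_\kappa$ equations, so every component of $\mathcal M$ has relative dimension at least $\dim G'_\kappa$.

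\textbf{(b) Upper bound on the special and generic fibres.} I would show $\dim \mathcal M_\kappa\le\dim G'_\kappa$ and $\dim\mathcal M_{E}\le\dim G'_{E}$ by stratifying according to the $G$-conjugacy class of the unipotent part of $\Sigma$. There are finitely many such classes in good characteristic, and $\ell>2$ suffices for $\GSp$ and $\GO$. For fixed $\Sigma$ in an orbit $\mathcal O_u$, the set of $\Phi$ with $\Phi\Sigma\Phi^{-1}=\Sigma^q$ is a torsor under $Z_G(\Sigma)$, because $\Sigma^q$ is conjugate to $\Sigma$ (a power of a unipotent). This gives stratum dimension $\dim\mathcal O_u+\dim Z(\Sigma)=\dim G'$ after the $\nu$-normalisation. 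The semisimple-part reduction is handled the same way, with the relevant group replacing $G$.

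\textbf{(c) Conclusion via commutative algebra.} Once the ring is cut out by $\dim G'_\kappa$ equations in a regular local ring of dimension $1+2\dim G'_\kappa$ and has dimension exactly $1+\dim G'_\kappa$, it is a complete intersection, hence Cohen--Macaulay. Because the special fibre has dimension $\dim G'_\kappa$, which is one less, $\lbd$ is a nonzerodivisor on the ring. This yields flatness, and since Cohen--Macaulay rings are equidimensional, we get pure relative dimension $\dim G'_\kappa$.

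The main obstacle is step (b) for disconnected $G=\GO_{2m}$ and in the presence of a nontrivial semisimple $\ell$-part of $\ovl\rho(t)$. There one must check that the centralizer groups that appear are again of the type where the orbit-stratification argument works, namely products of $\GL$'s and smaller $\GO$/$\GSp$'s with twisted Frobenius action, and that unipotent classes remain finite in number. I would first try simply citing \cite[Proposition~3.16]{LTXZZa} and \cite[\S5]{Boo19a}, whose argument is written for general such groups, and verify only that the hypotheses (good characteristic, $\ell\nmid q$) hold here.
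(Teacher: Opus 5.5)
The paper gives no argument of its own here. It simply imports \cite[Proposition~3.16]{LTXZZa}, which is your fallback. Your main sketch is instead a direct proof:
\begin{itemize}
\item realize $\msf R^\square_{\ovl\rho}$ as the completion of the pair scheme $\mathcal M$ at $(\ovl\rho(\phi_q),\ovl\rho(t))$;
\item note that it is cut out by $\dim G'_\kappa$ equations in a formally smooth $\mcl O$-algebra of relative dimension $2\dim G'_\kappa$;
\item bound the special fibre by the unipotent-orbit and torsor count;
\item conclude that the ring is a complete intersection, Cohen--Macaulay, $\lbd$-torsion free and equidimensional.
\end{itemize}
That argument is sound. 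It buys an honest check for $\GSp_{2m}$ and $\GO_{2m}$, rather than a silent transfer of a statement proved in \cite{LTXZZa} in the conjugate self-dual setting; the cost is length.

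A few corrections, none fatal.
\begin{enumerate}
\item Your stated subtlety and main obstacle do not exist. $\ovl\rho(t)$ has $\ell$-power order in characteristic $\ell$, hence is unipotent (the paper records this a few lines later). So $\ovl\rho(t)^{ss}=1$ and your reduction to $Z_G(\ovl\rho(t)^{ss})$ is vacuous.
\item Disconnectedness of $\GO_{2m}$ costs nothing. For $\ell$ odd a unipotent $\Sigma$ lies in $G^\circ$, so near the residual point the relation $\Phi\Sigma\Phi^{-1}\Sigma^{-q}$ takes values in $G'$. The torsor count also does not care which component $\Phi$ lies in.
\item Non-unipotent $\Sigma$ does occur, but only at characteristic-zero points: the eigenvalues can be nontrivial $\ell$-power roots of unity, e.g.\ when $\ell\mid q-1$. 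So the generic-fibre half of (b), which stratifies by unipotent classes, is not justified as written. It is also unnecessary, because $\dim\msf R^\square_{\ovl\rho}\le 1+\dim\msf R^\square_{\ovl\rho}/\lbd\le 1+\dim G'_\kappa$ already gives everything (c) uses.
\item In the special-fibre bound you should say why only unipotent $\Sigma$ matter. The multiset of eigenvalues of $\Sigma$ is stable under $x\mapsto x^q$, so they are roots of unity of bounded order. Hence the characteristic polynomial of $\Sigma$ is constant on each irreducible component of $\mathcal M_\kappa$. Every component through the residual point therefore has $\Sigma$ unipotent, and your stratification covers it.
\end{enumerate}
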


For the rest of this subsection, we assume $\ell>2m$. Denote by $\mcl N_{2m}$ (resp. $\mcl U_{2m}$) the closed subscheme of $\Mat_{2m}$ (resp. $\GL_{2m}$) defined by the equation $X^{2m}=0$ (resp. $(A-1)^{2m}=0$). For every object $\bx R$ of $\mrs C_{\mcl O}$, there is a truncated exponential map
\begin{equation*}
\exp: \mcl N_{2m}(\bx R)\to \mcl U_{2m}(\bx R): X\mapsto 1+X+\ldots+\frac{X^{2m-1}}{(2m-1)!},
\end{equation*}
which is a bijection. Its inverse is given by the truncated logarithmic map
\begin{equation*}
\log: \mcl U_{2m}(\bx R)\to \mcl N_{2m}(\bx R): A\mapsto -\sum_{i=1}^{2m-1}\frac{(1-A)^i}{i}.
\end{equation*}
Furthermore, for any $C\in \GL_{2m}(\bx R)$ and any positive integer $k$, we have
\begin{equation*}
\exp(CAC^{-1})=C\exp(A)C^{-1}, \quad \log(CAC^{-1})=C\log(A)C^{-1}
\end{equation*}
and
\begin{equation*}
\exp(kA)=\exp(A)^k, \quad \log(B^k)=k\log(B);
\end{equation*}
cf. \cite[Lemma~2.4]{Tay08}.

For the rest of this subsection, we suppose $(\Gamma, \ovl\rho, \chi)$ is a triple as in Setup~\ref{osisienieeimifes} in which $\Gamma=\bx T_q$. Then $\ovl A\defining \ovl\rho(t)$ is unipotent, as it has $\ell$-power torsion in the image of $\ovl\rho$ by the continuity of $\ovl\rho$. 
Set $\ovl X\defining \log A\in \mcl N_{2m}(\kappa)$. 

\begin{defi}
Let $\sigma\vdash 2m$ be a partition $2m=m_1+\ldots+m_r$ of $2m$. It is called \tbf{admissible} if those $m_i$ satisfying $m_i\equiv N\modu2$ appears for an even number of times. Let $\mfk P_{2m}$ denote the set of admissible partitions of $2m$.
\end{defi}

By the classification of nilpotent orbits of $G$, for each $L\in\{\ovl\kappa, \ovl{\bb Q_\ell}\}$, there exists canonical surjective maps $\pi: \mcl N_{2m}(L)\to \mfk P_{2m}$ \sut the fibers of $\pi$ are exactly the orbits in $\mcl N_{2m}(L)$ under the conjugate action of $G(L)$; see \cite[Theorem~1.6]{Jan04}. The corresponding orbit is the intersection of $\mfk g_L\subset \mfk{gl}_{2m, L}$ with the $\GL_{2m, L}$-orbit corresponding to that partition of $2m$.

We introduce the following convenient assumption.

\begin{assumption}\label{uansoisoifeifmes}
$\kappa$ contains a subfield $\kappa^\flat$ of degree two \sut $\Std\circ\ovl\rho$ is defined over $\kappa^\flat$.
\end{assumption}

By \cite[\S3.1]{Boo19a}, when Assumption~\ref{uansoisoifeifmes} holds, we obtain an element $X_0\in \mcl N_{2m}(\mcl O)$ \sut
\begin{itemize}
\item
$\pi(X_0)=\pi(\ovl X)$, where we regard $X_0$ as an element in $\mcl N_{2m}(\ovl{\bb Q_\ell})$;
\item
the centralizer $Z_G(X_0)\subset G$ is smooth over $\mcl O$; and
\item
there exists an element $\Phi\in G(\mcl O)$ \sut $\Ad_G(\Phi)X_0=qX_0$.
\end{itemize}
Following \cite[\S3.2]{Boo19a}, we obtain a functor
\begin{equation*}
\Nil_{\ovl X}: \mrs C_{\mcl O}\to \Set: \bx R\mapsto \{X\in\mfk g_{\bx R}: \Ad_G(g)(X_0)=X\text{ for some }g\in G^\wedge(\mcl O)\},
\end{equation*}
which is representable. It follows from \cite[Corollary~5.2]{Boo19a} that we obtain an exponential map
\begin{equation*}
\exp: \Nil_{\ovl X}\to G
\end{equation*}
\sut for any object $\bx R$ of $\mrs C_{\mcl O}$ and $g\in G^\wedge(\bx R), X\in \Nil_{\ovl X}(\bx R)$ and any positive integer $k$, we have
\begin{equation*}
\exp(kX)=\exp(X)^k, \quad \Ad(g)\exp(X)=\exp(\Ad_G(g)X).
\end{equation*}

\begin{defi}\label{psoosiiieufunnuenes}
Suppose Assumption~\ref{uansoisoifeifmes} holds. We say that a lifting $\rho$ of $\ovl\rho$ to an object $\bx R$ of $\mrs C_{\mcl O}$ is \emph{minimally ramified} if there exists an element $X\in \Nil_{\ovl X}(\bx R)$ \sut $\rho(t)=\exp(X)$.

Let $\mrs D^{\bmin, \square}$ be the local lifting condition of $\ovl\rho$ \wrt $\chi$ (see Definition~\ref{oeieriwowpq}) that classifies minimally ramified liftings of $\ovl\rho$.
\end{defi}

We recall the following result of Booher~\cite{Boo19a} concerning minimally ramified deformation problems.

\begin{prop}\label{ossoooieifmeies}
Suppose that $\ell>2m$ and \textup{Assumption~\ref{uansoisoifeifmes}} holds. Then the local lifting condition $\mrs D^{\bmin, \square}$ is formally smooth over $\Spf\mcl O$ of pure relative dimension $\dim G'_\kappa$. In particular, it follows from \textup{Equation~\eqref{psisieubifes}} that
\begin{equation*}
\dim_\kappa\bx L(\mrs D^\bmin)=\dim_\kappa\bx H^0(\bx T_q, \ad^0(\ovl\rho)).
\end{equation*}
\end{prop}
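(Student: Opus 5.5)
The plan is to deduce the statement from Booher's work \cite{Boo19a}, together with the tangent-space formula \eqref{psisieubifes}. The second assertion is formal once formal smoothness and the relative dimension are known. Indeed, if $\mrs D^{\bmin,\square}$ is formally smooth over $\Spf\mcl O$ of relative dimension $\dim G'_\kappa$, its special fibre is formally smooth over $\kappa$ of the same dimension. Hence $\dim_\kappa \bx L^1(\mrs D^{\bmin,\square})=\dim G'_\kappa$. Substituting this into \eqref{psisieubifes} gives $\dim_\kappa\bx L(\mrs D^\bmin)=\dim_\kappa\bx H^0(\bx T_q,\ad^0(\ovl\rho))$. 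So everything reduces to the smoothness and dimension claim.

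For smoothness, I would follow Booher's strategy, which goes through a parametrization of the minimally ramified lifting space. A minimally ramified lifting is determined by two data: the pair $(X,\rho(\phi_q))$ with $X\in\Nil_{\ovl X}(\bx R)$, subject to the relation $\Ad_G(\rho(\phi_q))X=qX$, which encodes $\phi_q t\phi_q^{-1}=t^q$ via $\exp(qX)=\exp(X)^q$. There is also the similitude condition $\nu\circ\rho=\chi$. The steps are as follows.
\begin{enumerate}
\item Show that $\Nil_{\ovl X}$ is formally smooth. It is the orbit map $G^\wedge\to\Nil_{\ovl X}$, $g\mapsto \Ad(g)X_0$, which identifies it with $G^\wedge/Z_G(X_0)^\wedge$. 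This uses that $Z_G(X_0)$ is smooth over $\mcl O$, which holds since $\ell>2m$ and Assumption~\ref{uansoisoifeifmes} lets us choose $X_0$ as in \cite[\S3.1]{Boo19a}.
\item Fix $X$ and consider the fibre over it. The set of admissible $\Phi'$ with $\Ad(\Phi')X=qX$ is a torsor under $Z_G(X)$, since it contains a translate of the element $\Phi$ with $\Ad(\Phi)X_0=qX_0$. Imposing $\nu(\Phi')=\chi(\phi_q)$ cuts this down to a torsor under $Z_G(X)\cap\ker\nu$, more precisely under the appropriate coset.
\item Use smoothness of $Z_G(X_0)$ and of its intersection with $\ker\nu$ to conclude that the total space is formally smooth over $\mcl O$.
\end{enumerate}

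For the dimension count, the relative dimension is
\[
\dim G-\dim Z_G(X_0)+\dim Z_G(X_0)-1=\dim G-1=\dim G'_\kappa ,
\]
since the center of $G^\circ$ is one-dimensional and $\nu$ is nontrivial on it. Here I use that $\nu|_{Z_G(X_0)}$ is smooth and surjective, because $Z_G(X_0)$ contains the scalars. Booher packages this as \cite[Theorem~4.7/Proposition~4.8]{Boo19a}, which I would cite directly after checking that his hypotheses ($\ell>2m$, $\ell$-adic tame group, the chosen $X_0$ and $\Phi$) match the present setup.

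I expect the main obstacle to be verifying that Booher's conventions fit ours. There are two points to check. First, his liftings have fixed similitude $\chi$ and are for $\GO_{2m}$, including the disconnected case. Second, his equivalence uses $G^\wedge$, whereas ours uses $(G')^\wedge$. The latter does not affect the lifting functor, only the deformation functor. In the disconnected $\GO_{2m}$ case, $\ovl\rho(\phi_q)$ may lie outside $G^\circ$; the torsor argument is unaffected because it only uses the coset of $\Phi$.
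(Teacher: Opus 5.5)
Your proposal is correct and takes the same route as the paper: formal smoothness of relative dimension $\dim G'_\kappa$ comes from the proofs of \cite[Proposition~5.6, Corollary~5.8]{Boo19a} (these are the relevant statements, rather than the 4.7/4.8 you cite), and $\dim_\kappa\bx L(\mrs D^\bmin)$ is then read off from \eqref{psisieubifes} exactly as you do. Your sketch is a faithful unpacking of that citation: the smooth orbit $\Nil_{\ovl X}\cong G^\wedge/Z_G(X_0)^\wedge$, with fibres that are torsors under the smooth group $Z_G(X)\cap\ker\nu$.
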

\begin{proof}
This follows from the proofs of \cite[Proposition~5.6, Corollary~5.8]{Boo19a}.
\end{proof}

We recall the following lemma on the decomposition of liftings of representations of tame groups that will be useful later.

\begin{lm}\label{poeoiemifemifws}
Let $(\ovl r, \ovl{\bx M})$ be an unramified representation of the tame group $\bx T_q$ over $\kappa$. Suppose that $\ovl{\bx M}$ admits a decomposition
\begin{equation}\label{osoiseitieres}
\ovl{\bx M}=\ovl{\bx M}_1\oplus\ldots\oplus \ovl{\bx M}_s
\end{equation}
stable under the action of $\ovl r(\phi_q)$ \sut the characteristic polynomials of $\ovl r(\phi_q)$ on $\ovl{\bx M}_i$ are pairwise coprime for $1\le i\le s$. Let $(r, \bx M)$ be a lifting of $(\ovl r, \ovl{\bx M})$ to an object $\bx R$ of $\mrs C_{\mcl O}$.
\begin{enumerate}
\item
There is a unique decomposition
\begin{equation*}
\bx M=\bx M_1\oplus\ldots\oplus \bx M_s
\end{equation*}
of free $\bx R$-modules lifting \eqref{osoiseitieres}, \sut each $\bx M_i$ is stable under the action of $\ovl r(\phi_q)$.
\item
Suppose $q$ is not an eigenvalue for the canonical action of $\phi_q$ on $\Hom_\kappa(\ovl{\bx M}_i, \ovl{\bx M}_j)$ for all $i\ne j$, then the decomposition in (1) is stable under the whole group $\bx T_q$.
\end{enumerate}
\end{lm}
\begin{proof}
For (1): Let $f(T)$ be the characteristic polynomial of $\rho(\phi_q)$ on $M$. Its reduction decomposes as $\ovl f(T)=\prod_{i=1}^rg_i(T)$, where $g_i(T)$ is the characteristic polynomial of $\ovl\rho(\phi_q)$ on $\bx M_i$ for every $1\le i\le r$. By hypothesis, these characteristic polynomials $g_i(T)$ are pairwise coprime. So it follows from Hensel's lemma that there exists a decomposition $f(T)=\prod_{i=1}^r\tld g_i(T)$ in which $\tld g_i(T)\in \bx R[T]$ are polynomials lifting $g_i(T)$ for every $1\le i\le r$. For each $1\le i\le r$, we define $\tld Q_i=\prod_{i\ne j}\tld P_i$, and choose $A_i, B_i\in \bx R[T]$ with $A_i\tld P_i+B_i\tld Q_i=1$. Then $e_i\defining B(\rho(\phi_q))\tld Q_i(\rho(\phi_q))$ are projectors on $\bx M$ satisfying $\sum_ie_i=1$. Then we finish by taking $\bx M_i\defining e_i\bx M$.

Part (2) is proved in \cite[Lemma~3.21]{LTXZZa}.
\end{proof}

\subsection{Minimally ramified representations}\label{sosofieifiehfemis}

In this subsection, we recall the notion of minimally ramified deformations.

Let $v$ be a finite place of $F$ that is not in $\Pla(\ell)$. Let $P_v$ be the maximal subgroup of $I_{F_v}$ of pro-order coprime to $\ell$, and set $\bx T_v\defining \Gal_{F_v}/P_v$, which is isomorphic to the $\norml{v}$-tame group $\bx T_{\norml{v}}$ (see Definition~\ref{eleifnfmeimiswsv}). Suppose $(\Gamma, \ovl\rho, \chi)$ is a triple as in \textup{Setup~\ref{osisienieeimifes}} in which $\Gamma=\Gal_{F_v}$.

For any irreducible representation $\tau$ of $P_v$ with coefficients in $\kappa$, we define
\begin{equation*}
\Gamma_\tau\defining\{\sigma\in\Gal_{F_v}|\tau^\sigma\cong\tau\},
\end{equation*}
which is a subgroup of $\Gal_{F_v}$ containing $P_v$. Let $\bx T_\tau$ be the image of $\Gamma_\tau$ in $\bx T_v$.

\begin{lm}\label{oeiipwinvuiqpqqa}
Suppose $\tau$ is an irreducible representation of $P_v$ with coefficients in $\kappa$.
\begin{enumerate}
\item
The dimension of $\tau$ is coprime to $\ell$; and $\tau$ has a unique deformation to a representation $\tld\tau$ of $P_v$ over $\mcl O$;
\item
The deformation $\tld\tau$ in (1) admits a unique extension to $\Gamma_\tau\cap I_{F_v}$ over $\mcl O$ with tame determinant (that is, the determinant maps the tame generator to a root of unity with finite order coprime to $\ell$);
\item
There exists an extension of the deformation $\tld\tau$ in (2) to a representation of $\Gamma_\tau$ over $\mcl O$.
\end{enumerate}
\end{lm}
\begin{proof}
This is \cite[Lemma~2.4.11]{CHT08}.
\end{proof}

For each irreducible representation $\tau$ of $P_v$ with coefficients in $\kappa$ and each lifting $\rho$ of $\ovl\rho$ to an object $\bx R$ of $\mrs C_{\mcl O}$, we set
\begin{equation*}
\bx W_\tau(\rho)\defining \Hom_{\bx R[P_v]}(\tld\tau\otimes_{\mcl O}\bx R, \Std\circ\rho),
\end{equation*}
where $\tld\tau$ is a representation of $\Gamma_\tau$ with coefficients in $\mcl O$ from Lemma~\ref{oeiipwinvuiqpqqa}(3) (later we will fix such choices in Proposition~\ref{lsimeinfeniemiws}). Note that $\bx W_\tau(\rho)$ is a finite free $\bx R$-module equipped with the induced continuous action by $\bx T_\tau$, and $\tld\tau\otimes_{\mcl O}\bx W_\tau(\rho)$ is equipped with a natural action by $\Gamma_\tau$. 

\begin{assumption}\label{osiiusuenfueiis}
$\kappa$ contains a subfield $\kappa^\flat$ of degree $2$ \sut every irreducible summand of $(\Std\circ\ovl\rho)|_{P_v}\otimes_\kappa\ovl\kappa$ is defined over $\kappa^\flat$.
\end{assumption}

\begin{defi}\label{lsiiieinnifeis}
Assume Assumption~\ref{osiiusuenfueiis} holds. We denote by $\mfk T$ the set of isomorphism classes of irreducible representations $\tau$ of $\bx P_v$ \sut $W_\tau(\ovl\rho)\ne 0$. Then $\Gal_{F_v}$ acts on $\mfk T$ by conjugation, whose orbits we denote by $\mfk T/\Gal_{F_v}$. For each $\tau\in\mfk T$, we denote by $[\tau]$ its orbit in $\mfk T/\Gal_{F_v}$.

We denote the underlying space of $\Std\circ\ovl\rho$ by $V$, and define $V^*\defining V^\vee\otimes\chi$. Then $\rho$ defines an isomorphism of $\kappa[\Gal_{F_v}]$-modules $\psi: V\cong V^*$. By comparing $P_v$-isotypic components, we get a natural isomorphism of of $\kappa[\Gal_{F_v}]$-modules
\begin{equation*}
\psi_\tau: \bx W_\tau(\ovl\rho)^*\cong W_{\tau^\star}(\ovl\rho)
\end{equation*}
for some $\tau^\star\in\mfk T$.

Then we construct a $\Gal_{F_v}$-stable partition $\mfk T=\mfk T_1\coprod \mfk T_2\coprod \mfk T_3$ as follows.
\begin{enumerate}
\item
We define $\mfk T_1$ to be the subset of $\mfk T$ consisting of those $\tau$ \sut $[\tau]\ne [\tau^\star]$. We also fix a subset $\mfk T_1^\heart\subset \mfk T_1$ \sut $\{\tau, \tau^\star|\tau\in\mfk T_1^\heart\}$ is set of representatives for the $\Gal_{F_v}$-action on $\mfk T_1$.
\item
We define $\mfk T_2$ to be the subset of $\mfk T$ consisting of those $\tau$ \sut $\tau\cong \tau^\star$ as $P_v$-representations. We also fix a subset $\mfk T_2^\heart\subset \mfk T_2$ of representatives for the $\Gal_{F_v}$-action on $\mfk T_2$.
\item
We define $\mfk T_3$ to be the subset of $\mfk T$ consisting of those $\tau$ \sut $[\tau]=[\tau^\star]$ but $\tau\ncong \tau^\star$ as $P_v$-representations. We also fix a subset $\mfk T_3^\heart\subset \mfk T_3$ of representatives for the $\Gal_{F_v}$-action on $\mfk T_3$.
\end{enumerate}
\end{defi}

\begin{prop}\label{lsimeinfeniemiws}
Assume \textup{Assumption~\ref{osiiusuenfueiis}} holds.
\begin{enumerate}
\item
For each $\tau\in \mfk T_1^\heart$, we fix a representation $\tld\tau$ of $\Gamma_\tau$ with coefficients in $\mcl O$ from \textup{Lemma~\ref{oeiipwinvuiqpqqa}(3)}. We set
\begin{equation*}
\ovl G_\tau\defining \udl\Aut(\bx W_\tau(\ovl\rho)).
\end{equation*}
\item
For each $\tau\in \mfk T_2^\heart$, any isomorphism $\iota_\tau: \tau\xr\sim \tau^\star$ induces a sign-symmetric pairing (for some fixed sign $\eps_\tau$) on $\tau$, cf. \cite[Lemma~6.5]{Boo19a}. Moreover, there is a sign-symmetric perfect pairing $\bra{-, -}_{\bx W(\ovl\rho)}$ on $\bx W(\ovl\rho)$, and we set
\begin{equation*}
\ovl G_\tau\defining\udl{\bx{GAut}}(\bx W_\tau(\ovl\rho), \bra{-, -}_{\bx W_\tau(\ovl\rho)}),
\end{equation*}
which is a split general symplectic group or split general orthogonal group over $\kappa$. It follows from \cite[Lemma~6.10, Lemma~6.11]{Boo19a} that we can fix a lifting $\tld\tau$ of $\tau$ as in \textup{Lemma~\ref{oeiipwinvuiqpqqa}(3)} with an isomorphism $\tld\tau\xr\sim\tld\tau^*r$ lifting that on $\iota_\tau$.
\item
For each $\tau\in \mfk T_3^\heart$, we define
\begin{equation*}
\Gamma_{\tau\oplus\tau^\star}\defining\{g\in \Gal_{F_v}: (\tau\oplus\tau^\star)^g\cong \tau\oplus\tau^\star\},
\end{equation*}
which contains $\Gamma_\tau$ as a subgroup of index 2, as conjugation either preserves $\tau$ and $\tau^*$ or swaps them. Any isomorphism $\iota_\tau: (\tau\oplus\tau^\star)\xr\sim(\tau\oplus\tau^\star)^*$ of $\kappa[\Gamma_{\tau\oplus\tau^\star}]$-modules induces a sign-symmetric pairing on $\bx W_\tau(\ovl\rho)$, and we set
\begin{equation*}
\ovl G_\tau\defining\udl{\bx{GAut}}(\bx W_\tau(\ovl\rho), \bra{-, -}_{\bx W_\tau(\ovl\rho)}),
\end{equation*}
which is a split general symplectic group or split general orthogonal group over $\kappa$. Then we obtain a perfect pairing on $\tau\oplus \tau^\star$, cf.~\cite[p.~30]{Boo19a}. It follows from \cite[Lemma~6.12]{Boo19a} that we can fix a lifting $\tld{\tau\oplus\tau^\star}$ of $\tau\oplus\tau^\star$ with an isomorphism $\tld{\tau\oplus\tau^\star}\xr\sim \paren{\tld{\tau\oplus\tau^\star}}^*$ lifting that on $\iota_\tau$.
\end{enumerate}

In each case, we can lift $\ovl G_\tau$ to a split reductive group $G_\tau$ over $\mcl O$ by lifting the linear algebra data.
\end{prop}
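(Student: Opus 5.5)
The proposition makes three existence claims: a chosen lift $\tld\tau$ carrying a pairing compatible with the given one, a sign-symmetric pairing on $\bx W_\tau(\ovl\rho)$ (so that $\ovl G_\tau$ is a split $\GSp$ or $\GO$), and a split reductive lift $G_\tau$ over $\mcl O$. I would treat the three cases of Definition~\ref{lsiiieinnifeis} separately, following \cite[\S6]{Boo19a} and adapting it to similitude groups with multiplier $\chi$. The basic mechanism is the isotypic decomposition
\begin{equation*}
\Std\circ\ovl\rho|_{P_v}\cong\bplus_{\tau\in\mfk T}\tau\otimes\bx W_\tau(\ovl\rho)
\end{equation*}
together with the given isomorphism $\psi: V\xr\sim V^*=V^\vee\otimes\chi$. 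Assumption~\ref{osiiusuenfueiis} makes every $\tau$ defined over $\kappa$, so each $\bx W_\tau(\ovl\rho)$ is a $\kappa$-vector space. Because $P_v$ has pro-order prime to $\ell$, its $\kappa$-representations are semisimple and Schur's lemma holds over $\kappa$.

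\emph{Case $\mfk T_1$.} No pairing is needed. I would pick any $\tld\tau$ from Lemma~\ref{oeiipwinvuiqpqqa}(3), and $\ovl G_\tau=\GL(\bx W_\tau(\ovl\rho))$ lifts to $\GL_{n}$ over $\mcl O$.

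\emph{Case $\mfk T_2$.} Here $\tau\cong\tau^\star$. An isomorphism $\iota_\tau:\tau\xr\sim\tau^\vee\otimes\chi|_{P_v}$ is a nondegenerate $P_v$-invariant bilinear form on $\tau$. Its transpose is another such form, so by Schur's lemma it equals $\eps_\tau$ times the first with $\eps_\tau^2=1$; this is \cite[Lemma~6.5]{Boo19a}. Restricting $\psi$ to the isotypic piece identifies $\tau\otimes\bx W_\tau$ with its dual twisted by $\chi$. Since $\Hom_{P_v}(\tau,\tau^*)$ is one-dimensional and spanned by $\iota_\tau$, the restricted form factors as $\iota_\tau\otimes\bra{-,-}_{\bx W_\tau}$ for a unique perfect pairing on $\bx W_\tau(\ovl\rho)$. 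Its symmetry sign is $\eps_\tau$ times the sign of $\psi$, which is $+1$ for $\GO$ and $-1$ for $\GSp$. The $\Gamma_\tau$-equivariance up to the multiplier follows from that of $\psi$, so $\ovl G_\tau$ is a general orthogonal or symplectic group, split because $\kappa$ is finite (for orthogonal groups, after adjusting the discriminant via the degree-two subfield $\kappa^\flat$ in Assumption~\ref{osiiusuenfueiis}, as in \cite{Boo19a}).

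To lift $\iota_\tau$, I would use uniqueness of the deformation of $\tau$ to $P_v$ in Lemma~\ref{oeiipwinvuiqpqqa}(1) and (2): since $\tld\tau^*$ also deforms $\tau$, there is an isomorphism $\tld\tau\xr\sim\tld\tau^*$, and it can be normalised to reduce to $\iota_\tau$. Extending this to $\Gamma_\tau$ is the main obstacle, because the chosen extension of $\tld\tau$ to $\Gamma_\tau$ is only unique up to a character twist. I would first try to absorb the discrepancy by twisting the extension by a square-root-type character, which exists since $\ell$ is odd; this is exactly \cite[Lemmas~6.10, 6.11]{Boo19a}, and I would cite it after checking that the twist by $\chi$ does not affect the argument.

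\emph{Case $\mfk T_3$.} I would apply the same argument to the $\Gamma_{\tau\oplus\tau^\star}$-module $\tau\oplus\tau^\star$. On this module $\psi$ pairs $\tau$ with $\tau^\star$, which defines the form on $\tau\oplus\tau^\star$ and, via Schur's lemma, the form on $\bx W_\tau$. The lift of the form to $\tld{\tau\oplus\tau^\star}$ is \cite[Lemma~6.12]{Boo19a}.

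Finally, $G_\tau$ is obtained by lifting the Gram matrix of $\bra{-,-}_{\bx W_\tau}$ to $\mcl O$ in a standard form such as $J_n$ or $J_n'$. Its similitude group is then a smooth split reductive group scheme over $\mcl O$ with special fibre $\ovl G_\tau$.
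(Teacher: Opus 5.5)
Your proposal is correct and takes essentially the same route as the paper, whose proof consists only of the reference to \cite[\S6.3]{Boo19a}. You spell out what that reference contains: the Schur-lemma factorisation of $\psi$ on the $\tau$-isotypic part as $\iota_\tau\otimes\langle -,-\rangle_{\mathrm{W}_\tau}$, with sign $\varepsilon_\tau$ times that of $\psi$. Exactly as the paper does, you delegate the delicate points (choosing the extension of $\tilde\tau$ to $\Gamma_\tau$ compatibly with the lifted pairing, and the $\mfk T_3$ case) to \cite[Lemmas~6.5, 6.10--6.12]{Boo19a}.
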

\begin{proof}
These follow from \cite[\S6.3]{Boo19a}.
\end{proof}

\begin{prop}\label{oiueerelwos}
Assume \textup{Assumption~\ref{osiiusuenfueiis}} holds. Suppose $\bx R$ is an object of $\mrs C_{\mcl O}$ and $\rho$ is a lifting of $\ovl\rho$ to $\bx R$.
\begin{enumerate}
\item
Recall that we have defined
\begin{equation*}
\bx W_\tau(\rho)\defining \Hom_{\bx R[P_v]}(\tld\tau\otimes_{\mcl O}\bx R, \Std\circ\rho)
\end{equation*}
for each $\tau\in \mfk T_1^\heart$, which is a finite free $\bx R$-module of some rank $m_\tau\ge 1$ equipped with the induced continuous action by $\bx T_\tau$.
\item
Recall that we have defined
\begin{equation*}
\bx W_\tau(\rho)\defining \Hom_{\bx R[P_v]}(\tld\tau\otimes_{\mcl O}\bx R, \Std\circ\rho)
\end{equation*}
for each $\tau\in \mfk T_2^\heart$, which is a finite free $\bx R$-module of some rank $m_\tau\ge 1$ equipped with the induced continuous action by $\bx T_\tau$. 
\item
We define
\begin{equation*}
\bx W_{\tau\oplus\tau^\star}(\rho)\defining \Hom_{\bx R[P_v]}(\tld{\tau\oplus\tau^\star}\otimes_{\mcl O}\bx R, \Std\circ\rho),
\end{equation*}
which is a finite free $\bx R$-module of some rank $m_\tau\ge 1$ equipped with the induced continuous action by $\bx T_{\tau\oplus\tau^\star}$. 
\end{enumerate}
Then we have a decomposition of $\bx R[\Gal_{F_v}]$-modules
\begin{align*}
\Std\circ\rho&=\bplus_{\tau\in\mfk T_1^\heart}\paren{\Ind_{\Gamma_\tau}^{\Gal_{F_v}}(\tld\tau\otimes_{\mcl O}\bx W_\tau(\rho))\oplus \Ind_{\Gamma_{\tau^\star}}^{\Gal_{F_v}}(\tld\tau^*\otimes_{\mcl O}\bx W_{\tau^\star}(\rho))}\\
&\oplus\bplus_{\tau\in\mfk T_2^\heart}\Ind_{\Gamma_\tau}^{\Gal_{F_v}}(\tld\tau\otimes_{\mcl O}\bx W_\tau(\rho))\oplus\bplus_{\tau\in\mfk T_3^\heart}\Ind_{\Gamma_{\tau\oplus\tau^*}}^{\Gal_{F_v}}(\tld{\tau\oplus\tau^*}\otimes_{\mcl O}\bx W_{\tau\oplus\tau^*}(\rho)).
\end{align*}
Moreover, for $\tau\in\mfk T_2^\heart$ (resp. $\mfk T_3^\heart$), we have natural sign-symmetric pairing on $\bx W_\tau(\rho)$ (resp. $\bx W_{\tau\oplus\tau^\star}(\rho)$).
\end{prop}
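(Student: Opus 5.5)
The plan is to follow the standard Clifford-theoretic decomposition of a lifting restricted to wild inertia, as in \cite[Lemma~2.4.12]{CHT08} and \cite[\S6]{Boo19a}, and then to track the self-duality.

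\emph{Step 1: isotypic decomposition over $P_v$.} Since $P_v$ is pro-prime-to-$\ell$, the category of $\bx R[P_v]$-modules that are finite free over $\bx R$ is semisimple in the appropriate sense. Concretely, $P_v$ acts through a finite quotient of order prime to $\ell$, so the group algebra is a product of matrix algebras over unramified extensions. Hence $\Std\circ\rho|_{P_v}$ decomposes canonically as $\bplus_{\tau\in\mfk T}\tld\tau\otimes_{\mcl O}\bx W_\tau(\rho)$. Here $\tld\tau|_{P_v}$ is the unique deformation from Lemma~\ref{oeiipwinvuiqpqqa}(1), and the evaluation map $\tld\tau\otimes\bx W_\tau(\rho)\to\Std\circ\rho$ is an isomorphism onto the $\tau$-isotypic part. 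Freeness of $\bx W_\tau(\rho)$, of rank equal to $\dim_\kappa \bx W_\tau(\ovl\rho)=m_\tau$, follows from exactness of $\Hom_{P_v}(\tld\tau,-)$ and Nakayama, by reducing modulo $\mfk m_{\bx R}$.

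\emph{Step 2: Galois action and induction.} Because $P_v$ is normal in $\Gal_{F_v}$, an element $\sigma$ maps the $\tau$-isotypic component to the $\tau^\sigma$-component. The stabilizer of the $\tau$-component is $\Gamma_\tau$. Using the extension of $\tld\tau$ to $\Gamma_\tau$ from Lemma~\ref{oeiipwinvuiqpqqa}(3), $\bx W_\tau(\rho)$ acquires an action of $\Gamma_\tau$ trivial on $P_v$, hence an action of $\bx T_\tau$. The $\tau$-component is then $\Gamma_\tau$-equivariantly $\tld\tau\otimes\bx W_\tau(\rho)$. Summing over a $\Gal_{F_v}$-orbit gives $\Ind_{\Gamma_\tau}^{\Gal_{F_v}}(\tld\tau\otimes\bx W_\tau(\rho))$, by Mackey/Frobenius reciprocity. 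Grouping orbits according to the partition $\mfk T_1\sqcup\mfk T_2\sqcup\mfk T_3$ gives the displayed decomposition:
\begin{itemize}
\item For $\mfk T_1$, each orbit $[\tau]$ pairs with $[\tau^\star]$, and $\tld\tau^*$ serves as the extension for $\tau^\star$.
\item For $\mfk T_2$, each orbit contributes a single induction.
\item For $\mfk T_3$, one takes together $\tau$ and $\tau^\star$, which lie in the same orbit, using $\Gamma_{\tau\oplus\tau^\star}$ and the lift $\tld{\tau\oplus\tau^\star}$ from Proposition~\ref{lsimeinfeniemiws}(3). The rank of $\bx W_{\tau\oplus\tau^\star}(\rho)$ is again read off mod $\mfk m_{\bx R}$.
\end{itemize}

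\emph{Step 3: the pairings.} The lifting $\rho$ is $G$-valued with $\nu\circ\rho=\chi$, so it gives a perfect sign-symmetric pairing $\Std\circ\rho\times\Std\circ\rho\to\bx R(\chi)$. This pairing identifies the $\tau$-isotypic part with the dual of the $\tau^\star$-part and is zero between the other isotypic pieces. For $\tau\in\mfk T_2^\heart$, one combines the restriction of the pairing to $\tld\tau\otimes\bx W_\tau(\rho)$ with the fixed isomorphism $\tld\tau\xr\sim\tld\tau^*$ lifting $\iota_\tau$ from Proposition~\ref{lsimeinfeniemiws}(2). Via the identification $\Hom_{P_v}(\tld\tau\otimes\tld\tau,\bx R)\cong \bx R$ (Schur over $\bx R$), this yields a pairing on $\bx W_\tau(\rho)$ whose sign is the product of $\eps_\tau$ and the sign of $G$. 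The $\mfk T_3$ case is handled identically with $\tld{\tau\oplus\tau^\star}$.

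The main obstacle is the claim that $\End_{\bx R[P_v]}(\tld\tau\otimes\bx R)=\bx R$ and the resulting perfectness and $\bx T_\tau$-equivariance of the pairing on $\bx W_\tau(\rho)$. I would handle this by reducing to $\ovl\rho$, using Assumption~\ref{osiiusuenfueiis} to ensure absolute irreducibility of $\tau$ over $\kappa$, and then invoking Nakayama.
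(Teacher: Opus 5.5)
Your outline is correct and is essentially the argument the paper relies on: its proof just cites \cite[\S6.3]{Boo19a}, which carries out this same Clifford-theoretic decomposition over $P_v$ (following \cite{CHT08}) and then transports the $G$-pairing to the multiplicity spaces. The only step that needs more care than ``handled identically'' is $\mfk T_3$: since $\tau\not\cong\tau^\star$, you have $\End_{P_v}(\tld{\tau\oplus\tau^\star})\cong\mcl O\times\mcl O$ and $\bx W_{\tau\oplus\tau^\star}(\rho)$ contains both the $\tau$- and the $\tau^\star$-multiplicity spaces, so both the tensor product and the Schur-type step for the pairing must be taken over this endomorphism ring (alternatively, keep $\Ind_{\Gamma_\tau}^{\Gal_{F_v}}(\tld\tau\otimes\bx W_\tau(\rho))$ for these orbits), since otherwise the ranks come out doubled.
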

\begin{proof}
These follow from \cite[\S6.3]{Boo19a}.
\end{proof}

\begin{prop}\label{lsoeieiinfituores}
Assume \textup{Assumption~\ref{osiiusuenfueiis}} holds. We also keep the choices of Definition~\ref{lsiiieinnifeis} and the isomorphisms $\iota_\tau: \tau\xr\sim\tau^\star$ as in \textup{Proposition~\ref{lsimeinfeniemiws}(2)}. Then for every object $\bx R$ of $\mrs C_{\mcl O}$, the assignment
\begin{equation*}
\rho\mapsto \paren{(\bx W_\tau(\rho))_{\tau\in\mfk T_1^\heart}, (\bx W_\tau(\rho))_{\tau\in\mfk T_2^\heart}, (\bx W_{\tau\oplus\tau^\star}(\rho))_{\tau\in\mfk T_3^\heart}}
\end{equation*}
induces a bijection between deformations of $\ovl\rho$ to $\bx R$ (\wrt $\chi$) and equivalence classes of tuples $\paren{(\bx W_\tau)_{\tau\in\mfk T_1^\heart}, (\bx W_\tau)_{\tau\in\mfk T_2^\heart}, (\bx W_{\tau\oplus\tau^\star})_{\tau\in\mfk T_3^\heart}}$ where
\begin{enumerate}
\item
for $\tau\in \mfk T_1^\heart$, $\bx W_\tau: \bx T_\tau\to \GL_{m_\tau}(\bx R)$ is a continuous homomorphism that reduces to $\bx W_\tau(\ovl\rho)$;
\item
for $\tau\in \mfk T_2^\heart$, $\bx W_\tau: \bx T_\tau\to G_\tau(\bx R)$ is a continuous homomorphism that reduces to $\bx W_\tau(\ovl\rho)$ with trivial similitude character; and
\item
for $\tau\in\mfk T_3^\heart$, $\bx W_{\tau\oplus\tau^\star}: \bx T_{\tau\oplus\tau^\star}\to G_\tau(\bx R)$ is a continuous homomorphism that reduces to $\bx W_{\tau\oplus\tau^\star}(\ovl\rho)$ with trivial similitude character.
\end{enumerate}
Here, two tuples
\begin{equation*}
\paren{(\bx W_\tau)_{\tau\in\mfk T_1^\heart}, (\bx W_\tau)_{\tau\in\mfk T_2^\heart}, (\bx W_{\tau\oplus\tau^\star})_{\tau\in\mfk T_3^\heart}}, \paren{(\bx W'_\tau)_{\tau\in\mfk T_1^\heart}, (\bx W'_\tau)_{\tau\in\mfk T_2^\heart}, (\bx W'_{\tau\oplus\tau^\star})_{\tau\in\mfk T_3^\heart}}
\end{equation*}
are said to be equivalent if 
\begin{enumerate}
\item
for $\tau\in \mfk T_1^\heart$, $\bx W_\tau$ and $\bx W'_\tau$ are conjugate by an element of $\GL_{m_\tau}^\wedge(\bx R)$;
\item
for $\tau\in \mfk T_2^\heart$, $\bx W_\tau$ and $\bx W'_\tau$ are conjugate by an element of $(G_\tau')^\wedge(\bx R)$, where $G_\tau'$ is the derived subgroup of $G_\tau$; and
\item
for $\tau\in \mfk T_3^\heart$, $\bx W_{\tau\oplus\tau^\star}$ and $\bx W'_{\tau\oplus\tau^\star}$ are conjugate by an element of $(G_\tau')^\wedge(\bx R)$, where $G_\tau'$ is the derived subgroup of $G_\tau$.\end{enumerate}
\end{prop}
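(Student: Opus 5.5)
The plan is to build the two maps explicitly, check that they are mutually inverse on deformations, and then verify compatibility with the self-duality data. This follows the strategy of \cite[\S6.3]{Boo19a} and \cite[Lemma~2.4.12]{CHT08}.

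\emph{Forward map.} Given a lifting $\rho$ of $\ovl\rho$ to $\bx R$, Proposition~\ref{oiueerelwos} gives the tuple $(\bx W_\tau(\rho))$. The key points are the following.
\begin{itemize}
\item Since $P_v$ has pro-order prime to $\ell$, its representation theory over $\bx R$ is semisimple, and the liftings $\tld\tau$ are rigid (Lemma~\ref{oeiipwinvuiqpqqa}).
\item Consequently each $\bx W_\tau(\rho)$ is finite free of rank $m_\tau$, and it reduces to $\bx W_\tau(\ovl\rho)$ because $\Hom_{P_v}$ commutes with base change.
\item For $\tau\in\mfk T_2^\heart$ and $\mfk T_3^\heart$, the pairing on $\Std\circ\rho$ (with similitude $\chi$) restricts along the isotypic decomposition. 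It lands on $\bx W_\tau(\rho)$ and pairs $\tau$ with $\tau^\star$. Combining it with the fixed isomorphisms $\tld\tau\cong\tld\tau^*$ of Proposition~\ref{lsimeinfeniemiws}, which already absorb $\chi$, yields a sign-symmetric pairing on $\bx W_\tau(\rho)$ preserved by $\bx T_\tau$ with trivial similitude.
\item To get a homomorphism into $G_\tau(\bx R)$ rather than into the group of some abstract form, I choose a basis lifting the fixed one over $\kappa$. Since forms of this type over a complete local ring are determined by their reduction (smoothness of the scheme of frames), this choice is unique up to $G_\tau^\wedge(\bx R)$-conjugation.
\item Conjugating $\rho$ by $g\in(G')^\wedge(\bx R)$ conjugates each $\bx W_\tau(\rho)$ by the induced element. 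So the map is well defined on equivalence classes.
\end{itemize}

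\emph{Inverse map.} Given a tuple $(\bx W_\tau)$, I form the induced module in the displayed decomposition of Proposition~\ref{oiueerelwos}. The pieces are:
\begin{itemize}
\item $\Ind_{\Gamma_\tau}^{\Gal_{F_v}}(\tld\tau\otimes\bx W_\tau)\oplus\Ind_{\Gamma_{\tau^\star}}^{\Gal_{F_v}}(\tld\tau^*\otimes\bx W_\tau^\vee)$ for $\tau\in\mfk T_1^\heart$, where the $\tau^\star$ part is forced by duality;
\item $\Ind_{\Gamma_\tau}^{\Gal_{F_v}}(\tld\tau\otimes\bx W_\tau)$ for $\mfk T_2^\heart$;
\item $\Ind_{\Gamma_{\tau\oplus\tau^\star}}^{\Gal_{F_v}}(\tld{\tau\oplus\tau^\star}\otimes\bx W_{\tau\oplus\tau^\star})$ for $\mfk T_3^\heart$.
\end{itemize}
I endow this module with the tensor-product pairing. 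For $\mfk T_1$ this is the evaluation pairing between the two summands. The resulting pairing is perfect, with similitude $\chi$ and the correct sign, because $\eps_\tau$ times the sign of the form on $\bx W_\tau$ equals the sign of $\Std$.

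The reduction mod $\mfk m_{\bx R}$ is isomorphic to $\Std\circ\ovl\rho$ as a module with pairing, by Proposition~\ref{oiueerelwos} applied to $\ovl\rho$. Lifting this isomorphism to an isomorphism over $\bx R$ compatible with pairings is again a smoothness statement for the frame torsor. It yields a lifting $\Gal_{F_v}\to G(\bx R)$ that is well defined up to $G^\wedge(\bx R)$-conjugation. Since the similitude is fixed to be $\chi$, rescaling lets me reduce this to $(G')^\wedge(\bx R)$-conjugation, as in the proof of Lemma~\ref{osoosieiwumfiev}.

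\emph{Mutual inverses.} The composite "tuple $\mapsto$ lift $\mapsto$ tuple" is the identity up to equivalence, since $\Hom_{P_v}(\tld\tau,-)$ recovers $\bx W_\tau$ by Frobenius reciprocity and Schur. The composite "lift $\mapsto$ tuple $\mapsto$ lift" is the identity by the decomposition in Proposition~\ref{oiueerelwos}, which is compatible with pairings.

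The step I expect to be the main obstacle is matching the equivalence relations exactly, especially for $\mfk T_1^\heart$. Here an arbitrary $\GL_{m_\tau}^\wedge$-conjugation of $\bx W_\tau$ must correspond to a $(G')^\wedge$-conjugation of $\rho$. I would handle it by acting on $\bx W_\tau$ by $h$ and on the dual $\bx W_{\tau^\star}$ by $(h^\vee)^{-1}$; this pair is an element of $G'$ preserving the form. Conversely, an element of the centralizer-type subgroup $\prod_\tau\GL_{m_\tau}\times\prod G_\tau$ embeds in $G$, and its $G^\wedge$-elements that are trivial on similitude lie in the derived part. A final check is needed to confirm that the similitude components of $G_\tau$ for $\mfk T_2$ and $\mfk T_3$ contribute nothing beyond the derived group once similitude is fixed, using that $\ell$ is odd so square roots exist in $1+\mfk m_{\bx R}$.
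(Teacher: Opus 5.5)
Your proposal is correct and takes essentially the same route as the paper, whose proof is just the remark that the proof of \cite[Proposition~6.15]{Boo19a} (cf.\ \cite[Proposition~3.28]{LTXZZa}) goes through. That argument is the one you give: decompose $\Std\circ\rho$ into $P_v$-isotypic pieces, transfer the similitude pairing to the multiplicity spaces $\bx W_\tau(\rho)$, rebuild liftings by induction and Hensel-lifting of frames (smooth since $\ell$ is odd), and match the formal conjugation relations, using square roots in $1+\mfk m_{\bx R}$ to pass from $G_\tau^\wedge$-conjugacy to $(G_\tau')^\wedge$-conjugacy.
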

\begin{proof}
The proof of \cite[Proposition~6.15]{Boo19a} goes through, cf.~\cite[Proposition~3.28]{LTXZZa}.
\end{proof}

We record the following corollary that will be useful later.

\begin{cor}\label{losoieiefniewswivmie}
Suppose $\ell\nmid\prod_{i=1}^{2m}(\norml{v}^i-1)$. Let $\iota_\ell: \bb C\xr\sim\ovl{\bb Q_\ell}$ be an isomorphism, and $\bx R$ an object of $\mrs C_{\mcl O}$ contained in $\ovl{\bb Q_\ell}$. Let $\rho_1, \rho_2$ be two liftings of $\rho$ to $\bx R$, and let $\Pi_1, \Pi_2$ be two irreducible admissible representations of $\GL_{2m}(F_v)$ \sut $\WD(\Std\circ\rho_i\otimes_{\bx R}\ovl{\bb Q_\ell})^{F\dash\sems}=\rec_{2m}(\iota_\ell\Pi_i)$ for every $i\in\{1, 2\}$. Then $\Pi_1$ and $\Pi_2$ are in the same Bernstein component.
\end{cor}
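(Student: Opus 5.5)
We argue through the Bernstein decomposition on the Galois side. Under the local Langlands correspondence for $\GL_{2m}(F_v)$, two irreducible admissible representations are in the same Bernstein component exactly when their supercuspidal supports agree up to unramified twists. On the Weil--Deligne side, this means the restrictions of the semisimplified $L$-parameters to the inertia group $\bx I_{F_v}$ are isomorphic. It therefore suffices to prove
\begin{equation*}
\rec_{2m}(\iota_\ell\Pi_1)|_{\bx I_{F_v}}^{\sems}\cong \rec_{2m}(\iota_\ell\Pi_2)|_{\bx I_{F_v}}^{\sems},
\end{equation*}
and the monodromy operators play no role here. We will show that $(\Std\circ\rho_1)|_{\bx I_{F_v}}$ and $(\Std\circ\rho_2)|_{\bx I_{F_v}}$ have isomorphic semisimplifications after we discard the part of inertia that can only act unipotently in Weil--Deligne terms.

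First we use the hypothesis on $\ell$. Let $P_v$ be the wild inertia group. For $i\in\{1,2\}$, the restriction $\Std\circ\rho_i|_{P_v}$ factors through a finite group of order prime to $\ell$. By Lemma~\ref{oeiipwinvuiqpqqa}(1) it is the unique deformation of $\Std\circ\ovl\rho|_{P_v}$, so $\rho_1|_{P_v}\cong\rho_2|_{P_v}$. Next we pass to the tame level. Proposition~\ref{oiueerelwos} decomposes $\Std\circ\rho_i$ as
\begin{equation*}
\bplus_{\tau}\Ind_{\Gamma_\tau}^{\Gal_{F_v}}(\tld\tau\otimes\bx W_\tau(\rho_i)),
\end{equation*}
where the $\tld\tau$ are fixed choices independent of $i$. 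Hence it remains to compare the actions of tame inertia on $\bx W_\tau(\rho_1)$ and $\bx W_\tau(\rho_2)$. These are representations of the tame group $\bx T_\tau\cong\bx T_{q'}$, with $q'$ a power of $\norml{v}$. Let $t$ be the tame generator. The image of $t$ in each $\bx W_\tau(\rho_i)$ lifts $\ovl{\bx W_\tau}(t)$, and its eigenvalues must be permuted by $x\mapsto x^{q'}$.

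The key step is to show that the tame inertial eigenvalues cannot change. Write the eigenvalues of $\bx W_\tau(\rho_i)(t)$ as $\zeta\cdot u$, where $\zeta$ is a root of unity of order prime to $\ell$ and $u$ is $\ell$-power torsion, or more generally $u\in 1+\mfk m$. Each eigenvalue is a root of unity satisfying $x^{q'^k}=x$ for some $k\le 2m$, because the Frobenius orbit of any eigenvalue has size at most $2m$. Its order therefore divides $q'^k-1$, which divides $\prod_{i=1}^{2m}(\norml{v}^i-1)$. This holds provided $\tau$ is chosen so that $q'^k$ has the form $\norml{v}^j$ with $j\le 2m$, which uses $\dim\tau\cdot[\Gal_{F_v}:\Gamma_\tau]\cdot\rk\le 2m$. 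Since $\ell$ does not divide $\prod_{i=1}^{2m}(\norml{v}^i-1)$, the order is prime to $\ell$. Consequently, the semisimple part of $t$ acts through a prime-to-$\ell$ root of unity. Such an eigenvalue is the Teichmüller lift of its reduction, so it is determined by $\ovl\rho$. The remaining part of $\rho_i(t)$ is unipotent and contributes only to the monodromy operator $N$, which the Weil--Deligne dictionary removes. It follows that the inertial semisimplified Weil--Deligne types of $\rho_1$ and $\rho_2$ both equal the Teichmüller lift of the inertial type of $\ovl\rho$. In particular they are isomorphic, and the Bernstein components agree.

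I expect the main obstacle to be the bound on the orders of the roots of unity: one has to check carefully that the relevant power of $\norml{v}$ occurring in $\bx T_\tau$, combined with the orbit length, stays within exponent $2m$. If that bookkeeping fails for the induced pieces, the fallback is to work directly with $\Std\circ\rho_i$ on the full tame quotient of $\Gal_{F_v}/\ker$. Alternatively, one can use the more crude observation that the finite image of inertia in $\rho_i^{\sems}$ has order dividing $\#\GL_{2m}(\bb F_{\norml{v}})$-type quantities, all of which are prime to $\ell$. Either way, the conclusion should be that the inertial type reduces injectively mod $\ell$.
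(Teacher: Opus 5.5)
Your proposal is correct and follows essentially the same route as the paper, which cites the decomposition of Proposition~\ref{oiueerelwos} together with the argument of \cite[Corollary~3.31]{LTXZZa}: the hypothesis on $\ell$ forces tame inertia to act unipotently on each $\bx W_\tau(\rho_i)$, so the semisimplified restriction of $\Std\circ\rho_i$ to inertia depends only on the fixed $\tld\tau$'s and the ranks $m_\tau$, and hence not on $i$.

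Two small corrections. First, in the paper $P_v$ is the maximal pro-prime-to-$\ell$ subgroup of $\bx I_{F_v}$, not the wild inertia group, so inertia acts on $\bx W_\tau(\rho_i)$ through the pro-$\ell$ group $t^{\bb Z_\ell}$ and your $\zeta$ is automatically $1$. Second, the bookkeeping you worried about works out: the Frobenius-orbit length $k$ of an eigenvalue is at most the rank $m_\tau$ of $\bx W_\tau(\rho_i)$, so $k\,[\Gal_{F_v}:\Gamma_\tau]\le 2m$. Moreover, the hypothesis on $\ell$ already forces $\ell>2m+1$ by Fermat's little theorem, which guarantees $\Gamma_\tau\supseteq\bx I_{F_v}$.
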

\begin{proof}
Using Proposition~\ref{oiueerelwos}, the proof of \cite[Corollary~3.31]{LTXZZa} goes through. 
\end{proof}

We can now drop the Assumption~\ref{osiiusuenfueiis} and define minimally ramified liftings of $\ovl\rho$ for general coefficient field $E$.

\begin{defi}\label{minaimlsoieieres}
Suppose $\ell>2m$. Choose a finite unramified extension $E_\dagger$ of $E$ contained in $\ovl{\bb Q_\ell}$ contained in $\ovl{\bb Q_\ell}$ that satisfies Assumption~\ref{uansoisoifeifmes} with ring of integers $\mcl O_\dagger$ and residue field $\kappa_\dagger$. We also keep the choices of Definition~\ref{lsiiieinnifeis} and the isomorphisms $\iota_\tau: \tau\xr\sim\tau^\star$ as in Proposition~\ref{lsimeinfeniemiws}(2). Let $\rho$ be a lifting of $\ovl\rho$ to an object $\bx R$ of $\mrs C_{\mcl O}$, 
then we obtain the decomposition of $\bx R[\Gal_F]$-modules
\begin{align*}
\Std\circ\rho&=\bplus_{\tau\in\mfk T_1^\heart}\paren{\Ind_{\Gamma_\tau}^{\Gal_{F_v}}(\tld\tau\otimes_{\mcl O}\bx W_\tau(\rho))\oplus \Ind_{\Gamma_{\tau^\star}}^{\Gal_{F_v}}(\tld\tau^*\otimes_{\mcl O}\bx W_{\tau^\star}(\rho))}\\
&\oplus\bplus_{\tau\in\mfk T_2^\heart}\Ind_{\Gamma_\tau}^{\Gal_{F_v}}(\tld\tau\otimes_{\mcl O}\bx W_\tau(\rho))\oplus\bplus_{\tau\in\mfk T_3^\heart}\Ind_{\Gamma_{\tau\oplus\tau^*}}^{\Gal_{F_v}}(\tld{\tau\oplus\tau^*}\otimes_{\mcl O}\bx W_{\tau\oplus\tau^*}(\rho)).
\end{align*}
We say that $\rho$ is minimally ramified with similitude $\chi$ if the following conditions hold:
\begin{enumerate}
\item
For each $\tau\in\mfk T_1^\heart$, $\bx W_\tau(\rho)$ is a minimally ramified extension of $\bx W_\tau(\ovl\rho)$ in the sense of \cite[Definition~2.4.14]{CHT08}.
\item
For each $\tau\in\mfk T_2^\heart$ (resp. $\tau\in \mfk T_3^\heart$), $\bx W_\tau$ (resp. $\bx W_{\tau\oplus\tau^*}$) is minimally ramified in the sense of Definition~\ref{psoosiiieufunnuenes} as a representation of $\bx T_\tau$ (resp. $\bx T_{\tau\oplus\tau^*}$) valued in the group $G_\tau$ with trivial similitude character.
\end{enumerate}
\end{defi}
\begin{rem}
It is easy to check that Definition~\ref{minaimlsoieieres} doesn't depend on the choices of $E_\dagger$ and the choices made in Definition~\ref{lsiiieinnifeis} and Proposition~\ref{lsimeinfeniemiws}.
\end{rem}

\begin{defi}\label{sspeoifiekfensinws}
When $\ell> 2m$, let $\mrs D^{\bmin, \square}$ denote the local lifting condition of $\ovl\rho$ (\wrt $\chi$) that classifies minimally ramified liftings of $\rho$ (see Definition~\ref{minaimlsoieieres}).
\end{defi}

\begin{prop}\label{psisiidimfimies}
The following statements hold:
\begin{enumerate}
\item
The universal lifting ring $\msf R^\square_{\ovl\rho}\in\mrs C_{\mcl O}$ is a reduced local complete intersection, flat and of pure relative dimension $\dim G'_\kappa$ over $\mcl O$.
\item
Every irreducible component of $\Spf\msf R^\square_{\ovl\rho}$ is a local lifting condition (see \textup{Definition~\ref{liftings-anadi-defintiieos}}).
\item
When $\ell>2m$, the local lifting condition $\mrs D^{\bmin, \square}$ is an irreducible component of $\msf R^\square_{\ovl\rho}$ that is formally smooth over $\Spf\mcl O$ of pure relative dimension $\dim G'_\kappa$. In particular, it follows from \textup{Equation~\eqref{psisieubifes}} that
\begin{equation*}
\dim_\kappa\bx L(\mrs D^\bmin)=\dim_\kappa\bx H^0(\Gal_{F_v}, \ad^0(\ovl\rho)).
\end{equation*}
\end{enumerate}
\end{prop}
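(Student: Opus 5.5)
We reduce all three parts to the tame case and to Booher's results via the decomposition in Proposition~\ref{lsoeieiinfituores}. First, after an unramified extension of $E$, which is harmless by descent for flatness, reducedness, the complete intersection property and formal smoothness, we may assume Assumption~\ref{osiiusuenfueiis}. The liftings then factor through the deformation data of Proposition~\ref{lsoeieiinfituores}. A framed lifting is a framed deformation together with an element of $(G')^\wedge(\bx R)$ modulo the centralizer. So, exactly as in \cite[Lemma~2.4.19]{CHT08} and \cite[Proposition~3.29]{LTXZZa}, we would show that $\msf R^\square_{\ovl\rho}$ is formally smooth over the completed tensor product
\begin{equation*}
\hat\btimes_{\tau}\msf R^\square_{\bx W_\tau(\ovl\rho)},
\end{equation*}
taken over $\tau\in\mfk T_1^\heart\cup\mfk T_2^\heart\cup\mfk T_3^\heart$. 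Here each factor is a universal lifting ring for a representation of a tame group, valued in $\GL_{m_\tau}$ for $\tau\in\mfk T_1^\heart$ and in $G_\tau$ with trivial similitude otherwise. The relative dimension of this smooth morphism is
\begin{equation*}
\dim G'_\kappa-\sum_\tau\dim(G_\tau')_\kappa,
\end{equation*}
with $\GL_{m_\tau}$ in place of $G'_\tau$ for $\tau\in\mfk T_1^\heart$. This count is a comparison of centralizers of the $P_v$-isotypic decomposition.

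For part~(1), each tame factor is a local complete intersection, flat and of pure relative dimension $\dim G'_\tau$ over $\mcl O$. For $G_\tau$ this is Proposition~\ref{osoosiiieubfes}, applied with $G$ replaced by the symplectic or orthogonal group $G_\tau$; the remark at the start of the tame subsection allows this, including odd orthogonal groups. For $\GL_{m_\tau}$ it is \cite[Lemma~2.4.19/Proposition~3.16]{LTXZZa}. Completed tensor products and formally smooth extensions preserve these properties, and the dimensions add up to $\dim G'_\kappa$.

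Reducedness is the delicate point. I would deduce it from generic reducedness plus the Cohen--Macaulay property: a complete intersection ring is Cohen--Macaulay, hence satisfies $(S_1)$, so it suffices to show $(R_0)$. Since the ring is $\mcl O$-flat, its generic points lie in the generic fibre. It therefore suffices to show the generic fibre is generically smooth, which would follow from the tame group computation in \cite[Proposition~3.16]{LTXZZa} (itself via the Bellovin--Gee/Booher style analysis of the components indexed by the semisimple conjugacy class of $\phi_q$). I expect checking that this applies verbatim to $G_\tau$-valued tame representations to be the main obstacle.

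For part~(2), $(G')^\wedge$ is formally connected, so conjugation by $(G')^\wedge(\bx R)$ preserves each irreducible component. Concretely, the conjugation action map $(G')^\wedge\times\Spf\msf R^\square\to\Spf\msf R^\square$ sends $\{1\}\times Z$ into $Z$, and by irreducibility of $(G')^\wedge\times Z$ its image lies in a single component, namely $Z$. Together with reducedness this shows each irreducible component, with its reduced structure, is a closed formal subscheme stable under equivalence, that is, a local lifting condition.

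For part~(3), $\mrs D^{\bmin,\square}$ corresponds under the above decomposition to the product of the minimally ramified conditions on the factors: those of \cite[Definition~2.4.14]{CHT08} for $\GL_{m_\tau}$, and those of Definition~\ref{psoosiiieufunnuenes} for $G_\tau$. Each factor is formally smooth of pure relative dimension $\dim G'_\tau$ (resp.\ $m_\tau^2$), by Proposition~\ref{ossoooieifmeies} and \cite[Lemma~2.4.19]{CHT08}. Hence $\mrs D^{\bmin,\square}$ is formally smooth over $\Spf\mcl O$ of pure relative dimension $\dim G'_\kappa$. It is irreducible, being formally smooth over $\mcl O$ with a single closed point. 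Being a closed subscheme of the same dimension as the equidimensional $\Spf\msf R^\square_{\ovl\rho}$, it is an irreducible component. The tangent space formula then follows from Equation~\eqref{psisieubifes}.
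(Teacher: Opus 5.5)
Your proposal is correct and follows the paper's proof essentially step for step. You reduce via Proposition~\ref{lsoeieiinfituores} to a power series ring over the completed tensor product of tame lifting rings, and get reducedness from the Cohen--Macaulay property plus generic regularity. You then use the conjugation-plus-irreducibility argument for (2), and obtain (3) from formal smoothness of pure relative dimension $\dim G'_\kappa$; the paper quotes this directly from \cite[Corollary~6.16]{Boo19a} rather than reassembling it factor by factor. The step you flag as the main obstacle is handled in the paper by citing \cite[Theorem~3.3.2]{B-G19} or \cite[Theorem~1]{B-P19}, which give generic regularity of $\msf R^\square_{\ovl\rho}[\frac{1}{\ell}]$ for the whole $G$-valued ring at once, so no factor-wise check is needed. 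The $\GL_{m_\tau}$ factors are covered by \cite[Theorem~2.5]{Sho18}.
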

\begin{proof}
For this proposition, we may assume that $E$ satisfies Assumption~\ref{osiiusuenfueiis}.

For (1), we define for each $\tau\in \mfk T_1^\heart\coprod \mfk T_2^\heart\coprod\mfk T_3^\heart$ the universal lifting ring $\msf R_\tau^\square$ representing the lifting condition of $\bx W_\tau(\ovl\rho)$ or $\bx W_{\tau\oplus \tau^\star}(\ovl\rho)$ valued in $G_\tau$ (with trivial similitude character when $\tau\in \mfk T_2^\heart\coprod\mfk T_3^\heart$), see Definition~\ref{liftings-anadi-defintiieos}. It follows from Proposition~\ref{lsoeieiinfituores} that $\msf R_{\ovl\rho}^\square$ is a power series ring over 
\begin{equation*}
\hat\btimes_{\tau\in\mfk T_1^\heart\coprod\mfk T_2\coprod\mfk T_3^\heart}\msf R_\tau^\square.
\end{equation*}
For each $\tau\in \mfk T_1^\heart\coprod\mfk T_2\coprod\mfk T_3^\heart$, we know $\msf R_\tau^\square$ is a local complete intersection, flat and of pure relative dimension $\dim G'_\kappa$ over $\Spf \mcl O$ by \cite[Theorem~2.5]{Sho18} for $\tau\in \mfk T_1^\heart$, and by Proposition~\ref{osoosiiieubfes} for $\tau\in\mfk T_2^\heart\coprod\mfk T_3^\heart$. On the other hand, $\msf R_{\ovl\rho}^\square[\frac{1}{\ell}]$ is a local complete intersection, generically regular and of pure dimension $\dim G'_\kappa$ by \cite[Theorem~3.3.2]{B-G19} or \cite[Theorem~1]{B-P19}. Thus $\msf R^\square_{\ovl\rho}$ is also a local complete intersection, flat and equidimensional of dimension $\dim G'_\kappa$ over $\mcl O$. Since $\msf R^\square_{\ovl\rho}$ is generically reduced and Cohen--Macaulay, it is reduced. (1) is proved.

For (2): take an irreducible component $\mrs D^\square$ of $\Spf \msf R^\square_{\ovl\rho}$, then the conjugation action induces a homomorphism $f: (G')^\wedge\times_{\Spf\mcl O}\mrs D^\square\to \Spf \msf R^\square_{\ovl\rho}$, whose image contains $\mrs D^\square$. Since the source of $f$ is irreducible, and $\Spf \msf R^\square_{\ovl\rho}$ is reduced, the image of $f$ has to be $\mrs D^\square$. In other words, $\mrs D^\square$ is a local lifting condition.

(3): Since $\mrs D^{\bmin, \square}$ is Zariski closed in $\Spf \msf R_{\ovl\rho}^\square$, it suffices to show that $\mrs D^{\bmin, \square}$ is formally smooth over $\Spf \mcl O$ of pure relative dimension $\dim G'_\kappa$. This claim follows from \cite[Corollary~6.16]{Boo19a}.
\end{proof}

\subsection{Level-raising deformations}

In this subsection, we study level-raising deformations. Assume that $N=2m+1$ is odd and that $\ell>2m$. We work in the following setting.

\begin{setup}\enskip
\begin{itemize}
\item
Let $v$ be a finite place of $F$ \sut $\ell\nmid (\norml{v}^2-1)\norml{v}$.
\item
Let $P_v$ be the maximal subgroup of $I_{F_v}$ of pro-order coprime to $\ell$, and set $\bx T_v\defining \Gal_{F_v}/P_v$, which is isomorphic to the $\norml{v}$-tame group $\bx T_{\norml{v}}$ (see Definition~\ref{eleifnfmeimiswsv}).
\item
Let $(\Gamma, \ovl\rho, \chi)$ be a triple as in Setup~\ref{osisienieeimifes}, satisfying that $\Gamma=\Gal_{F_v}, \chi=\ve_\ell^{1-2m}$, $\ovl\rho$ is unramified, and that
\begin{equation*}
\dim_\kappa\ker\big(\Std\circ\ovl\rho_v(\phi_v)-\norml{v}^{-m}\big)^{2m}=1.
\end{equation*}
\end{itemize}
\end{setup} 

By Lemma~\ref{oeiipwinvuiqpqqa}(1), every lifting $\rho$ of $\ovl\rho$ to an object $\bx R$ of $\mrs C_{\mcl O}$ (\wrt $\chi$) factors through $\bx T_{\norml{v}}$. Moreover, it follows from Lemma~\ref{poeoiemifemifws} that for such a lifting $\rho$, if we fix an isomorphism between the underlying space of $\Std\circ\rho$ with $\bx R^{2m}$, then we obtain a canonical decomposition
\begin{equation}\label{ieiurutiorpws}
\bx R^{\oplus(2m)}=\bx M_0\oplus\bx M_1
\end{equation}
of free $\bx R$-modules stable under the action of $\rho(\phi_v)$, \sut
\begin{equation*}
\det(T-\Std\circ\rho(\phi_v)|\bx M_0\otimes_{\bx R}\kappa)=(T-\norml{v}^{1-m})(T-\norml{v}^{-m})
\end{equation*}
holds in $\kappa[T]$.

\begin{defi}\label{leveiieenrifmesw}
We define $\mrs D^{\mix, \square}$ to be the the local lifting condition of $\ovl\rho$ (see Definition~\ref{oeieriwowpq}) that classifies liftings of $\ovl\rho$ to an object $R$ of $\mrs C_{\mcl O}$ \sut in the decomposition~\eqref{ieiurutiorpws}, $\rho(I_{F_v})$ preserves $\bx M_0$ and acts trivially on $\bx M_1$. We define
\begin{enumerate}
\item
$\mrs D^{\unr, \square}$ to be the local lifting condition contained in $\mrs D^{\mix, \square}$ \sut the action of $\rho(I_{F_v})$ on $\bx M_0$ is also trivial.
\item
$\mrs D^{\ram, \square}$ to be the local lifting condition contained in $\mrs D^{\mix, \square}$ \sut
\begin{equation*}
\det(T-\Std\circ\rho(\phi_v)|\bx M_0)=(T-\norml{v}^{1-m})(T-\norml{v}^{-m})
\end{equation*}
in $\bx R[T]$.
\end{enumerate}
\end{defi}

\begin{prop}\label{ososoidvienfiemsws}
The formal scheme $\mrs D^{\mix, \square}$ is formally smooth over $\Spf\mcl O[[x_0, x_1]]/(x_0x_1)$ of pure relative dimension $2m^2+m-1$ \sut the irreducible components defined by $x_0=0$ and $x_1=0$ are $\mrs D^{\unr, \square}$ and $\mrs D^{\ram, \square}$, respectively. In particular, $\mrs D^{\ram, \square}$ is formally smooth over $\Spf\mcl O$ of pure relative dimension $2m^2+m$, and it follows from \textup{Equation~\ref{psisieubifes}} that
\begin{equation*}
\dim_\kappa\bx L(\mrs D_{\ovl\rho}^\ram)=\dim_\kappa\bx H^0(\Gal_{F_v}, \ad^0(\ovl\rho)).
\end{equation*}
\end{prop}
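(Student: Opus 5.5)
The plan is to reduce $\mrs D^{\mix,\square}$ to an explicit computation on the two-dimensional block $\bx M_0$, then glue back the rest of the group via a smooth fibration. The model is the $\GL_2$ level-raising computation of Taylor \cite{Tay08} and its unitary analogue in \cite{LTXZZa}.

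First, every lifting factors through $\bx T_v=\bx T_{\norml{v}}$ (Lemma~\ref{oeiipwinvuiqpqqa}(1)), so a lifting is a pair $(\Phi,A)=(\rho(\phi_v),\rho(t))$ with $\Phi A\Phi^{-1}=A^{\norml{v}}$ and $\nu(\Phi)=\chi(\phi_v)$, $\nu(A)=1$. The hypothesis on generalized eigenspaces, together with self-duality, deserves comment. The eigenvalue $\norml{v}^{1-m}$ is dual to $\norml{v}^{-m}$ under the similitude $\norml{v}^{1-2m}$. Hence $\bx M_0$ is a nondegenerate (symplectic) plane, $\bx M_1=\bx M_0^\perp$, and both are stable under $\rho(\phi_v)$ by Lemma~\ref{poeoiemifemifws}(1). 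Since $\ell\nmid\norml{v}^2-1$, the hypothesis of Lemma~\ref{poeoiemifemifws}(2) holds: $\norml{v}$ is not an eigenvalue of $\phi_v$ on $\Hom(\ovl{\bx M}_0,\ovl{\bx M}_1)$ or $\Hom(\ovl{\bx M}_1,\ovl{\bx M}_0)$, because the only eigenvalue ratio equal to $\norml{v}$ involves two eigenvalues both inside $\bx M_0$.

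Next I would describe the liftings in $\mrs D^{\mix,\square}$ as follows. Choose the decomposition $\bx M_0\oplus\bx M_1$, which is parametrized by a formally smooth homogeneous space $G'^\wedge/(\mathrm{Levi})^\wedge$, namely $\GSp_{2m}$ modulo $\GSp_2\times_{\GL_1}\GSp_{2m-2}$. On $\bx M_1$, take an unramified lifting: $\Phi|_{\bx M_1}$ is an arbitrary lift in $\GSp(\bx M_1)$ with fixed similitude, with $A$ trivial. On $\bx M_0$, take a lifting of the tame group into $\GSp_2=\GL_2$ with determinant $\chi(\phi_v)$ on $\Phi$, determinant $1$ on $A$, and residually $\ovl\Phi$ with distinct eigenvalues $\norml{v}^{1-m}\ne\norml{v}^{-m}$ (distinct because $\ell\nmid\norml{v}-1$).

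For the $\bx M_0$ piece, diagonalize $\Phi$ via Hensel to eigenvalues $\alpha,\beta$ with $\alpha\beta$ fixed. The relation $\Phi A\Phi^{-1}=A^{\norml{v}}$ with $A$ unipotent residually, together with $A\equiv 1$ residually and Lemma~\ref{poeoiemifemifws}, forces $A=\begin{bmatrix}1&x_1\\0&1\end{bmatrix}$ in the eigenbasis, subject to the single relation $(\alpha-\norml{v}\beta)x_1=0$. Set $x_0:=\alpha/\beta-\norml{v}$, where $\alpha$ is the root lifting $\norml{v}^{1-m}$; this is a formal coordinate because $\alpha\beta$ is fixed. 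The $\bx M_0$-block is then $\Spf\mcl O[[x_0,x_1]]/(x_0x_1)$ times formal smooth framing variables. Component identification: $x_1=0$ means $I_{F_v}$ acts trivially on $\bx M_0$, which is $\mrs D^{\unr,\square}$. The locus $x_0=0$ gives the prescribed characteristic polynomial $(T-\norml{v}^{1-m})(T-\norml{v}^{-m})$, which is $\mrs D^{\ram,\square}$.

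The dimension count is then $\dim G'_\kappa-1=2m^2+m-1$ relative to $\mcl O[[x_0,x_1]]/(x_0x_1)$. This is consistent because each component has dimension $\dim G'=2m^2+m$: on $\mrs D^{\unr,\square}$ the unramified liftings form $G'^\wedge$-torsor-like data of dimension $\dim G'$. Finally, \eqref{psisieubifes} gives $\dim\bx L(\mrs D^\ram)=\dim\mrs D^{\ram}_\kappa$-tangent minus $\dim G'-\dim \bx H^0$, which equals $\dim_\kappa \bx H^0(\Gal_{F_v},\ad^0(\ovl\rho))$.

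The main obstacle is making the fibration rigorous. I need to show that the map sending a lifting to the data (decomposition, lifting on $\bx M_1$, block on $\bx M_0$) is formally smooth of the stated relative dimension, counting carefully the centralizer and framing redundancy (the torus $\GL_1$ in the Levi and conjugation inside blocks). I would handle this by writing $\mrs D^{\mix,\square}\cong (G'^\wedge\times \mathcal X_0\times\mathcal X_1)/(\text{Levi stabilizer})^\wedge$, with a free action, and computing the dimensions piece by piece.
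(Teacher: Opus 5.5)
Your proposal follows the paper's proof. A lifting in $\mrs D^{\mix,\square}$ splits orthogonally into an unrestricted $\GSp_2$-block on $\bx M_0$ and an unramified $\GSp_{2m-2}$-block on $\bx M_1$, the latter formally smooth of relative dimension $(m-1)(2m-1)$. Then $\mrs D^{\mix,\square}$ is the product of these with the formally smooth quotient $\Sp_2^\wedge\times\Sp_{2m-2}^\wedge\backslash\Sp_{2m}^\wedge$ of dimension $4m-4$, giving $2+(m-1)(2m-1)+(4m-4)=2m^2+m-1$.

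The only real difference is that you analyse the rank-two block by hand, whereas the paper quotes \cite[Proposition~5.5]{Sho16} for $\Spf\mcl O[[x_0,\ldots,x_3]]/(x_0x_1)$. Your route is fine, but the step ``forces $A$ upper unipotent with $(\alpha-\norml{v}\beta)x_1=0$'' needs an argument that Lemma~\ref{poeoiemifemifws} does not supply. Inside $\bx M_0$ one has $\alpha/\beta\equiv\norml{v}$, so part (2) is unavailable there. An argument that works, with $q=\norml{v}$:
\begin{itemize}
\item Since $\det A=1$, one has $\tr(A^q)=P_q(\tr A)$ for an integral polynomial with $P_q(s)-s=(s-2)u(s)$ and $u(2)=q^2-1\in\mcl O^\times$.
\item Hence $\tr A=\tr(\Phi A\Phi^{-1})=\tr(A^q)$ together with $\tr A\equiv 2$ gives $\tr A=2$.
\item By Cayley--Hamilton, $N=A-1$ satisfies $N^2=0$, so the tame relation becomes $\Phi N\Phi^{-1}=qN$.
\item The units $q-1$ and $q^{-1}-q$ kill the diagonal and lower-left entries of $N$, leaving exactly $(\alpha/\beta-q)x_1=0$.
\end{itemize}

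Separately, your claim that the hypothesis of Lemma~\ref{poeoiemifemifws}(2) holds for the splitting $\ovl{\bx M}_0\oplus\ovl{\bx M}_1$ is false in general. When $m\ge 2$, nothing excludes $\norml{v}^{2-m}$ or $\norml{v}^{-m-1}$ as an eigenvalue on $\ovl{\bx M}_1$. This happens, for instance, when $\ovl\rho$ is residually $\Sym^{2m-1}$ of an unramified representation with Frobenius eigenvalues $1,\norml{v}^{-1}$, which is the typical level-raising situation. Then $\norml{v}$ is an eigenvalue on $\Hom(\ovl{\bx M}_0,\ovl{\bx M}_1)$ or $\Hom(\ovl{\bx M}_1,\ovl{\bx M}_0)$.

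This costs you nothing: $\mrs D^{\mix,\square}$ requires by definition that inertia preserve $\bx M_0$ and act trivially on $\bx M_1$, so block-diagonality is built in. Just delete that appeal.

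Finally, your coordinates are swapped relative to the statement, where $x_0=0$ is $\mrs D^{\unr,\square}$ and $x_1=0$ is $\mrs D^{\ram,\square}$. This is only a renaming.
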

\begin{proof}
We modify the proof of \cite[Proposition~3.35]{LTXZZa}. Identifying the underlying space of $\Std\circ\ovl\rho$ with $\kappa^{\oplus(2m)}$, there exists a decomposition
\begin{equation*}
\kappa^{\oplus(2m)}=\ovl{\bx M}_0\oplus \ovl{\bx M}_1
\end{equation*}
\sut
\begin{equation*}
\det(T-\Std\circ\ovl\rho(\phi_v)|\ovl{\bx M}_0)=(T-\norml{v}^{1-m})(T-\norml{v}^{-m})
\end{equation*}
in $\kappa[T]$. Upon changing a basis, we may assume that $\ovl{\bx M}_0$ is spanned by the first two factors and $\ovl{\bx M}_1$ is spanned by the last $2m-2$ factors. Then we get two unramified representations $\ovl\rho_0: \bx T_{\norml{v}}\to \GSp_2(\kappa)$ and $\ovl\rho_1: \bx T_{\norml{v}}\to \GSp_{2m-2}(\kappa)$. Let $\mrs D_0^\square$ be the unrestricted lifting condition of $\ovl\rho_0$ \wrt $\chi=\ve_\ell^{1-2m}$, and let $\mrs D_1^\square$ be the local lifting condition of $\ovl\rho_1$ \wrt $\chi=\ve_\ell^{1-2m}$ classifying unramified liftings.

We say that a lifting $\rho$ of $\ovl\rho$ to an object $\bx R$ of $\mrs C_{\mcl O}$ is \emph{standard} if
\begin{equation*}
\rho(t)=\begin{bmatrix}A_0& 0\\ 0 &I_{2m-2}\end{bmatrix}, \quad \rho(\phi_v)=\begin{bmatrix}B_0& 0\\ 0 &B_1\end{bmatrix},
\end{equation*}
with $A_0, B_0\in\GL_2(\bx R)$ and $B_1\in\GSp_{2m-2}(\bx R)$. Let $\mrs D_{0, 1}^{\mix, \square}\subset \mrs D^{\mix, \square}$ be the subfunctor classifying standard liftings. Then there exists a natural isomorphism
\begin{equation*}
\mrs D_{0, 1}^{\mix, \square}\cong \mrs D_0^\square\times_{\Spf\mcl O}\mrs D_1^\square.
\end{equation*}
Moreover, there is an isomorphism
\begin{equation*}
\mrs D_{0, 1}^{\mix, \square}\times_{\Spf\mcl O}\big(\Sp_2^\wedge\times_{\Spf\mcl O}\Sp_{2m-2}^\wedge\bsh\Sp_{2m}^\wedge\big)\cong\mrs D^{\mix, \square}.
\end{equation*}
By Proposition~\ref{ossoooieifmeies} (here we use that $\ell>2m$), $\mrs D_1^\square$ is formally smooth of relative dimension $(2m-1)(m-1)$ over $\Spf\mcl O$. It follows from \cite[Proposition~5.5]{Sho16}(2) that $\mrs D_0^\square$ is isomorphic to $\Spf\mcl O[[x_0, \ldots, x_3]]/(x_0x_1)$, where the irreducible component defined by $x_0=0$ classifies unramified liftings, and the irreducible component defined by $x_1=0$ classifies liftings $\rho$ \sut $\norml{v}^{-m}$ 
is an eigenvalue of $\Std\circ\rho(\phi_v)$. The assertion follows.
\end{proof}

\section{An almost minimal R=T theorem}

In this section, we prove an almost minimal $\msf R=\bb T$ theorem for self-dual Galois representations with coefficients in a finite field satisfying a property called \emph{rigid}. 

\subsection{Rigid Galois representations}

We take a finite set $\Pla^\bmin$ of finite places of $F$ containing $\Pla^\bad$. We take an odd rational prime $\ell$ larger than $2(b_\xi-a_\xi)+2$ that is unramified in $F$, \sut $\Pla^\bmin$ contains no $\ell$-adic places. We then work in the setting of Setup~\ref{seutpeoeifmeis}. 

We fix an isomorphism $\iota_\ell: \bb C\xr\sim\ovl{\bb Q_\ell}$ and assume that the complex algebraic representation of $\Res_{F/\bb Q}\GL_{2m}$ with highest weight $\xi$ can be defined over $\iota_\ell^{-1}E$.

We take a finite set $\Pla^\lr$ of finite places of $F$ \sut 
\begin{itemize}
\item
$\Pla^\lr=\vn$ when $N$ is even;
\item
$\Pla^\lr$ is disjoint from $\Pla^\bmin\cup\Pla(\ell)$; and
\item
$\norml{v}^2-1$ is divisible by $\ell$ for every place $v\in \Pla^\lr$ 
\end{itemize}

We fix an element $\xi=(\xi_\tau)_\tau\in (\bb Z_\le^{2m})^{\Pla^\infty}$ satisfying $\xi_{\tau, i}=-\xi_{\tau, 2m+1-i}$ for every $\tau\in\Pla^\infty$ and $1\le i\le 2m$. 

We consider a triple $(\Gamma, \ovl\rho, \chi)$ from Setup~\ref{osisienieeimifes} in which $\Gamma=\Gal_F$ and $\chi=\ve_\ell^{2-N}$. 

\begin{defi}\label{isiinefies}
We say that $\ovl\rho$ is \emph{rigid} for $(\Pla^\bmin, \Pla^\lr)$ 
 if the following are satisfied:
\begin{enumerate}
\item
For $v\in\Pla^\bmin$, every lifting of $\ovl\rho_v$ is minimally ramified (see Definition~\ref{minaimlsoieieres}).
\item
For $v\in\Pla^\lr$, $\dim_\kappa\ker\big(\ovl\rho_v(\phi_v)-\norml{v}^{-m}\big)^{2m}=1$.
\item
For $v\in\Pla(\ell)$, $\ovl\rho_v$ is Fontaine--Laffaille crystalline with regular weights in $[a_\xi, b_\xi]$ (see Definition~\ref{otetotiiemifes}), \sut $\ell>2(b_\xi-a_\xi)+2$.
\item
For every finite place $v$ of $F$ not in $\Pla^\bmin\cup\Pla^\lr\cup\Pla(\ell)$, the homomorphism $\ovl\rho_v$ is unramified.
\end{enumerate}
\end{defi}

Suppose now that $\ovl\rho$ is rigid for $(\Pla^\bmin, \Pla^\lr)$. Consider a global deformation problem (see Definition~\ref{defomrioainoiereid})
\begin{equation*}
\mrs S\defining\paren{\ovl\rho, \ve_\ell^{2-N}, \Pla^\bmin\cup\Pla^\lr\cup\Pla(\ell), \{\mrs D_v^\square\}_{v\in\Pla^\bmin\cup\Pla^\lr, \Pla(\ell)}}
\end{equation*}
where
\begin{itemize}
\item
for $v\in\Pla^\bmin$, $\mrs D_v^\square$ is the unrestricted local lifting condition of $\ovl\rho_v$;
\item
for $v\in\Pla^\lr$, $\mrs D_v^\square$ is the local lifting condition $\mrs D^\ram_v$ of $\ovl\rho_v$ from Definition~\ref{leveiieenrifmesw}; and
\item
for $v\in\Pla(\ell)$, $\mrs D_v^\square$ is the local lifting condition $\mrs D^{\FL, \square}$ of $\ovl\rho_v$ from Definition~\ref{ooidnuefiems}.
\end{itemize}
Then we have the global deformation ring $\msf R_{\mrs S}$ from Proposition~\ref{peoieutnmeumes}.

\subsection{Adequate subgroups}

In this subsection, we recall the notion of $G^\circ$-adequate subgroups following \cite{Whi23a}. We consider a triple $(\Gamma, \ovl\rho, \chi)$ from Setup~\ref{osisienieeimifes} in which $\Gamma=\Gal_F$ and $\chi=\ve_\ell^{2-N}$. 

\begin{defi}\label{otieinnvnuducms}
We say that a subgroup $H\le G^\circ(\kappa)$ is $G^\circ$-adequate if
\begin{enumerate}
\item
the cohomology groups
\begin{equation*}
\bx H^0(H, (\mfk g_\kappa')^\vee), \quad\bx H^1(H, \kappa), \quad\bx H^1(H, (\mfk g_\kappa')^\vee)
\end{equation*}
vanish; and
\item
for every irreducible $\kappa[H]$-submodule $W$ of $(\mfk g'_\kappa)^\vee$, there exists an element $h\in H$ \sut $w(z)\ne 0$ for some $w\in W$ and some $z\in \Lie Z(Z_{G^\circ}(h))\cap\mfk g'_\kappa$.
\end{enumerate}
\end{defi}
Here $Z_{G^\circ}(h)$ denotes the centralizer subgroup of $h$ in $G^\circ$.

\begin{prop}\label{soosieiueuenfue}
If $\ell>2m+4$ and $|\ell-4m|\ne 1$, and $\Gamma'$ is a subgroup of $\Gamma$ \sut $\ovl\rho(\Gamma')\subset G^\circ(\kappa)$ and $\ovl\rho|_{\Gamma'}$ is absolutely irreducible, then $\ovl\rho(\Gamma')$ is a $G^\circ$-adequate subgroup of $G^\circ(\kappa)$.
\end{prop}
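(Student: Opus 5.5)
We reduce to the general adequacy criterion of \cite{Whi23a} for absolutely irreducible subgroups in large characteristic, checking separately the two conditions of Definition~\ref{otieinnvnuducms}. Put $H\defining\ovl\rho(\Gamma')\le G^\circ(\kappa)$ and $V\defining\Std\circ\ovl\rho|_{\Gamma'}$, an absolutely irreducible $2m$-dimensional representation of $H$ preserving a symplectic or symmetric form up to scalar.

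\textbf{Step 1: identify the relevant modules.} Since $\ell>2m+4$, the trace pairing on $\mfk g'_\kappa$ is perfect, so $(\mfk g'_\kappa)^\vee\cong\mfk g'_\kappa$. Moreover $\mfk g'_\kappa$ is identified with $\Sym^2V$ in the symplectic case and with $\wedge^2V$ in the orthogonal case, up to the similitude twist. Both are direct summands of $V\otimes V^\vee=\End(V)$, again because $\ell>2$.

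\textbf{Step 2: the $\bx H^0$ vanishing.} By Schur's lemma, $\End(V)^H=\kappa$, and this line consists of the scalars. The scalars lie in the $\mfk{gl}_1$-part, not in $\mfk g'$, so $\bx H^0(H,(\mfk g'_\kappa)^\vee)=0$.

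\textbf{Step 3: the $\bx H^1$ vanishings.} If $\ell\nmid\#H$, all higher cohomology vanishes and there is nothing to prove. Otherwise I would invoke Guralnick's results on $\bx H^1$ for absolutely irreducible (and hence $\ell$-adequate) modules of dimension less than $\ell$: the vanishing $\bx H^1(H,\End(V))=0$ for $\ell$ large relative to $\dim V$, in the form used in \cite{Whi23a}. The summand $\mfk g'_\kappa$ of $\End(V)$ then inherits the vanishing.

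For $\bx H^1(H,\kappa)=\Hom(H,\kappa)$, suppose it is nonzero. Then $H$ has a normal subgroup of index $\ell$, and Clifford theory applies to the restriction of $V$. Since $\dim V=2m<\ell$, that restriction stays irreducible, and the index-$\ell$ quotient would have to act on it through scalars. But the scalar group has order prime to $\ell$, a contradiction.

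The extra hypothesis $\lvert\ell-4m\rvert\neq 1$ matches the exceptional cases in Guralnick's $\bx H^1$ theorem for $\dim\mfk g'$-type modules. These are the cases $\ell=\dim V+1$ or $\dim V-1$, here with $\dim$ replaced by $2\cdot 2m$. I expect this to be the main obstacle: one must cite the precise theorem of \cite{Whi23a} (a $G$-version of Guralnick–Herzig–Taylor–Thorne) and check that its numerical hypotheses reduce to $\ell>2m+4$ and $\ell\neq 4m\pm1$.

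\textbf{Step 4: condition (2).} I would use the density/spanning property: the semisimple elements $h\in H$ have eigenspace projectors spanning $\End(V)$, which is a standard adequacy argument for $\ell>\dim V+1$. Given a nonzero irreducible $W\subset(\mfk g'_\kappa)^\vee$, pick $h$ such that $W$ pairs nontrivially with $\Lie Z(Z_{G^\circ}(h))\cap\mfk g'$. That Lie algebra contains the self-dual combinations of eigenprojectors of $h$. Since the projections of these combinations to $\mfk g'$ span $\mfk g'$ as $h$ varies, some $w\in W$ is nonzero on one of them.
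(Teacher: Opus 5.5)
Your handling of the $\mathrm{H}^0$ condition (Schur's lemma, since $\mathrm{id}\notin\mfk g'_\kappa$) is fine. So is your handling of condition~(2), modulo the spanning input: the $\mfk g'_\kappa$-component of an eigenprojector $e_{h,\alpha}$ is $\tfrac12(e_{h,\alpha}-e_{h,\nu(h)/\alpha})$, and this lies in $\mathrm{Lie}\,Z(Z_{G^\circ}(h))\cap\mfk g'_\kappa$. The paper's own proof is simply a citation of \cite[Theorem~2.3.4]{Whi23a}.

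However, the one step you argue by hand, $\mathrm{H}^1(H,\kappa)=0$, is wrong. From a normal subgroup $N\le H$ of index $\ell$ you correctly get that $V|_N$ stays irreducible. But nothing forces $H/N$ to act ``through scalars'': an element $h\in H\setminus N$ merely normalizes $N$. What your argument actually proves is that $h\notin N\cdot\kappa^\times$. Since the centralizer of $N$ in $\mathrm{GL}(V)$ is $\kappa^\times$, this means $h$ induces an outer automorphism of $N$, and $\dim V<\ell$ does not exclude that case.

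The case really occurs. $H=\mathrm{SL}_2(\mathbf F_3)=\mathrm{Sp}_2(\mathbf F_3)$ acting on $\overline{\mathbf F}_3^{\,2}$ is absolutely irreducible with $\dim V=2<\ell=3$, yet $H^{\mathrm{ab}}\cong\mathbf Z/3$; here $N=Q_8$ and an element of order $3$ permutes $i,j,k$. Likewise, the extraspecial group $2^{1+4}_-$ extended by an element of order $5$ gives a symplectic $4$-dimensional example with $\ell=5$. So the vanishing of $\mathrm{H}^1(H,\kappa)$ does not follow from Clifford theory plus $\dim V<\ell$. In the range $\ell>2m+4$ it is true, but it rests on the same kind of classification-based results (Guralnick's work on finite linear groups generated by $\ell$-elements in small dimension) as the vanishing of $\mathrm{H}^1(H,\mfk g'_\kappa)$.

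The inputs you defer in Steps~3 and~4 are also not routine in the stated range. The Guralnick--Herzig--Taylor--Thorne criterion \cite[Theorem~A.9]{Tho12} needs $\ell\ge 2(d+1)$, which in the worst case $d=2m$ means $\ell\ge 4m+2$. That is not implied by $\ell>2m+4$ and $\ell\ne 4m\pm1$. The spanning of $\End(V)$ by eigenprojectors of semisimple elements is itself one of these deep results, not an elementary observation.

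The sharper statements needed here are exactly what \cite[Theorem~2.3.4]{Whi23a} packages. The correct proof is to check its hypotheses and invoke it, as the paper does: the image lies in $G^\circ(\kappa)$, $\Std\circ\overline\rho|_{\Gamma'}$ is absolutely irreducible, and $\ell$ satisfies the numerical conditions. Your hand-made argument for $\mathrm{H}^1(H,\kappa)=0$ should be dropped.
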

\begin{proof}
This follows from \cite[Theorem~2.3.4]{Whi23a}.
\end{proof}

\subsection{Finiteness of deformation rings} 


In this subsection, we prove a finiteness theorem for the deformation ring of a self-dual Galois representation that will be used in~\S\ref{osisieiniiueurmeds}. 
Let $\Pi$ be a $\mfk d$-REASDC representation of $\GL_{2m}(\Ade_F)$ (see Definition~\ref{psoisieneifemiws}). Let $\Pla^\bmin$ be a finite set of finite places of $F$ containing $\Pla^\bad$ \sut $\Pi_v$ is unramified for every finite place $v$ of $F$ not in $\Pla^\bmin$. Let $E\subset\bb C$ be a strong coefficient field of $\Pi$ (see Definition~\ref{strongienfeihenss}). Let $\lbd$ be a finite place of $E$ whose underlying rational prime $\ell$ satisfies
\begin{equation*}
\Pla(\ell)\cap\Pla^\bmin=\vn, \quad \ell>2(2m+1), \quad \ell>2(b_\xi-a_\xi)+2,
\end{equation*}
and $\ell>\norml{v}^{2m}$ for every place $v$ in $\Pla^{\bx{min}}$. We set $\Pla\defining\Pla^\bmin\cup\Pla(\ell)$.

Then we adopt the notation of Setup~\ref{seutpeoeifmeis} with $E=E_\lbd$ plus the notation of Setup~\ref{osisienieeimifes} with $(\Gamma, \ovl\rho, \chi)=(\Gal_F, \ovl\rho_{\Pi, \lbd}, \ve_\ell^{1-2m})$, and fix an isomorphism $\iota_\ell: \bb C\xr\sim\ovl{\bb Q_\ell}$. Let
\begin{equation*}
\mrs S\defining\paren{\ovl\rho_{\Pi, \lbd}, \ve_\ell^{2-N}, \Pla^0, \{\mrs D_v^\square\}_{v\in\Pla}}
\end{equation*}
be a global deformation problem (see Definition~\ref{defomrioainoiereid}), where for each $v\in\Pla^\bmin$, $\mrs D_v^\square$ is an irreducible component (which is a local deformation problem by Proposition~\ref{psisiidimfimies}(2)); and for each $v\in\Pla(\ell)$, $\mrs D_v^\square$ is the local deformation $\mrs D^{\FL, \square}$ of $\ovl\rho_{\Pi, \lbd}$ from Definition~\ref{ooidnuefiems}.

\begin{defi}
Let $\mrs G_{2m}$ be the group scheme over $\bb Z$ which is the semidirect product of the group $\GL_{2m}\times\GL_1$ by the group $\{1, \jmath\}$, which acts on $\GL_{2m}\times\GL_1$ by
\begin{equation*}
\jmath(g, \mu)\jmath^{-1}=(\mu g^{-\top}, \mu).
\end{equation*}
There is a homomorphism $\nu_{\mrs G}: \mrs G_{2m}\to \GL_1$ sending $(g, \mu)$ to $\mu$ and $\jmath$ to $-1$.
\end{defi}

Let $\tld F$ be a totally imaginary quadratic extension of $F$ that is linearly disjoint from $(\ovl F)^{\ker\ovl\rho}(\mu_\ell)$. We fix a real place of $F$ that induces a complex multiplication $\cc\in \Gal_F$. We also choose an algebraic character
\begin{equation*}
\psi: \Gal(\ovl F/\tld F)\to \ovl{\bb Q_\ell}^\times
\end{equation*}
that is crystalline at places of $F$ above $\ell$, satisfying $\psi\psi^\cc=\ve_\ell$ (resp. $\psi$ is trivial) when $N$ is even (resp. $N$ is odd). This is possible by \cite[Lemma~4.1.5]{CHT08}. For our purpose of proving the finiteness of the deformation ring $\msf R_{\mrs S}$, we can enlarge $L$ so that $\psi$ is valued in $\mcl O_L^\times$.

Recall that, for any object $\bx R$ of $\mrs C$ and any continuous homomorphism $\rho': \Gal_F\to G(\bx R)$ with $\nu\circ\rho'=\ve_\ell^{2-N}$, there is a continuous homomorphism $r': \Gal_F\to \mrs G_{2m}(\bx R)$, determined by
\begin{equation*}
r'(g)=
\begin{cases}
(\psi^{-1}(g)\rho'(g), \ve_\ell^{1-2m}(g)) &g\in \Gal_{\tld F}\\
\paren{\psi^{-1}(gc_v)\rho'(g)(J_{2m}')^{-1}, -\ve_\ell^{1-2m}(g)}\jmath & g\notin \Gal_{\tld F}
\end{cases}
,
\end{equation*}
cf.~\cite[Lemma~2.1.2]{CHT08}. Moreover,
\begin{equation*}
\nu_{\mrs G}\circ r'=\ve_\ell^{1-2m}.
\end{equation*}
This construction is compatible with equivalences in the sense that if $B\in (G')^\wedge(\bx R)$ and $\rho'$ is replaced by $\rho'_B\defining B\rho' B^{-1}$, then $r'$ is replaced by $r'_B\defining (a\Std(B), 1)r'(a\Std(B), 1)^{-1}$, where $a\in \bx R$ satisfies $a^2=\nu(B)^{-1}$.

\begin{thm}\label{osiseimfiehgiemifes}
Suppose that $\Std\circ\ovl\rho_{\Pi, \lbd}|_{\Gal_{F(\mu_\ell)}}$ is absolutely irreducible. Then $\msf R_{\mrs S}$ is a finite $\mcl O$-module.
\end{thm}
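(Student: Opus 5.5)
The plan is to reduce the finiteness of $\msf R_{\mrs S}$ to a known finiteness result for the conjugate self-dual ($\mrs G_{2m}$-valued) deformation ring over the CM field $\tld F$. The reduction goes through the construction $\rho'\mapsto r'$ recalled just before the statement, and then uses potential automorphy together with the unitary $\msf R=\bb T$ theorems of \cite{CHT08, Tho12}.

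First, let $\rho_{\mrs S}:\Gal_F\to G(\msf R_{\mrs S})$ be a universal lifting. Applying the construction above gives $r_{\mrs S}:\Gal_F\to\mrs G_{2m}(\msf R_{\mrs S})$ with $\nu_{\mrs G}\circ r_{\mrs S}=\ve_\ell^{1-2m}$. Its restriction to $\Gal_{\tld F}$ is $(\psi^{-1}\otimes\Std\circ\rho_{\mrs S})|_{\Gal_{\tld F}}$. The reduction $\ovl r$ has absolutely irreducible restriction to $\Gal_{\tld F(\mu_\ell)}$, because $\tld F$ is linearly disjoint from $(\ovl F)^{\ker\ovl\rho}(\mu_\ell)$ and $\Std\circ\ovl\rho|_{\Gal_{F(\mu_\ell)}}$ is absolutely irreducible. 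Next I check the local conditions:
\begin{itemize}
\item At $\ell$-adic places, $\ell>2(b_\xi-a_\xi)+2$ and $\psi$ is crystalline, so the twist is again Fontaine--Laffaille with regular weights.
\item At $v\in\Pla^\bmin$, the image of the chosen irreducible component lands in some irreducible component of the unrestricted $\GL_{2m}$ lifting ring of $\ovl r|_{\Gal_{\tld F_w}}$, for $w\mid v$.
\end{itemize}
This yields a $\mrs G_{2m}$-deformation problem $\tld{\mrs S}$ over $\tld F/F$ in the sense of \cite{CHT08} and a classifying map $\msf R_{\tld{\mrs S}}\to\msf R_{\mrs S}$. The construction is compatible with equivalence, as noted above.

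Second, I show that $\msf R_{\tld{\mrs S}}\to\msf R_{\mrs S}$ is finite, indeed surjective up to finite index. By Lemma~\ref{osoosieiwumfiev}, $\rho_{\mrs S}$ is determined up to $(G')^\wedge$-conjugacy by the traces of $\Std\circ\rho_{\mrs S}$ together with $\nu$. These traces on $\Gal_{\tld F}$ equal $\psi\cdot\tr r_{\mrs S}$, so they lie in the image $\bx R'$ of $\msf R_{\tld{\mrs S}}$. On $\Gal_F\setminus\Gal_{\tld F}$, the traces are determined via the self-duality (the element $\jmath$ part) together with $\tr$ on $\Gal_{\tld F}$ and $\ovl\rho(\cc)$. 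Hence $\rho_{\mrs S}$ is conjugate to a representation valued in $G(\bx R')$, and the universal property forces $\bx R'=\msf R_{\mrs S}$. By topological Nakayama, it then suffices to prove that $\msf R_{\tld{\mrs S}}$ is finite over $\mcl O$.

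Third, the finiteness of $\msf R_{\tld{\mrs S}}$ is the main obstacle, and I would follow the potential automorphy argument of \cite{BLGGT}-type finiteness (cf.~\cite[Theorem~4.2.8]{CHT08} and \cite{Tho12}):
\begin{enumerate}
\item Choose a totally real Galois extension $F'/F$, linearly disjoint from the relevant fixed field, over which $\ovl r|_{\Gal_{\tld F F'}}$ becomes automorphic, with good reduction and minimal level. Here I use the automorphic origin of $\ovl r$, namely $\psi^{-1}\otimes\rho_{\Pi,\lbd}$ base changed to $\tld F$, which is automorphic via base change of $\Pi$.
\item Apply the unitary $\msf R=\bb T$ theorem (using adequacy, which holds by $\ell>2(2m+1)$ and Proposition~\ref{soosieiueuenfue}-type results) to get that $\msf R_{\tld{\mrs S}'}$ over $\tld FF'$ is finite over $\mcl O$.
\item Conclude that $\msf R_{\tld{\mrs S}}$ is finite over $\msf R_{\tld{\mrs S}'}$ by the standard argument \cite[Lemma~1.2.3]{BLGGT}: restriction to a finite-index subgroup gives a finite map on deformation rings, using traces and Carayol.
\end{enumerate}
The delicate points are ensuring the local conditions at $\Pla^\bmin$ (arbitrary irreducible components) are handled after base change, which I would address by choosing $F'$ so that all places above $\Pla^\bmin$ become unramified for $\ovl r$ where allowed (and are otherwise harmless, since the finiteness only needs the unrestricted ring away from $\ell$), and verifying adequacy of $\ovl r(\Gal_{\tld FF'(\mu_\ell)})$.
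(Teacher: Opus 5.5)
Your overall structure matches the paper's proof. You build $r_{\mrs S}$ and the unitary problem $\tld{\mrs S}$, taking at $v\in\Pla^\bmin$ a component that contains the image. You obtain $\theta\colon\msf R_{\tld{\mrs S}}\to\msf R_{\mrs S}$. You then get finiteness of $\msf R_{\tld{\mrs S}}$ from the Gee--Thorne potential automorphy argument, which the paper cites rather than reproves.

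The real difference is the middle step. The paper only shows that $\theta$ is finite. Modulo $\theta(\mfk m_{\msf R_{\tld{\mrs S}}})$, the lifting $r_{\mrs S}$ is equivalent to $\ovl r_{\mrs S}$, so it has finite image. Since $\msf R_{\mrs S}$ is generated by traces and similitudes (Lemma~\ref{osoosieiwumfiev}), each quotient of $\msf R_{\mrs S}/\theta(\mfk m_{\msf R_{\tld{\mrs S}}})$ by a prime is generated by sums of roots of unity of bounded order. Hence that quotient is Artinian and finite, and Nakayama concludes. This is exactly the argument you invoke in your step 3(c), so you could simply have reused it here.

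You instead claim that $\theta$ is surjective. That claim is true, but your justification for the traces on $\Gal_F\setminus\Gal_{\tld F}$ does not work as stated. The $\jmath$-component of $r_{\mrs S}(g)$ is $\psi^{-1}(g\cc)\,\Std\rho_{\mrs S}(g)(J'_{2m})^{-1}$. Equivalence of $\mrs G_{2m}$-liftings changes this component by $A\mapsto BAB^\top$ with $B\in\GL_{2m}^\wedge$, which does not preserve $\tr(AJ'_{2m})$. So the image of $\msf R_{\tld{\mrs S}}$ does not visibly contain these traces.

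Two correct arguments are available:
\begin{itemize}
\item The tangent map $\bx H^1(\Gal_F,\ad^0(\ovl\rho))\to\bx H^1(\Gal_{\tld F},\mfk{gl}_{2m,\kappa})$ is injective. This holds because $[\tld F:F]=2$ is prime to $\ell$, and $\mfk g'_\kappa$ is a $G$-stable direct summand of $\mfk{gl}_{2m,\kappa}$ for $\ell$ odd. Hence $\theta$ is surjective on cotangent spaces.
\item Alternatively, descend $\Std\circ\rho_{\mrs S}|_{\Gal_{\tld F}}$ to the image $\bx R'$ of $\theta$ by Lemma~\ref{ososiieiemifes}. Then \cite[Th\'eor\`eme~1]{Car94} and Schur's lemma let you write the image of $\cc$ as $bA$ with $A$ defined over $\bx R'$. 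After rescaling, $b\equiv 1$ and $b^2\in\bx R'$, so $b\in\bx R'$ because $\ell$ is odd.
\end{itemize}

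Your route gives the sharper statement that $\msf R_{\mrs S}$ is a quotient of the unitary deformation ring, but it uses that the index is prime to $\ell$. The paper's finite-image argument is cruder, but it works for any finite-index subgroup and needs no descent.
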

\begin{proof}
Fix a lift $\rho_{\mrs S}$ of $\ovl\rho$ to $\msf R_{\mrs S}$ representing the universal deformation. From the construction above, we obtain a homomorphism
\begin{equation*}
r_{\mrs S}: \Gal_F\to \mrs G_{2m}(\msf R_{\mrs S}).
\end{equation*}
We may consider the corresponding residual representation
\begin{equation*}
\ovl r_{\mrs S}: \Gal_F\to \mrs G_{2m}(\kappa)
\end{equation*}
and for each $v\in \Pla^\bmin$ a local deformation problem $\tld{\mrs D}_v^\square$ of $\ovl r_{\mrs S}$ corresponding to an irreducible component of the unrestricted deformation problem (cf.~\cite[Proposition~3.33]{LTXZZa}) \sut $r_{\mrs S}|_{\Gal_{F_v}}$ is an object of $\tld{\mrs D}_v^\square(\msf R_{\mrs S})$. We consider (in the notation of \cite[Definition~3.6]{LTXZZa}) the deformation problem
\begin{equation*}
\tld{\mrs S}\defining (\ovl r_{\mrs S}, \ve_\ell^{1-2m}, \Pla, \{\tld{\mrs D}_v^\square\}_{v\in\Pla}),
\end{equation*}
$\tld{\mrs D}_v^\square$ is the Fontaine--Laffaille deformation problem as defined in~\cite[Definition~3.12]{LTXZZa} for each $v\in\Pla(\ell)$. Fix a universal lift
\begin{equation*}
r_{\tld{\mrs S}}: \Gal_F\to \mrs G_{2m}(\msf R_{\tld{\mrs S}}).
\end{equation*}
Then there exists a homomorphism $\theta: \msf R_{\tld{\mrs S}}\to \msf R_{\mrs S}$ \sut $\theta\circ r_{\tld{\mrs S}}$ is equivalent to $r_{\mrs S}$.

We claim that $\theta$ makes $\msf R_{\mrs S}$ a finite module over $\msf R_{\tld{\mrs S}}$. To show this, we denote $\msf R_{\tld{\mrs S}, \mrs S}\defining \msf R_{\mrs S}/\theta(\mfk m_{\msf R_{\tld{\mrs S}}})$, and let $\rho_{\tld F, F}$ denote the $G(\msf R_{\tld{\mrs S}, \mrs S})$-valued representation obtained from $\theta_{\mrs S}$, and let
\begin{equation*}
r_{\tld{\mrs S}, \mrs S}: \Gal_F\to \mrs G_{2m}(\msf R_{\tld{\mrs S}, \mrs S})
\end{equation*}
denote the corresponding homomorphism. 
Then $r_{\tld{\mrs S}, \mrs S}$ is equivalent to $\ovl r_{\mrs S}$. In particular, $r_{\tld{\mrs S}, \mrs S}$ has finite image. It follows from Lemma~\ref{osoosieiwumfiev} that $\msf R_{\mrs S}$ is generated by the traces and similitudes of images of $\Std\circ\rho_{\mrs S}$.
Consider any prime ideal $\mfk p$ of $\msf R_{\tld{\mrs S}, \mrs S}$. Because the image of $r_{\tld{\mrs S}, \mrs S}$ is finite, we see that the images of these traces in $\msf R_{\tld{\mrs S}, \mrs S}/\mfk p$ are sums of roots of unity of bounded degree (up to a scalar in $\mcl O^\times$). In particular, $\msf R_{\tld{\mrs S}, \mrs S}/\mfk p$ is finite. Thus $\msf R_{\tld{\mrs S}, \mrs S}$ is Artinian. Because it has finite residue field, it is itself finite. The claim then follows from Nakayama's lemma.

Note that $\Std\circ\ovl\rho_{\Pi, \lbd}|_{\Gal(\ovl F/F(\mu_\ell))}$ is absolutely irreducible by our hypothesis on $\tld F$; which implies $\Std\circ\ovl\rho_{\Pi, \lbd}(\Gal(\ovl F/\tld F(\mu_\ell)))$ is adequate in the sense of \cite[Definition~2.3]{Tho12} by \cite[Theorem~A.9]{Tho12}. By the same proof of~\cite[Theorem~5.1.4]{Gee11} (strengthened using Thorne's adequacy condition \cite{Tho12}), we know that $\msf R_{\tld{\mrs S}}$ is a finite $\mcl O$-module.

The theorem is proved.
\end{proof}

\subsection{An almost minimal R=T theorem}

In this subsection, we work in the following setting:

\begin{setup}\enskip
\begin{itemize}
\item
Let $\mbf V$ be a quadratic space over $F$ of rank $N$ and discriminant $\mfk d$, \sut $\mbf V_v$ is not split for $v\in\Pla^\lr$. Let $(p_\tau, q_\tau)_{\tau\in\Pla^\infty}$ be the signature of $\mbf V$. Suppose that $q_\tau\in\{0, 2\}$ for each $\tau\in\Pla^\infty$, and set $d(\mbf V)\defining \big(\sum_{\tau\in\Pla^\infty}p_\tau q_\tau\big)/2$.
\item
Let $\Lbd$ be a self-dual $\prod_{v\in\Pla^\infty\cup\Pla^\bmin}\mcl O_v$-lattice in $\mbf V\otimes_F\Ade_F^{\Pla^\infty\cup\Pla^\bmin\cup\Pla^\lr}$ and a neat compact open subgroup of the form
\begin{equation*}
\mdc K=\prod_{v\in\Pla^\bmin\cup\Pla^\lr}\mdc K_v\times\prod_{v\notin\Pla^\infty\cup\Pla^\bmin\cup\Pla^\lr}\SO(\Lbd)(\mcl O_v)
\end{equation*}
in which $\mdc K_v$ is special maximal for $v\in\Pla^\lr$. 
\item
Let $\Pla$ be a finite set of finite places of $F$ containing $\Pla^\bmin\cup\Pla^\lr\cup\Pla(\ell)$.
\end{itemize}
\end{setup}

We obtain a system of (complex) Shimura varieties $\{\bSh(\mbf V, \mdc K')\}_{\mdc K'}$ attached to $\Res_{F/\bb Q}\SO(\mbf V)$ indexed by compact open subgroups $\mdc K'$ of $\mdc K$, which are quasi-projective smooth complex schemes of dimension $d(\mbf V)$. The element $\xi$ gives rise to a continuous homomorphism
\begin{equation*}
\prod_{v\in\Pla(\ell)}\SO(\Lbd)(\mcl O_v)\to \GL(L_\xi),
\end{equation*}
where $L_\xi$ is a finite free $\mcl O$-module, hence induces an $\mcl O$-linear \eTale local system $\mrs L_\xi$ on $\bSh(\mbf V, \mdc K')$ for every compact open subgroups $\mdc K'$ of $\mdc K$, compatible under pullback maps; see~\cite[1.5.8]{KSZ21}.

Recall the abstract orthogonal Hecke algebra $\bb T^{m, \mfk d,\Pla}$ defined in Definition~\ref{aosisieeinifmews}. Let $\phi: \bb T^{m, \mfk d, \Pla}\to \kappa$ be the homomorphism \sut for every finite place $v$ of $F$ not in $\Pla$, we have $\phi|_{\bb T^{\mfk m, \mfk d}_v}=\phi_{\bm\alpha_v}$ (see Definition~\ref{ieifiemifesqpo}) where $\bm\alpha_v=(\alpha_{v, 1}^{\pm1}, \ldots, \alpha_{v, m}^{\pm1})$ is the orthogonal abstract Satake parameter at $v$ (see Definition~\ref{slabossleiisTakeows}) satisfying that
\begin{equation*}
\Brace{\alpha_{v, 1}^{\pm1}\iota_\ell\big(\sqrt{\norml{v}}\big)^{2-N}, \ldots, \alpha_{v, m}^{\pm1}\iota_\ell\big(\sqrt{\norml{v}}\big)^{2-N}}
\end{equation*}
is the multiset of generalized eigenvalues of $\ovl\rho_v(\phi_v)$ in $\ovl{\bb F_\ell}$. We denote by $\mfk m$ the kernel of $\phi$.

\begin{thm}\label{peeoieowivmincs}
Under the above setup, we assume
\begin{enumerate}
\item[(D0)]
(already assumed) $\ell> 2(b_\xi-a_\xi)+2$, and $\ell$ is unramified in $F$;
\item[(D1)]
$\ell>2(2m+1)$; 
\item[(D2)]
$\Std\circ\ovl\rho|_{\Gal\paren{\ovl F/F(\sqrt{\mfk d})(\mu_\ell)}}$ is absolutely irreducible;
\item[(D3)]
$\ovl\rho$ is rigid for $(\Pla^\bmin, \Pla^\lr)$;
\item[(D4)]
for every finite set $\Pla'$ of finite places of $F$ containing $\Pla$, and every compact open group $\mdc K'$ of $\mdc K$ of the form $\mdc K'=\mdc K'_{\Pla'}\times \mdc K^{\Pla'}$, we have
\begin{equation*}
\etH^d(\bSh(\mbf V, \mdc K'), \mrs L_\xi\otimes_{\mcl O}\kappa)_{\bb T^{m, \mfk d, \Pla'}\cap\mfk m}=0
\end{equation*}
unless $d=d(\mbf V)$.
\end{enumerate}
Let $\bb T$ be the image of $\bb T^{m, \mfk d, \Pla}$ in $\End_{\mcl O}\big(\etH^{d(\mbf V)}(\bSh(\mbf V, \mdc K), \mrs L_\xi)\big)$. If $\bb T_{\mfk m}\ne 0$, then
\begin{enumerate}
\item
There is a canonical isomorphism $\msf R_{\mrs S}\cong \bb T_{\mfk m}$ of local complete intersection rings over $\mcl O$.
\item
The $\bb T_{\mfk m}$-module $\etH^{d(\mbf V)}(\bSh(\mbf V, \mdc K), \mrs L_\xi)_{\mfk m}$ is finite and free.
\end{enumerate}
\end{thm}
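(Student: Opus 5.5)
The plan is a Taylor--Wiles--Kisin patching argument in the form of \cite{Whi23a}, adapted to the cohomology of $\bSh(\mbf V,\mdc K)$ localized at $\mfk m$.

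\emph{Step 1: the Galois representation on $\bb T_{\mfk m}$.} First I will construct a homomorphism $\msf R_{\mrs S}\to\bb T_{\mfk m}$. By (D4), $\etH^{d(\mbf V)}(\bSh(\mbf V,\mdc K),\mrs L_\xi)_{\mfk m}$ is $\mcl O$-torsion free. Its generic fibre decomposes by cuspidal automorphic representations $\pi$ of $\SO(\mbf V)(\Ade_F)$. Each such $\pi$ has a functorial lift $\FL(\pi)$ (Definition~\ref{funcroliaureofnils}), and by (D2) this lift is cuspidal. Proposition~\ref{associateGlaosireifnies} then gives $\rho_\pi\colon\Gal_F\to G(\ovl{\bb Q_\ell})$, and gluing these with Lemma~\ref{osoosieiwumfiev} produces a $G(\bb T_{\mfk m})$-valued lift of $\ovl\rho$. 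Next I must check the lift is of type $\mrs S$ at each place:
\begin{itemize}
\item At $v\in\Pla^\bmin$ nothing is needed: the local condition is unrestricted, and rigidity (D3) forces minimality anyway.
\item At $v\in\Pla(\ell)$, Proposition~\ref{associateGlaosireifnies}(3) together with $\ell>2(b_\xi-a_\xi)+2$ gives Fontaine--Laffaille liftings.
\item At $v\in\Pla^\lr$, $\mdc K_v$ is special maximal. Corollary~\ref{pssienifienfiwwss} shows the monodromy is a single $2\times2$ block, so the lift lies in $\mrs D^{\ram}$. (If $\pi_v$ were unramified it would lie in $\mrs D^\unr$, but the nonsplit hypothesis on $\mbf V_v$ rules that out.)
\end{itemize}
Surjectivity of $\msf R_{\mrs S}\to\bb T_{\mfk m}$ follows because $\bb T_{\mfk m}$ is generated by Hecke operators, which are the images of characteristic-polynomial coefficients of Frobenius.

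\emph{Step 2: Taylor--Wiles data.} For each $n\ge1$ I will choose a set $Q_n$ of Taylor--Wiles places. These satisfy $\norml v\equiv1\bmod\ell^n$, $\ovl\rho(\phi_v)$ is semisimple, regular and in $G^\circ$, and adding them kills the dual Selmer group $\bx H^1_{\mrs L^\perp}(\Gal_{F,\Pla\cup Q_n},\ad^0(\ovl\rho)(1))$. Their number is $q=\dim\bx H^1_{\mrs L^\perp}$ for $\mrs S$. They exist because $\ovl\rho(\Gal_{F(\sqrt{\mfk d})(\mu_\ell)})$ is $G^\circ$-adequate, by Proposition~\ref{soosieiueuenfue} using (D1) and (D2), so \cite{Whi23a}'s Chebotarev argument applies. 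Wiles' formula (Lemma~\ref{sliimiifhiemiws}(5)), combined with the local dimensions $\dim\bx L(\mrs D^\FL)$, $\dim\bx L(\mrs D^\bmin)$ and $\dim\bx L(\mrs D^\ram)$ from Propositions~\ref{olaososiefefjeimss}, \ref{psisiidimfimies} and \ref{ososoidvienfiemsws}, and the archimedean coincidence, shows $\msf R^{\square_\Pla}_{\mrs S_{Q_n}}$ is a quotient of a power series ring over $\msf R^\square_{\mrs S,\Pla}$ in $q-[F:\bb Q]\cdot 0+\ldots$ variables. The local factor $\msf R^\square_{\mrs S,\Pla}$ is formally smooth, using rigidity at $\Pla^\bmin$ so that the unrestricted ring equals the smooth minimal component. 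Hence the number of variables matches $\#Q_n$ plus the framing dimension.

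\emph{Step 3: level structure at $Q_n$ and patching.} On the automorphic side I will add Iwahori-type (pro-$\ell$ Iwahori) level at $v\in Q_n$. This requires two facts. First, $\etH^{d(\mbf V)}$ at the new level, localized at a suitable maximal ideal (choosing an ordering of Frobenius eigenvalues), is free over $\mcl O[\Delta_{Q_n}]$ with $\Delta_{Q_n}$-coinvariants recovering the original module. This uses (D4) to have cohomology concentrated in one degree, plus neatness. Second, local--global compatibility at $Q_n$: the inertia acts through the torus in the Taylor--Wiles deformation problem. This uses Solleveld's local Langlands correspondence for principal series \cite{Sol25} to see that the monodromy vanishes and inertia acts through the diagonal characters matched with the $\Delta_{Q_n}$-action. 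With these, the standard patching gives a patched module $M_\infty$ over $\msf R_\infty=\msf R^\square_{\mrs S,\Pla}[[x_1,\dots,x_g]]$, which is free over $S_\infty=\mcl O[[y_1,\dots,y_{q+\text{frame}}]]$, where $\dim\msf R_\infty=\dim S_\infty$. Since $\msf R_\infty$ is a regular domain (a power series ring), $M_\infty$ is free over $\msf R_\infty$ by Auslander--Buchsbaum. Quotienting by the augmentation ideal of $S_\infty$ gives both claims (1) and (2): $\msf R_{\mrs S}\cong\bb T_{\mfk m}$ is a complete intersection, and $\etH^{d(\mbf V)}_{\mfk m}$ is free.

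The main obstacle is the local--global compatibility at the Taylor--Wiles places. The orthogonal group is not $\GL_n$, so matching the Hecke action of $\Delta_{Q_n}$ with inertia requires the explicit principal series local Langlands of \cite{Sol25} and its compatibility with Arthur's. A secondary difficulty is the level-raising places in Step 1, where Corollary~\ref{pssienifienfiwwss} must be used to rule out extra monodromy.
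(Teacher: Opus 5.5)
Your outline follows the paper's architecture: gluing $\rho_{\mathfrak m}$ via Lemma~\ref{osoosieiwumfiev}, Corollary~\ref{pssienifienfiwwss} at $\Pla^{\mathrm{lr}}$, Solleveld's correspondence for vanishing monodromy at Taylor--Wiles places, freeness over $\mathcal O[\Delta]$ from (D4) and neatness, then patching and Auslander--Buchsbaum. The gap is in Step~2, which assumes something the hypotheses do not give you: Taylor--Wiles places at which $\overline\rho(\phi_v)$ is \emph{regular} semisimple.

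Proposition~\ref{soosieiueuenfue} only yields $G^\circ$-adequacy in the sense of Definition~\ref{otieinnvnuducms}. Its condition~(2) supplies, for each irreducible $W\subset(\mathfrak g'_\kappa)^\vee$, an element $h$ such that $W$ pairs nontrivially with $\mathrm{Lie}\,Z(Z_{G^\circ}(h))$, and this $h$ is in general not regular. The Chebotarev argument of \cite{Whi23a} therefore produces places whose Frobenius is merely semisimple. You give no argument that every dual Selmer class can be killed using regular Frobenii alone, and the adequacy notion you invoke is designed precisely to avoid needing this. So the existence of your $Q_n$ is unsupported, and with it the Iwahori/pro-$\ell$-Iwahori level structure of Step~3.

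The missing idea is to allow non-regular Frobenius, which is what the paper does:
\begin{itemize}
\item The local condition at $v$ is that inertia lands in $Z(M_{g_v})^\circ$ (Proposition~\ref{lOIEYTleosoiwleiifns}). This contributes $\dim Z(M_{\overline g_v})-1$ to the tangent space.
\item The auxiliary level is $\mdc K_{v,1}\le\mdc K_{v,0}$. Here $\mdc K_{v,0}$ is the parahoric for the parabolic whose Levi $L^{\mathrm{SO}}$ is dual to $Z_{G^\circ_\kappa}(Z(M_{\overline g_v})^\circ)$. The quotient $\mdc K_{v,0}/\mdc K_{v,1}=\Delta_v$ is the maximal $\ell$-power quotient of $(L^{\mathrm{SO}}/[L^{\mathrm{SO}},L^{\mathrm{SO}}])(\kappa_v)$.
\item The centralizer types vary with $n$. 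You therefore need the pigeonhole step of Corollary~\ref{osisneieiroes} to fix the number $t$ of cyclic factors of $\Delta_{Q_n}$ before patching, and then take $\msf S_\infty=\mrs T_\Xi[[Y_1,\dots,Y_t]]$.
\end{itemize}

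Two further corrections.

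First, your variable count is off even when every centralizer is a maximal torus. In that case each Taylor--Wiles place contributes $\dim Z(M_{\overline g_v})-1=m$, not $1$. This applies both to the number of generators of $\msf R^{\square}_{\mrs S(Q_n)}$ over the framed local ring and to the number of cyclic factors of $\Delta_v$. So $\msf S_\infty$ needs $mq$ (in general $t$) diamond variables rather than $q$, and only then does $\dim\msf R_\infty=\dim\msf S_\infty$ hold.

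Second, your claim that the $\Delta_{Q_n}$-coinvariants recover the original module combines two distinct facts:
\begin{itemize}
\item (D4) and neatness give freeness over $\mathcal O[\Delta_{Q_n}]$, with coinvariants equal to the parahoric-level module $\mrs H_{\mdc K_0(Q_n)}$ (Lemma~\ref{oooootitiiieeos}).
\item The isomorphism between the $\mathfrak n_0$-localization of that module and the spherical-level module $\mrs H$ is a separate local Hecke-algebra statement. The paper takes it from \cite[Corollary~2.4.14]{Whi23a}, applied to the Iwahori-level module; it does not follow from (D4).
\end{itemize}
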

\begin{rem}
The Condition~(D4) is assumed because we need to use torsion-freeness of the cohomology with coefficients in the local system $\mrs L_\xi$. When $\mrs L_{\xi, \lbd}$ is a constant local system, this condition can be dropped by result of the author \cite{Pen25a} when $\bSh(\mbf V, \mdc K)$ is proper, and by Yang--Zhu \cite{Y-Z25} in general.\end{rem}

The rest of this section is devoted to the proof of the theorem. We modify the proof of \cite[Theorem~3.38]{LTXZZa}. By definition, we have an identification
\begin{equation*}
\mrs H_{\mdc K_0(\vn), \mfk m}\defining\Hom_{\mcl O}\paren{\etH^{d(\mbf V)}\paren{\bSh(\mbf V, \mdc K_0(\vn)), \mrs L_\xi}_{\mfk m}, \mcl O},
\end{equation*}
under which $\bb T_{\mfk m}$ is identified with the image of $\bb T^{m, \mfk d, \Pla}$ in $\End(\mrs H_{\mdc K_0(\vn), \mfk m})$ since $\etH^{d(\mbf V)}\paren{\bSh(\mbf V, \mdc K, \mrs L_\xi)_{\mfk m}}$ is $\mcl O$-torsion free.

We first construct a canonical homomorphism $\msf R_{\mrs S}\to \bb T_{\mfk m}$. It follows from semisimplicity of Hecke operators that $\bb T_{\mfk m}[\frac{1}{\ell}]$ is a reduced ring that is finite over $E$. As $\mrs H_{\mdc K_0(\vn), \mfk m}$ is a finite free $\mcl O$-module, $\bb T_{\mfk m}$ is also finite free over $\mcl O$. Via $\iota_\ell^{-1}$, every maximal ideal $x$ of $\bb T_{\mfk m}[\frac{1}{\ell}]$ gives rise to an $\mfk d$-REASDC representation $\Pi_x$ of $\GL_{2m}(\Ade_F)$ (see Definition~\ref{psoisieneifemiws}), satisfying that
\begin{enumerate}[(a)]
\item
the associated Galois representation $\rho_{\Pi_x, \iota_\ell}$ from Proposition~\ref{associateGlaosireifnies} is residually isomorphic to $\ovl\rho\otimes_\kappa\ovl{\bb F_\ell}$;
\item
$\Pi_x$ is the functorial lifting (see Definition~\ref{funcroliaureofnils}) of a cuspidal automorphic representation $\pi$ of $\SO(\mbf V)(\Ade_F)$ satisfying that $(\pi^\infty)^{\mdc K}$ appears nontrivially in $\etH^{d(\mbf V)}\paren{\bSh(\mbf V, \mdc K), \mrs L_\xi\otimes_{\mcl O, \iota_\ell^{-1}}\bb C}$. In particular, the Archimedean weights of $\Pi$ equals $\xi$ (see Definition~\ref{psoisieneifemiws}). Note that $\Pi_x$ is cuspidal because it is associated to an irreducible Galois representation.
\end{enumerate}
Denote by $\bb T_x$ the localization of $\bb T_{\mfk m}[\frac{1}{\ell}]$ at $x$. By an argument using Baire category theorem as in \cite[Lemma~2.1.5]{CHT08}, we may assume that the Galois representation $\rho_{\Pi_x, \iota_\ell}$ is actually valued in $G(\mcl O_x)$, where $\mcl O_x$ is the ring of integers of some finite extension of $\bb T_{\mfk m}[\frac{1}{\ell}]/x$. We may also assume that the reduction of $\rho_x$ modulo the maximal ideal of $\mcl O_x$ equals $\ovl\rho$. (Not just conjugate to $\ovl\rho$). Let $\bx A$ denote the subring of $\kappa\oplus\bplus_{x\in\Spec\bb T_{\mfk m}[\frac{1}{\ell}]}\mcl O_x$ consisting of elements $(a_{\mfk m}, a_x)$ \sut the reduction of $a_x$ modulo the maximal ideal of $\mcl O_x$ is $a_{\mfk m}$ for all $x\in\Spec\bb T_{\mfk m}[\frac{1}{\ell}]$. Then we have a continuous homomorphism
\begin{equation*}
\ovl\rho\oplus\bplus_{x\in\Spec\bb T_{\mfk m}[\frac{1}{\ell}]}\rho_x: \Gal_F\to G(\bx A).
\end{equation*}
It follows from Lemma~\ref{osoosieiwumfiev} that $\ovl\rho\oplus\bplus_{x\in\Spec\bb T_{\mfk m}[\frac{1}{\ell}]}\rho_x$ is conjugate to a continuous homomorphism
\begin{equation*}
\rho_{\mfk m}: \Gal_F\to G(\bb T_{\mfk m})
\end{equation*}
that lifts $\ovl\rho$.

We claim that $\rho_{\mfk m}$ satisfies the global deformation problem $\mrs S$. By property (b) and Arthur's multiplicity formula, $\Pi_{x, v}$ is unramified for every finite place $v$ of $F$ not in $\Pla^\bmin\cup\Pla^\lr$. So it follows from property (b) and Proposition~\ref{associateGlaosireifnies}(2) that $\rho_{\mfk m, v}$ is unramified for every $v$ not in $\Pla$. It follows from property (b), assumption (D0) and Proposition~\ref{associateGlaosireifnies}(3) that $\rho_{\mfk m, v}$ belongs to $\mrs D_v^{\FL, \square}$ for every $v\in\Pla(\ell)$. It follows from property (a)--(b), Proposition~\ref{associateGlaosireifnies}(2), Corollary~\ref{pssienifienfiwwss} and Arthur's multiplicity formula that $\rho_{\mfk m, v}$ belongs to $\mrs D_v^{\ram, \square}$ for every $v\in\Pla^\lr$. 

Therefore, by the universal property of $\msf R_{\mrs S}$, we obtain a canonical homomorphism
\begin{equation*}
\vp: \msf R_{\mrs S}\to \bb T_{\mfk m}
\end{equation*}
of rings over $\mcl O$. 

We now prove that $\vp$ is surjective. For this, we may assume that $E$ contains $\bb Q_{\ell^2}$ and every algebraic representation of $G$ is defined over $E$, so $\norml{v}$ is a square in $E^\times$ for every finite place of $F$ not in $\Pla$. Fix a lift $\rho_{\mrs S}$ of $\ovl\rho$ to $\msf R_{\mrs S}$. For each algebraic $\mcl O$-representation $(r_V, V)$ of $G$ and each finite place $v$ of $F$ not in $\Pla$, define
\begin{equation*}
t_{v, V}^{\msf R}\defining \tr(r_V(\rho_{\mrs S}(\phi_v)))\in \msf R_{\mrs S}
\end{equation*}
and
\begin{equation*}
t_{v, V}^{\bb T}\defining \tr(r_V(\rho_{\mfk m}(\phi_v)))\in \bb T_{\mfk m}.
\end{equation*}
By construction of $\phi$, we have
\begin{equation*}
\vp(t_{v, V}^{\msf R})=t_{v, V}^{\bb T}.
\end{equation*}
Thus it follows from the integral Satake isomorphism (cf.~\cite{Zhu24}) that $\bb T_{\mfk m}$ is generated by the elements $t_{v, V}^{\bb T}$. In particular, $\vp$ is surjective.

In the next two subsections, we will use the Taylor--Wiles patching argument to prove that $\vp$ is injective.

\subsection{Local computations}

In this subsection, we study local Langlands correspondence for principal series representations of split $p$-adic odd special orthogonal groups. We work in the following setting


\begin{setup}\label{osoitienifemifs}\enskip
\begin{itemize}
\item
Let $v$ be a finite place of $F$.
\item
Let $\SO_N$ be the split orthogonal group of rank $m$ over $\mcl O_v$.
\item
Let $T^{\SO}$ be a split maximal torus of $\SO_N$. Let $B^{\SO}$ be a Borel subgroup containing $T^{\SO}$, with unipotent radical $U_0^{\SO}$. Let $W$ be the Weyl group of $\SO_N$.
\item
Let $\ell$ be an odd rational prime not dividing the order of $W$, and suppose that $\norml{v}\equiv 1\modu\ell$.
\item
Let $T_\kappa$ be a split maximal torus contained in a Borel subgroup $B_\kappa$ of $G^\circ_\kappa$. Suppose the induced pinned root datum is dual to that of $\SO_N$, and let $\iota$ denote the induced isomorphisms $X_*(T^{\SO})\xr\sim X^*(T_\kappa)$ and $X^*(T^{\SO})\xr\sim X_*(T_\kappa)$, cf.~\cite[\S2.2.3]{Whi23a}.
\item
Let $\ovl g$ be an element of $T_\kappa(\kappa)$. Then we can define the following objects:
\begin{itemize}
\item
Let $M_{\ovl g}\defining Z_{G^\circ_\kappa}(\ovl g)$ be the centralizer of $\ovl g$ in $G^\circ_\kappa$.
\item
Let $S_\kappa$ be the identity component of the center of $M_{\ovl g}$, which is a subtorus of $T_\kappa$.
\item
Let $L_\kappa\defining Z_{G^\circ_\kappa}(S_\kappa)$ be the centralizer of $S_\kappa$ in $G^\circ_\kappa$, which is a Levi subgroup containing $M_{\ovl g}$.
\item
Let $P_\kappa$ denote the standard parabolic subgroup of $G^\circ_\kappa$ admitting $L_\kappa$ as a Levi factor.
\item
Let $P^{\SO}$ be a standard parabolic subgroup of $\SO_N\otimes\kappa_v$, with unipotent radical $U^{\SO}$ and a standard Levi subgroup $L^{\SO}$, such that $P^{\SO}$ (resp. $L^{\SO}$) corresponds to $P_\kappa\cap G'_\kappa$ (resp. $L_\kappa\cap G'_\kappa$) via $\iota$, cf.~\cite[\S2.2.3]{Whi23a}. Let $W_{L^{\SO}}$ be the Weyl group of $L^{\SO}$.
\item
let $\mdc K_0$ be the parahoric subgroup of $\SO_N(\mcl O_v)$ consisting of elements whose reduction is in $P^{\SO}(\kappa_v)$.
\item
Let $\Delta$ be the maximal quotient of $(L^{\SO}/[L^{\SO}, L^{\SO}])(\kappa_v)$ of $\ell$-power order.
\item
Let $\mdc K_1$ be the kernel of the composition map
\begin{equation*}
\mdc K_0\to P^{\SO}(\kappa)\to L^{\SO}(\kappa)\to \Delta,
\end{equation*}
which is a normal subgroup of $\mdc K_0$.
\item
Let $\ovl\chi: X_*(T^{\SO})\to \kappa^\times$ be the character corresponding to $\ovl g$ via $\iota$.
\item
Let $\mfk n_1$ (resp. $\mfk n_0$) denote the maximal ideal of $\mcl O[T^{\SO}(F_v)/(T^{\SO}(\mcl O_v)\cap\mdc K_1)]^{W_{L^{\SO}}}$ (resp. $\mcl O[T^{\SO}(F_v)/T^{\SO}(\mcl O_v)]^{W_{L^{\SO}}}$) corresponding to $\ovl\chi$.
\end{itemize}
\end{itemize}
\end{setup}


Suppose $(U_0^{\SO}, \psi)$ is a Whittaker datum, that is, $\psi$ is a nondegenerate character of $U_0^{\SO}$. Let $\Irr(\SO_N, T^{\SO})$ denote the set of irreducible smooth representations of $\SO_N(F_v)$ in the principal series. Let $\Phi_e(\SO_N, T^{\SO})$ denote the set of enhanced $L$-parameters for $\SO_N$ associated with principal series, that is, enhanced $L$-parameters whose cuspidal support is an enhanced $L$-parameter for $T^{\SO}$; cf.~\cite[p.271]{Sol25}. In~\cite{Sol25}, Solleveld uses the Whittaker datum $(U_0^{\SO}, \psi)$ to construct a canonical (bijective) local Langlands correspondence 
\begin{equation*}
\Irr(\SO_N, T^{\SO})\leftrightarrow \Phi_e(\SO_N, T^{\SO}): \quad \pi\mapsto (\phi_\pi, \rho_\pi), \quad \pi(\phi, \rho)\mapsfrom(\phi, \rho).
\end{equation*}
well-defined up to $G'$-conjugacy.

\begin{prop}\label{osseeifmeisw}
The local Langlands correspondence defined by Solleveld in \cite{Sol25} is compatible with that of Arthur for principal series representations of $\SO_N(F_v)$.
\end{prop}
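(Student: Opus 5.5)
Our plan is to reduce the comparison to the case of unramified characters of the torus and then to use the characterization of Arthur's local Langlands correspondence through its compatibility with the local functorial lifting to $\GL_{2m}$ (or $\GL_{2m+1}$ in the even case), as recalled in \S\ref{ososiieucume}. Arthur's correspondence is pinned down by two properties: the $L$-parameter of $\pi$ is determined by $\FL(\pi)$ via $\rec$, and the character $\rho_\pi$ of the component group is determined by the endoscopic character identities normalized by the Whittaker datum $(U_0^{\SO},\psi)$. We will therefore prove separately that Solleveld's parameter $\phi_\pi$ agrees with Arthur's after composing with the standard embedding ${}^L\SO_N\to\GL$, and that the enhancements agree.

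\emph{Step 1 (the $L$-parameter).} For $\pi$ in the principal series, choose a character $\chi$ of $T^{\SO}(F_v)$ with $\pi$ a constituent of $\bx I_{B^{\SO}}^{\SO_N}(\chi)$. Solleveld's construction proceeds through the Bernstein center and the Hecke algebra of the principal series block (via a Kazhdan--Lusztig type classification of the affine Hecke algebra $\mcl H(W_{L^{\SO}},\ldots)$ attached to $\chi$), and the resulting $\phi_\pi$ has semisimple part $\phi_\chi$, the parameter of $\chi$ under the torus LLC, pushed to ${}^L\SO_N$. The nilpotent (monodromy) part is read off from the Langlands quotient / tempered-induction data. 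We compare with \S\ref{ososiieucume}. Langlands classification reduces both sides to tempered $\tau_0$ on a smaller $\SO$ twisted by $\GL$ data, and both constructions are compatible with parabolic induction and Langlands quotients (Solleveld's by construction, Arthur's by \cite{Art13}). This reduces us to discrete series constituents $\mu_0$ of unramified-type principal series. There we compare cuspidal supports: Solleveld's correspondence preserves cuspidal support in the sense of Aubert--Moussaoui--Solleveld, and so does Arthur's (the cuspidal support of $\FL(\mu_0)$ is the $\GL$-induction of $\chi$ and its dual). Since a discrete parameter with prescribed cuspidal support and prescribed infinitesimal character is determined by the Jordan-block data, and both sides yield the unique discrete parameter compatible with the Plancherel/formal degree (or with the Kazhdan--Lusztig parametrization via Moeglin's Jordan blocks), the parameters agree.

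\emph{Step 2 (the enhancement).} Inside an $L$-packet of principal series type, both enhancements are normalized so that the $\psi$-generic member corresponds to the trivial character. We will match them by comparing the action of the $R$-group (Knapp--Stein intertwining operators normalized via $\psi$) in the tempered case, as Arthur's character identities force $\rho_\pi$ to be given by Keys' normalized intertwining operators for reducible unitary principal series, and Solleveld's construction uses the same normalized operators in the Hecke algebra model. We then extend to nontempered $\pi$ via Langlands quotients. For discrete series we would fall back on the uniqueness theorem characterizing LLC for $\SO_N$ by stability and endoscopic character identities, and we would verify that Solleveld's packets satisfy these identities through his compatibility with Lusztig's unipotent correspondence under the Bernstein-block equivalence.

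We expect the main obstacle to be Step 2 in the discrete (non-tempered-induced) case, where matching the enhancement requires showing that Solleveld's Hecke-algebraic labels satisfy Arthur's endoscopic character identities. We would first attempt to cite an existing comparison: Solleveld states compatibility with Lusztig for unipotent blocks, and work of Aubert--Moussaoui--Solleveld and Moeglin gives the comparison for Arthur's packets. For the application (vanishing of monodromy, Corollary~\ref{pssienifienfiwwss} type statements) only Step 1 is actually needed, so if the enhancement matching proves difficult we would restrict to the statement at the level of $L$-parameters.
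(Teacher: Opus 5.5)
\paragraph{Comparison with the paper, and the discrete-series step.}
Your Step~1 follows the paper's strategy: characterize $\pi\mapsto\phi_\pi$ by formal properties and check that both correspondences have them. The difference is in how uniqueness is obtained. The paper cites \cite[Lemma~7.13]{Sol25}, which says Solleveld's map is the unique one that
\begin{itemize}
\item is $\bx H^1(W_{F_v},Z(G'))$-equivariant,
\item is compatible with cuspidal supports,
\item matches tempered representations with bounded parameters, and
\item is compatible with the Langlands classification and tempered induction.
\end{itemize}
It then checks these properties for Arthur's correspondence via \cite[Theorem~7.1.1]{Pen25a} for $N$ even and \cite[Theorem~3.2]{A-G17} for $N$ odd. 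You instead redo the uniqueness by hand. That can work for $\SO_N$, and it even avoids the equivariance property, but your justification in the discrete case (Jordan blocks, formal degrees) is not an argument. The correct reason is short. A discrete parameter in a principal-series block has the form $\bigoplus_i\eta_i\otimes S_{d_i}$, with $\eta_i$ quadratic characters, the pairs $(\eta_i,d_i)$ distinct, and all $d_i$ of one parity. For each $\eta$, the multiset union of the segments $\{\frac{d-1}{2},\ldots,-\frac{d-1}{2}\}$ recovers the $d$'s. Hence such a parameter is determined by its infinitesimal character.

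You also use, without saying so, that both correspondences send discrete series to discrete parameters. More importantly, the substantive input lies on Arthur's side, and you only assert it. The claim that for a discrete series $\mu_0\subset\bx I(\chi)$ the parameter of $\FL(\mu_0)$ has cuspidal support given by $\chi$ and $\chi^{-1}$ is a real theorem (due to Moeglin, and available here through the cited results). It does not follow simply ``by \cite{Art13}''.

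\paragraph{The gap for $N$ even.}
Here \cite{Art13} defines $\phi_\pi$ only up to $\mathrm{O}_{2m}(\bb C)$-conjugacy, and $\pi$ only up to the outer automorphism. So your premise that Arthur's correspondence is pinned down by $\FL(\pi)$ fails. Concretely, take generic unramified $\chi$ and let $c$ be the outer automorphism. Then $\bx I(\chi)$ and $\bx I(\chi^c)$ are nonisomorphic but have the same $\FL$. Solleveld, by contrast, assigns them parameters that are not $\SO_{2m}(\bb C)$-conjugate. Consequently your cuspidal-support comparison works only up to $W(B_m)$, not $W(D_m)$. To obtain an actual statement you need the refined correspondence of \cite{Pen25a}, as the paper uses.

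\paragraph{Remaining points.}
\begin{itemize}
\item The paper's proof treats only $\pi\mapsto\phi_\pi$, and that is all Corollary~\ref{psosieiifmfieifs} needs. Your Step~2 on enhancements is not a proof: nothing supports the claim that Solleveld's Hecke-algebra labels come from Keys' normalized operators. Dropping it is the right call.
\item Your opening reduction to unramified characters is unjustified and unneeded. At Taylor--Wiles places the relevant characters factor through $\Delta_v$ and are tamely ramified.
\item The transfer is always to $\GL_{2m}$, since the dual groups are $\Sp_{2m}$ and $\SO_{2m}$. It is never to $\GL_{2m+1}$.
\end{itemize}
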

\begin{proof}
We use \cite[Lemma~7.13]{Sol25}. In particular, it is known that the local Langlands correspondence 
\begin{equation*}
\pi\mapsto \phi_\pi
\end{equation*}
of Solleveld is uniquely determined by the following properties:
\begin{enumerate}
\item
This bijection is equivariant for the canonical actions of $\bx H^1(W_{F_v}, Z(G'))$.
\item
This bijection is compatible with the cuspidal support maps on both sides in the sense of \cite[Lemma~7.6]{Sol25}.
\item
Under this bijection, $\pi(\phi, \rho)$ is tempered if and only if $\phi$ is bounded.
\item
The bijection is compatible with the Langlands classification and (for tempered representations) with parabolic induction, in the sense of \cite[Lemma~7.11]{Sol25}.
\end{enumerate}
Thus it suffices to show that the local Langlands correspondence defined by Arthur in \cite{Art13} (and refined in \cite{Pen25a} when $N$ is even) satisfies these properties.

Property (1) holds because Arthur's packets are characterized by endoscopic character identities, and the action of $\bx H^1(W_{F_v}, Z(G'))$ preserves these identities.

Property (2--4) follow from \cite[Theorem~7.1.1]{Pen25a} for $N$ even and \cite[Theorem~3.2]{A-G17} for $N$ odd.
\end{proof}

\begin{prop}\label{sosieifeiifmeisw}
Let $\pi$ be a smooth irreducible representation of $\SO_N(F_v)$. Suppose that $(\pi^{\mfk p_1})_{\mfk n_1}\ne 0$. Let $(r, N)$ be the Weil--Deligne representation corresponding to $\pi$ under the local Langlands correspondence of Solleveld \cite{Sol25}. Then $N=0$.
\end{prop}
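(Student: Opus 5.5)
We argue through the cuspidal support, reducing the claim to a statement about the Satake-type parameter of $\pi$ and then using the explicit shape of Solleveld's correspondence on principal series. Note first that $\pi^{\mfk p_1}\ne0$ (where $\mfk p_1$ stands for the compact open $\mdc K_1$) forces $\pi$ to lie in the principal series. Indeed, $\mdc K_1$ contains the pro-unipotent radical of the Iwahori subgroup, so by the theory of Moy--Prasad/Roche types $\pi$ has cuspidal support $(T^{\SO}, \chi)$ with $\chi$ tamely ramified. Moreover, the $\mdc K_1$-invariants factor through $\Delta$, which is of $\ell$-power order, so $\chi|_{T^{\SO}(\mcl O_v)}$ has $\ell$-power order. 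The localization at $\mfk n_1$ pins down the reduction: via the Bernstein/Jacquet module description of $\pi^{\mdc K_1}$ as a module over $\mcl O[T^{\SO}(F_v)/(T^{\SO}(\mcl O_v)\cap\mdc K_1)]^{W_{L^{\SO}}}$, the condition $(\pi^{\mfk p_1})_{\mfk n_1}\ne0$ says that some $W$-conjugate of $\chi$ reduces to $\ovl\chi$ modulo $\lbd$.

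Next we pass to the $L$-parameter. By Proposition~\ref{osseeifmeisw} and property (2) in its proof (compatibility with cuspidal support), the parameter $\phi_\pi$ restricted to $W_{F_v}$, after semisimplification, is the image of the parameter of $\chi$. This is a homomorphism $W_{F_v}\to T(\ovl{\bb Q_\ell})$ whose reduction is conjugate to the unramified map $\phi_v\mapsto\ovl g$, with inertia acting through $\ell$-power order elements. Hence $r(\phi_v)$, after $W$-conjugation, is a lift $g\in T(\ovl{\bb Z_\ell})$ of $\ovl g$ whose image in $G^\circ$ reduces to $\ovl g$. The monodromy $N$ lies in $\Lie$ of the centralizer of $r(I_{F_v})$ and satisfies $\Ad(r(\phi_v))N=\norml{v}^{-1}N$ (or $\norml{v}$, depending on normalization).

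Now we show $N=0$. Decompose $N$ into root spaces for $T$. A root $\alpha$ can contribute only if $\alpha(g)=\norml{v}^{\pm1}$. Since $\norml{v}\equiv1\modu\ell$, this forces $\alpha(\ovl g)=1$, so $\alpha$ is a root of $M_{\ovl g}\subset L_\kappa$. Thus $N$ lies in $\Lie(L)$ and the problem reduces to the Levi $L$. Here we use the finer information. Solleveld's correspondence is compatible with parabolic induction from $L^{\SO}$ (property (4)), and the $\mdc K_1$-invariants localized at $\mfk n_1$ with the character of $\Delta$ force $\chi$ restricted to $L^{\SO}$ to factor as $\chi_0\circ\det$-type data. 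More precisely, the relevant Hecke algebra at $\mfk n_1$ is commutative, being the $W_{L^{\SO}}$-invariants of a group algebra of the torus; by the Bernstein presentation, this implies that $\pi$ is a full irreducible induced representation $\bx I_{P^{\SO}}(\sigma)$ with $\sigma$ a character of $L^{\SO}(F_v)$. One-dimensional representations of a Levi correspond to parameters that factor through $Z(\hat L)$, which have trivial monodromy inside $\hat L$. Irreducible induction of such $\sigma$ then gives an $L$-parameter with $N=0$.

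The main obstacle is this last step: justifying that the localization at $\mfk n_1$ forces $\pi$ to be a constituent induced from a \emph{character} of $L^{\SO}$, rather than from a Steinberg-type constituent, and that the resulting enhanced parameter has $N=0$. We would first try an argument by contradiction using the Kazhdan--Lusztig/Solleveld description of principal series parameters. A nonzero $N$ would produce a generic-type subquotient with a nontrivial unipotent Jacquet module. Such a subquotient's $\mdc K_1$-invariants would detect an unramified twist, namely a root $\alpha$ of $L$ with $\alpha(\chi)=\norml{v}^{\pm1}$, hence with reduction $1$. We would then use that $\ell\nmid\#W$ to show that the corresponding $\mdc K_1$-fixed vectors vanish after localizing at $\mfk n_1$, following the argument of \cite[\S5]{Whi23a}.
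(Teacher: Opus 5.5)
Your opening reductions are fine: $\pi$ lies in the principal series with tame $\chi$ of $\ell$-power order on $T^{\SO}(\mcl O_v)$, $r(\phi_v)$ is a lift of $\ovl g$, and any root $\alpha$ contributing to $N$ has $\alpha(\ovl g)=1$. But they only give $N\in\Lie(L)$. Because $\norml{v}\equiv 1\modu\ell$, localizing at $\mfk n_1$ is perfectly compatible with a root $\alpha$ of $L$ satisfying $\alpha(g)=\norml{v}$. So all the content of the proposition lies in the step you flag as the obstacle, and the argument you give there does not work.

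Commutativity of $\mcl O[T^{\SO}(F_v)/(T^{\SO}(\mcl O_v)\cap\mdc K_1)]^{W_{L^{\SO}}}$ places no constraint on $\pi$; localizing at one of its maximal ideals merely selects a generalized eigenspace. The conclusion you draw from it is also false. If $\ovl g$ is central, then $L^{\SO}=\SO_N$, $\Delta$ is trivial and $\mdc K_1=\SO_N(\mcl O_v)$. Every irreducible unramified principal series whose Satake parameter reduces to $\ovl g$ then satisfies the hypothesis without being a character of $L^{\SO}(F_v)$. Separately, the parameter of a character of $L$ does not factor through $Z(\hat L)$: the trivial character of $\GL_2(F_v)$ has parameter $|\cdot|^{1/2}\oplus|\cdot|^{-1/2}$, though it does have $N=0$.

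Your fallback argument points at the wrong mechanism as well. Steinberg-type constituents of $\bx I_{B^{\SO}}^{\SO_N}(\chi)$ with $\chi$ reducing to $\ovl\chi$ are not removed by localizing at $\mfk n_1$. They are removed by the invariance under $\mdc K_1$.

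The missing idea, which is how the paper argues (following \cite[Theorem~2.4.26]{Whi23a}), is to exploit that invariance on the Hecke-algebra side. First, \cite[Proposition~2.4.22]{Whi23a} places $\pi$ in a principal series block $\mfk s$. Then the hypothesis $(\pi^{\mfk p_1})_{\mfk n_1}\ne0$ forces the module $\mcl M_\pi$ over the finite Hecke algebra $\mcl H(W(R_{\mfk s}^\vee),\norml{v}^\lbd)\subset\mcl H(\mfk s)$ to contain the character on which every simple $T_{s_\alpha}$ acts by $\norml{v}$ \cite[Lemma~2.4.25]{Whi23a}. This is where $\mdc K_1$-invariance enters: $\mdc K_1$ is only the kernel of a map to an abelian $\ell$-group quotient of $L^{\SO}(\kappa_v)$, so it still contains the preimages of the root subgroups of $L^{\SO}$.

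One then needs a genuinely Kazhdan--Lusztig-theoretic input, the analogue of ``spherical implies $N=0$''. An irreducible module containing this $\norml{v}$-trivial character has Kazhdan--Lusztig triple $(\sigma,x,\rho^\circ)$ with $x=1$ \cite[Proposition~10.1]{ABPSa}. This is not a formal consequence of compatibility with parabolic induction, so property~(4) in the proof of Proposition~\ref{osseeifmeisw} cannot replace it.

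Finally, you must check that Solleveld's correspondence transports this to the $L$-parameter. The triple $(\sigma,y,\rho^\circ)$ he attaches to $\mcl M_\pi$ via graded Hecke algebras exponentiates to the Kazhdan--Lusztig triple, so $\exp y=x=1$. By his construction, $y=\log\phi_\pi\big(1,\begin{bmatrix}1&1\\&1\end{bmatrix}\big)$ (see the proofs of \cite[Proposition~6.5, Lemma~6.10]{Sol25}). Hence $y=0$, and so $N=0$. With this in hand, your root-space reduction to $L$ becomes unnecessary.
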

\begin{proof}
The argument is analogous to \cite[Theorem~2.4.26]{Whi23a}, where a similar result is established with the local Langlands correspondence of \cite{Sol25} replaced by that of \cite{ABPSa}.

To prove this claim, recall that the local Langlands correspondence in \cite{Sol25} is obtained on each principal series Bernstein component $\mfk s=[T^{\SO}, \chi_0]_{\SO_N}$ via a series of canonical bijections
\begin{equation*}
\Irr(\SO_N, T^{\SO})^{\mfk s}\leftrightarrow \Irr(\End_{\SO_N}(\Pi_{\mfk s})^\op)\leftrightarrow\Irr(\mcl H(\mfk s)^\op)\leftrightarrow \Irr(\mcl H(\mfk s^\vee, \norml{v}^{1/2}))\leftrightarrow \Phi_e(\SO_N)^{\mfk s^\vee};
\end{equation*}
cf.~\cite[\S7]{Sol25}. Here
\begin{itemize}
\item
$\mfk s^\vee$ is the dual Bernstein component in $\Phi_e(T^{\SO})$ associated with $\mfk s$ via the local Langlands correspondence for tori; cf.~\cite[\S5]{Sol25};
\item
$\Pi_{\mfk s}$ is a progenerator of the Bernstein block $\Irr(\SO_N, T^{\SO})^{\mfk s}$ in the principal series, given by
\begin{equation*}
\Pi_{\mfk s}\defining \bx I_{B^{\SO}}^{\SO_N}(\ind_{T^{\SO}_\cpct}^{T^{\SO}(F_v)}(\chi_c)),
\end{equation*}
where $T^{\SO}_\cpct$ is the unique maximal compact subgroup of $T^{\SO}(F_v)$, and $\chi_c\defining \chi_0|_{T^{\SO}_\cpct}$;
\item
$\mcl H(\mfk s)=\mcl H(\mfk s)^\circ\rtimes\Gamma_{\mfk s}$ is certain (extended) affine Hecke algebra that is (canonically when given the Whittaker datum) isomorphic to $\End_{\SO_N}(\Pi_{\mfk s})$, cf.~\cite[Theorem~2.7]{Sol25}; and
\item
$(\mcl H(\mfk s^\vee, \norml{v}^{1/2})$ is certain (extended) affine Hecke algebra from \cite{AMS24} attached to the Bernstein component $\Phi_e(\SO_N)^{\mfk s^\vee}$.
\end{itemize}
The affine Hecke algebra $\mcl H(\mfk s)$ contains a finite-dimensional Iwahori--Hecke subalgebra
\begin{equation*}
\mcl H(W(R_{\mfk s}^\vee), \norml{v}^\lbd),
\end{equation*}
where $R_{\mfk s}^\vee$ is the root datum attached to the Bernstein component $\mfk s$ and $\lbd: R_{\mfk s}^\vee/W_{\mfk s}\to \bb N$ is the label function; cf.\cite[p.272]{Sol25}.

By \cite[Proposition~2.4.22]{Whi23a}, $\pi$ is contained in a principal series Bernstein component $\mfk s$. By \cite[Lemma~2.4.25]{Whi23a}, the $\mcl H(W(R_{\mfk s}^\vee), \norml{v}^\lbd)$-module $\mcl M_\pi$ corresponding to $\pi$ contains the one-dimensional character on which each simple generator $T_{s_\alpha}$ acts by $\norml{v}$ (i.e., the $\norml{v}$-trivial character).

Thus it follows from \cite[Proposition~10.1]{ABPSa} that the Kazhdan--Lusztig triple $(\sigma, x, \rho^\circ)$ attached to $\mcl M_\pi$ satisfies $x=1$.

Fix an element $u$ in the maximal compact real subtorus of $T_{\mfk s^\vee}$. Using graded Hecke algebras, Solleveld attached to $\mcl M_\pi$ a triple $(\sigma, y, \rho^\circ)$ in which
\begin{enumerate}
\item
$\sigma\in\Lie(G'_u)$ is semisimple, 
\item
$y\in \Lie(G'_u)$ is unipotent,
\item
$[\sigma, y]=-\log(\norml{v})y$, and
\item
$\rho^\circ$ is an irreducible representation of $\pi_0(Z_{G_u'}/Z(G_u'))$ appearing in certain Springer-type homology;
\end{enumerate}
cf.\cite[p.310]{Sol25}. It follows from the proof of \cite[Proposition~6.5]{Sol25} that $(\exp(\sigma), \exp(y), \rho^\circ)$ is the Kazhdan--Lusztig triple associated to $\mcl M_\pi$. In particular, $y=0$.

On the other hand, it follows from the proof of \cite[Lemma~6.10]{Sol25} on page~316 that
\begin{equation*}
y=\log\phi_\pi(1, \begin{bmatrix}1 & 1\\ & 1\end{bmatrix}).
\end{equation*}
Thus $\phi_\pi$ is trivial on the $\SL_2$-factor, and the assertion follows.
\end{proof}

Combining Proposition~\ref{sosieifeiifmeisw} and Proposition~\ref{osseeifmeisw}, we get the following corollary.

\begin{cor}\label{psosieiifmfieifs}
Let $\pi$ be a smooth irreducible representation of $\SO_N(F_v)$. Suppose that $(\pi^{\mfk p_1})_{\mfk n_1}\ne 0$. Let $(r, N)$ be the Weil--Deligne representation corresponding to $\pi$ under the local Langlands correspondence of Solleveld \cite{Sol25}. Then $N=0$.
\end{cor}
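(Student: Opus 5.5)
The statement, as worded, concerns the Weil--Deligne representation $(r,N)$ attached to $\pi$ by Solleveld's correspondence \cite{Sol25}, under the hypothesis $(\pi^{\mfk p_1})_{\mfk n_1}\ne 0$. This is verbatim the conclusion of Proposition~\ref{sosieifeiifmeisw}. The plan is therefore to deduce it directly from that proposition, and then to record, via Proposition~\ref{osseeifmeisw}, the form in which it will actually be used.

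\emph{Step 1.} Verify that the hypotheses match. The corollary is stated in Setup~\ref{osoitienifemifs}: $\ell\nmid\#W$, $\norml{v}\equiv1\modu\ell$, and $\mfk p_1$ the pro-$\ell$-type subgroup $\mdc K_1$ of the parahoric $\mdc K_0$. Also $\mfk n_1$ is the maximal ideal of $\mcl O[T^{\SO}(F_v)/(T^{\SO}(\mcl O_v)\cap\mdc K_1)]^{W_{L^{\SO}}}$ attached to $\ovl\chi$, acting on $\pi^{\mfk p_1}$ through the Bernstein centre of the Iwahori-type Hecke algebra. Proposition~\ref{sosieifeiifmeisw} uses exactly these objects. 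It then gives $N=0$ through the chain: $\pi$ lies in a principal series component $\mfk s$ \cite[Proposition~2.4.22]{Whi23a}; the module $\mcl M_\pi$ contains the $\norml{v}$-trivial character \cite[Lemma~2.4.25]{Whi23a}; hence the Kazhdan--Lusztig triple has trivial unipotent part; and Solleveld's nilpotent $y$, which equals $\log\phi_\pi$ on the unipotent of $\SL_2$, therefore vanishes.

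\emph{Step 2.} Pass from the $L$-parameter to the Weil--Deligne representation. Since $\phi_\pi$ is trivial on the $\SL_2$-factor, the associated Weil--Deligne representation, obtained by composing with the standard representation of the dual group $G^\circ$ and taking $N=d\phi_\pi\big(\begin{smallmatrix}0&1\\0&0\end{smallmatrix}\big)$, has $N=0$. This step is formal.

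\emph{Step 3 (the point of the corollary).} Use Proposition~\ref{osseeifmeisw}: Solleveld's parameter agrees with Arthur's on principal series representations. Hence the same vanishing holds for the Weil--Deligne representation of $\FL(\pi)$. Combined with Proposition~\ref{associateGlaosireifnies}(2), it then holds for $\WD(\Std\circ\rho_{\Pi,\iota_\ell}|_{\Gal_{F_v}})^{\bx F\dash\sems}$. This is what the Taylor--Wiles local--global compatibility requires.

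The only delicate point is Step~1: one must check that the representation lies in the principal series. This is guaranteed by $(\pi^{\mfk p_1})_{\mfk n_1}\ne0$ through \cite[Proposition~2.4.22]{Whi23a}, and it is also what allows Proposition~\ref{osseeifmeisw} to apply in Step~3.
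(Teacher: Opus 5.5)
Your proposal is correct and follows the paper's route. As worded, the corollary is literally Proposition~\ref{sosieifeiifmeisw}, and the paper's justification is exactly to combine that proposition with Proposition~\ref{osseeifmeisw}, so that $N=0$ transfers to the Arthur parameter (equivalently, to the Weil--Deligne representation of $\FL(\pi)$), with principal-series membership supplied by $(\pi^{\mfk p_1})_{\mfk n_1}\ne0$ via \cite[Proposition~2.4.22]{Whi23a} as you note; one small correction is that $\mfk p_1=\mdc K_1$ is the kernel of $\mdc K_0\to\Delta$ rather than a pro-$\ell$ subgroup, though this plays no role in the argument.
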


\subsection{Taylor--Wiles patching argument}

We will use the Taylor--Wiles patching argument following \cite{CHT08}, \cite{Tho12}, and \cite{Whi23a}. Set $\Pla^0\defining\Pla^\bmin\cup\Pla^\lr\cup\Pla(\ell)$. 
To continue the proof of Theorem~\ref{peeoieowivmincs}, we may replace $E$ by a finite unramified extension in $\ovl{\bb Q_\ell}$. Thus we may assume that $\kappa$ contains all eigenvalues of matrices in $\ovl\rho(\Gal_F)$. For each compact open subset $\mdc K'$ of $\mdc K$, we set
\begin{equation*}
\mrs H_{\mdc K'}\defining \Hom_{\mcl O}\paren{\etH^{d(\mbf V)}\paren{\bSh(\mbf V, \mdc K'), \mrs L_\xi}, \mcl O}.
\end{equation*}

We recall the definition of Taylor--Wiles places and Taylor--Wiles data, cf.~\cite[Definitions~2.2.8--2.2.9]{Whi23a}.

\begin{defi}
A \emph{Taylor--Wiles place} for the global deformation problem $\mrs S$ is a finite place $v$ of $F$ not in $\Pla$ that is split in $F(\sqrt{\mfk d})$, \sut $\norml{v}\equiv 1\modu \ell$ and $\ovl g_v\defining \ovl\rho(\phi_v)\in G^\circ(\kappa)$ is semisimple. Such a place is said to be of level $n\ge 1$ if $\norml{v}\equiv 1\modu{\ell^n}$. 

If $v$ is a Taylor--Wiles place for $\mrs S$, we let $M_{\ovl g_v}\defining Z_{G^\circ_\kappa}(\ovl g_v)$ denote the centralizer of $\ovl g_v$ in $G^\circ_\kappa$. 

A \emph{Taylor--Wiles datum} is a pair
\begin{equation*}
\big(\Pla^\TW, \{(T_v, B_v)\}_{v\in\Pla^\TW}\big)
\end{equation*}
in which
\begin{enumerate}
\item
$\Pla^\TW$ is a finite set of Taylor--Wiles places of $F$; and
\item
for each $v\in \Pla^\TW$, $T_v\le G^\circ_\kappa$ is a split maximal torus containing $Z(M_{\ovl g_v})^\circ$, and $B_v$ is a Borel subgroup of $G^\circ_\kappa$ containing $T_v$.
\end{enumerate}
\end{defi}

\begin{prop}\label{lOIEYTleosoiwleiifns}
For each Taylor--Wiles place $v\in \Pla^\TW$, there is a local deformation condition $\mrs D_v^\square$ of $\ovl\rho_v$ (\wrt $\chi=\ve_\ell^{2-N})$ representing liftings $\rho_v$ of $\ovl\rho_v$ to objects $\bx A$ of $\mrs C_{\mcl O}$ satisfying $\rho_v(I_v)\subset Z(M_{g_v})^\circ(\bx A)$. Here $g_v=\rho_v(\phi_v)$ and $M_{g_v}$ is the subgroup constructed in \cite[Theorem~2.1.14]{Whi23a}, which is a lift of $M_{\ovl g_v}$.

Moreover, we have
\begin{equation*}
\dim_\kappa\bx L(\mrs D_v^\square)=\dim_\kappa \bx H^0(F_v, \ad^0(\ovl\rho))+\dim_\kappa Z(M_{\ovl\rho(\phi_v)})-1.
\end{equation*}
\end{prop}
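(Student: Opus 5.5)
The plan is to deduce this from the general theory of Taylor--Wiles local deformation problems in \cite[\S2.1--2.2]{Whi23a}, adapted to the disconnected group $G$ and the fixed similitude $\chi$.

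\emph{Step 1: liftings are tame.} Since $v$ is a Taylor--Wiles place, $\ovl\rho_v$ is unramified and $\norml{v}\equiv1\modu\ell$. By Lemma~\ref{oeiipwinvuiqpqqa}(1), every lifting $\rho_v$ to $\bx A\in\mrs C_{\mcl O}$ factors through the tame group $\bx T_v\cong\bx T_{\norml{v}}$. Also $\ovl g_v=\ovl\rho(\phi_v)$ is semisimple and lies in $G^\circ(\kappa)$. By \cite[Theorem~2.1.14]{Whi23a}, for any lifting the element $g_v=\rho_v(\phi_v)$ determines a canonical lift $M_{g_v}\le G^\circ_{\bx A}$ of $M_{\ovl g_v}$. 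This lift is formed functorially in $\bx A$ and is compatible with $(G')^\wedge(\bx A)$-conjugation. Its formation uses Hensel's lemma on the generalized eigenspace decomposition, in the spirit of Lemma~\ref{poeoiemifemifws}(1).

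\emph{Step 2: the condition is a closed, conjugation-stable subfunctor.} The condition ``$\rho_v(t)\in Z(M_{g_v})^\circ(\bx A)$'' for a topological generator $t$ of tame inertia is a Zariski closed condition on $\Spf\msf R^\square_{\ovl\rho_v}$. The reason is that $Z(M_{g_v})^\circ$ is a torus varying smoothly over the universal lifting ring, so the condition is given by closed equations. It is stable under conjugation by the functoriality in Step 1. This defines the local lifting condition $\mrs D^\square_v$ in the sense of Definition~\ref{oeieriwowpq}. The similitude is fixed at $\chi$, which is unramified at $v$. Hence $\rho_v(t)$ actually lies in the torus $S_{g_v}\defining (Z(M_{g_v})^\circ\cap G')$, up to the finite component group, which is prime to $\ell$ because $\ell>2(2m+1)$.

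\emph{Step 3: the tangent space dimension.} Following \cite[Proposition~2.2.4]{Whi23a}, I would show that $\mrs D^\square_v$ is formally smooth and compute its dimension. Conjugating by $(G')^\wedge$, one may assume $g_v\in M_{\ovl g_v}$-lift with $g_v$ commuting with $\rho_v(t)\in S_{g_v}$. The relation $\phi t\phi^{-1}=t^{\norml{v}}$ then forces $\rho_v(t)^{\norml{v}-1}=1$, which lies in the pro-$\ell$ part of $S_{g_v}(\bx A)$. The space of such liftings is therefore a product of three pieces:
\begin{itemize}
\item $(G')^\wedge/M^\wedge$ orbits, of dimension $\dim G'-\dim M_{\ovl g_v}'$;
\item liftings of $g_v$ in $M_{g_v}$ with fixed similitude, of dimension $\dim M_{\ovl g_v}\cap G'$;
\item $\Hom(\bb Z_\ell(1)\text{-quotient},S^\wedge)$, which is formally smooth of relative dimension $\dim S=\dim Z(M_{\ovl g_v})-1$, since $\dim Z(M)\cap G'=\dim Z(M)-1$ for the center $\GL_1$.
\end{itemize}
Adding up, the relative dimension is $\dim G'_\kappa+\dim Z(M_{\ovl g_v})-1$. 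Since $\ovl g_v$ is semisimple, $\bx H^0(F_v,\ad^0(\ovl\rho))=\Lie(M_{\ovl g_v})\cap\mfk g'_\kappa$. Equation~\eqref{psisieubifes} then gives the stated formula for $\dim_\kappa\bx L(\mrs D_v^\square)$.

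\emph{Main obstacle.} I expect the hardest part to be Step 3: proving that $\mrs D^\square_v$ is formally smooth of the claimed dimension. This requires checking that the canonical lift $M_{g_v}$ of \cite{Whi23a} behaves well for our disconnected $G$ with the fixed similitude. Where needed, I would verify this by reducing to $G^\circ$ and the derived group $G'$.
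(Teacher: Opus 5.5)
Your reduction is the same as the paper's. The paper simply applies \cite[Definition~2.2.10, Corollary~2.2.13]{Whi23a} to the connected group $G^\circ$, and reads off the tangent space from the computation following that corollary. This is legitimate because all liftings of $\ovl\rho_v$ land in $G^\circ$. The $-1$ comes from the fixed similitude, which is unramified at $v$.

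The problem is Step~3, which sets out to prove something false. You correctly note that $g_v$ commutes with $\rho_v(t)$, hence $\rho_v(t)^{\norml{v}-1}=1$. Let $\ell^a$ be the exact power of $\ell$ dividing $\norml{v}-1$. The inertia factor is then $\Hom(\bb Z_\ell/(\norml{v}-1),S^\wedge)=S^\wedge[\ell^a]$. For the split torus $S$ this is represented by the group algebra $\mcl O[(\bb Z/\ell^a)^{\dim S}]$. That ring is finite flat over $\mcl O$ of relative dimension $0$, not formally smooth of relative dimension $\dim S$. Already for $G^\circ=\GSp_2=\GL_2$ and $\ovl g_v$ regular semisimple, this factor is $\mcl O[[X]]/((1+X)^{\ell^a}-1)$.

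So $\mrs D_v^\square$ is not formally smooth over $\mcl O$ of relative dimension $\dim G'_\kappa+\dim Z(M_{\ovl g_v})-1$. It has Krull dimension $1+\dim G'_\kappa$. It is formally smooth of relative dimension $\dim G'_\kappa$ over $\mcl O[\Delta_v]$, with $\Delta_v\cong(\bb Z/\ell^a)^{\dim Z(M_{\ovl g_v})-1}$ matching the group of Definition~\ref{ososieueunufuess}. This non-smoothness is exactly what the patching argument uses. It makes $\msf R_{\mrs S(\Pla^\TW)}$ an $\mcl O[\Delta_{\Pla^\TW}]$-algebra whose quotient by $\mfk a_{\Pla^\TW}$ kills the ramification at the Taylor--Wiles places.

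The stated formula is still true, because Equation~\eqref{psisieubifes} only involves the reduced tangent space $\bx L^1(\mrs D_v^\square)$, not the relative dimension. On $\kappa[\varepsilon]$-points the torsion condition is vacuous: $(1+\varepsilon X)^{\norml{v}-1}=1+(\norml{v}-1)\varepsilon X=1$, because $\ell\mid\norml{v}-1$. If you count first-order deformations in your three pieces instead of relative dimensions, you get $\dim_\kappa\bx L^1(\mrs D_v^\square)=\dim G'_\kappa+\dim Z(M_{\ovl g_v})-1$. Equation~\eqref{psisieubifes} then gives the claim.

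So replace the formal-smoothness goal in Step~3 by this direct tangent-space count. Alternatively, prove formal smoothness over $\mcl O[\Delta_v]$ of relative dimension $\dim G'_\kappa$ and add the $\dim Z(M_{\ovl g_v})-1$ cyclic factors of $\Delta_v$ to the tangent space.
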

\begin{proof}
This follows from \cite[Definition~2.2.10 and Corollary~2.2.13]{Whi23a} applied to $G^\circ$. The last equation follows from the computation following \cite[Corollary~2.2.13]{Whi23a}.
\end{proof}

\begin{defi}\label{ososieueunufuess}
For each Taylor--Wiles datum $\big(\Pla^\TW, \{(T_v, B_v)\}_{v\in\Pla^\TW}\big)$, we obtain another global deformation problem
\begin{equation*}
\mrs S(\Pla^\TW)\defining\paren{\ovl\rho, \ve_\ell^{2-N}, \Pla^0\cup\Pla^\TW, \{\mrs D_v^\square\}_{v\in\Pla\cup\Pla^\TW}}
\end{equation*}
where $\mrs D_v^\square$ is the same as in $\mrs S$ for $v\in\Pla^0$, and for $v\in\Pla^\TW$, $\mrs D_v^\square$ is the local lifting condition of $\ovl\rho_v$ defined in Proposition~\ref{lOIEYTleosoiwleiifns}. Moreover, for each place $v\in \Pla^\TW$, we adopt the notation from Setup~\ref{osoitienifemifs} with subscript ``$v$'' added to indicate the place $v$, and with $(T_\kappa, B_\kappa)=(T_v, B_v)$ and $\ovl g=\ovl g_v$. In particular, we have compact subgroups
\begin{equation*}
\mdc K_{v, 1}\le \mdc K_{v, 0}\le \mdc K_v=\SO_N(\mcl O_v),
\end{equation*}
\sut $\mdc K_{v, 1}$ is a normal subgroup of $\mdc K_{v, 0}$ with quotient $\Delta_v$.

We then set
\begin{itemize}
\item
$\Delta_{\Pla^\TW}\defining\prod_{v\in\Pla^\TW}\Delta_v$;
\item
$\mfk a_{\Pla^\TW}$ to be the augmentation ideal of $\mcl O[\Delta_{\Pla^\TW}]$ over $\mcl O$ generated by $\delta-1$ for all $\delta\in \Delta_{\Pla^\TW}$;
\item
$\mfk m_{\Pla^\TW}\defining\mfk m\cap\bb T^{m, \mfk d, \Pla\cup\Pla^\TW}$;
\item
$\mfk n_i=\prod_{v\in\Pla^\TW_n}\mfk n_{v, i}$ to be the maximal ideal of $\prod_{v\in\Pla^\TW_n}\mcl O[T^{\SO}(F_v)/(T^{\SO}(\mcl O_v)\cap\mdc K_1)]^{W_{L^{\SO}}}$ (resp. $\prod_{v\in\Pla^\TW_n}\mcl O[T^{\SO}(F_v)/T^{\SO}(\mcl O_v)]^{W_{L^{\SO}}}$) corresponding to $(\ovl g_v)_{v \in \Pla^\TW_n}$, for each $i\in\{0, 1\}$; and
\item
\begin{equation*}
\mdc K_i(\Pla^\TW)\defining \prod_{v\not\in\Pla^\TW}\mdc K_v\times\prod_{v\in\Pla^\TW}\mdc K_{v, i}
\end{equation*}
for each $i\in\{0, 1\}$, which are compact open subgroups of $\mdc K$.
\end{itemize}
In particular, $\mdc K_1(\Pla^\TW)$ is a normal subgroup of $\mdc K_0(\Pla^\TW)$, and we have a canonical isomorphism
\begin{equation}\label{osopsieueuriewaq}
\mdc K_0(\Pla^\TW)/\mdc K_1(\Pla^\TW)\cong \Delta_{\Pla^\TW}.
\end{equation}
By \eqref{osopsieueuriewaq}, $\mrs H_{\mdc K_1(\Pla^\TW)}$ is canonically a module over $\mcl O[\Delta_{\Pla^\TW}]$.
\end{defi}

\begin{lm}\label{oooootitiiieeos}
Let the situation be as in \textup{Theorem~\ref{peeoieowivmincs}}. If $\big(\Pla^\TW, \{(T_v, B_v)\}_{v\in\Pla^\TW}\big)$ is a Taylor--Wiles datum, then the $\mcl O[\Delta_{\Pla^\TW}]$-module $\mrs H_{\mdc K_1(\Pla^\TW), \mfk m_{\Pla^\TW}}$ is finite and free. Moreover, the canonical map
\begin{equation*}
\mrs H_{\mdc K_1(\Pla^\TW), \mfk m_{\Pla^\TW}}/\mfk a_{\Pla^\TW}\to \mrs H_{\mdc K_0(\Pla^\TW), \mfk m_{\Pla^\TW}}
\end{equation*}
is an isomorphism.
\end{lm}
\begin{proof}
We modify the proof of \cite[Lemma~3.6.5]{LTXZZa}. For each smooth schemes $X$ over $\bb C$ and each Abelian group $A$, we denote by $X^\an$ the analytification of $X$, and by $\bx C_\bullet(X^\an, A)$ the complex of singular chains of $X^\an$ with coefficients in $A$.

For each $i\in\{0, 1\}$ and each positive integer $m$, let $\mdc K_i^m(\Pla^\TW)$ be the kernel of the composite map
\begin{equation*}
\mdc K_i(\Pla^\TW)\to \mdc K\to \SO(\Lbd)(\mcl O_F/\ell^m).
\end{equation*}
Then $\mdc K_i^m(\Pla^\TW)$ acts trivially on $L_\xi/\ell^m$. Hence $\mrs L_\xi\otimes_{\mcl O}\mcl O/\ell^m$ is constant on $\bSh(\mbf V, \mdc K_i^m(\Pla^\TW))$. It follows from (D4) that there is a canonical isomorphism
\begin{equation*}\label{iepmvimvneif}
\etH^{d(\mbf V)}\paren{\bSh(\mbf V, \mdc K_i(\Pla^\TW)), \mrs L_\xi}_{\mfk m_{\Pla^\TW}}\cong \etH^{d(\mbf V)}\paren{\bSh(\mbf V, \mdc K_i^m(\Pla^\TW)), L_\xi\otimes_{\mcl O}\mcl O/\ell^m}_{\mfk m_{\Pla^\TW}}^{\SO(\Lbd)(\mcl O_F/\ell^m)}
\end{equation*}
of $\mcl O$-torsion free modules for every $i\in\{0, 1\}$. By Artin's comparison theorem between singular cohomologies and \eTale cohomologies, the dual complex
\begin{equation*}
\Hom_{\mcl O}\paren{\bx C_\bullet(\bSh(\mbf V, \mdc K_i^m(\Pla^\TW))^\an, L_\xi\otimes_{\mcl O}\mcl O/\ell^m), \mcl O/\ell^m}
\end{equation*}
calculates $\etH^\bullet\paren{\bSh(\mbf V, \mdc K_i^m(\Pla^\TW)), L_\xi\otimes_{\mcl O}\mcl O/\ell^m}$. It follows from (D4) again that
\begin{equation*}
\bx H_d\paren{\bSh(\mbf V, \mdc K_i^m(\Pla^\TW))^\an, L_\xi\otimes_{\mcl O}\mcl O/\ell^m}=0
\end{equation*}
unless $d=d(\mbf V)$.

On the other hand, $\mdc K_i^m(\Pla^\TW)$ is neat for every $i\in\{0, 1\}$, which implies that $t^{-1}\SO(\mbf V)(F)t\cap\mdc K_i^m(\Pla^\TW)$ has no torsion elements for every $t\in \SO(\mbf V)(\Ade_F^\infty)$. Using triangularizations of Shimura varieties as in the proof of~\cite[Lemma~6.9]{K-T17}, we see that $\bx C_\bullet(\bSh(\mbf V, \mdc K_1^m(\Pla^\TW))^\an, L_\xi\otimes_{\mcl O}\mcl O/\ell^m)$ is quasiisomorphic to a perfect complex of free $(\mcl O/\ell^m)[\Delta_{\Pla^\TW}]$-modules; and there is a canonical isomorphism
\begin{equation*}
\bx C_\bullet(\bSh(\mbf V, \mdc K_1^m(\Pla^\TW))^\an, L_\xi\otimes_{\mcl O}\mcl O/\ell^m)/\mfk a_{\Pla^\TW}\cong \bx C_\bullet(\bSh(\mbf V, \mdc K_0^m(\Pla^\TW))^\an, L_\xi\otimes_{\mcl O}\mcl O/\ell^m)
\end{equation*}
of complexes of $(\mcl O/\ell^m)[\bb T^{m, \mfk d,\Pla\cup\Pla^\TW}]$-modules.

After localizing at $\mfk m_{\Pla^\TW}$ and taking $\SO(\Lbd)(\mcl O_F/\ell^m)$-invariants, it follows that the canonical map
\begin{equation*}
\etH^{d(\mbf V)}\paren{\bSh(\mbf V, \mdc K_0^m(\Pla^\TW)), L_\xi\otimes_{\mcl O}\mcl O/\ell^m}/\mfk a_{\Pla^\TW}\to \etH^{d(\mbf V)}\paren{\bSh(\mbf V, \mdc K_1^m(\Pla^\TW)), L_\xi\otimes_{\mcl O}\mcl O/\ell^m}
\end{equation*}
is an isomorphism for every positive integer $m$. Then the lemma follows by passing to the limit for $m$.
\end{proof}

We now discuss the existence of Taylor--Wiles data, using the notion of $G^\circ$-adequate subgroups (see Definition~\ref{otieinnvnuducms}).

\begin{lm}\label{osissssneieiroes}
Let the situation be as in \textup{Theorem~\ref{peeoieowivmincs}}. Let $\Xi$ be a subset of $\Pla^0$.
For every integer $b\ge \dim_\kappa\bx H^1_{\mrs L^\perp, \Xi}(\Gal_{F, \Pla}, \ad^0(\ovl\rho)(1))$ (see \textup{Equation~\eqref{psosiieuueufemes}}) and every positive integer $n$, there is a Taylor--Wiles datum $\big(\Pla^\TW_n, \{(T_v, B_v)\}_{v\in\Pla^\TW_n}\big)$ satisfying
\begin{enumerate}
\item
$\#\Pla^\TW_n=b$;
\item
each $v\in\Pla^\TW_n$ is a Taylor--Wiles place of level $n$; and
\item
$\msf R^{\square_\Xi}_{\mrs S(\Pla^\TW_n)}$ can be topologically generated over $\msf R^\square_{\mrs S, \Xi}$ by
\begin{equation*}
\sum_{v\in \Pla_n^{\TW}}(\dim_\kappa Z(M_{\ovl\rho(\phi_v)})-1)-\sum_{v\in\Xi\cap\Pla(\ell)}[F_v: \bb Q_\ell](\dim G_\kappa-\dim B_\kappa)
\end{equation*}
elements.
\end{enumerate}
\end{lm}
\begin{proof}
We modify the proof of \cite[Theorem~4.4]{Tho12}. 
By Proposition~\ref{olaososiefefjeimss}, Proposition~\ref{psisiidimfimies}, and Proposition~\ref{ososoidvienfiemsws}, for each place $v\in \Pla$,
\begin{equation*}
\dim_\kappa\bx L(\mrs D_v)-\dim_\kappa\bx H^0(F_v, \ad^0(\ovl\rho)=
\begin{cases}
[F_v: \bb Q_\ell](\dim G_\kappa-\dim B_\kappa) & \If v\in\Pla(\ell)\\
0 & \If v\notin\Pla(\ell)
\end{cases}.
\end{equation*}
For any Taylor--Wiles datum $\big(\Pla^\TW, \{(T_v, B_v)\}_{v\in\Pla^\TW}\big)$, it follows from the Wiles' formula (see Lemma~\ref{sliimiifhiemiws}(5)), \ref{lOIEYTleosoiwleiifns}, and the proof of~\cite[Corollary~2.2.12]{CHT08} that $\msf R_{\mrs S(\Pla^\TW)}^{\square_\Xi}$ can be generated over $\msf R^\square_{\mrs S, \Xi}$ by
\begin{align}\label{pllpueunvnueimsiw}
\dim_\kappa\bx H^1_{\mrs S(\Pla^\TW), \Xi}(\Gal_{F, \Pla}, \ad^0(\ovl\rho))&=\dim_\kappa\bx H^1_{\mrs L(\Pla^\TW)^\perp, \Xi}(\Gal_{F, \Pla}, \ad^0(\ovl\rho)(1))+\sum_{v\in \Pla^{\TW}}(\dim_\kappa Z(M_{\ovl\rho(\phi_v)})-1)\\
&-\sum_{v\in\Xi\cap\Pla(\ell)}[F_v:\bb Q_\ell](\dim G_\kappa- \dim B_\kappa)-\dim_\kappa\bx H^0(\Gal_{F, \Pla}, \ad^0(\ovl\rho)(1))
\end{align}
elements.
It follows from \cite[Proposition~2.2.18 and Lemma~2.2.20]{Whi23a} that there exists a set $\Pla^\TW_n$ of Taylor--Wiles places satisfying Conditions (1)--(2), such that
\begin{equation*}
\dim_\kappa\bx H^1_{\mrs L(\Pla^\TW)^\perp, \Xi}(\Gal_{F, \Pla}, \ad^0(\ovl\rho)(1))=0.
\end{equation*}
Thus the lemma follows.
\end{proof}
\begin{cor}\label{osisneieiroes}
Let the situation be as in \textup{Theorem~\ref{peeoieowivmincs}}. Let $\Xi$ be a subset of $\Pla^0$. Then there exists a positive integers $t, b\in\bb Z_+$, \sut for every positive integer $n$, there is a Taylor--Wiles datum $\big(\Pla^\TW_n, \{(T_v, B_v)\}_{v\in\Pla^\TW_n}\big)$ satisfying
\begin{enumerate}
\item
$\#\Pla^\TW_n=b$;
\item
$\norml{v}\equiv 1\modu{\ell^n}$ for each $v\in\Pla^\TW_n$.
\item
$\msf R^{\square_\Xi}_{\mrs S(\Pla^\TW_n)}$ can be topologically generated over $\msf R^\square_{\mrs S, \Xi}$ by
\begin{equation*}
g_{t, \Xi}\defining t-\sum_{v\in\Xi\cap\Pla(\ell)}[F_v: \bb Q_\ell](\dim G_\kappa-\dim B_\kappa)
\end{equation*}
elements. 
\item
$\Delta_{\Pla^\TW_n}$ is an Abelian $\ell$-group with $t$ cyclic factors.
\end{enumerate}
\end{cor}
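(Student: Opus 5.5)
The plan is to deduce the corollary from Lemma~\ref{osissssneieiroes}, using a pigeonhole argument on the finitely many possible ``types'' of Taylor--Wiles places. First fix $b\defining\dim_\kappa\bx H^1_{\mrs L^\perp,\Xi}(\Gal_{F,\Pla},\ad^0(\ovl\rho)(1))$. This is finite because $\Gal_{F,\Pla}$ satisfies the $\ell$-finiteness property. If it happens to be zero, replace $b$ by $1$; the lemma allows any $b$ at least this dimension. For every $n$, Lemma~\ref{osissssneieiroes} then gives a Taylor--Wiles datum $\Pla^\TW_n$ of level $n$ with $\#\Pla^\TW_n=b$. This gives (1) and (2) immediately. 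The lemma also says that $\msf R^{\square_\Xi}_{\mrs S(\Pla^\TW_n)}$ is generated over $\msf R^\square_{\mrs S,\Xi}$ by
\[
t_n-\sum_{v\in\Xi\cap\Pla(\ell)}[F_v:\bb Q_\ell](\dim G_\kappa-\dim B_\kappa),
\qquad
t_n\defining\sum_{v\in\Pla^\TW_n}\bigl(\dim_\kappa Z(M_{\ovl\rho(\phi_v)})-1\bigr)
\]
elements. Note that $\dim Z(M_{\ovl g_v})$ means the dimension of this center, which equals the rank of the torus $S_{\kappa}$ attached to $v$.

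The key step is to make $t_n$ independent of $n$ and to identify it with the number of cyclic factors of $\Delta_{\Pla^\TW_n}$. For each $v$, $\dim Z(M_{\ovl g_v})$ takes one of finitely many values between $1$ and the rank of $G^\circ$. So $t_n$ lies in a finite set, and after passing to a subsequence in $n$ it is constant; call this value $t$. Reindex so that the datum for level $n$ is the datum taken from the $n$-th term of the subsequence; its level is at least $n$, so (2) still holds. This gives (3). For (4), use Setup~\ref{osoitienifemifs}: $\Delta_v$ is the maximal $\ell$-power quotient of $(L^{\SO}/[L^{\SO},L^{\SO}])(\kappa_v)$. The torus $L^{\SO}/[L^{\SO},L^{\SO}]$ is split, and its rank equals that of the center of $L_\kappa\cap G'$, which is dual to $L^{\SO}$ via $\iota$. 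This rank is $\dim S_\kappa-1$, because $S_\kappa$ contains the central $\GL_1$ of $G^\circ$ (the similitude direction), and we pass to $G'$. Since $\norml{v}\equiv1\modu\ell$, each $\kappa_v^\times$ has a nontrivial $\ell$-part. Hence $\Delta_v\cong(\bb Z/\ell^{a_v})^{\dim Z(M_{\ovl g_v})-1}$ with $a_v\ge1$, and $\Delta_{\Pla^\TW_n}$ is an abelian $\ell$-group with exactly $t$ cyclic factors.

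The main obstacle is checking in step (4) that the rank of $\Delta_v$ matches $\dim_\kappa Z(M_{\ovl\rho(\phi_v)})-1$, with exactly the same normalization that appears in Proposition~\ref{lOIEYTleosoiwleiifns}. I would verify this through the duality $\iota$ of root data: the cocharacters of $Z(L^{\SO})^\circ$ should correspond to the characters of $S_\kappa\cap G'$, up to finite index. Finite index does not matter here, because $\ell$ does not divide $|W|$. A secondary point is making sure $t\ge1$, so that it is a positive integer as the statement requires. For this, if necessary I would add one extra Taylor--Wiles place whose Frobenius is regular semisimple; such places exist by Chebotarev together with the adequacy guaranteed by Proposition~\ref{soosieiueuenfue}. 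This does not affect the vanishing of the dual Selmer group, because enlarging $\Pla^\TW$ only shrinks $\bx H^1_{\mrs L^\perp,\Xi}$.
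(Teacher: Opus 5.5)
Your proposal is correct and follows the paper's argument: the paper's proof is simply ``Lemma~\ref{osissssneieiroes} plus the pigeonhole principle''. That is what you do when you fix $b$, observe that $t_n=\sum_{v}(\dim Z(M_{\ovl g_v})-1)$ takes only finitely many values, and pass to a subsequence re-indexed so that the level stays at least $n$. Your check that $\Delta_v$ has exactly $\dim Z(M_{\ovl g_v})-1$ cyclic factors (via $Z(L_\kappa)^\circ=S_\kappa$ and the equality of central ranks of dual Levis) and your remark about forcing $t\ge 1$ fill in details the paper leaves implicit; for the latter, adequacy only supplies a semisimple $h$ with $\dim Z(Z_{G^\circ}(h))\ge 2$, not a regular semisimple element, but that is all you need.
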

\begin{proof}
This follows from Lemma~\ref{osissssneieiroes} and the Pigeonhole principle.
\end{proof}

We now prove our main result.

\begin{proof}[Proof of \textup{Theorem~\ref{peeoieowivmincs}}]
We fix a universal lifting
\begin{equation*}
\rho_{\mrs S}: \Gal_F\to G(\msf R_{\mrs S}).
\end{equation*}
Let $\Xi$ be a fixed subset of $\Pla^0$. We fix positive integer $b, t\in\bb Z_+$ and a Taylor--Wiles datum $(\Pla_n^\TW, \{(T_v, B_v)\}_{v\in\Pla^\TW_n})$ of size $b$ for each positive integer $n$ from Lemma~\ref{osisneieiroes}. For each $v\in \Pla^\TW_n$, we
\begin{itemize}
\item
let $\Art_v: F_v^\times\xr\sim \Gal_{F_v}^\ab$ be the local Artin map; and
\item
let $\varpi_v\in F_v$ be a uniformizer, \sut $\Art_v(\varpi_v)$ coincides with the image of $\phi_v^{-1}$ in $\Gal_{F_v}^\ab$.
\end{itemize}

We adopt notation from Definition~\ref{ososieueunufuess}, and
\begin{itemize}
\item
set $\mrs H\defining \mrs H_{\mdc K_0(\vn), \mfk m}$;
\item
let $\mrs H_{i, n}$ be the localized module $\paren{\mrs H_{\mdc K_i(\Pla^\TW_n), \mfk m_{\Pla^\TW_n}}}_{\mfk n_i}$ for each $n\in\bb Z_+$ and $i\in\{0, 1\}$; and
\item
let $\bb T_{i, n}$ be the image of $\bb T^{m, \mfk d, \Pla^\TW_n}$ in $\End_{\mcl O}(\mrs H_{i, n})$, for each $n\in\bb Z_+$ and $i\in\{0, 1\}$. 
\end{itemize}

For each $n\in\bb Z_+$, since the canonical map $\mrs H\to \mrs H_{\mdc K, \mfk m_{\Pla^\TW_n}}$ is an isomorphism, we obtain canonical surjections
\begin{equation}\label{ososiuuuturneus}
\bb T_{1, n}\surj \bb T_{0, n}\surj\bb T_{\mfk m}
\end{equation}
of rings over $\mcl O$. Similar to the construction of $\rho_{\mfk m}$, we obtain liftings $\rho_{i, n}$ of $\ovl\rho$ to $\bb T_{i, n}$ that are compatible with the maps in \eqref{ososiuuuturneus}.

Analogous to \cite[Propositions~5.9, 5.12]{Tho12}, we claim that
\begin{enumerate}
\item
$\rho_{1, n}$ satisfies the global deformation problem $\mrs S(\Pla^\TW_n)$ (see Definition~\ref{ososieueunufuess}). Moreover, for each $v\in\Pla^\TW_n$, the induced map $\Delta_v\to \bb T_{1, n}$ as defined in \cite[pp.59--60]{Whi23a} coincides with the scalar action of $\Delta_v$ on $\mrs H_{1, n}$ arising from viewing $\Delta_v=T^{\SO}(\mcl O_v)/(T^{\SO}(\mcl O_v)\cap \mdc K_{v, 1})$ as Hecke operators.
\item
The natural map
\begin{equation*}
\mrs H_{0, n}\to \mrs H
\end{equation*}
is an isomorphism for every $n\in\bb Z_+$. In particular, the canonical homomorphism $\bb T_{0, n}\to \bb T_{\mfk m}$ is an isomorphism, and $\rho_{0, n}$ are equivalent to $\rho_{\mfk m}$ as liftings of $\ovl\rho$.
\end{enumerate}

The first claim follows from applying \cite[Proposition~2.5.3]{Whi23a} to the Galois representation associated to the maximal ideals of $\bb T_{1,n}[1/\ell]$, by the construction of $\rho_{1,n}$. 
Here we note (\ref{iseihienfies}) and Corollary~\ref{psosieiifmfieifs} together guarantee that the hypotheses of \cite[Proposition~2.5.3]{Whi23a} hold, except for the fact that the local Galois representations are only valued in $G^\circ$ rather than $G'$, which does not affect the proof.
The second claim follows from applying \cite[Corollary 2.4.14]{Whi23a} to the module $\mrs H_{\mdc K_{\mathrm{Iw}}(\Pla^\TW_n), \mfk m_{\Pla^\TW_n}}$, where $\mdc K_{\mathrm{Iw}}(\Pla^\TW_n)$ is defined such that $\mdc K_{\mathrm{Iw}}(\Pla^\TW_n)_v = \mathrm{Iw}_v$ for  $v\in \Pla^\TW_n$ and $\mdc K_{\mathrm{Iw}}(\Pla^\TW_n)_v = \mdc K_v$ for $v \not\in \Pla^\TW_n$. Here $\mathrm{Iw}_v \subset \SO(\mcl O_v)$ is the standard Iwahori subgroup corresponding to $B^{\SO}$.

It follows from (1) above that $\rho_{1, n}$ satisfies the global deformation problem $\mrs S(\Pla^\TW_n)$, which induces a canonical surjective homomorphism
\begin{equation*}
\vp_n: \msf R_{\mrs S(\Pla^\TW_n)}\to \bb T_{1, n}.
\end{equation*}
of $\mcl O[\Delta_{\Pla^\TW_n}]$-modules.

By (2) and Lemma~\ref{oooootitiiieeos}, we obtain a canonical commutative diagram
\begin{equation*}
\begin{tikzcd}[sep=large]
\msf R_{\mrs S(\Pla^\TW_n)}/\mfk a_{\Pla^\TW_n}\ar[r, "\sim"]\ar[d, "\vp_n/\mfk a_{\Pla^\TW_n}"] &\msf R_{\mrs S}\ar[d, "\vp"]\\
\bb T_{1, n}/\mfk a_{\Pla^\TW_n}\ar[r, "\sim"] & \bb T_{\mfk m}
\end{tikzcd}
\end{equation*}
of ring over $\mcl O$, where the horizontal maps are isomorphisms.

For each $n\in\bb Z_+$, we choose a universal lifting
\begin{equation*}
\rho_{\mrs S(\Pla_n^\TW)}: \Gal_F\to G(\msf R_{\mrs S(\Pla_n^\TW)})
\end{equation*}
of $\rho_{\mrs S}$ to $\msf R_{\mrs S(\Pla_n^\TW)}$. By Proposition~\ref{peoieutnmeumes}(3), we obtain isomorphisms
\begin{equation*}
\msf R_{\mrs S}\hat\otimes\mrs T_\Xi\cong \msf R^{\square_\Xi}_{\mrs S}, \quad \msf R_{\mrs S(\Pla^\TW)}\hat\otimes\mrs T_\Xi\cong \msf R^{\square_\Xi}_{\mrs S(\Pla^\TW)}
\end{equation*}
of rings over $\mcl O$. In particular, we can choose a surjective local homomorphism $\msf R_{\mrs S}^{\square_\Xi}\to \msf R_{\mrs S}$, making $\msf R_{\mrs S}$ an algebra over $\msf R^\square_{\mrs S, \Xi}$.

We set
\begin{equation*}
\msf S_\infty\defining\mrs T_\Xi[[Y_1, \ldots, Y_t]],
\end{equation*}
where $\mrs T_\Xi$ is the coordinate ring of the formal smooth group $\prod_{v\in\Xi}(G')^\wedge$ over $\mcl O$. Let $\mfk a_\infty\subset\msf S_\infty$ be the augmentation ideal over $\mcl O$. Set 
\begin{equation*}
\msf R_\infty\defining\msf R^\square_{\mrs S, \Xi}[[Z_1, \ldots, Z_{g_{t, \Xi}}]],
\end{equation*}
where $g_{t, \Xi}$ is the integer appearing in Lemma~\ref{osisneieiroes}. Applying the usual patching lemma (see the proof of \cite[Thm~3.6.1]{LGG11} or \cite[Lemma~6.10]{Tho12}), it follows that
\begin{itemize}
\item
there exists a local homomorphism $\msf S_\infty\to \msf R_\infty$ over $\mcl O$ \sut we have a surjective homomorphism $\msf R_\infty/\mfk a_\infty\msf R_\infty\to \msf R_{\mrs S}$ of rings over $\msf R^\square_{\mrs S, \Xi}$.
\item
there exists a $\msf R_\infty$-module $\mrs H_\infty$ \sut
\begin{itemize}
\item
$\mrs H_\infty$ is finite free over $\msf S_\infty$, and
\item
$\mrs H_\infty/\mfk a_\infty\mrs H_\infty$ is isomorphic to $\mrs H$ as $\msf R_\infty/\mfk a_\infty\msf R_\infty$-modules.
\end{itemize}
\end{itemize}
In particular, we have
\begin{equation*}
\depth_{\msf R_\infty}\mrs H_\infty\ge \depth(\msf S_\infty)=1+\dim G'_\kappa\cdot\#\Xi+t.
\end{equation*}
On the other hand, it follows from Proposition~\ref{olaososiefefjeimss}, Proposition~\ref{psisiidimfimies}, and Proposition~\ref{ososoidvienfiemsws} 
that $\msf R_{\mrs S, \Xi}^\square$ is a formal power series over $\mcl O$ in
\begin{equation*}
\dim G'_\kappa\cdot\#\Xi+\sum_{v\in\Xi\cap\Pla(\ell)}[F_v: \bb Q_\ell](\dim G_\kappa-\dim B_\kappa)
\end{equation*}
variables. It follows that $\msf R_\infty$ is a regular local ring of dimension
\begin{equation*}
1+\dim G'_\kappa\cdot\#\Xi+\sum_{v\in\Xi\cap\Pla(\ell)}[F_v: \bb Q_\ell](\dim G_\kappa-\dim B_\kappa)+g_{t, \Xi}=1+\dim G'_\kappa\cdot\#\Xi+t.
\end{equation*}
Thus it follows from the Auslander--Buchsbaum theorem that $\mrs H_\infty$ is a finite free $\msf R_\infty$-module.

Then the $\msf R_\infty$-module $\mrs H=\mrs H_\infty/\mfk a_\infty\mrs H_\infty$ is finite free over $\msf R_\infty/\mfk a_\infty\msf R_\infty$. But the action of $\msf R_\infty/\mfk a_\infty\msf R_\infty$ on $\mrs H$ factors through the surjective homomorphism $\msf R_\infty/\mfk a_\infty\msf R_\infty\to \msf R_{\mrs S}$, implying that this surjection must be an isomorphism. Thus $\mrs H$ is a finite free $\msf R_{\mrs S}$-module. As a result, the map $\vp: \msf R_{\mrs S}\to \bb T_{\mfk m}$ is an isomorphism.
\end{proof}

\section{Rigidity}

\subsection{Rigidity of odd symmetric powers of elliptic curves}

In this subsection, we study rigidity of reductions of odd symmetric powers of Galois representation attached to elliptic curves.

Let $A$ be an elliptic curve over $F$. For every rational prime $\ell$, we fix an isomorphism $\etH^1(A_{\ovl F}, \bb Z_\ell)\cong \bb Z_\ell^{\oplus 2}$, and let
\begin{equation*}
\rho_{A, \ell}: \Gamma_F\to \GL(\etH^1(A_{\ovl F}, \bb Z_\ell))\cong \GL_2(\bb Z_\ell)
\end{equation*}
be the continuous homomorphism attached to $A$. Then we obtain a continuous homomorphism
\begin{equation*}
\rho_{A, m, \ell}\defining \Sym^{2m-1}(\rho_{A, \ell}): \Gal_F\to \GSp_{2m}(\bb Z_\ell).
\end{equation*}
Recall that $\GSp_{2m}$ is defined by the standard alternating pairing on $\bb Z_\ell^{\oplus(2m)}$ given by the matrix $J_{2m}'$; see Setup~\ref{seutpeoeifmeis}.

When $v$ is a finite place of $F$ and $\ell$ is a rational prime, let $G$ be the smooth group scheme $\GSp_{2m}$ over $\bb Z_\ell$, and we work in Setup~\ref{osisienieeimifes} with $\Gamma=\Gal_{F_v}$ and $\chi=\ve_\ell^{1-2m}$.

\begin{prop}
Let $v$ be a finite place of $F$. For all but finitely many rational primes $\ell>2m$, every lifting of $\ovl\rho_{A, m, \ell}|_{\Gal_{F_v}}$ 
to an object $\bx R$ of $\mrs C_{\bb Z_\ell}$ (\wrt to $\chi=\ve_\ell^{1-2m}$) is minimally ramified in the sense of \textup{Definition~\ref{minaimlsoieieres}}.
\end{prop}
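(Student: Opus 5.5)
The plan is to reduce the claim to a local statement about the lifting ring $\msf R^\square_{\ovl\rho}$ of $\ovl\rho\defining\ovl\rho_{A,m,\ell}|_{\Gal_{F_v}}$. By Proposition~\ref{psisiidimfimies}, this ring is reduced and flat of pure relative dimension $\dim G'_\kappa$ over $\mcl O$, and $\mrs D^{\bmin,\square}$ is one of its irreducible components. So it suffices to show, for all but finitely many $\ell$, that $\Spf\msf R^\square_{\ovl\rho}$ has only this one component, or more directly that every lift automatically satisfies the conditions of Definition~\ref{minaimlsoieieres}. We discard finitely many primes at the outset: $\ell=\chr\kappa_v$, primes $\ell\le 2m$ or $\ell\mid 6$, primes dividing $\prod_{i=1}^{2m}(\norml{v}^i-1)$, and primes dividing $\ord_v$ of the Tate parameter (when it exists). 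After enlarging $\kappa$ so that Assumption~\ref{osiiusuenfueiis} holds, we split according to the reduction type of $A$ at $v$.

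\emph{Potentially good reduction.} Here $\rho_{A,\ell}(I_{F_v})$ is a finite group of order dividing $24$, independent of $\ell$, so for $\ell>3$ it is prime to $\ell$ and factors through the image of $P_v$ (up to the tame part of order prime to $\ell$). Then every lift $\rho$ has $\rho(I_{F_v})$ of order prime to $\ell$, and by Lemma~\ref{oeiipwinvuiqpqqa} and Proposition~\ref{oiueerelwos} the modules $\bx W_\tau(\rho)$ are unramified (or have tame inertia of prime-to-$\ell$ order, hence rigid). In this case I would prove the stronger statement that $\msf R^\square_{\ovl\rho}$ is formally smooth. This follows from $\bx H^2(F_v,\ad^0(\ovl\rho))=0$, which by local duality is equivalent to $\bx H^0(F_v,\ad^0(\ovl\rho)(1))=0$. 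The Frobenius eigenvalues on $\ad^0(1)$ have the form $\norml{v}\cdot(\alpha/\beta)^k\zeta$, where $\alpha,\beta$ are the Weil numbers of $A$ (over a finite extension where it has good reduction), $|k|\le 2m-1$, and $\zeta$ is a root of unity of bounded order. In characteristic zero each such number has absolute value $\norml{v}\neq 1$, so it is not $1$. Thus only the finitely many $\ell$ dividing the norms of the differences $\norml{v}(\alpha/\beta)^k\zeta-1$ can make these vanish mod $\lbd$. A formally smooth lifting ring has a unique component, which must then be $\mrs D^{\bmin,\square}$.

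\emph{Potentially multiplicative reduction.} Here $\rho_{A,\ell}|_{\Gal_{F_v}}$ is a quadratic twist of the Tate representation $\begin{bmatrix}\ve_\ell & *\\ 0 & 1\end{bmatrix}$. After excluding $\ell\mid\ord_v(q_A)$, the residual extension is ramified, so $\ovl\rho(t)$ is a regular unipotent element of $\GSp_{2m}(\kappa)$; the quadratic twist is tame of order $2$ and is absorbed via Proposition~\ref{oiueerelwos}. Formal smoothness genuinely fails in this case, because $N$ spans a nonzero $\phi_v$-invariant line in $\ad^0(\ovl\rho)(1)$. So instead I would argue directly with Definition~\ref{psoosiiieufunnuenes}. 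For any lift $\rho$ to $\bx R$, the element $\rho(t)$ is unipotent, since its characteristic polynomial is stable under $T\mapsto T^{\norml{v}}$ on roots and reduces to $(T-1)^{2m}$, and we have excluded $\ell\mid\norml{v}^i-1$. Hence $X=\log\rho(t)\in\mfk g_{\bx R}$ is a nilpotent element lifting the regular nilpotent $\ovl X$. I then need that every such lift lies in $\Nil_{\ovl X}(\bx R)$, i.e.\ is $G^\wedge(\bx R)$-conjugate to $X_0$. My approach is to use that the centralizer $Z_G(X_0)$ is smooth (\cite[\S3.1]{Boo19a}, valid for $\ell>2m$) and that the regular nilpotent orbit is open in the nilpotent cone, so the orbit map $G\to\mcl N$, $g\mapsto\Ad(g)X_0$, is smooth near $\ovl X$. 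This would allow infinitesimal lifting of the conjugating element step by step along small extensions.

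\emph{Main obstacle.} I expect this last step to be the hardest. The nilpotent cone is singular, so one must check that $X$ nilpotent in $\mfk g_{\bx R}$ with regular reduction lies in the smooth open orbit scheme-theoretically, and not merely on points; the natural tool is the Kostant--Kazhdan slice (which the assumption $\ell>2m$ should make available) showing that the invariant-theoretic map $\mfk g\to\mfk g/\!\!/G$ is smooth along the regular locus. Once both cases are in place, combining them over the finitely many excluded primes gives the proposition.
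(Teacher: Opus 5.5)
\paragraph{Comparison with the paper.} Your case split and the finitely many primes you discard match the paper. Your potentially-good argument is correct but packaged differently.
\begin{itemize}
\item The paper applies Lemma~\ref{poeoiemifemifws}(2) to each $\bx W_\tau$. It uses that $\norml{v}$ is not a ratio of Frobenius eigenvalues mod $\ell$ to force every lift of $\bx W_\tau(\ovl\rho)$ to be unramified.
\item You use the same numerical condition to get $\bx H^0(F_v,\ad^0(\ovl\rho)(1))=0$, hence $\bx H^2(F_v,\ad^0(\ovl\rho))=0$, hence $\msf R^\square_{\ovl\rho}$ is formally smooth. You then let \textup{Proposition~\ref{psisiidimfimies}(3)} identify its unique irreducible component with $\mrs D^{\bmin,\square}$.
\end{itemize}
Your route avoids analyzing individual lifts. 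Note that your sentence ``every lift has inertia image of order prime to $\ell$'' is unjustified; it is exactly what is being proved. Drop it, since the formal-smoothness argument subsumes it.

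\paragraph{The multiplicative case rests on a false premise.} You claim formal smoothness fails there, but $N$ does not span a Frobenius-invariant line in $\ad^0(\ovl\rho)(1)$.
\begin{itemize}
\item With $\ovl\rho(t)=\exp(\ovl N)$ regular unipotent, the inertia invariants of $\ad^0(\ovl\rho)(1)$ form the centralizer of $\ovl N$ in $\mfk{sp}_{2m}$. This is spanned by $\ovl N,\ovl N^3,\ldots,\ovl N^{2m-1}$; the quadratic twist cancels in $\ad^0$, and $\ve_\ell$ is unramified at $v$.
\item The relation $\phi_v t\phi_v^{-1}=t^{\norml{v}}$ gives $\Ad(\ovl\rho(\phi_v))\ovl N=\norml{v}\ovl N$. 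The Tate twist contributes a further factor $\ve_\ell(\phi_v)=\norml{v}$.
\item So $\phi_v$ acts on these vectors by $\norml{v}^2,\norml{v}^4,\ldots,\norml{v}^{2m}$. Intrinsically, $N$ spans a copy of $\kappa(1)$ inside $\ad^0(\ovl\rho)$, so it is invariant in $\ad^0(\ovl\rho)(-1)$, not in $\ad^0(\ovl\rho)(1)$.
\end{itemize}
You already excluded $\ell\mid\prod_{i=1}^{2m}(\norml{v}^i-1)$. Hence $\bx H^2(F_v,\ad^0(\ovl\rho))=0$ here as well, and the same short argument as in the good case finishes. Your ``main obstacle'' disappears entirely. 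This mirrors the paper's computation at tempered $\ovl{\bb Q_\ell}$-points in the proof of Theorem~\ref{ososinbuufemws}, where Steinberg-type points turn out to be smooth.

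\paragraph{Gaps in the detour.} Your alternative route is viable in principle, but two steps are unfinished as written.
\begin{itemize}
\item \emph{Unipotence over Artinian rings.} The argument that the roots of the characteristic polynomial are stable under $x\mapsto x^{\norml{v}}$ only works at field-valued points. It does not by itself give $(\rho(t)-1)^{2m}=0$ over an Artinian $\bx R$. You need either reducedness and $\ell$-torsion-freeness of $\msf R^\square_{\ovl\rho}$ (\textup{Proposition~\ref{psisiidimfimies}(1)}), or a direct induction on the length of $\bx R$ using that $\rho(t)$ is cyclic because $\ovl\rho(t)$ is regular.
\item \emph{Conjugacy to $X_0$.} This still needs the Kostant-slice input you describe, or a cyclic-vector normalization of the symplectic form.
\end{itemize}
The paper itself only asserts the multiplicative case (``because $N_{2m}$ is of maximal rank''). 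The uniform criterion $\bx H^2(F_v,\ad^0(\ovl\rho))=0$ is therefore the cleanest way to make both cases rigorous.
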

\begin{proof}
Choose a finite totally ramified extension $F'_v$ of $F_v$ inside $\ovl{F_v}$ \sut $A'\defining A\otimes_FF_v'$ has either good or multiplicative reduction. We adopt the notation of \S\ref{sosofieifiehfemis}, and let $\bx T'_v$ be the image of the subgroup $\Gal(\ovl{F_v}/F'_v)$ in $\bx T_v\defining \Gal_{F_v}/P_v$. We fix an isomorphism between $\bx T_v$ and the $\norml{v}$-tame group $\bx T_{\norml{v}}$. Assume that $\ell\nmid [F_v': F_v]$, so that $\bx T_v'=\bx T_v$. Choose a finite unramified extension $E$ of $\bb Q_\ell$ contained in $\ovl{\bb Q_\ell}$ with ring of integers $\mcl O$ and residue field $\kappa$ that satisfies Assumption~\ref{osiiusuenfueiis} for $\ovl\rho$. Let $\mfk T=\mfk T(\ovl\rho_{A, m, \ell, v}\otimes\kappa)$ be the set of isomorphism classes of absolutely irreducible representations of $P_v$ appearing in $\Std\circ\ovl\rho_{A, m, \ell, v}\otimes\kappa$.

We first consider the case when $A'$ has good reduction. Let $\alpha, \beta\in\ovl{\bb Q_\ell}$ be the two eigenvalues of $\rho_{A', \ell}(\phi_q)$. 
Then $\alpha$ and $\beta$ are Weil $\norml{v}^{-1/2}$-numbers in $\ovl{\bb Q}$, which depends only on $A'$, not on $\ell$. We further assume that $\ell$ satisfies $\alpha, \beta\in (\ovl{\bb Z}_{(\ell)})^\times$, and that the image of the set
\begin{equation*}
\{(\alpha/\beta)^{2m}, (\alpha/\beta)^{2m-1},\ldots, (\alpha/\beta)^{-2m}\}
\end{equation*}
in $\ovl{\bb F_\ell}^\times$ does not contain $\norml{v}$. It follows that for any $\tau\in\mfk T$, every lifting of $\bx W_\tau(\rho_{A,m,\ell,v}\otimes\kappa)$ is actually unramified by Lemma~\ref{poeoiemifemifws}(2) if we assume that $\ell\nmid\prod_{i=1}^{2m+1}(\norml{v}^i-1)$. 

We then consider the case when $A'$ has multiplicative reduction. Let $u=\ord_v(j(A))<0$ be the valuation of the $j$-invariant of $A$. Assume further that $\ell$ is coprime to $u$. Then $\rho_{A, \ell}(t)$ is conjugate to $\pm(1+N_2)$ in $\GSp_2(\bb Z_\ell)$, which implies that $\Sym^{2m-1}\rho_{A, \ell}(t)$ is conjugate to $\pm\exp(N_{2m})$ in $\GSp_{2m}(\bb Z_\ell)$. It follows that $\mfk T$ is a singleton. Let $\tau$ be the unique element of $\mfk T$, then every lifting of $\bx W_\tau(\ovl \rho_{A,m,\ell,v}\otimes\kappa)$ is minimally unramified because $N_{2m}$ is of maximal rank.
\end{proof}

Combining this proposition with Proposition~\ref{associateGlaosireifnies}, we have the following immediate corollary. 

\begin{cor}
Let $\Pla$ be a finite set of finite places of $F$ containing $\Pla^\bad$ \sut $A$ has good reduction outside $\Pla$. Then all for but finitely many rational primes $\ell$, $\ovl\rho_{A, m, \ell}$ is rigid for $(\Pla, \vn)$ in the sense of \textup{Definition~\ref{isiinefies}} (with $\mcl O=\bb Z_\ell$).
\end{cor}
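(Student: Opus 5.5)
We check the four conditions of Definition~\ref{isiinefies} for $\ovl\rho_{A,m,\ell}$ with $\Pla^\bmin=\Pla$ and $\Pla^\lr=\vn$, excluding finitely many $\ell$ at each step. Since $\Pla$ is finite and each place has only finitely many bad primes, the excluded set remains finite.

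Condition (2) is vacuous because $\Pla^\lr=\vn$. For condition (1), we apply the preceding proposition to each of the finitely many $v\in\Pla$. It shows that, outside a finite set of $\ell$ (depending on $v$ and $A$), every lifting of $\ovl\rho_{A,m,\ell}|_{\Gal_{F_v}}$ is minimally ramified. Taking the union over $v\in\Pla$ of these exceptional sets, together with the finitely many $\ell$ lying under places of $\Pla$, settles (1).

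For condition (4), take $v\notin\Pla\cup\Pla(\ell)$. Then $A$ has good reduction at $v$, so $\rho_{A,\ell}$ is unramified at $v$ by N\'eron--Ogg--Shafarevich. Hence $\Sym^{2m-1}\rho_{A,\ell}$ and its reduction are unramified at $v$. We also need the similitude to be $\ve_\ell^{1-2m}$. This holds because $\det\rho_{A,\ell}=\ve_\ell^{-1}$ in the cohomological normalization, so $\Sym^{2m-1}$ has similitude $\ve_\ell^{-(2m-1)}=\ve_\ell^{2-N}$ with $N=2m+1$.

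Condition (3) is the main point. We exclude the $\ell$ that ramify in $F$ and the primes of bad reduction. For $v\mid\ell$ with $\ell$ unramified in $F$ and $A$ of good reduction at $v$, $\rho_{A,\ell}|_{\Gal_{F_v}}$ is the Tate module dual of a $\ell$-divisible group over $\mcl O_v$. Fontaine--Laffaille theory then applies with Hodge--Tate weights $\{0,1\}$ at each embedding, so the reduction arises as $\mbf T^\cris_v$ of a module with filtration jumps $0$ and $1$. Taking $\Sym^{2m-1}$ within the category $\mrsMF$, which is a tensor operation valid when the weight range $2m-1\le\ell-2$, gives regular Fontaine--Laffaille weights $\{0,\dots,2m-1\}$ after the appropriate twist. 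The weights are regular because each graded piece of $\Sym^{2m-1}$ of a two-step filtration with one-dimensional pieces is one-dimensional.

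These weights lie in $[a_\xi,b_\xi]$ for the $\xi$ attached to $\Sym^{2m-1}$, with $b_\xi-a_\xi=2m-1$ up to the normalization shift. Hence the condition $\ell>2(b_\xi-a_\xi)+2$ excludes only finitely many $\ell$.

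The obstacle is making the compatibility of $\mbf T^\cris_v$ with symmetric powers precise. The functor is exact and fully faithful, and it is compatible with tensor products when total weights are less than $\ell-1$. Alternatively, Proposition~\ref{associateGlaosireifnies}(3) can be invoked for the symmetric power lift once automorphy is known; the corollary's reference to Proposition~\ref{associateGlaosireifnies} suggests exactly this route via crystallinity in a small weight range.
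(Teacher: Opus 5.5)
Your proof is correct. For condition (1) it is the paper's argument: apply the preceding proposition at each of the finitely many $v$ in the set and take the union of the exceptional primes. For conditions (3) and (4) the routes differ. The paper's entire justification is that the corollary is ``immediate'' from Proposition~\ref{associateGlaosireifnies}. That is, it reads off unramifiedness outside the bad places, and crystallinity with regular Hodge--Tate weights in $[a_\xi,b_\xi]$, from the properties of Galois representations attached to a $\mathfrak d$-REASDC $\Pi$, the same way condition (3) is handled in Theorem~\ref{ososinbuufemws}. Taken literally, that proposition needs $\mathrm{Sym}^{2m-1}$ of $A$ to come from such a $\Pi$, which in particular requires modularity of $A$ and is not available over a general totally real $F$. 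You argue directly from the geometry of $A$ instead: N\'eron--Ogg--Shafarevich gives (4), and (3) follows from the Fontaine--Laffaille (Raynaud) description of $A[\ell]$ over the unramified ring $\mathcal O_v$ together with symmetric powers of the weight-$\{0,1\}$ module. This makes the statement unconditional, at the cost of a bit more bookkeeping; the paper's citation buys brevity and uniformity with the automorphic case. The step you flag as the obstacle, compatibility of $\mathbf T^{\mathrm{cris}}_v$ with tensor operations on torsion objects, can be avoided. The lattice $\mathrm{Sym}^{2m-1}\mathrm{H}^1(A_{\overline F},\mathbb Z_\ell)$ is Galois-stable inside a crystalline representation of $\mathrm{Gal}_{F_v}$ with Hodge--Tate weights $0,\dots,2m-1$, each of multiplicity one per embedding. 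So for $\ell>2m$ and $F_v/\mathbb Q_\ell$ unramified, Fontaine--Laffaille theory exhibits it as $\mathbf T^{\mathrm{cris}}_v$ of a strongly divisible lattice. The reduction of that lattice has one-dimensional graded pieces, which is exactly the regularity required.
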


\subsection{Rigidity of automorphic Galois representations}\label{osisieiniiueurmeds}

In this subsection, we study rigidity of reductions of automorphic Galois representations. We work in the following setting.

\begin{setup}\enskip
\begin{itemize}
\item
Let $\Pi$ be a $\mfk d$-REASDC representation of $\GL_{2m}(\Ade_F)$ (see Definition~\ref{psoisieneifemiws}).
\item
Le $\Pla^\Pi$ be the smallest (finite) set of finite places of $F$ containing $\Pla^\bad$ \sut $\Pi_v$ is unramified for every finite place $v$ of $F$ not in $\Pla^\Pi$.
\item
Let $E\subset\bb C$ be a strong coefficient field of $\Pi$ (see Definition~\ref{strongienfeihenss}).
\end{itemize}
\end{setup}

For each finite place $\lbd$ of $E$, we obtain a continuous homomorphism $\rho_{\Pi, \lbd}:\Gal_F\to \GL_{2m}(E_\lbd)$.

\begin{conj}\label{psospivnihfs}
Fix a finite set $\Pla$ of finite places of $F$ containing $\Pla^\Pi$. Suppose that the base change of $\Pi$ to $F(\sqrt{\mfk d})$ is also cuspidal. Then for all but finitely many primes $\lbd$ of $E$ with underlying rational prime $\ell$, 
\begin{enumerate}
\item
$\ovl\rho_{\Pi, \lbd}|_{\Gal(\ovl F/F(\mu_\ell)(\sqrt{\mfk d}))}$ is absolutely irreducible; and 
\item
$\ovl\rho_{\Pi, \lbd}$ is rigid for $(\Pla, \vn)$ in the sense of \textup{Definition~\ref{isiinefies}}.
\end{enumerate}
\end{conj}

Concerning Conjecture~\ref{psospivnihfs}, we have the following theorem.

\begin{thm}\label{ososinbuufemws}
Suppose there exists a finite place $v_0$ of $F$ split in $F(\sqrt{\mfk d})$ \sut $\Pi_{v_0}$ is supercuspidal, then \textup{Conjecture~\ref{psospivnihfs}} holds.
\end{thm}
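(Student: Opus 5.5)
The plan is to prove (1) and (2) of Conjecture~\ref{psospivnihfs} separately. In both parts the supercuspidal component $\Pi_{v_0}$ controls the residual image, and a finiteness/patching argument controls the local behaviour at $\Pla$ for large $\ell$.

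\emph{Step 1: irreducibility, part (1).} Let $K=F(\sqrt{\mfk d})$. Since $v_0$ splits in $K$, the restriction of $\rho_{\Pi,\lbd}$ to a decomposition group at a place $w_0\mid v_0$ of $K$ is, by Proposition~\ref{associateGlaosireifnies}(2), the Galois representation attached to the supercuspidal $\Pi_{v_0}$. Hence it is an irreducible representation of $W_{F_{v_0}}$ of dimension $2m$ with trivial monodromy. For $\ell$ larger than an explicit bound depending on $\norml{v_0}$ and $m$, the residual representation $\ovl\rho_{\Pi,\lbd}|_{\Gal_{F_{v_0}}}$ stays absolutely irreducible. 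To see this, I would write it as an induction of a tamely ramified character from an unramified extension of degree dividing $2m$ (possible once $\ell\nmid\#$ of the wild inertia image and $\ell>2m$), and check that the Frobenius-conjugates of the character remain distinct mod $\lbd$ for almost all $\lbd$. The extension $F_{v_0}(\mu_\ell)/F_{v_0}$ is unramified, and $\ell\nmid$ the relevant conductor data, so restriction to $\Gal_{F_{v_0}(\mu_\ell)}$ only replaces Frobenius by a power $\phi^{f}$ with $f$ the order of $\norml{v_0}$ mod $\ell$. I therefore need the conjugates to remain distinct under $\phi^f$. This is a finite list of conditions on $\ell$ once $\ell\nmid\norml{v_0}^i-1$ for $i\le 2m$, which gives $f>2m$. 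Local irreducibility at $w_0$ then implies global absolute irreducibility on $\Gal(\ovl F/F(\mu_\ell)(\sqrt{\mfk d}))$, since that group contains a conjugate of the local group.

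\emph{Step 2: the conditions of rigidity other than minimality.} Take $\Pla^\lr=\vn$. Condition (4) of Definition~\ref{isiinefies} is Proposition~\ref{associateGlaosireifnies}(2) for $v\notin\Pla\cup\Pla(\ell)$. Condition (3) follows from Proposition~\ref{associateGlaosireifnies}(3) and Fontaine--Laffaille theory, once $\ell$ is unramified in $F$ and $\ell>2(b_\xi-a_\xi)+2$.

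\emph{Step 3: minimality, condition (1).} This is the main obstacle. For each $v\in\Pla$, every lifting of $\ovl\rho_v$ must be minimally ramified. By Proposition~\ref{psisiidimfimies}, $\Spf\msf R^\square_{\ovl\rho_v}$ is reduced and equidimensional, and $\mrs D^{\bmin,\square}$ is one of its irreducible components. It therefore suffices to show that, for almost all $\lbd$, the lifting ring is irreducible, meaning it has a unique component.

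My first attempt would be a direct local argument as in the elliptic-curve case. Decompose $\Std\circ\rho_{\Pi,\lbd}|_{\Gal_{F_v}}$ via Proposition~\ref{oiueerelwos} into pieces $\bx W_\tau$. The Weil--Deligne data of $\Pi_v$ is independent of $\lbd$, so its Frobenius eigenvalue ratios are fixed algebraic numbers. For large $\ell$, reduction preserves the Bernstein component, using Corollary~\ref{losoieiefniewswivmie} as $\ell\nmid\prod(\norml{v}^i-1)$. Lemma~\ref{poeoiemifemifws}(2) then shows that any lifting is unramified in the unipotent directions except where the nilpotent orbit is forced.

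The difficulty is that a non-maximal residual unipotent can still deform into a larger orbit unless the Frobenius eigenvalue obstruction ($q$ never an eigenvalue of $\phi$ on the relevant Hom spaces) holds. That condition depends on finitely many fixed algebraic numbers, so it excludes only finitely many $\lbd$. Where this purely local approach fails to separate components, the fallback is the route announced in the introduction. Transfer to a unitary setting via the character $\psi$ and the construction $\rho\mapsto r$ used in Theorem~\ref{osiseimfiehgiemifes}, base-changed to a CM extension $\tld F$ in which $v_0$ splits. There, invoke the minimal $\msf R=\bb T$ theorems of \cite{CHT08,Tho12} to show that every component of the local lifting ring containing a characteristic-zero point is hit by automorphic points of the same inertial type, hence coincides with the minimal component. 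Either route yields condition (1) for all but finitely many $\lbd$, completing the proof.
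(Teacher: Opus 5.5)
\textbf{Step 1 fails.} Its key claim is that $\ovl\rho_{\Pi,\lbd}|_{\Gal_{F_{v_0}(\mu_\ell)}}$ stays absolutely irreducible for almost all $\lbd$. This is false in general.

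By Clifford theory (as in the paper), the parameter of $\Pi_{v_0}$ is $\Ind_{I\rtimes\langle\phi^b\rangle}^{W_{F_{v_0}}}(\tau\otimes\chi)$, with $\tau$ irreducible on inertia and $b$ minimal such that $\tau^{\phi^b}\cong\tau$. The extension $F_{v_0}(\mu_\ell)/F_{v_0}$ is unramified of degree $f$, the order of $\norml{v_0}$ modulo $\ell$. Mackey's formula therefore splits the restriction to $W_{F_{v_0}(\mu_\ell)}$, and likewise its reduction, into $\gcd(b,f)$ summands. For a depth-zero supercuspidal $b=2m$, and $f$ is even for infinitely many $\ell$. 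Your condition $f>2m$ only makes the $\phi^f$-conjugates distinct; irreducibility needs $\langle\phi^f\rangle$ to act transitively on them, i.e.\ $\gcd(b,f)=1$. Supercuspidal parameters are also not in general induced from tame characters of unramified extensions.

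Local information at $v_0$ can only give two things, with $\dot F=F(\sqrt{\mfk d})$. First, $\ovl\rho_{\Pi,\lbd}|_{\Gal_{\dot F}}$ is absolutely irreducible. Second, the summands of its restriction to $\Gal_{\dot F(\mu_\ell)}$ are pairwise non-isomorphic; this restriction is semisimple since the degree is prime to $\ell$, and distinctness can be checked on $I_{F_{v_0}}$. By \cite[Lemma~4.3]{C-G13} the representation is then induced from some $\dot F\subseteq F'\subseteq \dot F(\mu_\ell)$. What forces $F'=\dot F$ is the input at $\ell$: $\dot F(\mu_\ell)/\dot F$ is totally ramified above $\ell$, and the Fontaine--Laffaille condition with regular weights and $\ell$ large rules out such an induction \cite[Lemma~4.7]{LTXZZa}. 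Your Step 1 never uses $\ell$-adic Hodge theory, and it cannot be completed without it.

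\textbf{Step 3 also has a genuine gap.} The direct local route cannot work, for two reasons. First, the Weil--Deligne data of $\Pi_v$ does not control the \emph{residual} monodromy. For example, for an elliptic curve with multiplicative reduction at $v$ and $\ell\mid\mathrm{ord}_v(j)$, $\ovl\rho_v$ is less ramified than $\rho_v$, and then $\Spf\msf R^\square_{\ovl\rho_v}$ has several components. Second, when $\Pi_v$ has nonzero monodromy, the ratio $\norml{v}$ occurs among Frobenius eigenvalues for every $\lbd$, not just finitely many, so Lemma~\ref{poeoiemifemifws}(2) gives no control there. Excluding such congruences for large $\ell$ is a global matter.

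Your fallback concludes that components ``hit by automorphic points of the same inertial type'' coincide with the minimal component. That does not follow: unramified and Steinberg-type components contain points of the same inertial type. The paper's mechanism is as follows.
\begin{enumerate}
\item For each choice $\mrs D^\square_\Pla$ of irreducible components at $v\in\Pla$, $\msf R_{\mrs S(\mrs D^\square_\Pla)}$ is finite over $\mcl O$ (Theorem~\ref{osiseimfiehgiemifes}).
\item The usual Euler-characteristic bound, together with $\bx H^0(\Gal_F,\ad^0(\ovl\rho)(1))=0$ from adequacy, gives dimension at least $1$, hence a $\ovl{\bb Q_\ell}$-point.
\item Modularity lifting makes this point automorphic. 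This is not a minimal $\msf R=\bb T$ theorem, since the other components are not minimal. The resulting representation has weight $\xi$, is unramified outside $\Pla$, and has level bounded independently of $\lbd$ by Corollary~\ref{losoieiefniewswivmie}; this is where $\ell>\norml{v}^{2m}$ enters.
\item There are only finitely many such representations, so strong multiplicity one forces the point to be $\Pi$ once $\ell$ is large.
\item Temperedness of $\Pi_v$ gives $\bx H^0(F_v,\ad^0(\rho_{\Pi,\lbd})(1))=0$. So $\rho_{\Pi,\lbd}|_{\Gal_{F_v}}$ is a smooth point of $\Spec\msf R^\square_{\ovl\rho_v}[1/\ell]$ and lies on a single component.
\end{enumerate}
Hence all choices of components coincide, and the local lifting ring is irreducible and equal to $\mrs D^{\bmin,\square}$. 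The bounded-level/multiplicity-one step and the smoothness-at-tempered-points step are what your sketch is missing.
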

\begin{proof}
For part (1): We denote $F(\sqrt{\mfk d})$ by $\dot F$, and choose a place $w_0$ of $\dot F$ over $v_0$. In particular, we have a natural isomorphism $F_{v_0}\cong \dot F_{w_0}$. Since $\Pi_{v_0}$ is supercuspidal, the continuous representation $\rho_{\Pi_{v_0}}: W_{F_{v_0}}\to \GL_{2m}(\bb C)$ associated with $\Pi_{v_0}$ via the local Langlands correspondence is irreducible. By Clifford's theorem, there exists an irreducible representation $\tau$ of $I_{F_{v_0}}$ (with complex coefficients) and a character $\chi$ of $I_{F_{v_0}}\rtimes\bra{\varphi^b}$, \sut $\rho_{\Pi_{v_0}}$ is isomorphic to $\Ind_{I_{F_{v_0}}\rtimes\bra{\varphi_{v_0}^b}}^{W_{F_{v_0}}}(\tau\otimes\chi)$, where $b$ is the smallest positive integer satisfying $\tau^{\phi_{v_0}^b}\cong \tau$. We may choose a finite extension $E'$ of $E$ inside $\bb C$ and a finite set $\Lbd'$ of finite places of $E'$, \sut both $\tau$ and $\chi$ are defined over $\mcl O_{E', (\Lbd')}$. In particular, we may assume that the image of $\rho_{\Pi_{v_0}}$ is contained in $\GL_{2m}(\mcl O_{E', (\Lbd')})$.

Let $\Lbd_1'$ be the smallest set of finite places of $E'$ containing $\Lbd'$ \sut every finite place $\lbd'$ of $E'$ not in $\Lbd'$ satisfies
\begin{itemize}
\item
$\ovl\tau_{\lbd'}\defining \tau\otimes_{\mcl O_{E', (\Lbd')}}\ovl{\kappa_{\lbd'}}$ is irreducible; and
\item
$b$ is the smallest positive integer $k$ that satisfies $\ovl\tau_{\lbd'}^{\phi_{w_0}^k}$ is conjugate to $\ovl\tau_{\lbd'}$.
\end{itemize}
Let $\Lbd_1$ be the finite set of finite places of $E$ underlying $\Lbd_1'$, and let $\Lbd_2$ be the union of $\Lbd_1$ and the set of finite places of $E$ whose underlying rational prime $\ell$ satisfies either $\ell\le 2m(b_\xi-a_\xi)+1$ or $\Pla(\ell)\cap\Pla^\Pi\ne \vn$. Take a finite place $\lbd$ not in $\Lbd_2$ with underlying rational prime $\ell$. Then $\ovl\rho_{\Pi, \lbd}|_{\Gal(\ovl F/\dot F)}$ is absolutely irreducible by Proposition~\ref{associateGlaosireifnies}(2). By abuse of notation, we regard $\ovl\rho_{\Pi, \lbd}|_{\Gal(\ovl F/\dot F)}$ as a representation with $\ovl{\kappa_\lbd}$-coefficients. Since $[\dot F(\mu_\ell):\dot F]$ is coprime to $\ell$, the representation $\ovl\rho_{\Pi, \lbd}|_{\Gal(\ovl F/\dot F(\mu_\ell))}$ is semisimple. We claim that $\ovl\rho_{\Pi, \lbd}|_{\Gal(\ovl F/\dot F)}$ is an induction of an irreducible representation $\rho'$ of $\Gal(\ovl F/F')$ for some field extension $\dot F\subset F'\subset\dot F(\mu_\ell)$ \sut $[F':\dot F]$ equals the number of irreducible summands of $\ovl\rho_{\Pi, \lbd}|_{\Gal(\ovl F/\dot F(\mu_\ell))}$. By \cite[Lemma~4.3]{C-G13}, it suffices to show that the irreducible summands of $\ovl\rho_{\Pi, \lbd}|_{\Gal(\ovl F/\dot F(\mu_\ell))}$ are pairwise nonisomorphic. Since $v$ is unramified in $\dot F(\mu_\ell)$, it suffices to check that the irreducible summands of $\ovl\rho_{\Pi, \lbd}|_{I_{F_v}}$ are pairwise non-isomorphic. But this follows from the definition of $\Lbd_1'$.

By our definition of $\Lbd_2$ and Proposition~\ref{associateGlaosireifnies}(3), $\ovl\rho_{\Pi, \lbd}|_{\Gal(\ovl F/\dot F)}$ is crystalline with regular Fontaine--Laffaille weights in $[a_\xi, b_\xi]$ and $\ell>2m(b_\xi-a_\xi)+1\ge (b_\xi-a_\xi)+2$. Thus we must have $F'=\dot F$ by \cite[Lemma~4.7]{LTXZZa}, and then (1) follows.

For part (2): Let $\xi$ be the Archimedean weights of $\Pi$ (see Definition~\ref{psoisieneifemiws}). Let $\Lbd_2$ be the finite set of finite places of $E$ from part~(1). We show that each of the four conditions in Definition~\ref{isiinefies} excludes only finitely many finite places $\lbd$ of $E$. Condition~(2) is empty. Condition~(3) holds if the underlying rational prime $\ell$ of $\lbd$ satisfies $\ell\ge 2(b_\xi-a_\xi)+2$ and $\Pla(\ell)\cap\Pla=\vn$, by Proposition~\ref{associateGlaosireifnies}(3). Condition~(4) is automatic by Proposition~\ref{associateGlaosireifnies}(2). It remains to consider Condition~(1).

Let $\lbd$ be a finite place of $E$ whose underlying rational prime $\ell$ satisfies
\begin{equation*}
\Pla(\ell)\cap\Pla=\vn, \quad \ell>2(2m+1), \quad \ell>2(b_\xi-a_\xi)+2,
\end{equation*}
and $\ell>\norml{v}^{2m}$ for every place $v$ in $\Pla$. Then
\begin{enumerate}[(a)]
\item
$\ell$ is unramified in $F$;
\item
$\Pi_v$ is unramified for every $v\in\Pla(\ell)$;
\item
$\Std\circ\ovl\rho_{\Pi, \lbd}|_{\Gal(\ovl F/F(\mu_\ell)(\sqrt{\mfk d}))}$ is absolutely irreducible; which implies $\ovl\rho_{\Pi, \lbd}(\Gal(\ovl F/F(\mu_\ell)))$ is $G^\circ$-adequate by Proposition~\ref{soosieiueuenfue}; 
\item
Proposition~\ref{olaososiefefjeimss} holds for the local deformation problem $\mrs D^\FL$ of $\ovl\rho_{\Pi, \lbd, v}$ (see Definition~\ref{ooidnuefiems}) for every $v\in \Pla(\ell)$; and
\item
Proposition~\ref{psisiidimfimies} holds for the local deformation problem $\mrs D^\bmin$ of $\ovl\rho_{\Pi, \lbd, v}$ (see Definition~\ref{sspeoifiekfensinws}) for every $v\in\Pla$.
\end{enumerate}

We work in the setting of Setup~\ref{seutpeoeifmeis} with $E=E_\lbd$ plus setting of Setup~\ref{osisienieeimifes} with $(\Gamma, \ovl\rho, \chi)=(\Gal_F, \ovl\rho, \ve_\ell^{2-N})$, and fix an isomorphism $\iota_\ell: \bb C\xr\sim\ovl{\bb Q_\ell}$. For any collection $\mrs D_\Pla^\square\defining \{\mrs D_v^\square|v\in\Pla\}$ in which $\mrs D_v^\square$ is an irreducible component of $\Spf \msf R_{\ovl\rho_{\Pi, \lbd, v}}^\square$ for every $v\in \Pla$, we define a global deformation problem (see Definition~\ref{defomrioainoiereid})
\begin{equation*}
\mrs S(\mrs D_\Pla^\square)\defining\paren{\ovl\rho_{\Pi, \lbd}, \ve_\ell^{2-N}, \Pla\cup\Pla(\ell), \{\mrs D_v^\square\}_{v\in\Pla\cup\Pla(\ell)}},
\end{equation*}
where for each $v\in\Pla$, $\mrs D_v^\square$ is the prescribed irreducible component (which is a local deformation problem by Proposition~\ref{psisiidimfimies}(2)); and for each $v\in\Pla(\ell)$, $\mrs D_v^\square$ is the local deformation $\mrs D^{\FL, \square}$ of $\ovl\rho_{\Pi, \lbd}$ from Definition~\ref{ooidnuefiems}.

It follows from Theorem~\ref{osiseimfiehgiemifes} that $\msf R_{\mrs S(\mrs D_\Pla^\square)}$ is a finite $\mcl O$-module. On the other hand, it follows from Proposition~\ref{peoieutnmeumes} and the proof of \cite[Corollary~2.3.5]{CHT08} that \begin{align*}
\dim \msf R_{\mrs S(\mrs D_\Pla^\square)}\ge& 1+\sum_{v\in\Pla\cup\Pla(\ell)}(\dim_\kappa\bx L(\mrs D_v)-\dim_\kappa\bx H^0(\Gal_{F_v}, \ad^0(\ovl\rho)))-\dim_\kappa\bx H^0(G_{F, \Pla\cup\Pla(\ell)}, \ad^0(\ovl\rho)(1))\\
&-[F: \bb Q](\dim G_\kappa-\dim B_\kappa).
\end{align*} 
It follows from (d)--(e) that for each place $v\in \Pla$,
\begin{equation*}
\dim_\kappa\bx L(\mrs D_v)-\dim_\kappa\bx H^0(\Gal_{F_v}, \ad^0(\ovl\rho)(1))=
\begin{cases}
[F_v: \bb Q_\ell](\dim G_\kappa-\dim B_\kappa) & \If v|\ell\\
0 & \If v\nmid \ell
\end{cases}.
\end{equation*}
The assumption that $\ovl\rho_{\Pi, \lbd}(\Gal(\ovl F/F(\mu_\ell)))$ is adequate implies that $\bx H^0(G_{F, \Pla\cup\Pla(\ell)}, \ad^0(\ovl\rho)(1))=0$ by Definition~\ref{otieinnvnuducms}. Thus we have $\dim \msf R_{\mrs S(\mrs D_\Pla^\square)}\ge 1$. In particular, $\Spec \msf R_{\mrs S(\mrs D^\square_\Pla)}[\frac{1}{\ell}]$ is nonzero.

Choose an arbitrary $\ovl{\bb Q_\ell}$-point of $\Spec \msf R_{\mrs S(\mrs D^\square_\Pla)}[\frac{1}{\ell}]$. Via $\iota_\ell^{-1}$ and classical modularity theorems (see, for example, \cite[Theorem~4.2]{Gue11} strengthened using Thorne's adequacy condition \cite{Tho12}), we obtain a $\mfk d$-REASDC representation 
$\Pi(\mrs D_\Pla^\square)$ satisfying 
\begin{itemize}
\item
$\Pi(\mrs D_\Pla^\square)$ is unramified outside $\Pla\cup\Pla^\infty$;
\item
for each place $v$ of $F$ in $\Pla$, there is an open compact subgroup $U_v$ of $\GL_{2m}(F_v)$ depending only on $\Pi_v$, \sut $\Pi(\mrs D_\Pla^\square)_v$ has nontrivial vectors fixed by $U_v$; 
\item
the Archimedean weights of $\Pi(\mrs D_\Pla^\square)$ equals $\xi$;
\item
$\rho_{\Pi(\mrs D_\Pla^\square), \iota_\ell}$ and $\rho_{\Pi, \lbd}\otimes_{E_\lbd}\ovl{\bb Q_\ell}$ are residually isomorphic.
\end{itemize}
Note that the second property is a consequence of Proposition~\ref{associateGlaosireifnies}(2), Corollary~\ref{losoieiefniewswivmie} (which is applicable since $\ell>\norml{v}^{2m}$), and the fact that irreducible admissible representations in a common Bernstein component have a common level. Note that there are only finitely many isomorphism classes of $\mfk d$-REASDC representation of $\GL_{2m}(\Ade_F)$ satisfying the first three properties. By the strong multiplicity one property of $\GL_{2m}$ \cite{Sha79}, when $\ell$ is large, $\Pi$ is the only $\mfk d$-REASDC representation of $\GL_{2m}(\Ade_F)$ up to isomorphism satisfying all the four properties. 

By Proposition~\ref{psisiidimfimies}, the theorem would follow if we show that for any two different collections $\mrs D_\Pla^\square$ and $\mrs D_\Pla^{\square, \prime}$, the $\mfk d$-REASDC representations $\Pi(\mrs D_\Pla^\square)$ and $\Pi(\mrs D_\Pla^{\square, \prime})$ are not isomorphic. To show this, let $v$ be a place in $\Pla$, and let $x$ be the $\ovl{\bb Q_\ell}$-point of $\Spec \msf R^\square_{\ovl\rho_{\Pi, \lbd, v}}[\frac{1}{\ell}]$ corresponding to $\rho_{\Pi(\mrs D_\Pla^\square), \iota_\ell}|_{F_v}$. The dimension of the tangent space of $\Spec\msf R_{\ovl\rho_{\Pi, \lbd, v}}^\square[\frac{1}{\ell}]$ at $x$ is equal to
\begin{align*}
\bx Z^1(F_v, \ad^0(\rho)_{\Pi(\mrs D_\Pla^\square), \iota_\ell}|_{F_v})=&\dim G'_\kappa+\dim_{\ovl{\bb Q_\ell}}\bx H^1(F_v, \ad^0(\rho)_{\Pi(\mrs D_\Pla^\square), \iota_\ell}|_{F_v})-\bx H^0(F_v, \ad^0(\rho)_{\Pi(\mrs D_\Pla^\square), \iota_\ell}|_{F_v})\\ 
=&\dim G'_\kappa+\dim_{\ovl{\bb Q_\ell}}\bx H^2(F_v, \ad^0(\rho)_{\Pi(\mrs D_\Pla^\square), \iota_\ell}|_{F_v})\\
=&\dim G'_\kappa+\dim_{\ovl{\bb Q_\ell}}\bx H^0(F_v, \ad^0(\rho)_{\Pi(\mrs D_\Pla^\square), \iota_\ell}|_{F_v}(1)).
\end{align*}
Since $\Pi(\mrs D_\Pla^\square)_v$ is tempered by Proposition~\ref{associateGlaosireifnies}(1), we have $\dim_{\ovl{\bb Q_\ell}}\bx H^0(F_v, \ad^0(\rho)_{\Pi(\mrs D_\Pla^\square), \iota_\ell}|_{F_v}(1))=0$ by the proof of \cite[Lemma~1.3.2(1)]{LGGT}. It follows from Proposition~\ref{psisiidimfimies}(1) that $\Spec\msf R_{\ovl\rho_{\Pi, \lbd, v}}^\square[\frac{1}{\ell}]$ is smooth at $x$. Thus $x$ cannot lie on two irreducible components. Considering each place $v$ in $\Pla$, we conclude that $\Pi(\mrs D_\Pla^\square)$ and $\Pi(\mrs D_\Pla^{\square, \prime})$ are not isomorphic.

The theorem is proved.
\end{proof}

\bibliography{bibliography}

\begin{bibdiv}
\begin{biblist}

\bib{ABPSa}{article}{
      author={Aubert, Anne-Marie},
      author={Baum, Paul},
      author={Plymen, Roger},
      author={Solleveld, Maarten},
       title={The principal series of {$p$}-adic groups with disconnected
  center},
        date={2017},
        ISSN={0024-6115,1460-244X},
     journal={Proc. Lond. Math. Soc. (3)},
      volume={114},
      number={5},
       pages={798\ndash 854},
         url={https://doi.org/10.1112/plms.12023},
      review={\MR{3653247}},
}

\bib{A-G17}{article}{
      author={Atobe, Hiraku},
      author={Gan, Wee~Teck},
       title={On the local {L}anglands correspondence and {A}rthur conjecture
  for even orthogonal groups},
        date={2017},
        ISSN={1088-4165},
     journal={Represent. Theory},
      volume={21},
       pages={354\ndash 415},
         url={https://doi.org/10.1090/ert/504},
      review={\MR{3708200}},
}

\bib{AMS24}{article}{
      author={Aubert, Anne-Marie},
      author={Moussaoui, Ahmed},
      author={Solleveld, Maarten},
       title={Affine hecke algebras for {L}anglands parameters},
        date={2024},
      eprint={1701.03593},
         url={https://arxiv.org/abs/1701.03593},
}

\bib{Art13}{book}{
      author={Arthur, James},
       title={The endoscopic classification of representations},
      series={American Mathematical Society Colloquium Publications},
   publisher={American Mathematical Society, Providence, RI},
        date={2013},
      volume={61},
        ISBN={978-0-8218-4990-3},
         url={https://doi.org/10.1090/coll/061},
        note={Orthogonal and symplectic groups},
      review={\MR{3135650}},
}

\bib{Bal12}{book}{
      author={Balaji, Sundeep},
       title={G-valued potentially semi-stable deformation rings},
   publisher={ProQuest LLC, Ann Arbor, MI},
        date={2012},
        ISBN={978-1303-00490-2},
  url={http://gateway.proquest.com/openurl?url_ver=Z39.88-2004&rft_val_fmt=info:ofi/fmt:kev:mtx:dissertation&res_dat=xri:pqm&rft_dat=xri:pqdiss:3557387},
        note={Thesis (Ph.D.)--The University of Chicago},
      review={\MR{3152673}},
}

\bib{B-G19}{article}{
      author={Bellovin, Rebecca},
      author={Gee, Toby},
       title={{$G$}-valued local deformation rings and global lifts},
        date={2019},
        ISSN={1937-0652,1944-7833},
     journal={Algebra Number Theory},
      volume={13},
      number={2},
       pages={333\ndash 378},
         url={https://doi.org/10.2140/ant.2019.13.333},
      review={\MR{3927049}},
}

\bib{B-K90}{incollection}{
      author={Bloch, Spencer},
      author={Kato, Kazuya},
       title={{$L$}-functions and {T}amagawa numbers of motives},
        date={1990},
   booktitle={The {G}rothendieck {F}estschrift, {V}ol.\ {I}},
      series={Progr. Math.},
      volume={86},
   publisher={Birkh\"auser Boston, Boston, MA},
       pages={333\ndash 400},
      review={\MR{1086888}},
}

\bib{LGG11}{article}{
      author={Barnet-Lamb, Thomas},
      author={Gee, Toby},
      author={Geraghty, David},
       title={The {S}ato-{T}ate conjecture for {H}ilbert modular forms},
        date={2011},
        ISSN={0894-0347,1088-6834},
     journal={J. Amer. Math. Soc.},
      volume={24},
      number={2},
       pages={411\ndash 469},
         url={https://doi.org/10.1090/S0894-0347-2010-00689-3},
      review={\MR{2748398}},
}

\bib{LGGT}{article}{
      author={Barnet-Lamb, Thomas},
      author={Gee, Toby},
      author={Geraghty, David},
      author={Taylor, Richard},
       title={Potential automorphy and change of weight},
        date={2014},
        ISSN={0003-486X,1939-8980},
     journal={Ann. of Math. (2)},
      volume={179},
      number={2},
       pages={501\ndash 609},
         url={https://doi.org/10.4007/annals.2014.179.2.3},
      review={\MR{3152941}},
}

\bib{Boo19a}{article}{
      author={Booher, Jeremy},
       title={Minimally ramified deformations when {$\ell \neq p$}},
        date={2019},
        ISSN={0010-437X,1570-5846},
     journal={Compos. Math.},
      volume={155},
      number={1},
       pages={1\ndash 37},
         url={https://doi.org/10.1112/S0010437X18007546},
      review={\MR{3875451}},
}

\bib{Boo19}{article}{
      author={Booher, Jeremy},
       title={Producing geometric deformations of orthogonal and symplectic
  {G}alois representations},
        date={2019},
        ISSN={0022-314X,1096-1658},
     journal={J. Number Theory},
      volume={195},
       pages={115\ndash 158},
         url={https://doi.org/10.1016/j.jnt.2018.05.022},
      review={\MR{3867437}},
}

\bib{B-P19}{article}{
      author={Booher, Jeremy},
      author={Patrikis, Stefan},
       title={{$G$}-valued {G}alois deformation rings when {$\ell\neq p$}},
        date={2019},
        ISSN={1073-2780,1945-001X},
     journal={Math. Res. Lett.},
      volume={26},
      number={4},
       pages={973\ndash 990},
         url={https://doi.org/10.4310/MRL.2019.v26.n4.a2},
      review={\MR{4028108}},
}

\bib{Car12}{article}{
      author={Caraiani, Ana},
       title={Local-global compatibility and the action of monodromy on nearby
  cycles},
        date={2012},
        ISSN={0012-7094,1547-7398},
     journal={Duke Math. J.},
      volume={161},
      number={12},
       pages={2311\ndash 2413},
         url={https://doi.org/10.1215/00127094-1723706},
      review={\MR{2972460}},
}

\bib{Car14}{article}{
      author={Caraiani, Ana},
       title={Monodromy and local-global compatibility for {$l=p$}},
        date={2014},
        ISSN={1937-0652,1944-7833},
     journal={Algebra Number Theory},
      volume={8},
      number={7},
       pages={1597\ndash 1646},
         url={https://doi.org/10.2140/ant.2014.8.1597},
      review={\MR{3272276}},
}

\bib{Car94}{incollection}{
      author={Carayol, Henri},
       title={Formes modulaires et repr\'esentations galoisiennes \`a{} valeurs
  dans un anneau local complet},
        date={1994},
   booktitle={{$p$}-adic monodromy and the {B}irch and {S}winnerton-{D}yer
  conjecture ({B}oston, {MA}, 1991)},
      series={Contemp. Math.},
      volume={165},
   publisher={Amer. Math. Soc., Providence, RI},
       pages={213\ndash 237},
         url={https://doi.org/10.1090/conm/165/01601},
      review={\MR{1279611}},
}

\bib{Cas80}{article}{
      author={Casselman, W.},
       title={The unramified principal series of {${\mathfrak p}$}-adic groups.
  {I}. {T}he spherical function},
        date={1980},
        ISSN={0010-437X,1570-5846},
     journal={Compositio Math.},
      volume={40},
      number={3},
       pages={387\ndash 406},
         url={http://www.numdam.org/item?id=CM_1980__40_3_387_0},
      review={\MR{571057}},
}

\bib{C-G13}{article}{
      author={Calegari, Frank},
      author={Gee, Toby},
       title={Irreducibility of automorphic {G}alois representations of
  {$GL(n)$}, {$n$} at most 5},
        date={2013},
        ISSN={0373-0956,1777-5310},
     journal={Ann. Inst. Fourier (Grenoble)},
      volume={63},
      number={5},
       pages={1881\ndash 1912},
         url={https://doi.org/10.5802/aif.2817},
      review={\MR{3186511}},
}

\bib{C-H13}{article}{
      author={Chenevier, Ga\"etan},
      author={Harris, Michael},
       title={Construction of automorphic {G}alois representations, {II}},
        date={2013},
        ISSN={2168-0930,2168-0949},
     journal={Camb. J. Math.},
      volume={1},
      number={1},
       pages={53\ndash 73},
         url={https://doi.org/10.4310/CJM.2013.v1.n1.a2},
      review={\MR{3272052}},
}

\bib{CHT08}{article}{
      author={Clozel, Laurent},
      author={Harris, Michael},
      author={Taylor, Richard},
       title={Automorphy for some {$l$}-adic lifts of automorphic mod {$l$}
  {G}alois representations},
        date={2008},
        ISSN={0073-8301,1618-1913},
     journal={Publ. Math. Inst. Hautes \'Etudes Sci.},
      number={108},
       pages={1\ndash 181},
         url={https://doi.org/10.1007/s10240-008-0016-1},
        note={With Appendix A, summarizing unpublished work of Russ Mann, and
  Appendix B by Marie-France Vign\'eras},
      review={\MR{2470687}},
}

\bib{C-H16}{article}{
      author={Caraiani, Ana},
      author={Le~Hung, Bao~V.},
       title={On the image of complex conjugation in certain {G}alois
  representations},
        date={2016},
        ISSN={0010-437X,1570-5846},
     journal={Compos. Math.},
      volume={152},
      number={7},
       pages={1476\ndash 1488},
         url={https://doi.org/10.1112/S0010437X16007016},
      review={\MR{3530448}},
}

\bib{C-Z21}{article}{
      author={Chen, Rui},
      author={Zou, Jialiang},
       title={Local {L}anglands correspondence for even orthogonal groups via
  theta lifts},
        date={2021},
        ISSN={1022-1824,1420-9020},
     journal={Selecta Math. (N.S.)},
      volume={27},
      number={5},
       pages={Paper No. 88, 71},
         url={https://doi.org/10.1007/s00029-021-00704-8},
      review={\MR{4308934}},
}

\bib{C-Z24}{article}{
      author={Chen, Rui},
      author={Zou, Jialiang},
       title={Arthur's multiplicity formula for even orthogonal and unitary
  groups},
        date={2024},
     journal={Preprint},
      eprint={2103.07956},
         url={https://arxiv.org/abs/2103.07956},
}

\bib{F-L82}{article}{
      author={Fontaine, Jean-Marc},
      author={Laffaille, Guy},
       title={Construction de repr\'esentations {$p$}-adiques},
        date={1982},
        ISSN={0012-9593},
     journal={Ann. Sci. \'Ecole Norm. Sup. (4)},
      volume={15},
      number={4},
       pages={547\ndash 608},
         url={http://www.numdam.org/item?id=ASENS_1982_4_15_4_547_0},
      review={\MR{707328}},
}

\bib{Gee11}{article}{
      author={Gee, Toby},
       title={Automorphic lifts of prescribed types},
        date={2011},
        ISSN={0025-5831,1432-1807},
     journal={Math. Ann.},
      volume={350},
      number={1},
       pages={107\ndash 144},
         url={https://doi.org/10.1007/s00208-010-0545-z},
      review={\MR{2785764}},
}

\bib{Gue11}{article}{
      author={Guerberoff, Lucio},
       title={Modularity lifting theorems for {G}alois representations of
  unitary type},
        date={2011},
        ISSN={0010-437X,1570-5846},
     journal={Compos. Math.},
      volume={147},
      number={4},
       pages={1022\ndash 1058},
         url={https://doi.org/10.1112/S0010437X10005154},
      review={\MR{2822860}},
}

\bib{Hen00}{article}{
      author={Henniart, Guy},
       title={Une preuve simple des conjectures de {L}anglands pour {${\rm
  GL}(n)$} sur un corps {$p$}-adique},
        date={2000},
        ISSN={0020-9910,1432-1297},
     journal={Invent. Math.},
      volume={139},
      number={2},
       pages={439\ndash 455},
         url={https://doi.org/10.1007/s002220050012},
      review={\MR{1738446}},
}

\bib{HSBT}{article}{
      author={Harris, Michael},
      author={Shepherd-Barron, Nick},
      author={Taylor, Richard},
       title={A family of {C}alabi-{Y}au varieties and potential automorphy},
        date={2010},
        ISSN={0003-486X,1939-8980},
     journal={Ann. of Math. (2)},
      volume={171},
      number={2},
       pages={779\ndash 813},
         url={https://doi.org/10.4007/annals.2010.171.779},
      review={\MR{2630056}},
}

\bib{H-T01}{book}{
      author={Harris, Michael},
      author={Taylor, Richard},
       title={The geometry and cohomology of some simple {S}himura varieties},
      series={Annals of Mathematics Studies},
   publisher={Princeton University Press, Princeton, NJ},
        date={2001},
      volume={151},
        ISBN={0-691-09090-4},
        note={With an appendix by Vladimir G. Berkovich},
      review={\MR{1876802}},
}

\bib{Ish24}{article}{
      author={Ishimoto, Hiroshi},
       title={The endoscopic classification of representations of
  non-quasi-split odd special orthogonal groups},
        date={2024},
        ISSN={1073-7928,1687-0247},
     journal={Int. Math. Res. Not. IMRN},
      volume={2024},
      number={14},
       pages={10939\ndash 11012},
         url={https://doi.org/10.1093/imrn/rnae113},
      review={\MR{4776199}},
}

\bib{Jan04}{incollection}{
      author={Jantzen, Jens~Carsten},
       title={Nilpotent orbits in representation theory},
        date={2004},
   booktitle={Lie theory},
      series={Progr. Math.},
      volume={228},
   publisher={Birkh\"auser Boston, Boston, MA},
       pages={1\ndash 211},
      review={\MR{2042689}},
}

\bib{Kon03}{article}{
      author={Konno, Takuya},
       title={A note on the {L}anglands classification and irreducibility of
  induced representations of {$p$}-adic groups},
        date={2003},
        ISSN={1340-6116,1883-2032},
     journal={Kyushu J. Math.},
      volume={57},
      number={2},
       pages={383\ndash 409},
         url={https://doi.org/10.2206/kyushujm.57.383},
      review={\MR{2050093}},
}

\bib{KSZ21}{article}{
      author={Kisin, Mark},
      author={Shin, Sug~Woo},
      author={Zhu, Yihang},
       title={The stable trace formula for {S}himura varieties of abelian
  type},
        date={2021},
     journal={Preprint},
      eprint={2110.05381},
         url={https://arxiv.org/abs/2110.05381},
}

\bib{K-T17}{article}{
      author={Khare, Chandrashekhar~B.},
      author={Thorne, Jack~A.},
       title={Potential automorphy and the {L}eopoldt conjecture},
        date={2017},
        ISSN={0002-9327,1080-6377},
     journal={Amer. J. Math.},
      volume={139},
      number={5},
       pages={1205\ndash 1273},
         url={https://doi.org/10.1353/ajm.2017.0030},
      review={\MR{3702498}},
}

\bib{LTXZZa}{article}{
      author={Liu, Yi~Feng},
      author={Tian, Yi~Chao},
      author={Xiao, Liang},
      author={Zhang, Wei},
      author={Zhu, Xin~Wen},
       title={Deformation of rigid conjugate self-dual {G}alois
  representations},
        date={2024},
        ISSN={1439-8516,1439-7617},
     journal={Acta Math. Sin. (Engl. Ser.)},
      volume={40},
      number={7},
       pages={1599\ndash 1644},
         url={https://doi.org/10.1007/s10114-024-1409-x},
      review={\MR{4777059}},
}

\bib{Pat16}{article}{
      author={Patrikis, Stefan},
       title={Deformations of {G}alois representations and exceptional
  monodromy},
        date={2016},
        ISSN={0020-9910,1432-1297},
     journal={Invent. Math.},
      volume={205},
      number={2},
       pages={269\ndash 336},
         url={https://doi.org/10.1007/s00222-015-0635-3},
      review={\MR{3529115}},
}

\bib{Pen25a}{article}{
      author={Peng, Hao},
       title={Fargues-{S}cholze parameters and torsion vanishing for special
  orthogonal and unitary groups},
        date={2025},
     journal={Preprint},
      eprint={2503.04623},
         url={https://arxiv.org/abs/2503.04623},
}

\bib{Pen25c}{article}{
      author={Peng, Hao},
       title={On the {B}eilinson-{B}loch-{K}ato conjecture for polarized
  motives},
        date={2025},
      eprint={2509.18615},
         url={https://arxiv.org/abs/2509.18615},
}

\bib{Pen26}{article}{
      author={Peng, Hao},
       title={Higher {K}olyvagin theorems in the arithmetic {G}ross--{P}rasad
  setting},
        date={2026},
     journal={Ph.D. thesis to appear},
}

\bib{Sha79}{incollection}{
      author={Piatetski-Shapiro, I.~I.},
       title={Multiplicity one theorems},
        date={1979},
   booktitle={Automorphic forms, representations and {$L$}-functions ({P}roc.
  {S}ympos. {P}ure {M}ath., {O}regon {S}tate {U}niv., {C}orvallis, {O}re.,
  1977), {P}art 1},
      series={Proc. Sympos. Pure Math.},
      volume={XXXIII},
   publisher={Amer. Math. Soc., Providence, RI},
       pages={209\ndash 212},
      review={\MR{546599}},
}

\bib{Shi24}{article}{
      author={Shin, Sug~Woo},
       title={Weak transfer from classical groups to general linear groups},
        date={2024},
        ISSN={2834-4626,2834-4634},
     journal={Essent. Number Theory},
      volume={3},
      number={1},
       pages={19\ndash 62},
         url={https://doi.org/10.2140/ent.2024.3.19},
      review={\MR{4752712}},
}

\bib{Sho16}{article}{
      author={Shotton, Jack},
       title={Local deformation rings for {${\rm GL}_2$} and a
  {B}reuil-{M}\'ezard conjecture when {$\ell\ne p$}},
        date={2016},
        ISSN={1937-0652,1944-7833},
     journal={Algebra Number Theory},
      volume={10},
      number={7},
       pages={1437\ndash 1475},
         url={https://doi.org/10.2140/ant.2016.10.1437},
      review={\MR{3554238}},
}

\bib{Sho18}{article}{
      author={Shotton, Jack},
       title={The {B}reuil-{M}\'ezard conjecture when {$l\neq p$}},
        date={2018},
        ISSN={0012-7094,1547-7398},
     journal={Duke Math. J.},
      volume={167},
      number={4},
       pages={603\ndash 678},
         url={https://doi.org/10.1215/00127094-2017-0039},
      review={\MR{3769675}},
}

\bib{Sol25}{article}{
      author={Solleveld, Maarten},
       title={On principal series representations of quasi-split reductive
  {$p$}-adic groups},
        date={2025},
        ISSN={0030-8730,1945-5844},
     journal={Pacific J. Math.},
      volume={334},
      number={2},
       pages={269\ndash 327},
         url={https://doi.org/10.2140/pjm.2025.334.269},
      review={\MR{4862261}},
}

\bib{Sta}{article}{
      author={{Stacks Project Authors}, The},
       title={\textit{Stacks Project}},
         how={\url{https://stacks.math.columbia.edu}},
        date={2018},
}

\bib{Tay08}{article}{
      author={Taylor, Richard},
       title={Automorphy for some {$l$}-adic lifts of automorphic mod {$l$}
  {G}alois representations. {II}},
        date={2008},
        ISSN={0073-8301,1618-1913},
     journal={Publ. Math. Inst. Hautes \'Etudes Sci.},
      number={108},
       pages={183\ndash 239},
         url={https://doi.org/10.1007/s10240-008-0015-2},
      review={\MR{2470688}},
}

\bib{Tho12}{article}{
      author={Thorne, Jack},
       title={On the automorphy of {$l$}-adic {G}alois representations with
  small residual image},
        date={2012},
        ISSN={1474-7480,1475-3030},
     journal={J. Inst. Math. Jussieu},
      volume={11},
      number={4},
       pages={855\ndash 920},
         url={https://doi.org/10.1017/S1474748012000023},
        note={With an appendix by Robert Guralnick, Florian Herzig, Richard
  Taylor and Thorne},
      review={\MR{2979825}},
}

\bib{T-W95}{article}{
      author={Taylor, Richard},
      author={Wiles, Andrew},
       title={Ring-theoretic properties of certain {H}ecke algebras},
        date={1995},
        ISSN={0003-486X,1939-8980},
     journal={Ann. of Math. (2)},
      volume={141},
      number={3},
       pages={553\ndash 572},
         url={https://doi.org/10.2307/2118560},
      review={\MR{1333036}},
}

\bib{T-Y07}{article}{
      author={Taylor, Richard},
      author={Yoshida, Teruyoshi},
       title={Compatibility of local and global {L}anglands correspondences},
        date={2007},
        ISSN={0894-0347,1088-6834},
     journal={J. Amer. Math. Soc.},
      volume={20},
      number={2},
       pages={467\ndash 493},
         url={https://doi.org/10.1090/S0894-0347-06-00542-X},
      review={\MR{2276777}},
}

\bib{Wal03}{article}{
      author={Waldspurger, J.-L.},
       title={La formule de {P}lancherel pour les groupes {$p$}-adiques
  (d'apr\`es {H}arish-{C}handra)},
        date={2003},
        ISSN={1474-7480,1475-3030},
     journal={J. Inst. Math. Jussieu},
      volume={2},
      number={2},
       pages={235\ndash 333},
         url={https://doi.org/10.1017/S1474748003000082},
      review={\MR{1989693}},
}

\bib{Whi23a}{thesis}{
      author={Whitmore, Dmitri},
       title={The {T}aylor--{W}iles method for reductive groups},
        type={Ph.D. Thesis},
        date={2023},
         url={https://www.repository.cam.ac.uk/handle/1810/368883},
}

\bib{Wil95}{article}{
      author={Wiles, Andrew},
       title={Modular elliptic curves and {F}ermat's last theorem},
        date={1995},
        ISSN={0003-486X,1939-8980},
     journal={Ann. of Math. (2)},
      volume={141},
      number={3},
       pages={443\ndash 551},
         url={https://doi.org/10.2307/2118559},
      review={\MR{1333035}},
}

\bib{Y-Z25}{article}{
      author={Yang, Xiangqian},
      author={Zhu, Xinwen},
       title={On the generic part of the cohomology of shimura varieties of
  abelian type},
        date={2025},
      eprint={2505.04329},
         url={https://arxiv.org/abs/2505.04329},
}

\bib{Zha25}{article}{
      author={Zhang, Xiaoyu},
       title={Automorphic side of {T}aylor-{W}iles method for orthogonal and
  symplectic groups},
        date={2025},
      eprint={2411.04897},
         url={https://arxiv.org/abs/2411.04897},
}

\bib{Zhu24}{incollection}{
      author={Zhu, Xinwen},
       title={A note on integral {S}atake isomorphisms},
        date={2024},
   booktitle={Arithmetic geometry},
      series={Tata Inst. Fundam. Res. Stud. Math.},
      volume={41},
   publisher={Tata Inst. Fund. Res., Mumbai},
       pages={469\ndash 489},
      review={\MR{4812712}},
}

\end{biblist}
\end{bibdiv}

\vspace{2em}
\noindent\textsc{Hao Peng}\\
Department of Mathematics, Massachusetts Institute of Technology, Cambridge, MA 02139, USA\\
\textit{Email}: \texttt{hao\_peng@mit.edu}


\vspace{2em}
\noindent\textsc{Dmitri Whitmore}\\
DPMMS, Wilberforce Road, Cambridge CB3 0WA, UK\\
\textit{Email}: \texttt{dw517@cam.ac.uk}

\end{document}